\DeclareMathOperator{\IE}{IE}
\DeclareMathOperator{\RE}{RE}
\newtheorem{THM}{\textbf{Theorem}}[section]
\newtheorem{DEF}{\textbf{Definition}}
\newtheorem{LEM}{\textbf{Lemma}}[section]
\newtheorem{conj}{\textbf{Conjecture}}
\newtheorem{COR}{\textbf{Corollary}}[section]
\newtheorem{REM}{\textbf{Remark}}
\newtheorem{CLA}{\textbf{Claim}}[section]
\newtheorem{case}{Case}
\newtheorem{subcase}{Case}
\numberwithin{subcase}{case}
\newtheorem{subsubcase}{Case}
\numberwithin{subsubcase}{subcase}
\newcommand{\TT}{\mathcal {T}}
\newcommand{\D}{\Delta}
\newcommand{\phibar}{\overline{\varphi}}
\newcommand{\phiv}{\varphi}
\newcommand{\mubar}{\overline{\mu}}
\newcommand{\pibar}{\overline{\pi}}
\newcommand{\sigmabar}{\overline{\sigma}}
\def\ps@pprintTitle{%
	\let\@oddhead\@empty
	\let\@evenhead\@empty
	\let\@oddfoot\@empty
	\let\@evenfoot\@oddfoot
}
\begin{document}

\title{On Edge Coloring of Multigraphs}




\author{Guangming Jing\fnref{label1}}
\ead{jingguangming@gmail.com}
\fntext[label1]{The author is supported in part by NSF DMS-2348740.}
\affiliation[label1]{organization={Department of Mathematics and Statistics, University of Massachusetts Lowell},
	addressline={Lowell, Massachusetts},
	postcode={01854},
	country={USA}}


\begin{abstract}
 Let $\Delta(G)$ and $\chi'(G)$ be the maximum degree and chromatic index of a graph $G$, respectively.
 Appearing in different forms, Gupta\,(1967), Goldberg\,(1973), Andersen\,(1977), and Seymour\,(1979) made the following conjecture:
Every multigraph $G$ satisfies $\chi'(G) \le \max\{ \Delta(G) + 1, \Gamma(G) \}$, where $\Gamma(G) = \max_{H \subseteq G, |V(H)|\geq 2} \left\lceil \frac{ |E(H)| }{ \lfloor \tfrac{1}{2} |V(H)| \rfloor} \right\rceil$ is the density of $G$. In this paper, we present a polynomial-time algorithm for coloring any multigraph with $\max\{ \Delta(G) + 1, \Gamma(G) \}$ colors, confirming the conjecture algorithmically. Since $\chi'(G)\geq \max\{ \Delta(G), \Gamma(G) \}$, this algorithm gives a proper edge coloring that uses at most one more color than the optimum. As determining the chromatic index of an arbitrary graph is $NP$-hard, the $\max\{ \Delta(G) + 1, \Gamma(G) \}$ bound is best possible for efficient proper edge coloring algorithms on general multigraphs, unless $P=NP$.  Related work of Chen, Hao, Yu, and Zang have also obtained an algorithm using similar high-level ideas; the present approach establishes a complete proof.
\end{abstract}


\begin{keyword}
	Edge chromatic index; graph density; Tashkinov tree; Extended Tashkinov tree; Goldberg-Seymour Conjecture
\end{keyword}
\maketitle
\section{Introduction}
A (proper) {\bf $k$-edge-coloring}\label{kedgecoloring} of $G$ is an assignment of $k$ colors, $1,2, \ldots, k$, to the edges of $G$ 
so that no two adjacent edges have the same color. By definition, the chromatic index $\chi'(G)$ of $G$ is the minimum $k$ 
for which $G$ has a $k$-edge-coloring.  We use $[k]$ to denote the color set $\{1,2, \ldots, k\}$,
and use ${\cal C}^k(G)$ to denote the set of all $k$-edge-colorings of $G$. Note that every $k$-edge-coloring
of $G$ is a mapping from $E$ to $[k]$.   Let $\D(G)$ and $\mu(G)$ be the maximum degree  and the multiplicity of a graph $G$, respectively.   Vizing~\cite{Vizing64} showed that $\D(G) \le \chi'(G) \le \D(G) + \mu(G)$. Simple graphs are consequently divided into two classes. Class I: graphs $G$ with  $\chi'(G) = \D(G)$; Class II:
graphs $G$ with $\chi'(G) = \D(G) +1$.

Since the edges of $G$ with the same color form a matching, we have $|E(H)|\leq\chi'(G)\lfloor |V(H)|/2\rfloor$ for any subgraph $H$ of $G$ with $|V(H)|\geq 2$. Therefore, we have $\chi'(G)\geq \max\{\D(G), \Gamma(G)\}$, where

 \[\Gamma(G):=\max_{H \subseteq G, |V(H)|\geq 2} \left\lceil \frac{ |E(H)| }{ \lfloor \frac{1}{2} |V(H)| \rfloor} \right\rceil,\] is the {\bf density} of $G$.

On the other hand, Gupta\, (1967)~\cite{Gupta67}, Goldberg\,(1973)~\cite{Goldberg}, Andersen\,(1977)~\cite{Andersen} and Seymour\,(1979)~\cite{Seymour} independently made the following conjecture, commonly known as the Goldberg-Seymour conjecture:

\begin{conj}[Andersen, Goldberg, Gupta, Seymour]\label{con:GAS}
Every multigraph $G$ satisfies $\chi'(G) \le \max\{\D(G) +1, \Gamma(G) \}$.
\end{conj}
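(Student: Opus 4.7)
The strategy is constructive: set $k := \max\{\Delta(G)+1,\Gamma(G)\}$ and proceed by induction on $|E(G)|$, picking any edge $e = xy$ and invoking the inductive hypothesis to obtain a proper $k$-edge-coloring $\varphi$ of $G - e$. The task then reduces to the following dichotomy: either find, in polynomial time, a sequence of Kempe (bichromatic) swaps that transforms $\varphi$ into a $k$-edge-coloring of $G - e$ where some color is missing at both $x$ and $y$ (so $e$ can be colored), or extract a subgraph $H \subseteq G$ with $\lceil |E(H)|/\lfloor |V(H)|/2\rfloor\rceil > k$, contradicting the choice of $k$ since $k \geq \Gamma(G)$.

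The central tool is the Tashkinov tree (TT) and its iterative upgrade, the extended Tashkinov tree (ETT) highlighted in the keywords. Starting with the uncolorable edge $e$, I would grow an ordered sequence of distinct vertices $v_0 = x, v_1 = y, v_2, \ldots, v_t$ together with edges $e_1 = e, e_2, \ldots, e_t$ so that each $e_i$ joins $v_i$ to some earlier $v_j$, and the color $\varphi(e_i)$ for $i \geq 2$ is missing at some $v_l$ with $l < i$. Tashkinov's classical lemma guarantees that such a tree $T$ is \emph{elementary}: no color is simultaneously missing at two distinct vertices of $V(T)$; otherwise an alternating-path argument would immediately yield a Vizing-type augmentation coloring $e$.

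If the resulting $T$ already satisfies $|E(G[V(T)])| > k\lfloor |V(T)|/2\rfloor$ we are done, so assume otherwise. The ETT procedure then picks a color $\alpha$ missing at some $v_i \in V(T)$, performs a carefully chosen bichromatic swap on the $(\alpha,\beta)$-Kempe chain passing through $V(T)$, and either extends the coloring to $e$ or enlarges $T$ while preserving elementariness and the TT invariants. Iterating yields a final vertex set $S$ that is \emph{closed}: the colors missing in $S$ do not appear on edges leaving $S$. Counting color-incidences at $S$ then gives $\sum_{v\in S}(k - d_G(v)) \geq k - |S| + $(extra slack from closedness), which on rearrangement forces $|E(G[S])| > k\lfloor |S|/2\rfloor$, the sought density contradiction.

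The hard part will be the ETT orchestration: showing that each successive Kempe swap needed to grow $T$ does not destroy the elementariness or closedness invariants established in earlier rounds, and bounding the total number of swap-and-grow rounds by a polynomial in $|V(G)| + |E(G)|$. The natural approach is to define a lexicographic potential on (the size of $S$, the number of colors missing inside $S$, the tree order) and prove it strictly decreases at each round; but verifying that every case of swap interaction respects this potential is delicate, because an ill-chosen Kempe chain can reintroduce a previously forbidden free color into $S$. Controlling this interaction is the principal combinatorial and algorithmic obstacle that the remainder of the paper must resolve.
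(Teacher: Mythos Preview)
Your high-level architecture matches the paper's: reduce to a $k$-triple $(G,e,\varphi)$ with $k\ge\Delta+1$, grow an elementary structure from $e$, and when growth halts, read off a dense subgraph. But two points in your sketch are genuine gaps rather than deferred details.

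First, \emph{closed is not sufficient}. Your terminal set $S$ being closed (no color of $\overline\varphi(S)$ appears on $\partial(S)$) does not give the density bound. You need $S$ elementary and \emph{strongly closed}: in addition, each color in $[k]\setminus\overline\varphi(S)$ appears on exactly one edge of $\partial(S)$. Only then does every one of the $k$ colors contribute a near-perfect matching to $G[S]$, yielding $|E(G[S])|\ge k(|S|-1)/2+1$ and hence $\Gamma(G)>k$. A maximal Tashkinov tree is always closed but in general not strongly closed; bridging this gap is exactly why extended trees are needed. Your incidence-count line ``$\sum_{v\in S}(k-d_G(v))\ge k-|S|+\cdots$'' does not recover this without strong closure.

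Second, the paper's ETT growth mechanism is not ``perform a Kempe swap on a chain through $T$ and regrow.'' That description fits the earlier exponential-time proof. Here, once $T_n$ is closed but has a defective color $\delta$ (appearing on $\ge 2$ edges of $\partial(T_n)$), one picks $\gamma\in\overline\varphi(T_n)$ and extends $T_n$ by an edge lying on a $(\gamma,\delta)$-\emph{ear} of $T_n$ --- no Kempe change is made to build the next layer. Two extension types (initial IE and revisiting RE) alternate under a priority rule, and Kempe changes enter only in the \emph{analysis}: proving that each layer stays elementary (A1) and that missing colors are interchangeable (A2), via an induction on the rung number $n$ together with a finer ``good hierarchy'' decomposition of the last layer. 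Your proposed lexicographic potential on (size, missing colors, tree order) does not correspond to this scheme and would not obviously survive the ear extensions, where the added edge carries a colour \emph{not} in $\overline\varphi(T_n)$.
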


 Over the past five decades, this conjecture has been a subject of extensive research in the fields of graph theory, theoretical computer science and operations research, and has inspired a significant body of work, with contributions from many researchers; see~\cite{StiebSTF-Book} by Stiebitz et al.  and~\cite{gs} by Chen, Zang, and the author for a comprehensive account. In particular,~\cite{gs} confirms the Goldberg-Seymour conjecture. However, the approach used in~\cite{gs} is quite technical and artificial, and thus it is far from obtaining an efficient edge coloring algorithm. In fact, the time complexity is exponential on the number of vertices, which makes it difficult to deploy~\cite{gs} in real world applications. 
 
 The key to prove the Goldberg-Seymour conjecture is to find a ``strongly closed elementary set", see the proof of Theorem~\ref{con} for more details. The Tashkinov tree method, introduced by Tashkinov \cite{Tashkinov-2000} in the early 2000s, is the perfect tool for this purpose. Therefore, almost all recent progress towards the conjecture is based on the extensions of Tashkinov trees. To properly extend a Tashkinov tree, two major questions need to be answered:
 \begin{enumerate}
 \item Where to extend. This often involves adding requirements on the tree sequence we extend from.
 \item How to mimic Tashkinov's idea to show that the extended structure is elementary. This involves (a) Kempe changes, and (b) proving structure and all the extension requirements stay well-defined after the Kempe changes.
 \end{enumerate}

 Question (2) was answered naturally in~\cite{2017paper} and~\cite{gs} by utilizing the idea of interchangeability. This idea shows that under certain conditions, for a color $\alpha$ missing at a vertex inside an extended Tashkinov tree $T_n$ (that is a Tashkinov-type tree sequence extended $n-1$ times) and a color $\delta$ appearing on the boundary of $T_n$, there is at most one $(\alpha,\delta)$-path sharing vertices with $T_n$. All the other $(\alpha,\delta)$-components sharing vertices with $T_n$ must be cycles. In this way, we can mimic Tashkinov's idea on proving the elementariness after extending $T_n$ by avoiding this unique $(\alpha,\delta)$-path sharing vertices with $T_n$.
 
 On the other hand, Question (1) was not fully answered in~\cite{gs}. Pick an arbitrary color $\alpha$ missing at an arbitrary vertex inside $T_n$ and a "defective color" $\delta$ (defective means that there is more than one edge colored by $\delta$ that appears on the boundary of $T_n$). Ideally, we should be able to naturally extend $T_n$ with any $\delta$-colored defective boundary edges except for the one that is incident with the $(\alpha,\delta)$-exit vertex of $T_n$, which is the last vertex of the only $(\alpha,\delta)$-path (uniqueness from interchangeability) shared with $T_n$ along the direction starting inside $T_n$. One can refer to the later Figure~\ref{figure1} for a graphical explanation. Asplund and McDonald~\cite{AsplundMcD16} also suggested a similar type of extension. However, \cite{gs} cannot guarantee that the extension structure is well-defined after performing Kempe changes. To be more specific, after Kempe changes involving the color $\alpha$ or $\delta$, the $(\alpha,\delta)$-path intersecting $T_n$ might change its exit. So the entire proof collapses. To overcome this obstacle, \cite{gs} introduced a sophisticated way of extending a Tashkinov tree with a lot of ``maximum" rules while changing the coloring of $G$ along the extension process. This does work for the proof, but it is impossible to find a corresponding edge coloring of $G$ in polynomial time.
 
 To address this problem, in this paper we showed that either such Kempe changes do not change the exit of the aforementioned $(\alpha,\delta)$-path intersecting $T_n$, or there is a way to reduce the ``size" (not the size of the vertex set) of a counterexample. See Lemmas~\ref{A2} to~\ref{A3.5} and their proofs for more details on this. So Question (1) is answered naturally, which leads to a polynomial-time edge coloring algorithm.

In particular, we confirm Conjecture~\ref{con:GAS} by finding a polynomial-time algorithm to properly edge color any (multi)graph $G$ with $\max\{\D(G) +1,  \Gamma(G) \}$ many colors. Since $\chi'(G)\geq \max\{\D(G), \Gamma(G)\}$, it is equivalent to saying that:
\begin{THM}\label{main}
	There exists a polynomial-time algorithm to find a proper edge coloring for any given multigraph $G$ with $\D(G)+1$ colors if $\D(G)\leq \chi'(G)\leq \D(G)+1$, and with exactly $\chi'(G)=\Gamma(G)$ colors if $\chi'(G)> \D(G)+1$.
	
\end{THM}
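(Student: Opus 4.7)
The plan is to design an iterative algorithm that extends a partial $k$-edge-coloring one edge at a time, where $k = \max\{\Delta(G)+1, \Gamma(G)\}$. Starting from the empty coloring of $G$, each phase receives a coloring $\varphi \in \mathcal{C}^k(G - e_0)$ missing a single edge $e_0 = x_0 y_0$, and must either modify $\varphi$ to a $k$-coloring of $G$ or exhibit a subgraph violating the density bound, the latter being impossible since $k \geq \Gamma(G)$.

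First I would grow a Tashkinov tree $T$ rooted at $e_0$ in the Kempe-chain style, iteratively attaching an edge colored with some color missing at a previously built vertex of $T$. If during this growth two vertices of $T$ share a missing color, a standard sequence of Kempe interchanges extends $\varphi$ to color $e_0$; otherwise $T$ becomes closed, and the classical counting via Vizing's adjacency arguments produces a lower bound on $|V(T)|$. When this bound is large enough relative to $\Delta(G)$ and $k$, the tree $T$ itself witnesses $\Gamma(G) > k$, a contradiction.

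In the residual case the closed $T$ is too small to give a density contradiction. I would then invoke the extended Tashkinov tree framework: perform carefully chosen $(\alpha,\beta)$-Kempe interchanges on pairs of colors involving the missing palette of $T$, each producing a strictly larger closed tree $T \subsetneq T'$ or else directly yielding an augmentation. Iterating produces a nested sequence $T_1 \subsetneq T_2 \subsetneq \cdots$ that must terminate once the size threshold for the density argument is crossed.

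The chief obstacle is guaranteeing polynomial termination: the Chen--Jing--Zang existence proof in~\cite{gs} branches the ETT exploration in a way that is only known to terminate in principle, with worst-case exponential search. My plan is to equip the ETT process with a polynomially bounded monovariant, for instance a lexicographic potential on the pair $(|V(T_i)|, \textrm{sum of missing-color defects along } T_i)$, and to show that each interchange strictly improves it. Since $|V(T)| \leq |V(G)|$ and the defect sum is $O(k|V(G)|)$, combined with the $O(|V(G)| + |E(G)|)$ cost per Kempe swap this yields the desired polynomial bound. The delicate part will be designing the interchange rule so that monotonicity of the monovariant is preserved without breaking the closure and adjacency invariants on which the final density argument rests.
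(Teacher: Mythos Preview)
Your outline identifies the right framework (Tashkinov trees, ETT extensions, Kempe interchanges) and the right bottleneck (polynomial termination), but the actual content of the proof is missing, and two of your stated ingredients are incorrect as written.

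First, the density contradiction does not come from $|V(T)|$ being ``large enough relative to $\Delta(G)$ and $k$''. What is needed is that the final ETT be simultaneously \emph{elementary} and \emph{strongly closed}; only then does the standard counting give $\Gamma(G) \ge k+1$. So the algorithm cannot stop at an arbitrary large closed tree---it must drive the construction to strong closure while maintaining elementariness throughout. The paper does this via a specific extension rule (Algorithm~3.1) using $(\gamma,\delta)$-ears, with two extension types (IE and RE) whose priority ordering is essential; your plan does not supply any such rule.

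Second, your proposed monovariant $\bigl(|V(T_i)|,\ \text{defect sum}\bigr)$ will not be monotone under the interchanges actually needed. In the elementariness argument one repeatedly backtracks to a \emph{segment} $T(y_j)$ of the current tree and regrows a closure from there under a modified coloring; this can strictly shrink $|V(T)|$. The paper's monovariant is a four-component lexicographic quantity $\bigl(n,\ q,\ p(T),\ |T-T_{n,q}|\bigr)$---rung number, hierarchy level, path number, and tail length---supported by an elaborate ``good hierarchy'' apparatus (the $\Gamma$-sets of Definition~\ref{R2}) whose entire purpose is to reserve enough colors so that each Kempe change can be chosen to decrease this four-tuple while keeping the ETT an ETT. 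Your final sentence (``The delicate part will be designing the interchange rule\ldots'') is precisely the whole paper; without the hierarchy and path-number machinery you have not yet begun that part.
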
	
Holyer~\cite{Holyer81} proved that it is $NP$-hard in general to determine $\chi'(G)$,  even when restricted to a simple cubic graph.  In this sense, the  $\max\{\D(G) +1,  \Gamma(G) \}$ bound is best possible for efficient edge coloring algorithms. Theorem~\ref{main} also confirms the following conjecture by Hochbaum, Nishizeki, and Shmoys~\cite{HochNS80}.

\begin{conj}[Hochbaum, Nishizeki, Shmoys]
	There exists a polynomial-time algorithm to find a proper edge coloring for any given multigraph $G$ with $\max\{\Delta(G)+1,\Gamma(G)\}$ many colors.
\end{conj}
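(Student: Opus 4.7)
The natural strategy is the classical incremental one: order the edges of $G$ arbitrarily and insert them one at a time, maintaining after each step a proper $k$-edge-coloring $\varphi$ with $k=\max\{\Delta(G)+1,\Gamma(G)\}$ colors. When the next edge $e=xy$ cannot be colored directly (every color in $[k]$ is used at $x$ or at $y$), the task is to recolor a few edges so that some color becomes missing at both endpoints of $e$. The standard tool for this is a Tashkinov tree rooted at $e$: a sequence $e_0=e,e_1,\dots,e_p$ of edges of $G$ such that each $e_i$ is incident to a previously reached vertex and is colored with a color missing at another already-reached vertex. When $k\ge\Gamma(G)$, the structural theory developed in~\cite{gs} guarantees that such a tree is \emph{extendable}: either it can be grown by an additional edge, or one of its colors can be interchanged along a Kempe chain so as to free a color at $e$. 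The central obstacle is turning this existence statement into a polynomial-time construction.

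The plan is to maintain an \emph{extended Tashkinov tree} $T$ together with a global potential function $\Phi(\varphi,T)$ that strictly decreases by at least one at every Vizing-style color interchange which fails to produce an immediate extension of $T$. A natural candidate for $\Phi$ combines the lexicographically ordered multiset of missing colors at the vertices of $T$, the count of ``defective'' vertices of $T$ witnessing a blocked extension, and a tie-breaking term based on the total length of the Kempe chains currently attached to $T$. Each summand is computable in polynomial time and lies in a range of polynomial size, so at most polynomially many interchanges can be performed before either $T$ genuinely extends, or $T$ becomes a closed configuration that, by the density lemma of~\cite{gs}, would force $\Gamma(G)>k$, contradicting the choice of $k$. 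Since a Tashkinov tree has at most $|V(G)|$ vertices, the growth phase itself is polynomial, so each edge insertion takes $\mathrm{poly}(|V|,|E|)$ time and the whole algorithm runs in polynomial time overall.

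The hard part, as anticipated, is the design of $\Phi$ together with the accompanying recipe of interchanges. In~\cite{gs} the recoloring step is justified through an extremal subgraph argument that a priori requires searching over many subcases, and the main work here is to replace this implicit case analysis by an explicit deterministic recipe: given the current pair $(\varphi,T)$, pick a specific pair of Kempe-connected colors, perform the interchange on a canonical side of the chain, and verify that $\Phi$ strictly decreases. Once such a recipe is in place, correctness of the overall algorithm reduces to the structural theorems on extended Tashkinov trees from~\cite{gs}, now interpreted as invariants produced by the algorithm rather than as existential properties of the graph. Combined with a straightforward outer loop that processes edges one by one, this yields the desired polynomial-time algorithm matching the bound in Conjecture~\ref{con:GAS}.
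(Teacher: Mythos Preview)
Your proposal is a plan, not a proof, and it has a genuine gap at its core. You write that ``correctness of the overall algorithm reduces to the structural theorems on extended Tashkinov trees from~\cite{gs}, now interpreted as invariants produced by the algorithm rather than as existential properties of the graph.'' But the paper you are citing explicitly notes that the recoloring procedure implicit in~\cite{gs} has \emph{exponential} time complexity in $|V(G)|$; this is precisely why the algorithmic conjecture remained open after~\cite{gs}. Reinterpreting an existential, extremal-subgraph argument as an algorithm does not come for free: the case analysis in~\cite{gs} branches in a way that is not controlled by any evident polynomial potential, and you do not actually specify a potential $\Phi$---you only list plausible ingredients and then say ``the hard part\ldots is the design of $\Phi$ together with the accompanying recipe of interchanges.'' That hard part is the entire content of the result.

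The paper does not bolt a potential onto the ETTs of~\cite{gs}. It introduces a \emph{new} notion of extended Tashkinov tree (via Algorithm~3.1, with two extension types IE and RE and an explicit priority rule between them), chosen so that the structure is stable under a specific, short list of Kempe changes. The technical Theorem~\ref{tech} (statements A1--A3.5) is proved by induction on the rung number, and the proof of elementariness (A1) uses a further ``good hierarchy'' decomposition with reserved $\Gamma$-sets. Every recoloring step is shown to strictly decrease a concrete lexicographic tuple $(n,q,p(T),|T-T_{n,q}|)$, each coordinate bounded by $\min\{\Delta,|V|\}$, which is what yields the $O(|E|\,|V|\,\Delta^5)$ running time. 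None of this is a reinterpretation of~\cite{gs}; it is a redesign of the extension mechanism so that the recoloring recipe becomes deterministic and short. Your outline would need to supply all of this, and as written it does not.
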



It is worth noting that in July 2024 (the first draft of this paper was posted on arXiv in August 2023), Chen, Hao, Yu, and Zang~\cite{short} also posted a related result on arXiv using a similar approach. In their draft, Lemmas~\ref{A3}–\ref{A3.5} are combined into a single lemma without incorporating the techniques developed in Lemma~\ref{A3.2}. As explained in Remark~\ref{gap}, this omission leaves a critical case unresolved.

\section{Notation and terminology}
We will generally follow~\cite{StiebSTF-Book,gs} for notation and terminology.  Let $G=(V,E)$ be a multigraph.  For each $X \subseteq V$, let $G[X]$ denote the subgraph of $G$ induced by $X$, and let 
$G-X$ denote $G[V-X]$; we write $G-x$ for $G-\{x\}$. Moreover, we use $\partial(X)$ to denote the set of all 
edges with precisely one end in $X$, and write $\partial(x)$ for $\partial(\{x\})$. Edges in $\partial(X)$ are called {\bf boundary edges} of $X$. For each pair $x, y 
\in V$, let $E(x,y)$ denote the set of all edges between $x$ and $y$. As it is no longer appropriate to represent 
an edge $f$ between $x$ and $y$ by $xy$ in a multigraph, we write $f \in E(x,y)$ instead. For each subgraph $H$ of $G$, 
let $V(H)$ and $E(H)$ denote the vertex set and edge set of $H$, respectively, let $|H|=|V(H)|$, and let $G[H]=G[V(H)]$ and $\partial(H) =\partial(V(H))$.     

Let $e$ be an edge of $G$. A {\bf tree-sequence}\label{tsequence} with respect to $G$ and $e$ is a sequence 
$T=(y_0,e_1,y_1, \ldots, e_p, y_p)$ with $p\ge 1$, consisting of distinct edges $e_1,e_2, \ldots, e_p$ and 
distinct vertices $y_0,y_1, \ldots, y_p$, such that $e_1=e$ and each edge $e_j$ with $1\le j \le p$ is between 
$y_j$ and some $y_i$ with $0\le i <j$. Let $E(T)$ and $V(T)$ be the edge set and vertex set of a tree-sequence $T$, respectively. So $E(T)$ indeed induces a tree as a subgraph of $G$.  Given a tree-sequence $T=(y_0,e_1,y_1, \ldots, e_p, y_p)$, we can naturally 
associate a linear order $\prec$ with its vertices, such that $y_i \prec y_j$ if $i<j$. We
write $y_i \preceq y_j$ if $i\le j$.  This linear order will be used repeatedly in subsequent sections.
For each vertex $y_j$ of $T$ with $j \ge 1$, let $T(y_j)$ denote $(y_0,e_1,y_1, \ldots, e_j, y_j)$. Clearly, 
$T(y_j)$ is also a tree-sequence with respect to $G$ and $e$. We call $T(y_j)$ the {\bf segment}\label{segment} of $T$ induced
by $y_j$. Let $T_1$ and $T_2$ be two tree-sequences with respect to $G$ and $e$. We write $T_1 \subseteq T_2$ if $T_1$ is a segment of $T_2$, and write $T_1 \subset T_2$ if $T_1$ 
is a proper segment of $T_2$; that is, $T_1 \subseteq T_2$ and $T_1 \ne T_2$.

Let $\varphi$ be a $k$-edge-coloring of $G$. For each $\alpha \in [k]$, the edge set $E_{\varphi, \alpha}=\{e\in E:\, 
\varphi(e)=\alpha\}$ is called a {\bf color class}\label{colorclass}, which is a matching in $G$ if it is nonempty.  For any two distinct colors 
$\alpha$ and $\beta$ in $[k]$, let $H$ be the spanning subgraph of $G$ with $E(H)=E_{\varphi, \alpha} 
\cup E_{\varphi, \beta}$. Then each component of $H$ is either a path or an even cycle; we refer to such a component 
as an $(\alpha, \beta)$-{\bf chain} with respect to $\varphi$, and also call it an $(\alpha, \beta)$-{\bf path} \label{alphabetapath}
(resp. $(\alpha, \beta)$-{\bf cycle}) if it is a path (resp. cycle). Possibly a component of $H$ is an isolated
vertex. We use $P_v(\alpha, \beta, \varphi)$ to denote the unique $(\alpha, \beta)$-chain containing the vertex $v$. 
Clearly, for any two distinct vertices $u$ and $v$, $P_u(\alpha, \beta, \varphi)$ and $P_v(\alpha, \beta, \varphi)$ 
are either identical or vertex-disjoint. In this paper, $P_v(\alpha, \beta, \varphi)$ usually is an $(\alpha,\beta)$-path with $v$ as an end, unless specified otherwise. Let $C$ be an $(\alpha, \beta)$-chain with respect to $\varphi$, and let 
$\varphi'$ be the $k$-edge-coloring arising from $\varphi$ by interchanging $\alpha$ and $\beta$ on $C$. We say 
that $\varphi'$ is obtained from $\varphi$ by {\bf recoloring} $C$, and write $\varphi'=\varphi /C$. This 
operation is called a {\bf Kempe change}\label{kempe}. 

Let $F$ be an edge subset of $G$. As usual, $G-F$ stands for the multigraph obtained from $G$ by deleting all
edges in $F$; we write $G-f$ for $G-\{f\}$. Let $\varphi \in {\cal C}^k(G-F)$. For each $K \subseteq E$, 
define $\varphi(K)=\cup_{e\in K-F} \, \varphi(e)$. For each $v \in V$, define 
\[\varphi(v)=\varphi(\partial(v)) \hskip 2mm {\rm and} \hskip 2mm \overline{\varphi}(v)=[k]-\varphi(v).\] 
We call $\varphi(v)$ the set of colors {\bf present} at $v$ and call $\overline{\varphi}(v)$ the set of colors {\bf missing}\label{missingcolor}
at $v$.  For each $X\subseteq V$, define the set of colors {\bf missing} at $X$ as
\[\overline{\varphi}(X)= \cup_{v\in X} \, \overline{\varphi}(v).\] 
We call $X$ {\bf elementary}\label{elementaryset} with respect to $\varphi$ if $\overline{\varphi}(u) \cap \overline{\varphi}(v)
=\emptyset$ for any two distinct vertices $u, v\in X$. We call $X$ {\bf closed}\label{closedsets} with respect to $\varphi$ if
$\varphi(\partial(X))\cap \overline{\varphi}(X)=\emptyset$; that is, no missing color of $X$ appears on the 
edges in $\partial(X)$.  Furthermore, we call $X$ {\bf strongly closed}\label{sclosed} with respect to $\varphi$ if $X$ is closed
with respect to $\varphi$ and $\varphi(e) \ne \varphi(f)$ for any two colored edges $e, f \in 
\partial(X)$ with $e\neq f$.  For each subgraph or tree-sequence $H$ of $G$, write $\overline{\varphi}(H)$ for $\overline{\varphi}(V(H))$,
and write ${\varphi}(H)$ for ${\varphi}(E(H))$. If more than one boundary edge of $H$ (edge in $\partial(H)$) is colored by $\alpha$, we call $\alpha$ a {\bf defective color}\label{defectivecolor} 
of $H$ with respect to $\varphi$, call each $\alpha$ colored boundary edge a {\bf defective edge}\label{defectiveedge} of $H$ with 
respect to $\varphi$, and call the end vertex of a defective edge in $H$ a {\bf defective vertex}\label{defectivevertex} of $H$ with 
respect to $\varphi$. A color $\alpha \in \overline{\varphi}(H)$ is called {\bf closed} in $H$ under $\varphi$ 
if $H$ has no $\alpha$ colored boundary edges.  For convenience, we say that $H$ is {\bf closed} (resp. {\bf strongly closed}) 
with respect to $\varphi$ if $V(H)$ is closed (resp. strongly closed) with respect to $\varphi$. Let $\alpha$ and $\beta$ be 
two colors that are not assigned to $\partial(H)$ under $\varphi$. We use $\varphi/(G-H, \alpha, \beta)$ to denote the 
coloring $\varphi'$ obtained from $\varphi$ by interchanging $\alpha$ and $\beta$ in $G-V(H)$. Since 
$\varphi$ belongs to ${\cal C}^k(G-F)$, so does $\varphi'$.

\section{Extented Tashkinov Tree (ETT)}

For the purpose of simplicity, we will present our proof based on the concept of critical edges. An edge $e$ of a graph $G$ is {\bf critical} if $\chi'(G-e)<\chi'(G)$. Throughout this paper, by 
a $k$-{\bf triple}\label{ktriple} we mean a multigraph $G=(V,E)$ such that $\chi'(G)=k+1$ and $k \ge \Delta(G)+1$,  together with an 
uncolored critical edge $e\in E$ and a coloring $\varphi \in {\cal C}^k(G-e)$; we denote it by $(G,e, \varphi)$.  A graph $G$ is {\bf $k$-critical} if $\chi'(G)=k+1$ and every edge of $G$ is critical.
 
Let $(G,e, \varphi)$ be a $k$-triple. A {\bf Tashkinov tree}\label{tashkinovtree} with respect to $e$ and $\varphi$ is a 
tree-sequence $T=(y_0,e_1,y_1, \ldots, e_p, y_p)$ with respect to $G$ and $e$, such that 
for each edge $e_j$ with $2\le j \le p$, there is a vertex $y_i$ with $0 \le i <j$ satisfying
$\varphi(e_j) \in  \overline{\varphi}(y_i)$.

The following theorem is due to Tashkinov \cite{Tashkinov-2000}; its proof can also be found in Stiebitz et al. 
\cite{StiebSTF-Book}  (see Theorem 5.1 on page 116).

\begin{THM}\label{TashTree}
	Let $(G,e, \varphi)$ be a $k$-triple and let $T$ be a Tashkinov tree with respect to $e$ and $\varphi$.
	Then $V(T)$ is elementary with respect to $\varphi$. 
\end{THM}

To carry out a proof of Conjecture \ref{con:GAS}, we will need the following generalizations.

\begin{DEF} \label{TAA}
	{\rm Given a $k$-triple $(G,e, \varphi)$ and a tree-sequence $T$ with respect to $G$ and $e$, we say that a tree-sequence $(T,f,y)$ is obtained from $T$
		by a {\bf Tashkinov augmentation}\label{TA} (TA) under $\varphi$ if $\varphi(f)\in\phibar(T)$, one end $x$ of $f$ is contained in $T$, and 
		the other end $y$ of $f$ is outside $T$. A {\bf Tashkinov augmentation algorithm} (TAA)\label{TAAalgorithm} consists of a sequence of TAs under the same
		edge coloring. We call a tree-sequence $T'$ a {\bf closure}\label{closure} of $T$ under $\varphi$ if $T'$ arises from $T$ by 
		TAA and cannot grow further by TA under $\varphi$ (equivalently, $T'$ is closed).} 
\end{DEF}

So a Tashkinov tree with respect to $e$ and $\varphi$ is a tree-sequence obtained from $(y_0,e,y_1)$ by TAA, where
$y_0$ and $y_1$ are two ends of the uncolored edge $e$. We point out that, although there might be several ways to construct a closure of $T$ under $\varphi$, the 
vertex set of these closures is unique. Indeed, for two closures $T'_1=(y_0,e_1,y_1, \ldots, e_n, y_n)$ and $T'_2$ of $T$, if $V(T'_1)- V(T'_2)\neq\emptyset$, then for the smallest $i$ with $y_i\in V(T'_1)$ but not in $V(T'_2)$, we have $\varphi(e_i)\in\phibar(T'_2)$ and $\varphi(e_i)\in\partial(T'_2)$, a contradiction.

Let $C$ be an $(\alpha,\beta)$-chain under $\varphi$ and $T$ be a tree-sequence. A subpath $P_{ex}$ of $C$ with ends $u$ and $v$ is called an {\bf $(\alpha,\beta)$-exit path} of $T$
under $\varphi$ if $V(T) \cap V(P_{ex})=\{v\}$ and $\overline{\varphi}(u) \cap \{\alpha,\beta\} \ne \emptyset$; 
in this case, $v$ is called the corresponding $(\alpha,\beta)$-{\bf exit vertex} of $T$ and edge $f$ in $\partial(T)\cap E(P_{ex})$ is called the corresponding $(\alpha,\beta)$-{\bf exit edge} of $T$. Note that possibly $\overline{\varphi}(v) \cap \{\alpha,\beta\}=\emptyset$, but $C$ must be an $(\alpha,\beta)$-path if it contains an $(\alpha,\beta)$-exit path of $T$. A subpath $D$ of $C$ with ends $u$ and $v$ is called a {\bf $(\alpha,\beta)$-ear} of $T$ if $|V(D)|\geq 3$ and $V(T)\cap V(D)=\{u,v\}$; in this case, $u$ and $v$ are called {\bf roots} of $D$ in $T$. See Figure~\ref{figure1} for reference.

\begin{figure}[!h]
	\begin{center}
		\includegraphics[scale =.45
		]{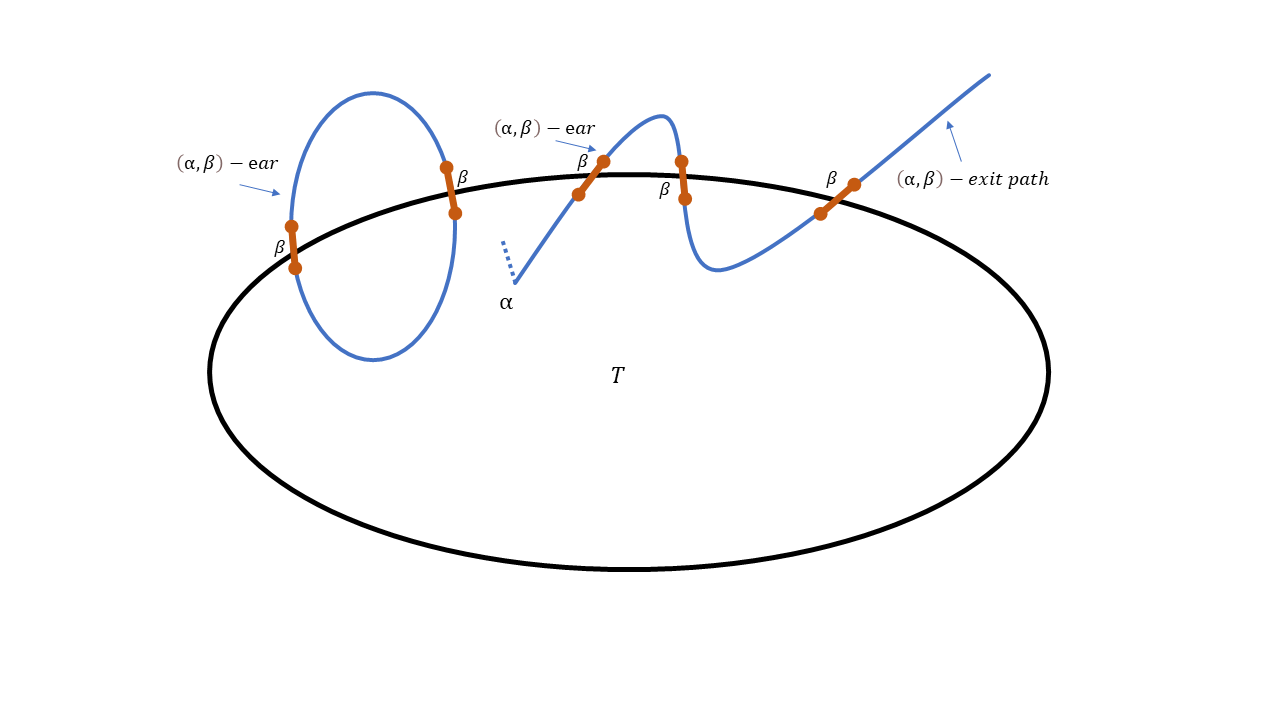}
		\caption{}
		\label{figure1}
	\end{center}
\end{figure}

Consider a $k$-triple $(G,e,\varphi)$, a {\bf Tashkinov Series} is a series of tuples $(T_{n+1}, S_n, F_n, \Theta_n)$ for $n=0,1, 2 \ldots $ constructed inductively using the following algorithm, where $T_n+f_n$ stands for a tree-sequence 
augmented from $T_n$ by adding a boundary edge $f_n$ of $T_n$. It is worth pointing out that iteration $n$ includes iteration $1$ when $n=1$, but we still write iteration $1$ out for a better presentation.

\newpage
\noindent {\bf Algorithm 3.1}

\noindent {\bf Iteration 0.} Let $(T_1,  S_0, F_0, \Theta_0)$ be the initial tuple, such that  $T_1$ is a closure of $e$ under 
$\varphi$ (which is a closed Tashkinov tree with respect to $e$ and $\varphi$), $f_0=e$, $F_0=\{f_0\}$, and $S_0 =\Theta_0=\emptyset$.  

\noindent {\bf Iteration 1.}
\begin{itemize}
\item If $T_1$ has a $(\gamma_1,\delta_1)$-ear $Q$ where $\delta_1$ is a defective color of $T_1$ under $\varphi$ and $\gamma_1\in\phibar(T_1)$, we construct $(T_2, S_1, F_1, \Theta_1)$ as follows;\footnote{We shall see later that Theorem~\ref{tech} (A1) and (A2) guarantee the existence of such a $(\gamma_1,\delta_1)$-ear $Q$ if $T_1$ is not strongly closed under $\varphi$. In fact, for any color $\gamma_1\in\phibar(T_1)$, all the $\delta_1$ colored defective edges of $T_1$ are contained in some  $(\gamma_1,\delta_1)$-ears except for one which is contained in a $(\gamma_1,\delta_1)$-exit path. See more details in the proof of Theorem~\ref{con}.} pick an arbitrary $\delta_1$ colored edge $f_1\in\partial(T_1)$ (boundary edge of $T_1$) that is contained in the $(\gamma_1,\delta_1)$-ear $Q$, set $F_1=\{f_1\}$, $S_1=\{\gamma_1,\delta_1\}$, and $\Theta_1=\IE$ (Here $\IE$ stands for {\bf Initial Extension} which is the extension type). Let $T_2$ be a closure of $T_1+f_1$ under $\varphi$.
\item Else, stop.
\end{itemize}

\noindent {\bf Iteration} ${\bm n}$ $(n\geq 1)$. 
\begin{itemize}

	\item If there exists a subscript $h \le n-1$ such that $\Theta_h =\IE$ and $S_h =\{\delta_h, \gamma_h\}$, and there is
	a $(\gamma_h, \delta_h)$-ear $O$ of $T_h$ containing a vertex outside $V(T_n)$, we construct $(T_{n+1}, S_n, F_n, \Theta_n)$ as follows; Pick an arbitrary edge $f_n$ in $O \cap \partial(T_n)$ such that $O$ contains a path $L$
	connecting $f_n$ and $V(T_h)$ with $V(L) \subseteq V(T_n)$.  Set $F_n=\{f_n\}$, $S_n=\{\gamma_n,\delta_n\}$ with $\gamma_n=\gamma_h$ and $\delta_n=\delta_h$, and $\Theta_n=\RE$. Let $T_{n+1}$ be a closure of $T_n+f_n$ under $\varphi$. This type of extension is called a {\bf Revisiting Extension ($\RE$)}.

	\item Else,  if $T_n$ has a $(\gamma_n,\delta_n)$-ear $Q$ where $\delta_n$ is a defective color of $T_n$ under $\varphi$ and $\gamma_n\in\phibar(T_n)$, we construct $(T_{n+1}, S_n, F_n, \Theta_n)$ as follows; pick an arbitrary edge $f_n\in\partial(T_n)\cap Q$, set $F_n=\{f_n\}$, $S_n=\{\gamma_n,\delta_n\}$, and $\Theta_n=\IE$. Let $T_{n+1}$ be a closure of $T_n+f_n$ under $\varphi$. This type of extension is called an {\bf Initial Extension ($\IE$)}.
	\item Else, stop.
	\end{itemize}

	In the above algorithm, for each $1\leq i\leq n$, $f_i, \delta_i$, and $\gamma_i$ are called the {\bf connecting edge}, {\bf connecting color}, and {\bf companion color} of $T_i$, respectively. We say $T_i$ is {\bf $\RE$ finished} if for each $h\leq i-1$ with $\Theta_{h}=\IE$, all the $(\gamma_h,\delta_h)$-ears of $T_h$ are contained in $G[V(T_i)]$, that is, $T_i$ does not satisfy the requirement for an $\RE$ extension. So $T_1$ is $\RE$ finished. By convention, we define $T_0=\emptyset$ and also say $T_0$ is $\RE$ finished. 
\begin{REM}	
	It is worth pointing out that since $RE$ extensions always have higher priority than $\IE$ extensions, for each $T_i$ which we will perform an $\IE$ extension on, $T_i$ must be $\RE$ finished. In fact, each time after an $\IE$ extension, we perform a series of $\RE$ extensions until it is $\RE$ finished, before performing the next $\IE$ extension. This mechanism ensures that, before the next $\IE$ extension, all $(\gamma_n,\delta_n)$-ears of $T_n$ are eventually contained after an $\IE$ extension of $T_n$.
\end{REM}	
	  For each $T_i$ and extension type $\Theta_{i-1}$ with $i\geq 1$, we call the index $h$ with $0\leq h\leq i-1$ the {\bf initial index} of $T_i$ and $\Theta_{i-1}$, if $h$ is the largest index with $\Theta_h=\IE$ when $i\geq 2$, and if $h=0$ when $i=1$.  So $T_h$ is $\RE$ finished and $\Theta_j=\RE$ for each $h<j<i-1$.  Define $D_n=\cup_{1\leq i\leq n} S_i-\phibar(T_n)$. 
In the remainder of this paper, we reserve notations $T_{i}, S_i, F_i, \Theta_i, D_i, \gamma_i,$ and $\delta_i$ for Tashkinov series built from Algorithm 3.1.

\begin{DEF} \label{DEF-ETT}
  Let $(G,e, \varphi)$ be a $k$-triple and let $\TT=\{(T_i, S_{i-1}, F_{i-1}, \Theta_{i-1}): 
  1\le i \le n+1\}$  be a Tashkinov series constructed from $(G,e, \varphi)$ following Algorithm 3.1.  A tree-sequence $T$ is called an {\em extended Tashkinov 
  	tree} (ETT)\label{ETT} constructed from $\TT$ if $T_n\subset T\subseteq T_{n+1}$. $\TT$ is called the corresponding Tashkinov series of $T$.
The unique nonegative integer $n$ such that $T_n\subsetneq T\subseteq T_{n+1}$ is called the {\bf rung number} of $T$ and denoted by $n(T)$. The sequence $T_0\subset T_1\subset \dots \subset T_n \subset T$ is called the {\bf ladder} of $T$. 
\end{DEF}

The idea of extending from the ear is natural. In fact, the author first got this idea in 2017 from~\cite{AsplundMcD16} by Asplund and McDonald, see Theorem 2 and Figure 2 in their paper. But due to difficulties proving elementaryness and keeping the structure stable, \cite{gs} used a very technical method to extend a Tashkinov tree instead.

Clearly each $T_i$ in a Tashkinov series is a closed ETT.
Note that if $T$ is an ETT then any segment $T'$ of $T$ is also an ETT.  
Let $T$ be an ETT under $\varphi$ and
$C$ be a subset of $[k]$. We say
that an edge $f$ of $G$ is {\bf incident} to $T$ if at least one end of $f$ is contained in $T$; this definition
applies to edges of $T$ as well.  Since our proof consists of a sophisticated sequence of Kempe changes, 
the concept of stable coloring introduced below will be employed to preserve some important coloring properties 
of $T$, such as, among others, the color on each edge and the set of colors missing at each vertex. Usually, 
$C$ is the set of colors assigned to $E(T)$ but not missing at any vertex of $T$. 

To be specific, a coloring $\pi \in {\cal C}^k(G-e)$ is called a $(T, C, \varphi)$-{\bf stable coloring}\label{tcstable} 
if the following two conditions are satisfied:
\begin{itemize}
	\vspace{-2mm}
	
	\item[$(i)$] $\overline{\pi} (v) = \overline{\varphi}(v)$ for any $v\in V(T)$; and
	\vspace{-2mm}
	\item[$(ii)$] $\{f\ |\ \varphi(f)=\alpha\ and\ f\ is\ incident\ to\ T\}=\{f\ |\ \pi(f)=\alpha\ and\ f\ is\ incident\ to\ T\}$ for each color $\alpha\in\phibar(T)\cup C$.
\end{itemize}

\vspace{-2mm}
By convention, $\pi(e)=\varphi(e)=\emptyset$. 
From the definition, we see that the following statements hold for a $(T, C, \varphi)$-stable 
coloring $\pi$: 

$\bullet$ being $(T, C, \cdot )$-stable is an equivalence relation on ${\cal C}^k(G-e)$;

$\bullet$ if $T' \subseteq T$ and $\overline{\varphi}(T')\cup C' \subseteq \overline{\varphi}(T)\cup C$, then $\pi$ is also $(T', C', \varphi)$-stable;

$\bullet$ if a color $\alpha \in \overline{\varphi}(T)$ is closed in $T$ under $\varphi$, then it is also closed in $T$ under $\pi$; and

$\bullet$ if ${\varphi}(T) \subseteq \overline{\varphi}(T)\cup C$, then $\pi(f) = \varphi(f)$ for all edges $f\in E(T)$.

Let  $T_n$ be an ETT from a Tashkinov series with $(n-1)$-rungs under $\varphi$ and $\pi$ be a $(T_n, D_{n-1}, \phiv)$-stable coloring.  It is worth pointing out that although $\pi$ is $(T_n, D_{n-1}, \phiv)$-stable, $T_n$ might not be an ETT under $\pi$, because a $(\gamma_i,\delta_i)$-ear of $T_i$ under $\varphi$ could become a $(\gamma_i,\delta_i)$-exit path of $T_i$ under $\pi$ for some $i\leq n$.
Therefore, we introduce the following concept.
\begin{DEF}\label{mod}
Let $\TT=\{ (T_i, S_{i-1}, F_{i-1}, \Theta_{i-1})\ : \
1 \le i \le n+1\}$ be a Tashinov series.  A $(T_{n+1},D_{n},\varphi)$-stable coloring $\pi$ is called {\bf $T_{n+1}$ mod $\phiv$} if  $\TT=\{ (T_i, S_{i-1}, F_{i-1}, \Theta_{i-1})\ : \
1 \le i \le n+1\}$ remains a Tashinov series under $\pi$; and a $(T_{n},D_{n},\varphi)$-stable coloring $\phi$ is called {\bf $T_{n}+f_n$ mod $\phiv$} if $\TT=\{ (T^*_i, S_{i-1}, F_{i-1}, \Theta_{i-1})\ : \
1 \le i \le n\}$ is a Tashkinov series under $\phi$, where $T^*_i=T_i$ for each $1\leq i\leq n$, and $T^*_{n+1}$ is a closure of $T_n+f_n$ under $\phi$. For convention, a {\bf $T_0+f_0$ mod $\varphi$} coloring is defined as any $k$-edge-coloring of $G-e$, $T_0+f_0$ is just a Tashkinov tree with the uncolored edge $e$ and its two ends, and a closure of $T_0+f_0$ is a closure of $e$.  

\end{DEF}

Let $T$  be an ETT under $\varphi$ with $n$-rungs and corresponding Tashkinov series $\TT=\{ (T_i, S_{i-1}, $ $F_{i-1}, \Theta_{i-1})\ : \
1 \le i \le n+1\}$. Clearly, if $\pi$ is both $(T, D_{n},\varphi)$-stable and $T_n+f_n$ mod $\varphi$, then $T$ or any tree-sequence obtained from $T_n+f_n$ by TAA under $\pi$ is also an ETT with the same ladder, connecting edges, connecting colors, companion colors, and extension types as under $\varphi$. Moreover, mod colorings form equivalent classes. Let $v$ be the last vertex of $T$. Denote by $T-v$ the segment of $T$ ending at the vertex before $v$ along $\prec$. Let $T$ be an ETT with $n$-rungs and $T_n\subset T$ under $\varphi$. We call a $(T_{n},D_n,\varphi)$-stable coloring $\pi$  {\bf $(T,D_n,\varphi)$-weakly stable} if $\varphi(f)=\pi(f)$ for each $f\in E(T)$ and $\phibar(u)=\pibar(u)$ for each $u\in V(T-v)$.

\begin{LEM}\label{wstable}
Let $T$ be an ETT with ladder $T_0\subset T_1 \subset \dots \subset T_n \subset T$ with respect to a $k$-triple $(G,\varphi,e)$, and $v$ be the last vertex of $T$. If a $(T,D_n,\varphi)$-weakly stable coloring $\pi$ is also $T_n+f_n$  mod $\varphi$ ($n=0$ may happen), then $T$ remains an ETT under $\pi$.
\end{LEM}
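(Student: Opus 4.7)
The plan is to exhibit an explicit Tashkinov series under $\pi$ that contains $T$ as a segment. By the hypothesis that $\pi$ is $T_n+f_n$ mod $\varphi$, the tuples $(T_i, S_{i-1}, F_{i-1}, \Theta_{i-1})$ for $1\le i\le n$ are already valid stages of a Tashkinov series under $\pi$, and some closure $T^*_{n+1}$ of $T_n+f_n$ under $\pi$ exists. What is left is to show that one such closure can be chosen so as to contain $T$ as a segment, equivalently, that $T$ itself can be obtained from $T_n+f_n$ by a sequence of Tashkinov augmentations under $\pi$.

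Write $T=(y_0,e_1,y_1,\ldots,e_p,y_p)$ with $v=y_p$, and let $q$ be the largest index with $y_q\in V(T_n+f_n)$, so that the tail $(e_{q+1},y_{q+1},\ldots,e_p,y_p)$ records the TAA steps performed under $\varphi$ from $T_n+f_n$ to $T$. For each $j$ with $q<j\le p$, TAA validity under $\varphi$ supplies some $i<j$ with $\varphi(e_j)\in\overline{\varphi}(y_i)$. Since $e_j\in E(T)$ and the strict inequality $i<j\le p$ forces $y_i\in V(T-v)$, the weakly stable hypothesis yields $\pi(e_j)=\varphi(e_j)$ and $\overline{\pi}(y_i)=\overline{\varphi}(y_i)$, hence $\pi(e_j)\in\overline{\pi}(y_i)$, so the very same augmentation is legal under $\pi$. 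Applying the steps in order shows that $T$ arises from $T_n+f_n$ by TAA under $\pi$; the degenerate case $T=T_n+f_n$ is vacuous. From $T$, continue augmenting under $\pi$ until no boundary edge of the current tree-sequence has its color missing at a vertex of the tree-sequence; this terminates (as $G$ is finite) in a closure $T^*_{n+1}$ of $T_n+f_n$ under $\pi$ satisfying $T\subseteq T^*_{n+1}$. Concatenating this $T^*_{n+1}$ with the preserved earlier stages produces a Tashkinov series under $\pi$ that contains $T$, and hence $T$ is an ETT under $\pi$.

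The delicate point is that the weakly stable hypothesis does not promise $\overline{\pi}(v)=\overline{\varphi}(v)$ at the terminal vertex $v$, so one must verify that $v$ never plays the role of missing-color anchor in any augmentation used to build $T$. This is automatic: the TAA rule requires the anchor $y_i$ to satisfy $i<j$, and even for the critical final step $j=p$ (in which $v$ itself is appended) this forces $y_i\in V(T-v)$, exactly where the weakly stable identity applies. The bookkeeping for the tuple $(T^*_{n+1},S_n,F_n,\Theta_n)$ itself, including validity of $f_n$ as the connecting edge with its prescribed color pair $S_n=\{\gamma_n,\delta_n\}$, is already packaged into the mod hypothesis via the preservation of colors on $E(T_n)$ and missing sets on $V(T_n)$.
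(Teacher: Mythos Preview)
Your proof is correct and follows essentially the same approach as the paper's. The paper's argument is terse---it singles out the last edge $f$, observes $\varphi(f)\in\overline\varphi(T-v)$, and then asserts that ``$T$ can still be obtained by TAA from $T_n+f_n$ under $\pi$ the same way as under $\varphi$''---whereas you make the edge-by-edge verification explicit and carefully note that every TAA anchor $y_i$ lies in $V(T-v)$, which is precisely where the weakly stable hypothesis applies. Your explicit construction of the closure $T^*_{n+1}\supseteq T$ is also a welcome clarification of what the paper leaves implicit.
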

	
\begin{proof}
	
	Let $T$, $v$ be described as above, $f$ be the last edge of $T$, and $\pi$ be a $(T-v,D_n,\varphi)$-stable coloring which is also $T_n+f_n$ mod $\varphi$. 
	If $f=f_n$, then $T=T_n+f_n$ is an ETT under $\pi$ because $\pi$ is $T_n+f_n$ mod $\varphi$. So we assume $f\neq f_n$, and therefore $f\in\phibar(T-v)$ following Algorithm 3.1. Since $\pi$ is $(T,D_n,\varphi)$-weakly stable, we see that $\varphi(f)=\pi(f)\in\pibar(T-v)$, so $T$ can still be obtained by TAA from $T_n+f_n$ under $\pi$ the same way as under $\varphi$. Because $\pi$ is also $T_n+f_n$ mod $\varphi$, $T$ remains an ETT under $\pi$ as desired. 
\end{proof}
In view of the lemma above, it is easy to see that if $\pi$ is $(T-v,D_n,\varphi)$-stable, then $\pi$ is also $(T-v, D_n,\varphi)$-stable, because under $\varphi$, the last edge of $T$ is either colored by a missing color of $V(T-v)$, or it is colored by a color in $D_n$. So we have the following corollary.
\begin{COR}\label{vstable}
Let $T$ be an ETT with ladder $T_0\subset T_1 \subset \dots \subset T_n \subset T$ with respect to a $k$-triple $(G,\varphi,e)$, and $v$ be the last vertex of $T$. If a $(T-v,D_n,\varphi)$-stable coloring $\pi$ is also $T_n+f_n$  mod $\varphi$ ($n=0$ may happen), then $T$ remains an ETT under $\pi$.

\end{COR}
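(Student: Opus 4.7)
The plan is to reduce this to Lemma~\ref{wstable}: since the hypothesis already gives that $\pi$ is $T_n+f_n$ mod $\varphi$, it suffices to upgrade the $(T-v,D_n,\varphi)$-stability hypothesis to $(T,D_n,\varphi)$-weak stability, and then apply the lemma directly.

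Recall that $(T,D_n,\varphi)$-weak stability consists of two clauses: (a) $\varphi(f)=\pi(f)$ for every $f\in E(T)$, and (b) $\overline{\pi}(u)=\overline{\varphi}(u)$ for every $u\in V(T-v)$. Clause (b) is just condition (i) of the $(T-v,D_n,\varphi)$-stable definition, so it is immediate from the hypothesis. For clause (a), I would show that every edge of $E(T)$ (other than the uncolored $e$, on which both colorings agree trivially) satisfies $\varphi(f)\in \overline{\varphi}(V(T-v))\cup D_n$ and is incident to $T-v$; then condition (ii) of stability, applied with $\alpha=\varphi(f)$, forces $\pi(f)=\varphi(f)$.

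The color classification of the edges of $E(T)$ is read off Algorithm~3.1. Note first that $V(T_n)\subseteq V(T-v)$ because $v$ is the last vertex of $T$ and $T_n\subsetneq T$, hence $\overline{\varphi}(T_n)\subseteq\overline{\varphi}(T-v)$. Any edge of $T$ is introduced in one of two ways: either it is a connecting edge $f_i$ for some $1\le i\le n$, with color $\delta_i\in S_i$, and by the definition $D_n=\bigcup_{1\le i\le n}S_i-\overline{\varphi}(T_n)$ we have $\delta_i\in \overline{\varphi}(T_n)\cup D_n\subseteq\overline{\varphi}(T-v)\cup D_n$; or it is added during a Tashkinov-augmentation step within some closure, in which case its color lies in $\overline{\varphi}$ of the partial tree at the moment of insertion, which in turn is contained in $\overline{\varphi}(T-v)$ since that partial tree is a segment of $T-v$. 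In particular this covers the last edge of $T$, which is the special case already alluded to in the paragraph preceding the statement. Since each such edge has at least one endpoint in $V(T-v)$, it qualifies as ``incident to $T-v$'' in the sense of the stability definition, and we conclude clause (a).

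With (a) and (b) in hand, $\pi$ is $(T,D_n,\varphi)$-weakly stable, and Lemma~\ref{wstable} together with the assumption that $\pi$ is $T_n+f_n$ mod $\varphi$ yields that $T$ remains an ETT under $\pi$. The only place that requires genuine care rather than unpacking definitions is the color classification in the previous paragraph, where one must verify that neither the connecting edges nor any TAA-closure edges can escape $\overline{\varphi}(T-v)\cup D_n$; this is a direct bookkeeping check against Algorithm~3.1 and the definitions of $S_i$ and $D_n$, rather than a structural obstacle.
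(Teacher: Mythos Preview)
Your proof is correct and follows the same route as the paper's: verify that every edge color of $T$ lies in $\overline\varphi(T-v)\cup D_n$ so that $(T-v,D_n,\varphi)$-stability upgrades to $(T,D_n,\varphi)$-weak stability, then apply Lemma~\ref{wstable}. One minor imprecision worth fixing: in an RE extension the connecting edge $f_i$ may be colored $\gamma_i$ rather than $\delta_i$ (it lies on a $(\gamma_h,\delta_h)$-ear), but since $\gamma_i\in\overline\varphi(T_i)\subseteq\overline\varphi(T-v)$ your conclusion $\varphi(f_i)\in\overline\varphi(T-v)\cup D_n$ still holds.
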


The following lemma gives a lower bound for the number of missing colors in a closed Tashkinov tree.
\begin{LEM}\label{size}
	Let $T_1$ be a closed Tashkinov tree with respect to a $k$-triple $(G,\varphi,e)$. Then $|\phibar(T_1)|\geq 5$.
	
\end{LEM}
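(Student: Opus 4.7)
The plan is to combine the elementarity of $V(T_1)$ guaranteed by Theorem~\ref{TashTree} with a vertex-wise lower bound on the number of missing colors, after first ruling out $|V(T_1)|=2$ by a short closure argument.

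Step 1 (per-vertex bounds). Since $k \ge \Delta(G)+1$, every vertex $v$ of $G$ satisfies
\[
|\overline{\varphi}(v)| = k - d_{G-e}(v) \ge k - \Delta(G) \ge 1.
\]
For the two ends $y_0,y_1$ of the uncolored edge $e$, the colored-degree in $G-e$ drops by one, so
\[
|\overline{\varphi}(y_i)| \ge k - \Delta(G) + 1 \ge 2 \qquad (i=0,1).
\]

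Step 2 ($|V(T_1)|\ge 3$). Suppose toward contradiction that $V(T_1)=\{y_0,y_1\}$. By Step~1 we may pick $\alpha \in \overline{\varphi}(y_1)$. Theorem~\ref{TashTree} gives $\alpha \notin \overline{\varphi}(y_0)$, so some edge $f\ne e$ incident to $y_0$ satisfies $\varphi(f)=\alpha$. The other end of $f$ cannot be $y_1$, otherwise $\alpha$ would be present at $y_1$, contradicting $\alpha\in\overline{\varphi}(y_1)$. Hence $f\in\partial(T_1)$ and $\varphi(f)=\alpha\in\overline{\varphi}(T_1)$, contradicting that $T_1$ is closed under $\varphi$.

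Step 3 (summation). By elementarity, $|\overline{\varphi}(T_1)|=\sum_{v\in V(T_1)}|\overline{\varphi}(v)|$, so combining Steps~1 and~2 yields
\[
|\overline{\varphi}(T_1)| \;\ge\; 2\cdot 2 + (|V(T_1)|-2)\cdot 1 \;\ge\; 4+1 \;=\; 5.
\]

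There is no serious obstacle in this proof; the only points requiring care are the correct accounting of the $-1$ drop in colored degree at $y_0,y_1$ (using $k\ge\Delta(G)+1$ to convert this into the extra missing color) and the one-line closure argument of Step~2 which forces a third vertex into $T_1$.
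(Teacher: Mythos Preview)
Your proof is correct and follows essentially the same approach as the paper: elementarity plus the per-vertex missing-color counts, combined with $|V(T_1)|\ge 3$. The only minor difference is that the paper obtains $|V(T_1)|\ge 3$ via a parity argument (closedness forces $|V(T_1)|$ to be odd), whereas your Step~2 uses a direct closure contradiction; both are valid, though the parity observation is reused elsewhere in the paper.
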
	

\begin{proof}
	Since $T_1$ is a Tashkinov tree, $T_1$ is elementary by Theorem~\ref{TashTree}. Moreover, because $T_1$ is closed, none of the colors in $\phibar(T_1)$ is used on edges of $\partial(T_1)$. Let $\alpha\in\phibar(T_1)$. So edges colored by $\alpha$ form a matching for all but the vertex missing color $\alpha$ in $V(T_1)$. Consequently, $|V(T_1)|$ is odd. Since $e\in T_1$, $|V(T_1)|\geq 3$. Because $k>\Delta$ and the edge $e$ is uncolored, both ends of $e$ must have at least two missing colors. Thus $|\phibar(T_1)|\geq 5$ by the elementariness of $T_1$.
\end{proof}

Let $T$ be an ETT under a $k$-triple $(G,e,\varphi)$ and $\alpha,\beta$ be two colors in $[k]$.   We say a cycle or a path $P$ {\bf intersects} $T$ (resp. a subgrah $H$ of $G$) if $V(P)\cap V(T)\neq\emptyset$ (resp. ($V(P)\cap V(H)\neq\emptyset$)). If there is at most one $(\alpha,\beta)$-path intersecting $T$ under $\varphi$, then we say $\alpha$ and $\beta$ are   {\bf $T$-interchangeable} or {\bf interchangeable} for $T$. Note that $P_v(\alpha,\beta,\varphi)=\{v\}$ is also an  $(\alpha,\beta)$-path intersecting $T$ if $\alpha,\beta\in\phibar(v)$ for some $v\in V(T)$. Clearly, if $\alpha$ and $\beta$ are $T$-interchangeable, then all the $(\alpha,\beta)$-chains intersecting $T$ are cycles except for at most one.
\begin{DEF}
	A closed ETT $T$ with $n(T)=n$ under $\varphi$  has {\bf interchangeability property} if any two colors are $T$-interchangeable provided one of them is in $\phibar(T)$ when $T$ is $\RE$ finished, and if any two colors are $T$-interchangeable provided one of them is in $\phibar(T)-\{\gamma_n\}$ when $T$ is not $\RE$ finished, where $\gamma_n=\gamma_h$ with $h$ being the initial index of $T$. 
\end{DEF}
We are now ready to present the following technical theorem.
\begin{THM}\label{tech}
	Let $n$ be a nonnegative integer and $(G, e, \phiv)$ be a $k$-triple with $k \ge \D +1$.  
If $T$ is an ETT with ladder $T_0\subset \dots \subset T_n \subset T\subseteq T_{n+1}$ under $\varphi$, then the following hold.

{\flushleft \bf A1}:
$T$ is elementary under $\phiv$.

{\flushleft \bf A2}: $T_{n+1}$ has the interchangeability property. Consequently, if $T_{n+1}$ is $\RE$ finished, then any $(T_{n+1},D_{n},\varphi)$-stable coloring $\varphi'$ is a $T_{n+1}$ mod $\varphi$ coloring, and $T_{n+1}$ remains $\RE$ finished under $\varphi'$.

{\flushleft \bf A2.5}: Suppose $T_{n+1}$ could be extended further to $T_{n+2}$ with $F_{n+1}=\{f_{n+1}\}$, $S_{n+1}=\{\gamma_{n+1},\delta_{n+1}\}$, and initial index $h$.  If $\varphi'$ is a $(T_{n+1},D_{n+1},\varphi)$-stable coloring such that  $T_{h}$ has the same $(\gamma_{h},\delta_{h})$-exit vertex under $\varphi'$ as under $\varphi$, then $\varphi'$ is a $T_{n+1}+f_{n+1}$ mod $\varphi$ coloring.

{\flushleft \bf A3}: Assume $T_{n+1}$ is $\RE$ finished, $\delta$ is a defective color of $T_{n+1}$ under $\varphi$, and $\gamma\in\phibar(T_{n+1})$. Let $\alpha,\beta$ be two colors such that $\alpha\in\phibar(T_{n+1})\setminus\{\gamma\}$ and $\beta$ is arbitrary. For any $(\alpha,\beta)$-path $P$, if $P$ does not intersect $T_{n+1}$, then $T_{n+1}$ has the same $(\gamma,\delta)$-exit vertex under $\varphi/P$ as under $\varphi$.

{\flushleft \bf A3.5:} Assume $T_{n+1}$ could be extended further with $F_{n+1}=\{f_{n+1}\}$ and $S_{n+1}=\{\gamma_{n+1},\delta_{n+1}\}$.  Let $\alpha,\beta$ be two colors such that $\alpha\in\phibar(T_{n+1})\setminus\{\gamma_{n+1}\}$ and $\beta$ is arbitrary. For any $(\alpha,\beta)$-path $P$, if $P$ does not intersect $T_{n+1}$, then the coloring $\varphi'=\varphi/P$ is a $T_{n+1}+f_{n+1}$ mod $\varphi$ coloring.

\end{THM}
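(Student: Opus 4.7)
The plan is to prove A1--A3.5 by a simultaneous induction on the lexicographic pair $(n(T),|V(T)|)$: induct first on the rung number, and within a fixed rung on the length of the tree-sequence. For the base case $n=0$, the ETT $T\subseteq T_1$ is a classical Tashkinov tree, so A1 is exactly Theorem~\ref{TashTree}. Interchangeability of $T_1$ (A2) is then a standard Kempe-swap argument: if two distinct $(\alpha,\beta)$-paths met $T_1$ with $\alpha\in\overline{\varphi}(T_1)$, swapping on the one not containing the vertex missing $\alpha$ yields a coloring that is $(T_1,\emptyset,\varphi)$-stable, so by Theorem~\ref{TashTree} applied to the altered coloring $T_1$ remains elementary; but the swap forces $\alpha$ to be missing at an additional vertex of $T_1$, a contradiction. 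Statements A3, A3.5, and A2.5 at rung $0$ then follow by combining interchangeability with Lemma~\ref{wstable} and Corollary~\ref{vstable}.

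For the inductive step I assume A1--A3.5 hold for every ETT preceding $T$ in the lexicographic order. For A1, let $v$ be the last vertex of $T$ and $f$ the last edge. By the construction rules of Algorithm~3.1, $\beta:=\varphi(f)$ lies in $\overline{\varphi}(T-v)\cup D_{n-1}$, and $T-v$ is elementary by induction. If some $\alpha\in\overline{\varphi}(v)\cap\overline{\varphi}(u)$ with $u\in V(T-v)$, I trace the $(\alpha,\beta)$-chain starting at $u$: the inductive form of A3 justifies that the relevant Kempe change preserves the exit vertices of every earlier rung, and Corollary~\ref{vstable} certifies that the resulting coloring $\varphi'$ makes the whole ladder a valid ETT. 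This produces an ETT that is strictly smaller in the lexicographic order under $\varphi'$ whose elementariness is violated, contradicting the inductive hypothesis.

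For A2, suppose $\alpha\in\overline{\varphi}(T_{n+1})$ (with $\alpha\ne\gamma_n$ in the non-RE-finished case), $\beta$ is arbitrary, and two distinct $(\alpha,\beta)$-paths intersect $T_{n+1}$. Swapping on the one not containing the vertex missing $\alpha$ gives a coloring $\pi$ that is $(T_{n+1},D_n,\varphi)$-stable by the construction of the swap; the inductive A2.5 together with the just-proved A1 at the current rung upgrade $\pi$ to a mod coloring of $T_{n+1}$, so that $T_{n+1}$ is still an ETT under $\pi$ and hence elementary by A1 applied again. But $\pi$ has made $\alpha$ missing at one more vertex of $T_{n+1}$, contradicting elementariness. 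The ``consequently'' clause of A2 follows because once interchangeability holds, every $(T_{n+1},D_n,\varphi)$-stable coloring preserves ears as ears and exit paths as exit paths, so the full data $(T_i,S_{i-1},F_{i-1},\Theta_{i-1})$ is transported verbatim. A3 is the observation that if $P$ is an $(\alpha,\beta)$-path disjoint from $T_{n+1}$, then by interchangeability at $\alpha$ the recoloring of $P$ cannot touch the $(\gamma,\delta)$-chain that realizes the exit path, so the exit vertex is unchanged; A3.5 is A3 applied at the next extension step, and A2.5 follows by threading A3.5 through the RE/IE rules of Algorithm~3.1 using Corollary~\ref{vstable}.

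The chief obstacle is the bookkeeping around Revisiting Extensions. In an RE the connecting color $\delta_n$ already lies in $D_{n-1}$ rather than in $\overline{\varphi}(T_n)$, so the Kempe-change arguments for A1 and A2 must descend to the initial index $h$ of the current rung in order to trace the relevant $(\gamma_h,\delta_h)$-ear and certify that the exit vertex of $T_h$ is preserved. The exclusion of $\gamma_n$ from the interchangeability hypothesis in the non-RE-finished case is essential at this point: without it, a $(\gamma_n,\delta_n)$-swap could destroy the very ear that certifies the pending RE, collapsing the ladder and invalidating the inductive hypothesis. Reconciling this exception with the mod-coloring compatibility required by Definition~\ref{mod}, and making sure that every Kempe change in the A1/A2 contradiction arguments respects it, is where the bulk of the case analysis will live.
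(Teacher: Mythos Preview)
Your induction scheme is broadly right, but each of the inductive steps you sketch has a concrete gap that makes the argument collapse.

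\textbf{A2.} You write that swapping on the $(\alpha,\beta)$-path not containing the vertex missing $\alpha$ yields a coloring $\pi$ that is $(T_{n+1},D_n,\varphi)$-stable. This is false: that path \emph{intersects} $T_{n+1}$, so the swap recolors edges incident to $T_{n+1}$. In particular, the $\beta$-colored boundary edges on that path become $\alpha$-colored, so condition~(ii) of stability fails for the color $\alpha\in\overline\varphi(T_{n+1})$, and $T_{n+1}$ is no longer closed. You then have no way to invoke A1 on $T_{n+1}$ under $\pi$. The paper avoids this by first performing a swap \emph{outside} $T_{n+1}$ (of the form $\varphi/(G-T_{n+1},\alpha,\gamma)$ with $\gamma$ a carefully chosen missing color) to relocate the problem, and then runs a minimal-counterexample argument on \emph{three} exit paths (not two), together with an auxiliary invariant $Q(T_{n+1})\in\{0,1\}$ tracking whether $\delta_h$ is already missing before the second exit vertex. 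Two entire technical lemmas (Lemmas~\ref{A3.0} and~\ref{A3.2}) are needed to close the case $\beta=\delta_h$.

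\textbf{A3.} You claim that interchangeability at $\alpha$ prevents $P$ from touching the $(\gamma,\delta)$-exit path. Interchangeability says nothing of the sort: it bounds the number of $(\alpha,\beta)$-paths meeting $T_{n+1}$, but $P$ is entirely outside $T_{n+1}$, while the $(\gamma,\delta)$-exit path $P_{ex}$ leaves $T_{n+1}$ and wanders through $G-V(T_{n+1})$. If $\beta\in\{\gamma,\delta\}$ (the only interesting case), $P$ can and often will share edges with $P_{ex}$ and with a $(\gamma,\delta)$-ear $D$ simultaneously, and the swap $\varphi/P$ then reroutes the exit. The paper proves that this cannot happen by a delicate recursive construction (Lemma~\ref{A3.1}): one repeatedly takes closures $T^i$ of segments $T^{i-1}(x_i)$ under modified colorings $\varphi_i$, pushing a sequence of exit vertices $x=x_1\prec x_2\prec\cdots$ ever further along, until both ends of $P$ are absorbed and elementariness is violated. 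This is the technical heart of A3 and there is no shortcut around it.

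\textbf{A1.} Your plan ``trace the $(\alpha,\beta)$-chain starting at $u$ and obtain a smaller non-elementary ETT'' does not work as stated. When the last edge $f$ of $T$ is the connecting edge $f_n$, one has $\varphi(f)=\delta_n\notin\overline\varphi(T-v)$, so $\beta$ need not be a missing color and there is no obvious chain to trace. More seriously, even when $\beta\in\overline\varphi(T-v)$, a single Kempe change will typically destroy the TAA structure of $T$ somewhere between $T_{n,q}$ and $v$, and the resulting object is not an ETT at all. The paper handles A1 by introducing an additional layer of structure on top of the ladder: a \emph{good hierarchy} $T_n\subset T_{n,1}\subset\cdots\subset T_{n,q}\subset T$ inside $T_{n+1}$, together with reserved two-color sets $\Gamma_m^j$ for each $\delta_m\in D_{n,j}$ (Definition~\ref{R2}). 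These $\Gamma$-sets serve as stepping stones that let one move the offending color $\alpha$ onto $y_p$ without disturbing the TAA property. The actual contradiction is then obtained by a second induction on the number $q$ of levels, minimizing first the path number $p(T)$ and then $|T-T_{n,q}|$, with a case analysis on whether $p(T)=1$, $p(T)=p$, or $2\le p(T)\le p-1$. None of this machinery is optional: without the $\Gamma$-sets you cannot perform the swaps needed in the $\alpha\in D_{n,q}$ cases while keeping the coloring $T_n+f_n$ mod $\varphi$.

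Your last paragraph correctly identifies RE extensions as the sensitive point, but the difficulty is not just ``bookkeeping'': it is that every Kempe change in the A1 and A2 arguments must be checked against A2.5, which in turn requires knowing that the $(\gamma_h,\delta_h)$-exit vertex of $T_h$ is preserved, which is exactly the content of A3. The paper's lemmas are arranged so that at rung $n$ one proves A2, then A2.5, then A3 (via Lemmas~\ref{A3.0}--\ref{A3.2}), then A3.5, and only then A1; your proposal implicitly uses A1 at rung $n$ inside the proof of A2 at rung $n$, which is circular unless you set up the induction more carefully.
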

 
For the remainder of this section, let us present a proof of Conjecture~\ref{con:GAS} using Theorem~\ref{tech}.
\begin{THM}\label{con}
	Let $G$ be a graph with $\chi'(G)\geq \D(G)+2$. Then $\chi'(G)= \Gamma(G)$.
\end{THM}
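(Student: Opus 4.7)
Write $k := \chi'(G)-1$, so the hypothesis gives $k \geq \Delta(G)+1$. Since $\Gamma$ is monotone under taking subgraphs, it suffices to prove $\Gamma(G') \geq k+1$ for some $k$-critical subgraph $G' \subseteq G$, so I may assume without loss of generality that $G$ is itself $k$-critical. Pick any edge $e$ of $G$ and any $\varphi \in \mathcal{C}^k(G-e)$, making $(G,e,\varphi)$ a $k$-triple. Apply Algorithm~3.1 starting from a closure of $e$; each iteration appends at least one new vertex, so the algorithm terminates after finitely many steps at some $T_m$ with Tashkinov series $\{(T_i,S_{i-1},F_{i-1},\Theta_{i-1}) : 1 \leq i \leq m\}$. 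By construction $T_m$ is closed, and by termination it is $RE$ finished and admits no $IE$ extension.

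The crux of the proof is to show that $V(T_m)$ is strongly closed under $\varphi$. Suppose, for contradiction, that $\delta$ is a defective color of $T_m$; then $T_m$ has at least two $\delta$-colored boundary edges, and closedness forces $\delta \notin \overline{\varphi}(T_m)$. Lemma~\ref{size} (combined with elementariness of $T_m$ from Theorem~\ref{tech}\,A1) gives $\overline{\varphi}(T_m) \neq \emptyset$, so fix any $\gamma \in \overline{\varphi}(T_m)$. By Theorem~\ref{tech}\,A2 the pair $(\gamma,\delta)$ is $T_m$-interchangeable, so at most one $(\gamma,\delta)$-path meets $V(T_m)$. Each $\delta$-boundary edge sits on some $(\gamma,\delta)$-chain that crosses $\partial(T_m)$, and tracking the excursions of these chains outside $V(T_m)$ shows that every $(\gamma,\delta)$-cycle meeting $V(T_m)$ splits into $(\gamma,\delta)$-ears at every excursion, while the at most one $(\gamma,\delta)$-path meeting $V(T_m)$ contributes ears at all but at most one of its boundary crossings (the leftover crossing forming a $(\gamma,\delta)$-exit path). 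Hence at least one $\delta$-boundary edge lies on a $(\gamma,\delta)$-ear of $T_m$, yielding a valid $IE$ extension and contradicting the termination of Algorithm~3.1.

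With $V(T_m)$ strongly closed and elementary, I finish by a standard density count. Writing $X := V(T_m)$, elementariness gives
\[
|\overline{\varphi}(X)| \;=\; \sum_{v \in X}|\overline{\varphi}(v)| \;=\; k|X| - 2|E(G[X])| - |\partial(X)| + 2,
\]
where the term $+2$ accounts for the uncolored edge $e$, whose two ends both lie in $V(T_0) \subseteq X$. For each $\alpha \in \overline{\varphi}(X)$ the $\alpha$-colored edges of $G[X]$ form a matching saturating all but the unique $\alpha$-missing vertex, so $|X|$ is odd, and $|X| \geq 3$ because $e$ has two distinct ends in $X$. Strong closure of $T_m$ forces the boundary edges to carry pairwise distinct colors, none of them missing in $X$, giving $|\partial(X)| + |\overline{\varphi}(X)| \leq k$. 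Substituting this into the displayed identity yields $2|E(G[X])| \geq k(|X|-1)+2$, and therefore
\[
\Gamma(G) \;\geq\; \left\lceil \frac{|E(G[X])|}{\lfloor |X|/2 \rfloor} \right\rceil \;\geq\; \left\lceil k + \frac{2}{|X|-1}\right\rceil \;=\; k+1 \;=\; \chi'(G),
\]
which combined with the universal bound $\chi'(G) \geq \Gamma(G)$ gives the desired equality.

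The main obstacle is the ear-counting step in the second paragraph: interchangeability (A2) by itself only limits the number of $(\gamma,\delta)$-\emph{paths} meeting $V(T_m)$ and says nothing about $(\gamma,\delta)$-cycles, so one has to carefully follow each $\delta$-boundary edge along its chain and account for cycle excursions versus exit pieces in order to produce the required ear. This combinatorial bookkeeping, foreshadowed in the footnote to Iteration~1 of Algorithm~3.1, is precisely the place where elementariness (A1) and interchangeability (A2) of Theorem~\ref{tech} do the heavy lifting.
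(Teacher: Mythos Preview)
Your proof is correct and follows essentially the same line as the paper's own argument: reduce to a $k$-critical graph, run Algorithm~3.1 to exhaustion, use (A1) for elementariness and (A2) for interchangeability to force the terminal tree to be strongly closed, then do the density count. The paper's version of the ear step is slightly crisper than yours---once $T_m$ is closed and elementary, the unique vertex $v$ missing $\gamma$ is an end of the unique $(\gamma,\delta)$-path meeting $T_m$, so that path has exactly one exit edge and every other $\delta$-boundary edge automatically sits on an ear---so the cycle bookkeeping you flag as the main obstacle is in fact immediate and need not be argued separately.
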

\begin{proof}
	Assume that $\chi'(G)\geq \D(G)+2$. Then $G$ must contain a $k$-critical subgraph $H$ with $k=\chi'(G)-1 \geq\D(G)+1\geq \D(H)+1$. Let $(H,e,\varphi)$ be a $k$-triple, and construct a Tashkinov series  $\TT=\{ (T_i, S_{i-1}, F_{i-1}, \Theta_{i-1})\ : \ 1 \le i \le n+1\}$ with maximum $n$ following Algorithm 3.1. We claim that $T_{n+1}$ is elementary and strongly closed. Indeed, $T_{n+1}$ is elementary by Theorem~\ref{tech} (A1). Moreover, it is closed as it is a closure of $T_{n}+f_n$ under $\varphi$. Since $n$ is maximum, we can not keep extending $T_{n+1}$ by an $\RE$ extension, so $T_{n+1}$ is $\RE$ finished. Suppose on the contrary, that $T_{n+1}$ is not strongly closed. Then there exists a defective color $\delta_{n+1}$ of $T_{n+1}$. Then $\delta_{n+1}\notin\phibar(T_{n+1})$ because $T_{n+1}$ is closed. Pick an arbitrary color $\gamma_{n+1}\in\phibar(T_{n+1})$. By Theorem~\ref{tech} (A2), $\gamma_{n+1}$ and $\delta_{n+1}$ are $T_{n+1}$-interchangeable, so $P=P_v(\gamma_{n+1},\delta_{n+1},\varphi)$ is the only $(\gamma_{n+1},\delta_{n+1})$-path intersecting $T_{n+1}$, where $v$ is the only vertex in $T_{n+1}$ with $\gamma_{n+1}\in\phibar(v)$. As the end $v$ of $P$ is in $T_{n+1}$, we only have one $(\gamma_{n+1},\delta_{n+1})$-exit vertex and one $(\gamma_{n+1},\delta_{n+1})$-exit edge of $T_{n+1}$. Therefore, a $\delta_{n+1}$ colored boundary edge $f_{n+1}$ other than the  $(\gamma_{n+1},\delta_{n+1})$-exit edge, is contained in some $(\gamma_{n+1},\delta_{n+1})$-ear of $T_{n+1}$. Therefore, we could extend $T_{n+1}$ using $f_{n+1}$ with an $\IE$ extension, a contradiction to $n$ being maximum.
	
	So $T_{n+1}$ is elementary and strongly closed. Now for each color $\alpha$ in $\phibar(T_{n+1})$, its color class forms a matching for all vertices in $T_{n+1}$ but the one missing the color $\alpha$. So this gives $|\phibar(T_{n+1})|\cdot (|V(T_{n+1})|-1)/2$  many edges in $H[V(T_{n+1})]$ and $|V(T_{n+1})|$ is odd. On the other hand, any color $\beta$ in $[k]-\phibar(T_{n+1})$ appears exactly once on $\partial(T_{n+1})$, so its color class also forms a matching for all vertices in $T_{n+1}$ but the one incident with the $\beta$ colored boundary edge. Recall that $e$ is uncolored. So together we have $k \cdot (|V(T_{n+1})|-1)/2+1$ many edges in $H[V(T_{n+1})]$, and therefore $\Gamma(H)\geq \frac{k \cdot (|V(T_{n+1})|-1)/2+1}{(|(V(T_{n+1})|-1)/2}>k$. Thus $k+1\leq \Gamma(H)$. Since $ \Gamma(G)$ is a lower bound for $\chi'(G)$, we have  $\Gamma(G) \leq \chi'(G)=k+1\leq\Gamma(H)\leq \Gamma(G)$, forcing $\chi'(G)=\Gamma(G)$.
\end{proof}	
	\section{Proof of Theorem~\ref{tech}}
We will prove Theorem~\ref{tech} by induction on $n=n(T)$, the number of rungs. Note that when $n=0$, (A1) holds by Theorem~\ref{tashkinovtree}.  For the inductive step we assume Theorem~\ref{tech} holds for rungs smaller than $n$ and prove it for $n$. The inductive step will be divided into a few Lemmas and propositions. Note that in the inductive step, by (A1) and (A2), $T_h$ has at most one $(\gamma_h, \delta_h)$-exit vertex where $h$ is the initial index of $T_{n+1}$, and that is why we say ``the same $(\gamma,\delta)$-exit vertex" in (A2.5) and (A3). 
\begin{LEM}(A2)\label{A2}
	Let $n$ be a nonnegative integer. Suppose (A1) holds for  all  ETTs with at most $n$ rungs, and (A2), (A2.5), and (A3) hold for all ETTs with at most $(n-1)$ rungs. If $T_{n+1}$ is a closed ETT with ladder $T_0\subset \dots \subset T_n \subset T_{n+1}$, then $T_{n+1}$ has the interchangeability property. Furthermore, if $T_{n+1}$ is $\RE$ finished, then any $(T_{n+1},D_{n},\varphi)$-stable coloring $\varphi'$ is a $T_{n+1}$ mod $\varphi$ coloring, and $T_{n+1}$ remains $\RE$ finished under $\varphi'$.

\end{LEM}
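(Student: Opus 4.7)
I would prove the lemma in two stages: first establish the interchangeability property for $T_{n+1}$, and then, under the additional hypothesis that $T_{n+1}$ is $RE$-finished, show that every $(T_{n+1},D_n,\varphi)$-stable coloring is a $T_{n+1}$ mod $\varphi$ coloring that preserves $RE$-finishedness.

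For interchangeability, I fix $\alpha\in\phibar(T_{n+1})$ (with $\alpha\ne\gamma_n$ when $T_{n+1}$ is not $RE$-finished) and an arbitrary $\beta\in[k]$. The inductive hypothesis (A1) at rung $n$ gives that $T_{n+1}$ is elementary, so there is a unique vertex $u\in V(T_{n+1})$ with $\alpha\in\phibar(u)$, and $P_u(\alpha,\beta,\varphi)$ is one $(\alpha,\beta)$-path meeting $T_{n+1}$. I split on whether $\beta\in\phibar(T_{n+1})$. If yes, closedness of $T_{n+1}$ rules out $\alpha$- and $\beta$-boundary edges, so every $(\alpha,\beta)$-chain meeting $V(T_{n+1})$ stays inside $G[V(T_{n+1})]$, and elementariness leaves at most one such chain that is a path (from $u$ to the unique $\beta$-missing vertex if it differs from $u$). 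If not, I argue by contradiction: assume a second $(\alpha,\beta)$-path $P\ne P_u(\alpha,\beta,\varphi)$ meets $T_{n+1}$; then $u\notin V(P)$ and, since $\alpha$ is missing and $T_{n+1}$ is closed, $P$ enters and exits $T_{n+1}$ only through $\beta$-colored boundary edges.

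To derive the contradiction I consider the transitional coloring $\varphi'=\varphi/P$. Every $v\in V(T_{n+1})\cap V(P)$ is distinct from $u$, so $\alpha,\beta\in\varphi(v)$, and the swap preserves $\phibar(v)$. I then propagate the ladder structure to $\varphi'$: at each rung $i\le n$, (A3) at rung $i-1$ handles the segments where $P$ avoids $T_i$ by preserving the exit vertex of the relevant $T_h$, and (A2.5) at rung $i-1$ handles the segments where $P$ crosses $T_i$ by providing a $T_i+f_i$ mod $\varphi$ coloring locally. Iterating along the ladder, $T_n+f_n$ still extends correctly under $\varphi'$ and the closure of $T_n+f_n$ under $\varphi'$ has the same vertex set as $T_{n+1}$. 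But under $\varphi'$ the $\beta$-boundary edges of $T_{n+1}$ on $P$ have become $\alpha$-boundary edges, and $\alpha\in\overline{\varphi'}(T_{n+1})=\phibar(T_{n+1})$ is missing at $u$; the outer endpoint of such a new $\alpha$-edge would therefore be absorbed into the closure of $T_n+f_n$ under $\varphi'$, strictly enlarging it beyond $T_{n+1}$ and contradicting the coincidence of closures just established.

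For the second assertion, let $\varphi'$ be $(T_{n+1},D_n,\varphi)$-stable with $T_{n+1}$ $RE$-finished under $\varphi$. Because $\phibar(T_i)\cup D_{i-1}\subseteq\phibar(T_{n+1})\cup D_n$ for each $i\le n$, $\varphi'$ restricts to a $(T_i,D_{i-1},\varphi)$-stable coloring at every lower rung. Inductively, (A2) at rung $i-1$ recovers $T_i$ with the same data under $\varphi'$ and (A2.5) at rung $i-1$ ensures the extension step from $T_i$ to $T_{i+1}$ goes through with the same connecting edge $f_i$ and colors $\gamma_i,\delta_i$, which are pinned by condition (ii) of stability. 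Iterating yields that $\{(T_i,S_{i-1},F_{i-1},\Theta_{i-1}):1\le i\le n\}$ remains a Tashkinov series under $\varphi'$, so $\varphi'$ is a $T_{n+1}$ mod $\varphi$ coloring. Finally, $RE$-finishedness of $T_{n+1}$ depends only on which $(\gamma_h,\delta_h)$-ears of $T_h$ lie inside $G[V(T_{n+1})]$, a property determined by the missing colors at $V(T_{n+1})$ and by the colors on edges incident to $T_{n+1}$; both are preserved by stability, so $T_{n+1}$ remains $RE$-finished under $\varphi'$.

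The hard part, as I see it, is the case $\beta\notin\phibar(T_{n+1})$ of interchangeability. The Kempe change $\varphi'=\varphi/P$ is not literally $(T_{n+1},D_n,\varphi)$-stable because condition (ii) fails for color $\alpha$, so one must use $\varphi'$ only transitionally while re-deriving the ladder structure from (A3) and (A2.5) at every lower rung and then extracting a closure-uniqueness contradiction. The combinatorial bookkeeping of how $P$ enters and leaves each $T_i$, together with the verification that the inductive hypotheses apply along each segment of $P$, is where I expect the proof to require the most care.
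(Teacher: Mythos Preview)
Your plan for the ``furthermore'' part is close to the paper's, but you skip the crucial verification. To invoke (A2.5) at rung $n-1$ (with $T_n$ playing the role of $T_{n+1}$) you need not only $(T_n,D_n,\varphi)$-stability of $\varphi'$ but also that $T_h$ has the \emph{same} $(\gamma_h,\delta_h)$-exit vertex under $\varphi'$ as under $\varphi$, where $h$ is the initial index. The paper establishes this explicitly from $RE$-finishedness: all $(\gamma_h,\delta_h)$-ears of $T_h$ lie inside $G[V(T_{n+1})]$, and since $\{\gamma_h,\delta_h\}\subseteq\phibar(T_{n+1})\cup D_n$, stability fixes those ears and hence the exit vertex. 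Your iterative sketch never checks this hypothesis, so as written it does not close.

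Your interchangeability argument has a genuine gap. You propose to swap along the offending second path $P$ and then ``re-derive the ladder structure from (A3) and (A2.5) at every lower rung,'' but neither of those hypotheses applies. (A3) at rung $i-1$ requires the Kempe path to be \emph{disjoint} from $T_i$; your $P$ may intersect every $T_i$. (A2.5) at rung $i-1$ requires the new coloring to be $(T_i,D_i,\varphi)$-stable \emph{and} the relevant exit vertex to be unchanged; after swapping $\alpha$ and $\beta$ on $P$, condition~(ii) of stability fails for $\alpha$ on edges of $P$ incident to $T_i$, so you cannot invoke it. Segmenting a single Kempe path and applying different lemmas to different pieces is not a valid move: the color change is global on $P$. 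Your final step is also self-contradictory: you assert that the closure of $T_n+f_n$ under $\varphi'$ equals $V(T_{n+1})$ and, in the next sentence, that it strictly contains $V(T_{n+1})$; the first assertion is never justified (and is in fact false because $T_{n+1}$ acquires $\alpha$-colored boundary edges under $\varphi'$).

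The paper's route is substantially different and more delicate. It assumes two offending paths, extracts \emph{three} exit paths $P_1,P_2,P_3$ with exit vertices $b_1\prec b_2\prec b_3$, and runs a minimal-counterexample argument on the pair $(Q(T_{n+1}),\,|P_1|+|P_2|+|P_3|)$. In the case $\beta\neq\delta_h$ it recolors outside $T_{n+1}$, flips along $P_3$, and rebuilds a closure $T'$ of $T_{n+1}(b_3)$ to obtain a strictly smaller counterexample. The case $\beta=\delta_h$ is harder and relies on the forward-referenced technical Lemmas~\ref{A3.0} and~\ref{A3.2} to control how a Kempe change can move the $(\gamma_h,\delta_h)$-exit vertex of $T_h$. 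The difficulty you correctly sense --- that a bare Kempe change can break the Tashkinov-series structure --- is exactly why this machinery is needed; your proposal does not supply a substitute for it.
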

\begin{proof}
	Let $T_{n+1}$ be a closed ETT with ladder $T_0\subset \dots \subset T_n \subset T_{n+1}$ and $h$ be the initial index of $T_{n+1}$.
	 We first prove the ``Furthermore" part. Let $\varphi'$ be a $(T_{n+1},D_{n},\varphi)$-stable coloring and assume that $T_{n+1}$ is $\RE$ finished. If $n=0$, then $T_1$ remains a closed Tashkinov tree under $\varphi'$ and we have as desired. So we assume $1\leq h\leq n$. Since $T_{h}$ is elementary under $\varphi$ by (A1), there is exactly one vertex $v\in V(T_{h})$ with $\gamma_h\in\phibar(v)$ and $\delta_h\notin\phibar(T_h)$ as $T_h$ is closed under $\varphi$ (see Algorithm 3.1).  Therefore, by (A2) on $T_h$ which is $\RE$ finished and has $h-1<n$ rungs, there is exactly one $(\gamma_h,\delta_h)$-path intersecting $T_h$ under $\varphi$. Let $u$ and $f$ be the corresponding $(\gamma_h,\delta_h)$-exit vertex and $(\gamma_h,\delta_h)$-exit edge, respectively. Then under $\varphi$, all the $\delta_h$ colored boundary edges of $T_h$ are contained in some $(\gamma_h,\delta_h)$-ear of $T_h$, except for $f$. Since $T_{n+1}$ is $\RE$ finished, all the $(\gamma_h,\delta_h)$-ears of $T_h$ are contained in $G[V(T_{n+1})]$ (see Algorithm 3.1), and therefore they are colored the same under $\varphi'$ as $\varphi$, because $\varphi'$ is $(T_{n+1},D_{n},\varphi)$-stable and $\{\gamma_h,\delta_h\}=S_h\subseteq\phibar(T_{h})\cup D_{h}\subseteq\phibar(T_{n+1})\cup D_n$. This forces $u$ and $f$ be the only $(\gamma_h,\delta_h)$-exit vertex and $(\gamma_h,\delta_h)$-exit edge of $T_h$ under $\varphi'$, respectively, and all the $(\gamma_h,\delta_h)$-ears of $T_h$ under $\varphi'$ are contained in  $G[V(T_{n+1})]$. So by (A2.5) with $T_n$ that has $(n-1)$-rungs in place for $T_{n+1}$, the $(T_{n+1},D_{n},\varphi)$-stable coloring $\varphi'$ is also $T_n+f_n$ mod $\varphi$. Thus any tree-sequence obtained from $T_n+f_n$ is an ETT under $\varphi'$ with $n$ rungs. Since $\varphi'$ is $(T_{n+1},D_n,\varphi)$-stable and $T_{n+1}$ is closed under $\varphi$, we see that $T_{n+1}$ remains a closure of $T_n+f_n$ under $\varphi'$, so $\varphi'$ is $T_{n+1}$ mod $\varphi$. Moreover, since all the $(\gamma_h,\delta_h)$-ears of $T_h$ under $\varphi'$ are contained in  $G[V(T_{n+1})]$, $T_n$ is also $\RE$ finished under $\varphi'$.

 Let $\alpha,\beta$ be two colors such that $\alpha\in\phibar(T_{n+1})$, and furthermore $\alpha\neq\gamma_n$ if $T_{n+1}$ is not $\RE$ finished. We now show that $T_{n+1}$ has interchangeability property by assuming otherwise that there are two $(\alpha,\beta)$-paths $Q_1$ and $Q_2$ intersecting $T_{n+1}$. By (A1), $T_{n+1}$ is elementary with respect to $\varphi$. So  
	$T_{n+1}$ contains at most two vertices $v$ with $\overline{\varphi}(v) \cap \{\alpha, \beta\} \ne \emptyset$, 
	which in turn implies that at least two ends of $Q_1$ and $Q_2$ are outside $T_{n+1}$. Since $T_{n+1}$ is closed, $\beta\notin\phibar(T_{n+1})$.
	Thus we further deduce that at least three ends of $Q_1$ and $Q_2$ are outside $T_{n+1}$. Traversing $Q_1$ and $Q_2$ 
	from these ends respectively, we can find at least three $(\alpha, \beta)$-exit paths 
	$P_1,P_2,P_3$ for $T_{n+1}$ under $\varphi$ with corresponding exit vertices $b_1,b_2$ and $b_3$. For $i=1,2,3$, let $a_i$  be the end of $P_i$ outside of $T_{n+1}$, 
	and $f_i$ be the boundary edge of $T_{n+1}$ in $P_i$. Renaming subscripts if necessary, we may assume that $b_1\prec b_2 
	\prec b_3$ along $T_{n+1}$. Define $Q(T_{n+1})=0$ if $\delta_h\in\phibar(T_{n+1}(b_2))$, and $Q(T_{n+1})=1$ otherwise. So $Q(T_1)=0$ as $D_0=\emptyset$ for any ETT $T_1$ with $0$-rungs. We call the tuple $(\varphi,T_{n+1}, \alpha, \beta, P_1,P_2,P_3)$ a {\em counterexample} and use ${\cal K}$ 
	to denote the set of all such counterexamples.
	
	Let $(\varphi, T_{n+1}, \alpha, \beta, P_1,P_2,P_3)$ be a counterexample in ${\cal K}$ such that:
	\begin{itemize}
		
		\item [(a)] $Q(T_{n+1})$ is minimum, and
		\item [(b)] $|P_1|+|P_2|+|P_3|$ is minimum subject to (a).
	\end{itemize}	


For any color $\gamma\in\phibar(T_{n+1})$ with $\gamma\neq\gamma_h$, let $\varphi_{\gamma}=\varphi/(G-T_{n+1},\alpha,\gamma)$. We claim that

(1) $\varphi_{\gamma}$ is $T_{n+1}$ mod $\varphi$, and therefore $T_{n+1}$ is an ETT with $n$-rungs under $\varphi_{\gamma}$.

Indeed, if $T_{n+1}$ is $\RE$ finished, then we have as claimed by the ``Furthermore" part we proved earlier. Now if $T_{n+1}$ is not $\RE$ finished, we have $\alpha\neq\gamma_h$ by assumption and $\delta_n\notin\phibar(T_{n+1})$. So both $\gamma$ and $\alpha$ are different from $\gamma_h=\gamma_n$ and $\delta_n$, and therefore $T_{h}$ has the same $(\gamma_{h},\delta_{h})$-exit vertex under $\varphi_{\gamma}$ as $\varphi$. So $\varphi_{\gamma}$ is $T_n+f_n$ mod $\varphi$ by (A2.5) because $\varphi_{\gamma}$ is also $(T_{n}, D_{n},\varphi)$-stable. Thus any closure of $T_n+f_n$ is an ETT with $n$-rungs under $\varphi_{\gamma}$. Finally since  $\varphi_{\gamma}$ is $(T_{n+1}, D_{n},\varphi)$-stable, we have as claimed as $T_{n+1}$ remains a closure of $T_n+f_n$ under $\varphi_{\gamma}$. 
Furthermore, we have

(2) $b_2,b_3\notin V(T_{n})$, and therefore 	$b_2,b_3\notin V(T_{h})$.

Note that (2) holds trivially if $n=0$, so we may assume $n\geq 1$, and therefore $h\geq 1$. Since $|\phibar(T_1)|\geq 5$ by Lemma~\ref{size}, there's a color $\gamma\in\phibar(T_{n})$ with $\gamma\neq \gamma_h$. So by (1), $\varphi_{\gamma}=\varphi/(G-T_{n+1},\alpha,\gamma)$ is $T_{n+1}$ mod $\varphi$ and $T_{n+1}$ is an ETT with $n$-rungs under $\varphi_{\gamma}$. Thus $T_{n}$ remains an ETT with $(n-1)$-rungs under $\varphi_{\gamma}$. Since $\gamma\neq \gamma_h$, we see that there is at most one $(\gamma,\beta)$-path intersecting $T_n$ by (A2).  Because $P_1$, $P_2$, and $P_3$ are $(\gamma, \beta)$-exit paths of $T_{n+1}$ under $\varphi_{\gamma}$ with exit vertices $b_1\prec b_2 \prec b_3$,  (2) is established. Depending on whether $\beta$ is $\delta_h$ or not, we consider the following two cases.

{\flushleft {\bf Case I:}}  $\beta\neq\delta_h$.

 Let $\gamma$ be a color in $\phibar(b_3)$. Since $T_{n+1}$ is elementary by (A1) and $\gamma_h\in\phibar(T_{h})$, by (2), we have
 
 (3) $\gamma$ is different from $\gamma_n=\gamma_h$. 
 
Let $\varphi_{\gamma}=\varphi/(G-T_{n+1},\alpha,\gamma)$.

Note that $P_3$ is a $(\gamma,\beta)$-path under $\varphi_{\gamma}$ with $\gamma\in\overline{\varphi}_{\gamma}(b_3)$. So $f_i$ is colored 
	by $\beta$ under both $\varphi$ and $\varphi_{\gamma}$ for $i=1,2,3$.  
Consider $\sigma=\varphi_{\gamma}/P_3$. Note that $\beta \in \overline{\sigma}(b_3)$, and $f_i$ is colored 
	by $\beta$ under $\sigma$ for $i=1,2$.  
	Let $T'$  be a closure of $T_{n+1}(b_3)$ under $\sigma$.  Since $\beta \in \overline{\sigma}(b_3)$, by TAA,
	
	(4) both $f_1$ and $f_2$ are contained in $G[V(T')]$.
	
 We claim that 
	
	(5) $\sigma$ is $T_{n}+f_n$ mod $\varphi_{\gamma}$, and $T'$ is an ETT with $(n-1)$-rungs under $\sigma$.
	
	 Note that $T'$ can be obtained by TAA from $T_{n}+f_n$. So by (1) and (3), $T'$ is an ETT with $(n-1)$-rungs under $\sigma$ once we show that $\sigma$ is $T_{n}+f_n$ mod $\varphi_{\gamma}$. Indeed, because $\beta\notin\phibar(T_{n+1})$ and $T_{n+1}$ is closed under $\varphi$, we have $\beta\neq\gamma_h$. As $\gamma_h\in\phibar(T_h)$ and $T_{n+1}$ is elementary under $\varphi$ by (A1), we have $\gamma\neq\gamma_h$ by (2). So both $\gamma$ and $\beta$ are different from $\gamma_h$ . Recall that we have $\beta\neq\delta_h$ in the current case. So if $\gamma\neq\delta_h$, then $T_{h}$ has the same $(\gamma_{h},\delta_{h})$-exit vertex under $\sigma$ as $\varphi_{\gamma}$, and thus $\sigma$ is $T_{n}+f_n$ mod $\varphi_{\gamma}$ by (A2.5) for $T_n$.  If $\gamma=\delta_h$, then all the $(\gamma_h,\delta_h)$-ears of $T_h$ under $\varphi$ are contained in $G[V(T_{n+1})]$ by TAA. Thus by the construction of $\varphi_{\gamma}$ and $\sigma$, they are colored the same under $\sigma$ as under $\varphi$. Since $T_h$ only has one $(\gamma_h,\delta_h)$-exit path under $\varphi$ by (A2),  $T_{h}$ still has the same $(\gamma_{h},\delta_{h})$-exit vertex under $\varphi_{\gamma}$ as $\varphi_{\gamma}$, and thus (5) is again established by (A2.5) for $T_n$.

	Observe that none of $a_1,a_2,a_3$ is contained in $T'$, for otherwise, let $a_i \in V(T')$ for some $i$
	with $1\le i \le 3$. By (A1) and (5), we see that $T'$ is elementary under $\sigma$. Since $\{\beta,\gamma\}\cap \overline{\sigma}(a_i) \ne \emptyset$ and $\beta \in \overline{\sigma}(b_3)$, 
	we obtain $\gamma \in \overline{\sigma}(a_i)$. Hence from TAA we see that $P_1,P_2,P_3$ and $a_1, a_2, a_3$ are all entirely contained in $G[V(T')]$, 
	which in turn implies  $V(T')$ is not 
	elementary with respect to $\sigma$, a contradiction. So each $P_i$ contains a subpath $L_i$ with $i=1,2,3$, which is a exit path of  $T'$
	under $\sigma$. So $f_1$ is not contained in $L_1$ by (4), and thus we obtain $|L_1|+|L_2|+|L_3|<|P_1|+|P_2|+|P_3|$. Moreover, we observe that $Q(T')\leq Q(T)$.
	Thus the existence of the counterexample $(\sigma, T', \beta, \gamma, L_1,L_2,L_3)$ with $\beta\neq\gamma_n$ (see (2)) violates the minimality 
	assumption on $(\varphi, T_{n+1}, \alpha, \beta, P_1,P_2,P_3)$.
	
{\flushleft {\bf Case II:}} $\beta=\delta_h$.

In this case, we have $h\geq 1$. Since $T_{n+1}$ is elementary (by (A1)) and closed under $\varphi$, we see that $\delta_h\notin\phibar(T_{n+1})$. So $Q(T_{n+1})=1$.  In this case we will get a contradiction to the minimality of $Q(T_{n+1})$. For this purpose, we let $x_e$ be the first vertex in $T_{n+1}$ along $\prec$, and pick a color $\gamma\in\phibar(x_e)$ with $\gamma\neq\gamma_h$; we have such a choice because $|\phibar(x_e)|\geq 2$ as $e$ is uncolored.  By (1) and (A2) on $T_n$ under $\varphi_{\gamma}$, one of $P_2$ and $P_3$, say $P_j$ with $j\in\{2,3\}$, belongs to a $(\gamma,\beta)$-path $P$ which is disjoint from $T_n$. Let $\eta\in\phibar(b_j)$ and consider $\varphi_{\eta}=\varphi/(G-T_{n+1},\alpha,\eta)$. Since $\eta\neq\gamma_h$ following (2) and the elementariness of $T_{n+1}$ under $\varphi$ by (A1), in view of (1), we have 

(6) $\varphi_{\eta}$ is $T_{n+1}$ mod $\varphi$, and therefore $T_{n+1}$ is an ETT with $n$-rungs under $\varphi_{\eta}$.

Observe that $P_j$ is an $(\eta,\beta)$-path under $\varphi_{\eta}$ with $\eta\in\overline{\varphi}_{\eta}(b_j)$. Similarly as in the previous case, we let $\sigma=\varphi_{\eta}/P_j$ and $T'$ be a closure of $T_{n+1}(b_j)$ under $\sigma$. We claim that 

(7) $T_h$ has the same $(\gamma_h,\delta_h)$-exit vertex under $\sigma$ as $\varphi_{\eta}$, and therefore $\sigma$ is  $T_{n}+f_n$ mod $\varphi_{\eta}$ (by A(2.5) on $T_n$).

Note that following (7), $T'$ is elementary by (A1) and (6), and therefore similarly as earlier, none of $a_1$ and $a_j$ are in $T'$ (see the paragraph before Case II). So under $\sigma$, $P_1$ and $P_j$ guarantee two $(\beta,\eta)$-paths intersecting $T'$, and they provide three $(\beta,\eta)$-exit paths $L_1, L_2$ and $L_3$ of $T'$, where $L_1$, $L_2$, and $L_3$ produce at least two $(\beta,\eta)$-exit vertices in $T'$ not before $b_j$ along $\prec$ of $T'$ (one of them might be $b_j$ itself). Since $\delta_h=\beta\in\overline{\sigma}(b_j)$,  $(\sigma, T', \beta, \eta, L_1,L_2,L_3)$ provides a counterexample with $Q(T')=0$, a contradiction.

We now prove (7) to finish the proof of Lemma~\ref{A2} by assuming otherwise that $T_h$ has a different $(\gamma_h,\delta_h)$-exit vertex $v_1$ under $\sigma$ from the $(\gamma_h,\delta_h)$-exit vertex $v_2$ under $\varphi_{\eta}$. Note that $\sigma$ is $(T_n, D_n,\varphi_{\eta})$-stable as $P_j$ is disjoint from $T_n$ (see (2)). So $\sigma$ is $T_h$ mod $\varphi$ and $T_h$ has exactly one $(\gamma_h,\delta_h)$-exit vertex $v_1$ under $\sigma$ by (A2) for $T_h$ with $h\leq n$. We first assume that $v_1\prec v_2$ along $\prec$ of $T_{n+1}$. In this case, under $\varphi_{\eta}$, $P_j$ intersects both the $(\gamma_h,\delta_h)$-exit path $P_{ex}$ and a $(\gamma_h,\delta_h)$-ear $D$ with a root $v_1$ of $T_h$. Since $\gamma\neq\gamma_h$ and $\eta\neq\gamma_h$ (see the sentence before (6)), $P_{ex}$ is still a $(\gamma_h,\delta_h)$-exit path and $D$ is still a $(\gamma_h,\delta_h)$-ear  of $T_{h}$ under $\varphi_{\gamma}$. Let $n'$ be the smallest index such that $v_2\in T_{n'}$ and $T_{n'}$ is $\RE$ finished under $\varphi_{\gamma}$. Then $n'\leq n$ and $v_2\notin T_{h'}$, where $h'$ is the initial index of $n'$. Moreover, under $\varphi_{\gamma}$, $P_{ex}$ is a $(\gamma_h,\delta_h)$-exit path of $T_{n'}$ by our choice of $n'$. Since $T_{n'}$ is an ETT with at most $(n-1)$-rungs by (1), $v_1$ is a root of a $(\gamma_h,\delta_h)$-ear $D'$  of $T_{n'}$ by (A2). Since $v_1$ is a root of the $(\gamma_h,\delta_h)$-ear $D$ of $T_h$ with $h\geq n'$, $D$ is a subpath of $D'$. So $P$ is a $(\gamma,\delta_h)$-path intersecting both $P_{ex}$ and $D'$. Note that $\gamma\in\phibar_{\gamma}(T_{n'})$ as $x_e$ is the first vertex in $T_{n+1}$ along $\prec$. However, this is a contradiction to the later introduced technical Lemma~\ref{A3.0} under $\varphi_{\gamma}$ with $T_{n'}$ ($n'\leq n$) in place of $T_{n+1}$, $D'$ in place of $D$, $v_2$ in place of $x$, $v_1$ in place of $y$, $\gamma$ in place of $\alpha$, and $\gamma_h,\delta_h$ in place of $\gamma,\delta$.

So we may assume $v_2\prec v_1$. Now in this case, under $\sigma$, we may assume $P_j$ intersects both the $(\gamma_h,\delta_h)$-exit path $P_{ex}$ of $T_h$ with exit $v_1$ and a $(\gamma_h,\delta_h)$-ear $D$ with root $v_2$. Since $T_{n+1}$ is elementary under $\varphi$, $\delta_h=\beta\notin\phibar(T_{n+1})$, and $\gamma,\eta\neq\gamma_h$ (see the sentence before (6)), we observe that:

(8) $\sigma$ is $(T_h, D_h, \varphi)$-stable, $T_{n+1}$ is elementary under $\sigma$, $\sigma(P)=\{\gamma,\eta,\delta_h\}$, all the $\eta$ colored edges of $P$ are not in $G[V(T_{n+1})]$, all the $\gamma$ colored edges of $P$ are in $G[V(T_{n+1})]$, $\delta_h\in\overline{\sigma}(b_j)$, two ends of $P$ miss either $\delta_h$ or $\eta$, and exactly one boundary edge of $T_{n+1}$ is colored by $\eta$ which is $f_j$  under $\sigma$.

Let $n'$ be the smallest index such that $v_1\in T_{n'}$ and $T_{n'}$ is $\RE$ finished, and let $h'$ be the initial index of $T_{n'}$. So $n'\leq h$, and $v_1\notin T_{h'}$. Let $\alpha'\in\phibar(v_1)=\overline{\sigma}(v_1)$ and $\tau\in\phibar(x_e)$ with $\tau\neq\alpha'$ and $\tau\neq\gamma_{h'}$; we have such choices because $T_{n+1}$ is elementary under $\varphi$, $v_1$ is not the first vertex in $T_{n+1}$ along $\prec$, and $|\phibar(x_e)|\geq 2$.  So $\alpha',\tau\notin \{\delta_h, \eta\}$ and $\alpha'\neq\gamma$ by the choice of $\alpha',\tau$, (2), $\delta_h\notin\phibar(T_{n+1})$, and the elementariness of $T_{n+1}$ under $\varphi$.  Moreover, under $\sigma$ and $\varphi$, $T_{h}$ is closed for both $\alpha'$ and $\gamma_h$, and $T_{n+1}$ and $T_h$ are closed for both $\gamma$ and $\tau$ ($\gamma=\tau$ may happen). Let $\mu=\sigma/(G-T_{h},\gamma,\tau)$.  Let $\mu_1=\mu/(G-T_{h},\alpha',\gamma_h)$ if $\tau\neq\gamma_h$, and $\mu_1=\mu/(G-T_{h},\alpha',\gamma)$ otherwise.  Furthermore, because $P$ is disjoint from $T_n$, following (8), we have the following under $\mu_1$: 

(9) $\mu_1$ is $(T_{n'}, D_{n'}, \varphi)$-stable, $T_{n+1}$ is elementary under $\mu_1$, $P_{ex}$ is an $(\alpha',\delta_h)$-exit path of both $T_{n'}$ and $T_h$ with $\alpha'\in\overline{\mu}_2(v_1)$, $D$ is an $(\alpha',\delta_h)$-ear of $T_h$, $\mu_1(P)=\{\tau,\eta,\delta_h\}$, all the $\eta$ colored edges of $P$ are not in $G[V(T_{n+1})]$, all the $\tau$ colored edges of $P$ are in $G[V(T_{n+1})]$, $\delta_h\in\overline{\mu}_1(b_j)$, two ends of $P$ miss either $\delta_h$ or $\eta$, exactly one boundary edge of $T_{n+1}$ is colored by $\eta$ which is $f_j$  under $\mu_1$, and $T_{n+1}$ is closed for $\tau$ under $\mu_1$.

Because $T_{n+1}$ is elementary under $\mu_1$, exactly one boundary edge of $T_{n+1}$ is colored by $\eta$ which is $f_j$  under $\mu_1$, and $T_{n+1}$ is closed for $\tau$ under $\mu_1$, we see that there is exactly one $(\tau,\eta)$-path $P_{x_e}(\eta,\tau,\mu_1)$ intersecting $T_{n+1}$ under $\mu_1$ with $(\tau,\eta)$-exit $b_j$. Moreover, the subpath of $P_{x_e}(\eta,\tau,\mu_1)$ from $x_e$ to $b_j$ is contained in $G[V(T_{n+1})]$, and all the other $(\tau,\eta)$-chains intersecting $T_{n+1}$ are cycles also contained in $G[V(T_{n+1})]$.  So following (9), all the $\eta$ colored edges of $P$ are contained either in $(\tau,\eta)$-chains not intersecting $T_{n+1}$ or in $P_{x_e}(\eta,\tau,\mu_1)$. Consequently, following (9), because all the $\tau$ colored of $P$ are in $G[V(T_{n+1})]$ and no $\eta$ colored edges of $P$ are in $G[V(T_{n+1})]$, the coloring $\mu_2$ obtained from $\mu_1$ by switching all the $\eta$ colored edges of $P$ not in $P_{x_e}(\eta,\tau,\mu_1)$ to $\tau$ by Kempe changes, satisfies the following:

\begin{itemize}
	\item [(o)] $\mu_2$ is $(T_{n'}, D_{n'}, \varphi)$-stable, $P_{ex}$ is an $(\alpha',\delta_h)$-exit path of both $T_{n'}$ and $T_h$ with $\alpha'\in\overline{\mu}_2(v_1)$, and $D$ is an $(\alpha',\delta_h)$-ear of $T_h$ under $\mu_2$.
	\item [(i)] $P$ shares edges with both $P_{ex}$ and $D$, and $P$ is vertex disjoint from $T_{n'}$.
\item [(ii)] $\mu_2(P)=\{\tau,\eta,\delta_h\}$
\item [(iii)] $\delta_h\in\overline{\mu}_2(b_j)$, $\overline{\mu}_2(u)\cap\{\eta,\tau,\delta_h\}\neq\emptyset$, and $\overline{\mu}_2(v)\cap\{\eta,\tau,\delta_h\}\neq\emptyset$, where $u$ and $v$ are the two ends of $P$.
\item [(iv)] All the $\eta$ colored edges of $P$ belong to $P_{x_e}(\eta,\tau,\mu_2)=P_{x_e}(\eta,\tau,\mu_1)$ containing the vertex $b_j\in V(P)$ with one end $x_e\in T_{n'}(v_1)$ missing $\tau$ and the other end $r'$ is outside of $T_{n'}$, and all of these $\eta$ colored edges of $P$ are in the subpath from $b_j$ to $r'$ of $P_{x_e}(\eta,\tau,\mu_2)$, and this subpath does not intersect $T_{n'}$.
\end{itemize}

Because $T_{n'}$ is $\RE$ finished, following (A2) on $T_{n'}$ and (o), we see that $T_{n'}$ is an ETT with $(n'-1)$-rungs under $\mu_2$, and $v_2$ is a root of an $(\alpha',\delta_h)$-ear $D'$ of $T_{n'}$. Since $v_2$ is a root of the $(\alpha',\delta_h)$-ear $D$ of $T_h$ with $h\geq n'$, $D$ is a subpath of $D'$, and $P$ shares edges with both $P_{ex}$ and $D'$ by (i). Moreover, since $\tau\in\phibar(x_e)$, we have $\tau\neq\delta_{h'}$. So $\tau\notin S_{h'}$. Thus with (o), (i), (ii), (iii) and (iv), we have a contradiction to the later introduced technical Lemma~\ref{A3.2} under $\mu_2$ with $T_{n'}$ ($n'\leq n$) in place of $T_{n+1}$, $D'$ in place of $D$, $\alpha'$ in place of $\alpha$, $\delta_h$ in place of $\delta$, $x_e$ in place of $r$, $v_1$ in place of $x$, $v_2$ in place of $y$, and $b_j$ in place of $w$.
\end{proof}
\begin{REM}\label{gap}
	Because almost all properties remain valid when we switch $\alpha$ and $\gamma$ for edges in $G - V(T_{n+1})$, for two colors $\alpha$ and $\gamma$ missing at vertices of $T_{n+1}$, it is tempting to combine Lemmas~\ref{A3}–\ref{A3.5} into a single lemma and apply it directly in the proof of Lemma~\ref{A2}. Indeed, Lemma 3.2 in~\cite{short} is analogous to our Lemmas~\ref{A3}–\ref{A3.5}. However, their proof is incomplete due to the absence of the key techniques developed in Lemma~\ref{A3.2}.
	
	Specifically, in the second paragraph after Claim 1 of their Lemma 3.2, they define $\varphi^*=\varphi/(G-U,\alpha_1,\alpha)$. They then claim that $R$, an $(\alpha_1,\alpha_2)$-chain under $\varphi$ which is an entire $(\alpha_1,\alpha_2)$-path (they define $(\alpha,\beta)$-chain as an entire $(\alpha,\beta)$-component that is a path), becomes an entire $(\alpha,\alpha_2)$-path under $\varphi^*$. This claim does not hold when $R$ and $U$ share an endpoint $v$ of $R$ with $\alpha_1\in\phibar(v)$ and $\alpha\notin\phibar(v)$ (a common situation since $U$ is elementary in their Lemma 3.2). In this case, $v$ is no longer an endpoint of the $(\alpha,\alpha_2)$-path containing $R$ under $\varphi^*=\varphi/(G-U,\alpha_1,\alpha)$. Consequently, their assumption that $w\in V(T_n(x))$ fails (see paragraph 6 after Claim 1 in their Lemma 3.2, which corresponds to (3) in the proof of our Lemma~\ref{A3}).
	
	Furthermore, when applying their Lemma 3.2 in the proof of their Lemma 3.3 (which is analogous to our Lemma~\ref{A2} together with Claim~\ref{bstep1}), the argument fails when applied to $P_x$ in paragraph 6 of their Lemma 3.3. This corresponds precisely to Case II in the proof of our Lemma~\ref{A2}, where Lemma~\ref{A3.2} is essential to resolve the issue. Consequently, our proof of Lemma~\ref{A2} must be more technical in order to properly apply Lemma~\ref{A3.2}.
	
	In fact, Lemma~\ref{A3.2} is essential to our approach. Its proof requires extending both the techniques and the algorithm (see the paragraph after $(f_i)$ in the proof of Lemma~\ref{A3.1}) to their full strength. In contrast, they use this algorithm only once in their proof of Lemma 3.2 (see the algorithm introduced before their Claim 2), which explains why their paper is shorter but also why their argument is incomplete.
\end{REM}

\begin{LEM}(A2.5)\label{stable}
	Let $n$ be a nonnegative integer. Suppose (A1) and (A2) hold for  all  ETTs with at most $n$ rungs, and $(A2.5)$ holds for  all  ETTs with at most $(n-1)$ rungs.  Let $T_{n+1}$ be a closed ETT with ladder $T_0\subset \dots \subset T_n \subset T_{n+1}$. Suppose $T_{n+1}$ could be extended further to $T_{n+2}$ with $F_{n+1}=\{f_{n+1}\}$, $S_{n+1}=\{\gamma_{n+1},\delta_{n+1}\}$, and initial index $h$.  If $\varphi'$ is a $(T_{n+1},D_{n+1},\varphi)$-stable coloring such that $T_{h}$ has the same $(\gamma_{h},\delta_{h})$-exit vertex  under $\varphi'$ as under $\varphi$, then $\varphi'$ is a $T_{n+1}+f_{n+1}$ mod $\varphi$ coloring.

\end{LEM}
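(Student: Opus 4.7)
The plan is to verify that under $\varphi'$ the Tashkinov series $\{(T_i, S_{i-1}, F_{i-1}, \Theta_{i-1}) : 1 \le i \le n+1\}$ is reproduced and that the pending extension using $f_{n+1}$ with $(S_{n+1}, \Theta_{n+1})$ remains a valid Tashkinov augmentation. The natural strategy is to invoke the induction hypothesis A2.5 for $T_n$ to handle iterations $1, \ldots, n$, and then verify the final step directly from the explicit exit-vertex hypothesis.

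First I would show that $\varphi'$ inherits $(T_n, D_n, \varphi)$-stability from $(T_{n+1}, D_{n+1}, \varphi)$-stability. Condition (i) is immediate from $V(T_n) \subseteq V(T_{n+1})$; for (ii), it suffices to observe that $\overline{\varphi}(T_n) \cup D_n \subseteq \overline{\varphi}(T_{n+1}) \cup D_{n+1}$, which holds because $\overline{\varphi}(T_n) \subseteq \overline{\varphi}(T_{n+1})$ and each color in $D_n \subseteq \bigcup_{i \le n} S_i$ lies in $\overline{\varphi}(T_{n+1}) \cup D_{n+1}$. I would then establish the exit-vertex hypothesis for $T_{h'}$ (with $h'$ the initial index of $T_{n+1}$) required by A2.5 for $T_n$. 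When $\Theta_{n+1} = RE$ we have $h' = h$ and $(\gamma_{h'}, \delta_{h'}) = (\gamma_h, \delta_h)$, so the lemma's hypothesis transfers verbatim. When $\Theta_{n+1} = IE$ we have $h = n+1$ and $h' \le n$ possibly distinct; here I would invoke the furthermore clause of A2 for $T_{h'}$ (which has $h'-1 < n$ rungs and is $RE$ finished, hence is covered by the assumed A2) to conclude that $\varphi'$ is $T_{h'}$ mod $\varphi$, and combine this with the invariance of the $(\gamma_{h'}, \delta_{h'})$-subgraph restricted to $V(T_{n+1})$ under $\varphi \to \varphi'$ together with interchangeability of $T_{h'}$ to pin down the unique exit vertex. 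Applying A2.5 to $T_n$ then yields that $\varphi'$ is $T_n + f_n$ mod $\varphi$; since $T_{n+1}$ is closed under $\varphi$ and the boundary colors relevant to closedness lie in $\overline{\varphi}(T_{n+1}) \cup D_{n+1}$, $T_{n+1}$ persists as a closure of $T_n + f_n$ under $\varphi'$ by the uniqueness of the closure vertex set.

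Second I would verify the extension step at iteration $n+1$ under $\varphi'$. In the IE case ($h = n+1$), the hypothesis directly provides the $(\gamma_{n+1}, \delta_{n+1})$-exit vertex of $T_{n+1}$ under $\varphi'$; combined with interchangeability from A2 applied to $T_{n+1}$ (which has $n$ rungs, hence covered by the assumed A2), the exit chain is unique under $\varphi'$, so every remaining $\delta_{n+1}$-colored boundary edge of $T_{n+1}$---in particular $f_{n+1}$, whose color is preserved because $\delta_{n+1} \in S_{n+1} \subseteq \overline{\varphi}(T_{n+1}) \cup D_{n+1}$---lies in some $(\gamma_{n+1}, \delta_{n+1})$-ear, validating the IE extension. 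In the RE case the segment $L \subseteq O$ inside $V(T_{n+1})$ has all its edges incident to $T_{n+1}$ and colored in $\{\gamma_h, \delta_h\} \subseteq \overline{\varphi}(T_{n+1}) \cup D_{n+1}$, so $L$ persists under $\varphi'$; the preserved exit vertex of $T_h$ together with interchangeability of $T_h$ under $\varphi'$ then force the $(\gamma_h, \delta_h)$-chain through $f_{n+1}$ to be distinct from the exit chain and hence to close up into an ear of $T_h$, which necessarily contains a vertex outside $V(T_{n+1})$ since $f_{n+1} \in \partial(T_{n+1})$.

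The main obstacle I anticipate is the subcase $\Theta_{n+1} = IE$ of the reduction step, where the hypothesis concerns the exit vertex of $T_{n+1}$ while the induction for $T_n$ requires the analogous statement for the strictly smaller $T_{h'}$. Recovering this exit-vertex preservation relies on the furthermore clause of A2 for $T_{h'}$ together with a careful comparison of the $(\gamma_{h'}, \delta_{h'})$-chain structure restricted to $V(T_{n+1})$, which is fixed by stability. A secondary subtlety is in the RE case of the extension step, where the ear $O$ may have portions outside $V(T_{n+1})$ that need not match under $\varphi'$; one must rule out the $(\gamma_h, \delta_h)$-chain through $f_{n+1}$ reorganizing into the exit chain instead of remaining an ear.
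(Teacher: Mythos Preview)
Your overall strategy is sound and, in the $RE$ case ($\Theta_{n+1}=RE$), it matches the paper's proof essentially verbatim: use (A2.5) for $T_n$ (with $n-1$ rungs) via the hypothesis on the exit vertex of $T_h=T_{h'}$, deduce that $T_{n+1}$ persists as a closure, and then argue that the segment $L\subseteq O$ inside $V(T_{n+1})$ is preserved by stability and still sits in an ear under~$\varphi'$.

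In the $IE$ case, however, you take an unnecessary detour. You propose to descend to $T_{h'}$ (the initial index of $T_{n+1}$), invoke the ``furthermore'' clause of (A2) for $T_{h'}$, reconstruct exit-vertex preservation for $T_{h'}$ from the invariance of the $(\gamma_{h'},\delta_{h'})$-subgraph inside $V(T_{n+1})$, and only then apply (A2.5) for $T_n$. The paper instead applies the ``furthermore'' clause of (A2) \emph{directly to $T_{n+1}$} (which has $n$ rungs, hence is covered by the assumed (A2)): since $\Theta_{n+1}=IE$ forces $T_{n+1}$ to be $RE$ finished and $\varphi'$ is $(T_{n+1},D_n,\varphi)$-stable, this single citation yields both that $\varphi'$ is $T_{n+1}$ mod $\varphi$ \emph{and} that $T_{n+1}$ remains $RE$ finished under~$\varphi'$. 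What you identified as the ``main obstacle'' thus dissolves entirely; your workaround is correct but is effectively re-proving that clause of (A2) one level down.

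One genuine omission in your proposal: in the $IE$ case you verify that $f_{n+1}$ still lies in a $(\gamma_{n+1},\delta_{n+1})$-ear of $T_{n+1}$ under $\varphi'$, but you never check that $T_{n+1}$ is still $RE$ finished under~$\varphi'$. This is required for the extension via $(f_{n+1},S_{n+1},\Theta_{n+1}=IE)$ to be a legitimate iteration of Algorithm~3.1, since $RE$ takes priority over $IE$. It does follow from your ear-preservation argument for $T_{h'}$ (all $(\gamma_{h'},\delta_{h'})$-ears of $T_{h'}$ lie in $G[V(T_{n+1})]$ under $\varphi$ and are fixed by stability, hence remain there under $\varphi'$), but you should state it explicitly. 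The paper gets it for free from the ``furthermore'' clause of (A2) for $T_{n+1}$.
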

	\begin{proof}
		Let $T_{n+1}$, $F_{n+1}$, $S_{n+1}$, $h$, and $\varphi'$ be as described above.
		We first consider the case when extension type $\Theta_{n+1}=\IE$. Then $T_{n+1}$ is $\RE$ finished, $h=n+1$, $\delta_{n+1}$ is a defective color of $T_{n+1}$, and $f_n$ belongs to a $(\gamma_{n+1},\delta_{n+1})$-ear of $T_{n+1}$ under $\varphi$. Moreover, since $T_{n+1}$ is elementary by (A1), $T_{n+1}$ has exactly one $(\gamma_{n+1},\delta_{n+1})$-exit vertex under $\varphi$ by (A2).  Since $\varphi'$ is $(T_{n+1},D_{n+1},\varphi)$-stable,  $T_{n+1}$ is closed under $\varphi'$ and $\varphi'$ is $(T_{n+1},D_{n},\varphi)$-stable. So $\varphi'$ is a $T_{n+1}$ mod $\varphi$ coloring by (A2), and therefore 
		 
		 (1) $T_{n+1}$ is a closed ETT under $\varphi'$ with the same ladder, $F_i$, $S_i$, and $\Theta_i$ for $0\leq i\leq n$, and $T_{n+1}$ is still $\RE$ finished under $\varphi'$.

		  

		Recall that $f_{n+1}$ belongs to a $(\gamma_{n+1},\delta_{n+1})$-ear of $T_{n+1}$ under $\varphi$. Since $\varphi'$ is a $(T_{n+1},D_{n+1},\varphi)$-stable coloring such that $T_{n+1}$ has the same $(\gamma_{n+1},\delta_{n+1})$-exit vertex under $\varphi'$ as $\varphi$, the connecting edge $f_{n+1}$ still belongs to  a $(\gamma_{n+1},\delta_{n+1})$-ear of $T_{n+1}$ under $\varphi'$. Since $T_{n+1}$ is still $\RE$ finished under $\varphi'$ by (1), we may still choose $F_{n+1}=\{f_{n+1}\}$ and $S_{n+1}=\{\gamma_{n+1},\delta_{n+1}\}$ to extend $T_{n+1}$ by an $\IE$ extension under $\varphi'$, so  the coloring $\varphi'$ is a $T_{n+1}+f_{n+1}$ mod $\varphi$ coloring following (1).
		
		We then consider the case when extension type $\Theta_{n+1}=\RE$ with initial index $h\leq n$. In this case, $n\geq 1$ and $h\geq 1$. Since $T_h$ is elementary by (A1),  $T_h$ has exactly one $(\gamma_h,\delta_h)$-exit vertex under $\varphi$ by (A2). By the assumption on $\varphi'$, the ETT $T_h$ has the same unique $(\gamma_h,\delta_h)$-exit vertex under $\varphi'$ as under $\varphi$.  Note that in this case $\varphi'$ is also a $(T_{n},D_{n},\varphi)$-stable coloring, we see that $\varphi'$ is a $T_n+f_n$ mod $\varphi$ coloring by (A2.5) with $T_n$ that has $(n-1)$-rungs in place of $T_{n+1}$. As $\varphi'$ is also $(T_{n+1},D_{n+1},\varphi)$-stable,
		
		(2) $T_{n+1}$ is a closed ETT under $\varphi'$ with the same ladder, $F_i$, $S_i$, and $\Theta_i$ for $0\leq i\leq n$.
		
		Furthermore, $f_{n+1}$ is still colored by $\gamma_h=\gamma_{n+1}$. Because $\Theta_{n+1}=\RE$, the connecting edge $f_{n+1}$ belongs to a $(\gamma_h,\delta_h)$-ear $O$ of $T_h$ such that $O$ contains a path $L$ connecting $f_{n+1}$ and $V(T_h)$ with $V(L)\subseteq V(T_{n+1})$ under $\varphi$. Since $\varphi'$ is $(T_{n+1},D_{n+1},\varphi)$-stable, $L$ remains a path contained entirely in either a $(\gamma_h,\delta_h)$-ear or a $(\gamma_h,\delta_h)$-exit path of $T_h$ under $\varphi'$. Since $T_h$ has the same unique $(\gamma_h,\delta_h)$-exit vertex under $\varphi'$ as under $\varphi$, the path $L$ must be contained entirely in a $(\gamma_h,\delta_h)$-ear $O^*$ of $T_h$ under $\varphi'$ that connects $f_{n+1}$ to $V(T_h)$ (here $O^*$ might be different from $O$). So we may still choose $F_{n+1}=\{f_{n+1}\}$ and $S_{n+1}=\{\gamma_{n+1},\delta_{n+1}\}$ to extend $T_{n+1}$ by an $\RE$ extension under $\varphi'$, and therefore  the coloring $\varphi'$ is a $T_{n+1}+f_{n+1}$ mod $\varphi$ coloring following (2).
	\end{proof}

\begin{LEM}(A3)\label{A3}
Let $n$ be a nonnegative integer. Suppose (A1), (A2), and (A2.5) hold for  all  ETTs with at most $n$ rungs, and (A3) holds for all ETTs with at most $(n-1)$ rungs.  Let $T_{n+1}$ be a closed ETT with ladder $T_0\subset \dots \subset T_n \subset T_{n+1}$. Assume $T_{n+1}$ is $\RE$ finished, $\delta$ is a defective color of $T_{n+1}$ under $\varphi$, and $\gamma\in\phibar(T_{n+1})$. Let $\alpha,\beta$ be two colors such that $\alpha\in\phibar(T_{n+1})\setminus\{\gamma\}$ and $\beta$ is arbitrary. For any $(\alpha,\beta)$-path $P$, if $P$ does not intersect $T_{n+1}$, then $T_{n+1}$ has the same $(\gamma,\delta)$-exit vertex under $\varphi/P$ as under $\varphi$.
\end{LEM}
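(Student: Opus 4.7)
The plan is to combine the inductive hypotheses (A1), (A2), and (A2.5) for $n$-rung ETTs with a case analysis on how the interchanged colors $\{\alpha,\beta\}$ meet $\{\gamma,\delta\}$. First I observe that since $P$ is vertex-disjoint from $T_{n+1}$, the Kempe interchange $\varphi'=\varphi/P$ leaves every vertex of $T_{n+1}$ with the same missing colors and every edge incident to $T_{n+1}$ unchanged, so $\varphi'$ is $(T_{n+1},D_n,\varphi)$-stable. Applying (A2) to the $n$-rung $RE$-finished ETT $T_{n+1}$, we conclude that $\varphi'$ is $T_{n+1}$ mod $\varphi$ and that $T_{n+1}$ remains a closed $RE$-finished ETT under $\varphi'$ with the same ladder, connecting edges, connecting colors, and extension types. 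Applying (A2) once more under $\varphi'$, the pair $\{\gamma,\delta\}$ is $T_{n+1}$-interchangeable under $\varphi'$ (since $\gamma\in\overline{\varphi'}(T_{n+1})=\phibar(T_{n+1})$), and $\delta$ remains a defective color because boundary edges are unchanged; thus the $(\gamma,\delta)$-exit vertex of $T_{n+1}$ is uniquely determined under each of $\varphi$ and $\varphi'$.

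Next I split on $\beta$. Note that $\alpha\notin\{\gamma,\delta\}$: by assumption $\alpha\neq\gamma$, while $\alpha\in\phibar(T_{n+1})$ combined with $\delta\notin\phibar(T_{n+1})$ (since $T_{n+1}$ is closed and $\delta$ is defective) forces $\alpha\neq\delta$. In the easy subcase $\beta\notin\{\gamma,\delta\}$, the Kempe change on $P$ leaves every $\gamma$- and $\delta$-colored edge of the whole graph untouched, so the $(\gamma,\delta)$-chain through $v$ (the unique vertex of $T_{n+1}$ missing $\gamma$) is identical under $\varphi$ and $\varphi'$, and the exit vertex is trivially preserved.

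The substantive case is $\beta\in\{\gamma,\delta\}$, where Kempe on $P$ does alter $\gamma$- or $\delta$-colored edges outside $T_{n+1}$. Even then, every $(\gamma,\delta)$-edge inside $T_{n+1}$ and every boundary $\delta$-edge is unchanged, so the internal segments of the chain inside $T_{n+1}$ form the same set of subpaths; only their external linkage can change. My plan is to argue by contradiction that the final exit vertex cannot change: assuming it does, the $(\gamma,\delta)$-path from $v$ under $\varphi'$ must visit a different subset of the internal segments than under $\varphi$, and the segments it omits must lie on $(\gamma,\delta)$-cycles by the interchangeability established above. Using the constraint $\alpha\in\phibar(T_{n+1})\setminus\{\gamma\}$ together with the vertex-disjointness of $P$ from $T_{n+1}$, I would trace how the external components of the $(\gamma,\delta)$-graph must re-assemble after the Kempe change and derive a structural contradiction. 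This delicate external combinatorial analysis is the main obstacle of the proof, and I expect it will invoke auxiliary technical lemmas analogous to those foreshadowed in the proof of (A2).
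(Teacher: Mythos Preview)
Your setup is correct and matches the paper: $\varphi'=\varphi/P$ is $(T_{n+1},D_n,\varphi)$-stable, hence (by (A2)) $T_{n+1}$ mod $\varphi$; $\alpha\notin\{\gamma,\delta\}$; and when $\beta\notin\{\gamma,\delta\}$ nothing in the $(\gamma,\delta)$-chain structure changes, so the exit vertex is trivially preserved. The paper's proof begins the same way.

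The gap is the case $\beta\in\{\gamma,\delta\}$, which you explicitly leave open. Your sketch (``trace how the external components of the $(\gamma,\delta)$-graph must re-assemble and derive a structural contradiction'') is not a proof, and the naive approach you outline does not obviously terminate: interchangeability only tells you that under each of $\varphi$ and $\varphi'$ there is a unique $(\gamma,\delta)$-path meeting $T_{n+1}$, but it gives no direct handle on how the external linkage can and cannot rearrange after the Kempe change. The paper does not argue this way.

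What the paper actually does in this case: it names the two exit vertices $x$ (under $\varphi$) and $y$ (under $\varphi'$), so that $y$ is a root of a $(\gamma,\delta)$-ear $D$ of $T_{n+1}$ under $\varphi$, and records that $P$ must share edges with both the exit path $P_{ex}$ and $D$. Two reductions are then made before any technical lemma is invoked. First, a symmetry argument: since $\varphi=\varphi'/P$ and $T_{n+1}$ is an $RE$-finished closed ETT under $\varphi'$ with the same data, one may swap the roles of $\varphi$ and $\varphi'$ and hence assume $y\prec x$ along $T_{n+1}$. Second, one reduces to $x\notin V(T_h)$ (where $h$ is the initial index): if $x\in V(T_h)$, then after adjusting colors outside $T_{n+1}$ one obtains a counterexample to (A3) for the strictly smaller $RE$-finished ETT $T_h$, contradicting the inductive hypothesis on rungs $\le n-1$. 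Only after these two reductions is the technical Lemma~\ref{A3.0} applied (with $T_{n+1}$, $x$, $y$, $P_{ex}$, $D$, $P$ in hand) to reach the contradiction. Both reductions are essential for the hypotheses of that lemma, and neither appears in your proposal.
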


\begin{proof}
	Let $T_{n+1}$, $\gamma$, $\delta$, $\alpha$, $\beta$, $P$, $\varphi$, and $\varphi'$ be as described above. Since $T_{n+1}$ is closed under $\varphi$, we have

	(1) $\delta\notin\phibar(T_{n+1})$.
	
	Assume otherwise that $T_{n+1}$ has a different $(\gamma,\delta)$-exit vertex under $\varphi/P$ than $\varphi$. By (A2) for $T_{n+1}$, there is only one $(\gamma, \delta)$-path intersecting $T_{n+1}$ under $\varphi$, so $T_{n+1}$ has exactly one  $(\gamma, \delta)$-exit vertex, say $x$, contained in a $(\gamma, \delta)$-exit path $P_{ex}$. Since $P$ does not intersect $T_{n+1}$ and $T_{n+1}$ is $\RE$ finished, the coloring $\varphi'=\varphi/P$ is $(T_{n+1},D_{n},\varphi)$-stable and therefore is a $T_{n+1}$ mod $\varphi$ coloring by (A2). So $T_{n+1}$ is a closed ETT under $\varphi'$ with $n$-rungs. Therefore, by (A2) for $T_{n+1}$ under $\varphi'$, $T_{n+1}$ also has exactly one  $(\gamma, \delta)$-exit vertex under $\varphi'$, say $y$, and $y$ is different from $x$. Thus $y$ a root of a $(\gamma,\delta)$-ear $D$ of $T_{n+1}$ under $\varphi$. Because interchanging $\alpha$ and $\beta$ along $P$ changes the $(\gamma, \delta)$-exit vertex of $T_{n+1}$, $\alpha\neq\gamma$ (by the assumption on $\alpha$), and $\alpha\neq\delta$ by (1), we have 
	
	(2) $P$ shares edges with both $D$ and $P_{ex}$, $\alpha\notin\{\gamma,\delta\}$, and $\beta\in\{\gamma,\delta\}$.
	
	 We may assume that
	
	(3) $y\prec x$ along $T_{n+1}$. Consequently, $|\phibar(T_{n+1}(x))-\phibar(x)|\geq 2$ and $|\phibar(T_{n+1}(x))|\geq 4$.
	
  Since $P$ is disjoint from $T_{n+1}$, the coloring $\varphi'$ is $(T_{n+1}, D_n,\varphi)$-stable and therefore is $T_{n+1}$ mod $\varphi$ by (A2). So $T_{n+1}$ is a $\RE$ finished closed ETT under $\varphi'$ with the same ladder, $F_i$, $S_i$, and $\Theta_i$ for $0\leq i\leq n$. As $\varphi=\varphi'/P$,  we just consider $T_{n+1}$ under $\varphi'$ as a counterexample with $(\gamma, \delta)$-exit vertex $y$ instead, so we have $y\prec x$. Consequently, since both ends of the uncolored edge $e$ have at least two missing colors, and $T_{n+1}$ is elementary by (A1), we have $|\phibar(T_{n+1}(x))-\phibar(x)|\geq 2$ and $|\phibar(T_{n+1}(x))|\geq 4$.
  
 Let $h$ be the initial index of $T_{n+1}$. We may furthermore assume that
  
  (4)  $x\notin V(T_h)$.

   If $n=0$, then by definition $h=0$ and $T_0=\emptyset$, so $x\notin V(T_h)$.  We then consider the case that $n\geq 1$, $x\in V(T_h)$, and $\alpha\in\phibar(T_{n+1}(x))-\phibar(x)$. Note that $h\geq 1$ as $n\geq 1$ and $T_h$ is also $\RE$ finished, and $y\in V(T_h)$ by (3). Moreover, by (3) there are two colors $\alpha',\gamma'\in\phibar(T_h)$. Now since $T_{n+1}$ is closed and $\RE$ finished, following (A2), by first switching colors on edges colored by $\gamma$ with $\gamma'$ outside of $T_{n+1}$, and then switching the edges colored by $\alpha'$ with $\alpha$ outside $T_{n+1}$ if $\alpha\notin\phibar(T_h)$, we have a smaller counterexample to (A3) by considering the $\RE$ finished ETT $T_h$ instead of $T_{n+1}$.
   
  
 Now with (2), (3), and (4), we have a contradiction by the following technical Lemma~\ref{A3.0}.

\end{proof}
For a path $P$ with vertices $u,v\in V(P)$, denote by $P[u,v]$ the subpath of $P$ from $u$ to $v$.
\begin{LEM}\label{A3.0}
	
	Let $n$ be a nonnegative integer. Suppose (A1), (A2), (A2.5), (A3) hold for  all  ETTs with at most $n$ rungs, and (A3.5) holds for all  ETTs with at most $(n-1)$-rungs. Let $T_{n+1}$ be a closed ETT with ladder $T_0\subset \dots \subset T_n \subset T_{n+1}$.  Assume $T_{n+1}$ is $\RE$ finished and has initial index $h$, $\delta$ is a defective color of $T_{n+1}$ under $\varphi$, $\gamma\in\phibar(T_{n+1})$, $x$ is the $(\gamma,\delta)$-exit vertex of $T_{n+1}$ with exit path $P_{ex}$ and $x\notin V(T_h)$. Suppose $T_{n+1}$ also has an $(\gamma,\delta)$-ear $D$ with one root $y$ such that $y\prec x$ along $T_{n+1}$.  Let $\alpha,\beta$ be two colors such that  $\alpha\in\phibar(T_{n+1})\setminus\{\gamma\}$ and $\beta$ is either $\gamma$ or $\delta$. For any   $(\alpha,\beta)$-path $P$, if $P$ does not intersect $T_{n+1}$, then $P$ does not share vertices with both $P_{ex}$ and $D$.
	
\end{LEM}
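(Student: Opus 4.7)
Argue by contradiction and assume that $P$ shares a vertex with both $P_{ex}$ and $D$. Since $P$ is vertex-disjoint from $T_{n+1}$, all such shared vertices lie outside $T_{n+1}$; and since $\beta \in \{\gamma,\delta\}$, at every common vertex the edges of $P$ and of $P_{ex}$ (respectively $D$) of color $\beta$ must coincide, because each vertex can be incident to at most one edge of each color. Hence $P$ shares at least one $\beta$-colored edge with $P_{ex}$ and at least one $\beta$-colored edge with $D$.

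Consider $\varphi' = \varphi/P$. Since $P$ is vertex-disjoint from $T_{n+1}$, no edge incident to $T_{n+1}$ changes color, so $\varphi'$ is $(T_{n+1}, D_n, \varphi)$-stable. Because $T_{n+1}$ is $RE$ finished, (A2) for $n$ rungs (available in the hypotheses of Lemma~\ref{A3.0}) implies that $\varphi'$ is a $T_{n+1}$ mod $\varphi$ coloring; in particular $T_{n+1}$ remains a closed $RE$ finished ETT with the same ladder and with $\phibar(v)$ unchanged for every $v \in V(T_{n+1})$. Applying (A3) to $T_{n+1}$ itself (also in the hypotheses of Lemma~\ref{A3.0}), the $(\gamma,\delta)$-exit vertex of $T_{n+1}$ under $\varphi'$ is still $x$, so the unique $(\gamma,\delta)$-path intersecting $T_{n+1}$ under $\varphi'$ still originates at the vertex $v^* \in V(T_{n+1})$ with $\gamma \in \phibar(v^*)$ and exits at $x$.

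Next, I trace the $(\gamma,\delta)$-chain through the root $y$ under $\varphi'$. Under $\varphi$ this chain is a cycle whose portion outside $T_{n+1}$ contains $D$. The Kempe change on $P$ recolors each shared $\beta$-edge of $P$ with $D \cup P_{ex}$ to $\alpha$, and each $\alpha$-edge of $P$ to $\beta$. Since $P$ bridges $D$ to $P_{ex}$ via shared $\beta$-edges, following the chain through $y$ under $\varphi'$ one leaves $D$ at the first vertex shared with $P$, traverses the recolored stretch of $P$, and eventually reaches (the remnant of) $P_{ex}$. Consequently this chain under $\varphi'$ is now a $(\gamma,\delta)$-path with $y$ as one end, whose other end lies either in $V(T_{n+1}) \setminus \{y\}$ or outside $T_{n+1}$. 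In either case we obtain a second $(\gamma,\delta)$-path intersecting $T_{n+1}$ under $\varphi'$, distinct from the one through $v^*$, contradicting the interchangeability of $\gamma$ and $\delta$ for $T_{n+1}$ guaranteed by (A2) under $\varphi'$.

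The main obstacle is the bookkeeping at every common vertex of $P$ with $P_{ex}$ and $D$: one must treat $\beta=\gamma$ and $\beta=\delta$ separately, follow the alternating parity of $\alpha$- and $\beta$-edges along $P$, and confirm that the reconfigured chain through $y$ cannot accidentally coincide with the chain through $v^*$ (which would turn it into a cycle rather than a second path). The hypotheses $y \prec x$ on $T_{n+1}$ and $x \notin V(T_h)$ are expected to be used precisely to exclude these degenerate identifications and keep the two chains distinct once the swap along $P$ is performed.
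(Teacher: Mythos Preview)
Your proposal has a genuine gap in the central step, and the approach differs fundamentally from the paper's.

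\textbf{The main error.} You assert that, after the Kempe change $\varphi'=\varphi/P$, ``this chain under $\varphi'$ is now a $(\gamma,\delta)$-path with $y$ as one end.'' But $y$ has both $\gamma$ and $\delta$ present (it is a root of the ear $D$), so $y$ cannot be an endpoint of any $(\gamma,\delta)$-path; it is always an internal vertex of its chain. More substantively, your claim that the rewiring produces a \emph{second} $(\gamma,\delta)$-path intersecting $T_{n+1}$ is not established. After the swap, the cycle $C_y$ through $y$ and the path $Q_{v^*}$ through $v^*$ may merge into a \emph{single} $(\gamma,\delta)$-path that first exits at $x$, follows a piece of $P_{ex}$, diverts through $P$ to $D$, re-enters at $y$ (or $y'$), traverses the inside of $C_y$, and exits again. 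In that scenario there is only one path intersecting $T_{n+1}$, so no contradiction to (A2); what changes is the \emph{exit vertex}, which becomes $y$ or $y'$ rather than $x$. So the dichotomy you would actually need is ``either two paths (contra A2) or exit vertex $\ne x$ (contra A3),'' and proving that dichotomy rigorously---tracking how $P$'s $\alpha$-edges splice the pieces of $P_{ex}$, $D$, and possibly other ears---is precisely the hard part. Your proposal does not do this; the phrase ``eventually reaches (the remnant of) $P_{ex}$'' hides all the difficulty, since the chain may wander through other $(\gamma,\delta)$-cycles, terminate at an endpoint of $P$, or loop back into $D$.

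\textbf{A structural concern.} You invoke (A3) for $T_{n+1}$ itself, which has $n$ rungs. While the stated hypothesis of Lemma~\ref{A3.0} formally lists ``(A3) for at most $n$ rungs,'' note that in the paper Lemma~\ref{A3.0} is the key tool used to \emph{prove} (A3) for $n$ rungs (Lemma~\ref{A3}). Thus any proof of Lemma~\ref{A3.0} that relies on (A3) for $T_{n+1}$ is circular in the intended inductive scheme; the paper's own proof of Lemma~\ref{A3.0} never uses (A3) at level $n$.

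\textbf{What the paper does instead.} The paper first normalizes so that $\gamma\in\phibar(x)$ and picks an auxiliary color $\pi\in\phibar(T_{n+1}(x))\setminus(\{\alpha,\gamma\}\cup S_h)$, then builds two explicit edge-disjoint paths $P_1,P_2$ from $T_{n+1}(x)$ to the two ends $u,v$ of $P$ by concatenating initial segments of $P_{ex}$ and $D$ with tails of $P$. It then invokes the technical Lemma~\ref{A3.1}, which through an iterated sequence of Kempe changes produces a closure $T$ (with the same $(j-1)$ rungs, $j\le n+1$) that absorbs $P_1\cup P_2$; if $u\ne v$ this $T$ fails to be elementary, contradicting (A1). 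This is a much heavier machine than a single Kempe change, and it avoids any appeal to (A3) at level $n$.
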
	
\begin{proof}
	
		Let everything be as described as in Lemma~\ref{A3.0}, and suppose the $(\alpha,\beta)$-path $P$ shares vertices with both $P_{ex}$ and $D$.  We claim that we may assume:
		
		(1) $\gamma\in\phibar(x)$, $\alpha\in\phibar(T_{n+1}(x))$, and there exists a color $\pi\in\phibar(T_{n+1}(x))$ such that $\pi\notin (\{\alpha,\gamma\}\cup S_h)$.
		
		Since $y\prec x$ along $T_{n+1}$, both ends of the uncolored edge $e$ are in $T_{n+1}(x)$. Since $\varphi$ has at least $\Delta(G)+1$ many colors, we have $|\phibar(T_{n+1}(x))|\geq 4$. Moreover, when $h\geq 1$, we have $|\phibar(T_1)|\geq 5$ by Lemma~\ref{size}. Furthermore, since $x\notin T_h$ by the assumption in the statement of the current lemma, there exist colors $\gamma'\in\phibar(x)$, $\alpha'\in\phibar(T_{n+1}(x))$, and a color $\pi\in\phibar(T_{n+1}(x))$ such that $\pi\notin (\{\alpha',\gamma'\}\cup S_h)$. Now since $T_{n+1}$ is closed and $\RE$ finished, following (A2), by switching colors on edges colored by $\alpha$ with $\alpha'$ and edges colored by $\gamma'$ with $\gamma$ outside of $T_{n+1}$ if necessary, we have as claimed.

		Our goal is to apply the following technical Lemma~\ref{A3.1} with $\gamma$ in place of $\alpha$. Now by (1), $P_{ex}$ is the entire $(\gamma,\delta)$-path. Moreover, since $T_{n+1}$ is $\RE$ finished, all the $(\gamma_h,\delta_h)$-ear intersecting $T_{h}$ when $h\geq 1$ are contained in $G[V(T_{n+1})]$. As a result, if $\delta=\delta_h$ with $h\geq 1$, then $P_{ex}$ is disjoint from any $(\gamma_h,\delta_h)$-ear intersecting $T_{h}$ . Assume the two ends of $P$ are $u$ and $v$. Since $P$ shares vertices with both $P_{ex}$ and $D$,  without loss of generality, we may assume the first shared vertex between $P_{ex}$ and $P$ along $P_{ex}$ from $x$ is $u'$, the first shared vertex between $D$ and $P$ along $D$ from $y$ is $v'$, and $u'$ is closer to $u$ than $v'$ along $P$.	So now 	$P_1=P_{ex}[x,u']\cup P[u',u]$ and $P_2=D[y,v']\cup P[v',v]$ are two edge disjoint paths from $T_{n+1}(x)$ to $\{u,v\}$ satisfying all conditions $(i)$ and $(ii)$ of 	Lemma~\ref{A3.1}. Thus applying Lemma~\ref{A3.1}, we have both $(II)$ and $(III)$. However, on one side, since $j\leq n+1$ where $j$ is the smallest index with $x\in T_j$, by $(II)$ and (A1), the resulting $T$ is elementary under $\mu$. On the other hand, because $u$ and $v$ are different vertices, $T$  is not elementary by $(III)$. This contradiction finishes the proof of Lemma~\ref{A3.0}.
\end{proof}
\begin{LEM}\label{A3.1}
	Let $n$ be a nonnegative integer. Suppose (A1), (A2), (A2.5), (A3) hold for  all  ETTs with at most $n$ rungs, and (A3.5) holds for all  ETTs with at most $(n-1)$-rungs. Let $T_{n+1}$ be a closed ETT with ladder $T_0\subset \dots \subset T_n \subset T_{n+1}$.  Assume $T_{n+1}$ has initial index $h$, $\delta$ is a defective color of $T_{n+1}$ under $\varphi$, and $\alpha\in\phibar(x)$ where $x$ is the $(\alpha,\delta)$-exit vertex of $T_{n+1}$ with exit path $P_{ex}$. Moreover, if $\delta=\delta_h$ when $h\geq 1$, then $P_{ex}$ does not intersect any $(\gamma_h,\delta_h)$-ear of $T_h$ under $\varphi$. Let $\pi$ be a color with $\pi\in\phibar(z)$ and $z\in V(T_{n+1}(x))$ such that $\pi\notin (\{\alpha\}\cup S_h)$ ($S_h=\emptyset$ when $h=0$). Let $P_1$ and $P_2$ be two edge disjoint paths from $T_{n+1}(x)$ to  a vertex set $V$ outside of $T_{n+1}$   satisfying all of the following conditions:
	\begin{itemize}
		\item [(i)] $\varphi(P_i)\subseteq(\phibar(T_{n+1}(x))\cup\{\delta\})$, $\alpha\notin \varphi(P_i\cap G[V(T_{n+1})])$ and $\pi\notin\varphi(P_i-G[V(T_{n+1})])$ for $i\in\{1,2\}$.
		\item [(ii)] $\phibar(v)\cap(\phibar(T_{n+1}(x))\cup\{\delta\})\neq\emptyset$ for each $v\in V$.
	
	\end{itemize}
 If $x\notin V(T_h)$ and $j$ is the smallest index where $x\in T_j$, then there exists a $(T_{j}(x)-x,D_{j},\varphi)$-stable coloring $\mu$ and a closure $T$ of $T_j(x)$ under $\mu$ such that:
 
 \begin{itemize}
 	\item [(I)] $(V(P_1)\cup V(P_2))\subseteq V(T)$.
 	\item [(II)] $T$ is an ETT with the same $(j-1)$-rungs under $\mu$ as $T_j$ under $\varphi$.
 	\item [(III)] If $P_1$ and $P_2$ have different ends in $V$, then $T$ is not elementary.
 \end{itemize}
\end{LEM}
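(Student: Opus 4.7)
I plan to construct $\mu$ from $\varphi$ by a sequence of Kempe changes that rearrange colors strictly outside $T_j(x)-x$, so that (a) the ladder $T_0\subset\cdots\subset T_{j-1}\subset T_j$ is preserved (via the inductive form of (A2.5)) and (b) a TA closure of $T_j(x)$ under $\mu$ naturally absorbs $V(P_1)\cup V(P_2)$. A key initial observation is that since the ladder of $T_{n+1}$ contains $T_j$ as a segment and $x$ enters at the same step in both, we have $T_j(x)=T_{n+1}(x)$ as tree-sequences; in particular $T_j(x)$ is elementary by (A1), and all colors appearing on $P_1,P_2$ lie in $\overline{\varphi}(T_j(x))\cup\{\delta\}$, which is exactly the set of colors eligible to drive TA from $T_j(x)$ ($\delta$ enters through the $\delta$-colored boundary edge at $x$ from $P_{ex}$).

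The first step is to perform Kempe changes of $\alpha$ with $\pi$ on every $(\alpha,\pi)$-chain that meets $P_1\cup P_2$ outside $T_{n+1}$; the hypotheses $\alpha\notin\varphi(P_i\cap G[V(T_{n+1})])$ and $\pi\notin\varphi(P_i-G[V(T_{n+1})])$ guarantee that these chains are entirely outside $T_{n+1}$ and hence disjoint from $T_j(x)-x$. Because $\pi\notin\{\alpha\}\cup S_h$ and $\pi$ is missing only at the internal vertex $z$ of $T_{n+1}(x)$ (using elementariness from (A1)), these swaps do not disturb missing colors on $T_j(x)-x$ nor any color class indexed by $D_j$; so the resulting coloring is $(T_j(x)-x,D_j,\varphi)$-stable. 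By inductive application of (A2.5) to each rung $T_i$ with $i\le j-1$, this coloring is a $T_{j-1}+f_{j-1}$ mod $\varphi$ coloring; call it $\mu$. Thus (II) follows: any closure $T$ of $T_j(x)$ under $\mu$ is an ETT with the same $(j-1)$-rung ladder as $T_j$ under $\varphi$.

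For (I), I would now run TAA from $T_j(x)$ under $\mu$ and trace each $P_i$ edge-by-edge. The $\delta$-colored edge of $P_{ex}$ leaves $x$ and is absorbed first; then each subsequent edge of $P_1$ or $P_2$ carries a color lying in $\overline{\varphi}(T_j(x))=\overline{\mu}(T_j(x))$, so it remains usable by TA (the swap of $\alpha$ and $\pi$ outside $T_{n+1}$ having replaced the one problematic color, namely $\alpha$, by the usable $\pi$). Iterating, every vertex of $V(P_1)\cup V(P_2)$ is drawn into $T$. Finally, for (III): when $P_1$ and $P_2$ end at distinct vertices $u\ne v$ of $V$, hypothesis (ii) gives $\overline{\mu}(u)\cap(\overline{\mu}(T_j(x))\cup\{\delta\})\ne\emptyset$ and likewise for $v$; since $T_j(x)$ is elementary and $\delta$ is already realized at $x\in V(T_j(x))\subseteq V(T)$, at least one missing color at $u$ or $v$ is also missing inside $T$, forcing $T$ to fail to be elementary.

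The main obstacle is the first step: one must verify that the chosen Kempe swaps do not interfere with the initial rungs $T_0\subset\cdots\subset T_h$, which is exactly the reason why the hypothesis requires $\pi\notin S_h$ and (when $\delta=\delta_h$) that $P_{ex}$ avoid all $(\gamma_h,\delta_h)$-ears of $T_h$. Without these conditions, swapping $\alpha$ and $\pi$ could shift the $(\gamma_h,\delta_h)$-exit vertex of $T_h$ and break the inductive (A2.5) hypothesis, invalidating the mod-coloring conclusion. Carefully tracking exactly which $(\alpha,\pi)$-chains are swapped and checking that none of them cross $T_h$ via a $(\gamma_h,\delta_h)$-ear is the delicate point; once this is verified, the rest of the argument follows by inductive application of (A1), (A2), and (A2.5) to the smaller ETTs produced along the way.
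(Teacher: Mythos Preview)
Your proposal has a fundamental gap: it never makes $\delta$ available for TAA. Since $\delta$ is a defective color of the closed ETT $T_{n+1}$, we have $\delta\notin\overline{\varphi}(T_{n+1})\supseteq\overline{\varphi}(T_j(x))$, and your only Kempe changes swap $\alpha$ with $\pi$ outside $T_{n+1}$, so $\delta\notin\overline{\mu}(T_j(x))$ as well. Consequently TAA from $T_j(x)$ under $\mu$ cannot add any $\delta$-colored boundary edge; your assertion that ``the $\delta$-colored edge of $P_{ex}$ leaves $x$ and is absorbed first'' is simply false. You also have the roles reversed: $\alpha\in\overline{\varphi}(x)$ is already usable by TAA, while $\delta$ is the obstructive color. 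The paper's proof fixes this by first performing $\varphi_1'=\varphi/P_x(\alpha,\delta,\varphi)=\varphi/P_{ex}$, which puts $\delta$ into $\overline{\varphi_1'}(x)$ and shifts the obstruction to $\alpha$; only then are $(\pi,\alpha)$-swaps applied.

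Even with that correction, a single closure does not suffice. After the $P_{ex}$-swap and the $(\pi,\alpha)$-swaps, some $\alpha$-colored edges of $P_1\cup P_2$ may lie on the unique chain $P_z(\pi,\alpha,\varphi_1')$ that meets $T_{n+1}$ and hence cannot be recolored; these edges can block TAA before all of $V(P_1)\cup V(P_2)$ is absorbed. The paper handles this by an iterative construction: if the closure $T^1$ under $\varphi_1$ fails to absorb everything, property $(f_1)$ yields a new $(\pi,\alpha)$-exit vertex $x_2\succ x$ outside $T_{n+1}$, one picks $\alpha_2\in\overline{\varphi_1}(x_2)$, and repeats the two-step recoloring to obtain $\varphi_2$, $T^2$, and so on. Finiteness of $G$ forces termination at some $\varphi_i$ with $V(P_1)\cup V(P_2)\subseteq V(T^i)$. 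Your one-shot argument cannot substitute for this recursion. (As a smaller point, your argument for (III) is also incomplete: knowing that each end of $P_1,P_2$ misses some color in $\overline{\mu}(T_j(x))\cup\{\delta\}$ does not by itself produce a repeated missing color in $V(T)$; the paper derives non-elementariness from $(d_i)$ together with the fact that both ends lie in $T^i$.)
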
	

\begin{proof}
	Let everything be as described as in Lemma~\ref{A3.1}, and suppose $x\notin V(T_h)$ with $j$ being the smallest index where $x\in T_j$. So $j-1\geq h$. Note that $P_1$ and $P_2$ together contain at least two $\delta$ defective boundary edges of $T_{n+1}$ under $\varphi$. Moreover, $\delta\notin\phibar(T_{n+1})$ as $\delta$ is a defective color of $T_{n+1}$ and $T_{n+1}$ is closed under $\varphi$. So by assumptions of the current lemma, $\gamma_h\in\phibar(T_h)$, and the elementariness of $T_{n+1}$ by (A1), we have
	
	(1) $x\notin V(T_h)$, colors $\pi,\alpha$, $\delta$, and $\gamma_h$ (when $h\geq 1$) are all different, and $\pi\notin S_h$.
	
	We will recursively construct a series of colorings $\varphi_i$ with $i\geq 1$ until we reach the desired $\mu$. We first let $\varphi_1'=\varphi/P_x(\alpha,\delta,\varphi)$, and then construct $\varphi_1$ from $\varphi_1'$ by switching all the $\alpha$ colored edges of $P_1\cup P_2$ not in $P_z(\pi,\alpha,\varphi_1')$ to $\pi$ by Kempe changes. Let $T^1$ be a closure of $T_j(x)$ under $\varphi_1$. We claim that $\varphi_1$ satisfies the following:
	
	\begin{itemize}
		
		\item [$(a_1)$] $\varphi_1$ is $(T_j(x)-x, D_{j-1},\varphi)$-stable, and if $h\geq 1$, then all the $(\gamma_h,\delta_h)$-ears of $T_h$ under $\varphi$ are the same as under $\varphi_1$.
		
		\item [$(b_1)$] $T^1$ is an elementary ETT with the same $(j-1)$-rungs under $\varphi_1$ as $T_j$ under $\varphi$.
		
		\item [$(c_1)$] $\varphi_1(P_1\cup P_2)\subseteq\phibar_1(T^1(x))\cup\{\alpha\}$.

		\item [$(d_1)$] $\phibar_1(v)\cap(\phibar_1(T^1(x))\cup\{\alpha\})\neq\emptyset$ for each vertex $v\in V$.
		
		\item [$(e_1)$] All the $\alpha$ colored edges of $P_1\cup P_2$ under $\varphi_1$ belong to $P_z(\pi,\alpha,\varphi_1)$ after the vertex $x$ along the direction from $z$.
		
		\item [$(f_1)$] If  $(V(P_1)\cup V(P_2))- V(T^1)\neq\emptyset$, then $\alpha\notin\phibar_1(T^1)$ and $T^1$ has a $(\pi,\alpha)$-exit vertex $x_2$ in $P_z(\pi,\alpha,\varphi_1)$ with $x\prec x_2$ along $T^1$, and the corresponding $(\pi,\alpha)$-exit path does not share vertex with $T_{n+1}$.
	\end{itemize}

By (A1) and (A2) on $T_h$ under $\varphi$, we have

(2) if $h\geq 1$, under $\varphi$ all the $\delta_h=\delta_n$ colored defective edges of $T_h$ are contained in some $(\gamma_h,\delta_h)$-ear of $T_h$ except for the $(\gamma_h,\delta_h)$-exit edge. 

Clearly $\varphi_1'=\varphi/P_x(\alpha,\delta,\varphi)$ is $(T_j(x)-x, D_{j-1},\varphi)$-stable, because $P_x(\alpha,\delta,\varphi)\cap T_{n+1}=\{x\}$. As $T_{n+1}$ is closed for both $\alpha$ and $\pi$ under $\varphi$, all the $(\pi,\alpha)$-chains intersecting $T_{n+1}$ under $\varphi_1'$ are contained in $G[V(T_{n+1})]$ except for $P_z(\pi,\alpha,\varphi_1')$ which contains $x$ who is the only $(\pi,\alpha)$-exit of $T_{n+1}$ under $\varphi_1'$, and  all vertices in $P_z(\pi,\alpha,\varphi_1')$ from $z$ to $x$ are contained in $T_{n+1}$.  Moreover, by (i) in the statement of the current lemma, (1), and  $P_x(\alpha,\delta,\varphi)\cap T_{n+1}=\{x\}$, we see that $\pi$ is not used on edges of $P_1\cup P_2-G[V(T_{n+1})]$ and $\alpha$ is not used on edges of $(P_1\cup P_2)\cap G[V(T_{n+1})]$ under $\varphi_1'$. Therefore, since all vertices in $P_z(\pi,\alpha,\varphi_1')$ from $z$ to $x$ are contained in $T_{n+1}$, none of the $\alpha$ colored edges of $P_1\cup P_2$ under $\varphi_1'$ is in the subpath of $P_z(\pi,\alpha,\varphi_1')$ from $z$ to $x$. Consequently, we have $(e_1)$ because we switch all the $\alpha$ colored edges of $P_1\cup P_2$ not in $P_z(\pi,\alpha,\varphi_1')$ to $\pi$ by Kempe changes, and none of these $(\pi,\alpha)$-chains  intersects $T_{n+1}$ under $\varphi_1'$. It also follows that

(3) $\varphi_1$ is also $(T_j(x)-x,D_{j-1},\varphi)$-stable.

To prove $(a_1)$, by (3), we may assume $h\geq 1$. Since $\alpha\neq\gamma_h$, $\delta\neq\gamma_h$ and $\pi\notin S_h$ following (1), by (3) and the construction of $\varphi_1$, we have $(a_1)$ if $\delta\neq\delta_h$ and $\alpha\neq\delta_h$. If $\alpha=\delta_h$, then all the $(\gamma_h,\delta_h)$-ears of $T_h$ under $\varphi$ are contained in $G[V(T_{n+1})]$ by TAA, so still we have $(a_1)$ from (3) as $P_x(\alpha,\delta,\varphi)\cap T_{n+1}=\{x\}$ and none of the $(\pi,\alpha)$-chains we switched by Kempe changes from $\varphi_1'$ to $\varphi_1$ intersects $T_{n+1}$ (see the paragraph above (3)). So we assume $\delta=\delta_h$. In this case, we have $\alpha\neq\delta_h$, and by the statement of the current lemma (if $\delta=\delta_h$ when $h\geq 1$, then $P_{ex}$ does not intersect any $(\gamma_h,\delta_h)$-ear of $T_h$ under $\varphi$), we see that all the $(\gamma_h,\delta_h)$-ears of $T_h$ under $\varphi$ are colored the same as under $\varphi_1'$. Since  $\alpha\neq\delta_h$ and $\pi\notin S_h$, we see that all the $(\gamma_h,\delta_h)$-ears of $T_h$ under $\varphi_{1}$ are the same as under $\varphi_{1}'$. Thus we have $(a_1)$ following (3).


By $(a_1)$ and (2), we see that $T_h$ has the same $(\gamma_h,\delta_h)$-exit vertex under $\varphi_1$ as under $\varphi$ if $h\geq 1$.  Since $\varphi_1$ is $(T_j(x)-x, D_{j-1},\varphi)$-stable and $x\notin T_{j-1}$, we see that $\varphi_1$ is also $(T_{j-1}, D_{j-1},\varphi)$-stable, and thus is $T_{j-1}+f_{j-1}$ mod $\varphi$ following (A2.5), and therefore $T_j(x)$ is an ETT with $(j-1)$-rungs under $\varphi_1$ by Corollary~\ref{vstable}. Since $T^1$ is a closure of $T_j(x)$ under $\varphi_1$ and $j-1\leq n$, we have $(b_1)$ by (A1) on $T^1$.

$(c_1)$ and $(d_1)$ follow from (1) and assumpions $(ii)$, $(iii)$ of $P$ and the construction of $\varphi_1$.

To prove $(f_1)$, we assume that $(V(P_1)\cup V(P_2))- V(T^1)\neq\emptyset$. If some vertex $v$ in $V$ is contained in $V(T^1)$, then $\alpha\in\phibar_1(v)$ by the fact that $V\cap V(T_{j}(x))=\emptyset$, $(a_1)$, $(b_1)$ and $(d_1)$. Since both $P_1$ and $P_2$ have an end in $V(T_{n+1}(x))$,  by $(c_1)$ and TAA, we have $(V(P_1)\cup V(P_2))\subseteq V(T^1)$, a contradiction to our recent assumption. So both $P_1$ and $P_2$ have an end outside of $T^1$, and therefore the edge disjoint paths $P_1$ and $P_2$ give us (at least) two $\alpha$ colored boundary edges $f$ and $g$ of $T^1$ colored by $\alpha$ following $(c_1)$. Moreover, $\alpha\notin\phibar_1(T^1)$. So by $(e_1)$, one of $f$ and $g$ is incident to a vertex in $T^1$ other than $x$ and it is after $x$ along $P_z(\pi,\alpha,\varphi_1)$ from the direction starting at $z$. Since $\alpha\notin\phibar_1(T^1)$ and $T^1$ is elementary by $(b_1)$, we see that $T^1$ has a $(\pi,\alpha)$-exit vertex $x_2$ in $P_z(\pi,\alpha,\varphi_1)$ such that $x_2$ is after $x$ along  $P_z(\pi,\alpha,\varphi_1)$ from the direction starting at $z$. Because $P_z(\pi,\alpha,\varphi_1)$ and $T_{n+1}$ only share vertices on the subpath of $P_z(\pi,\alpha,\varphi_1)$ from $z$ to $x$ (which is also $P_z(\pi,\alpha,\varphi)$) by the construction of $\varphi_1$ ($T_{n+1}$ is closed for $\pi$ and it only has one boundary edge colored by $\alpha$ which is the edge incident to $x$ under $\varphi_1$), we see that $x_2\notin V(T_{n+1})$ and the corresponding $(\pi,\alpha)$-exit path of $T^1$ under $\varphi_1$ does not share vertex with $T_{n+1}$. So $x\prec x_2$ along $T^1$ and $(f_1)$ is established. 

Now we have as claimed. If $(V(P_1)\cup V(P_2))\subseteq V(T^1)$, we stop with $\mu=\varphi_{1}$ and $T=T^1$. We then have (I), (II), and $\mu$ is $(T_j(x)-x,D_j,\varphi)$-stable by $(V(P_1)\cup V(P_2))\subseteq V(T^1)$, $(b_1)$ and $(a_1)$. If  $P_1$ and $P_2$ have different ends in $V$, then both ends of $P_1$ and $P_2$ in $V$ are contained in $T^1$, and therefore $T^1$ is not elementary by $(d_1)$ and the fact that $V\cap V(T_j(x))=\emptyset$. Thus we have (III). Note that actually $P_1$ and $P_2$ can't have different ends in this case, as otherwise we have a contradiction to $(b_1)$. If $(V(P_1)\cup V(P_2))- V(T^1)\neq\emptyset$, we will in the following fashion recursively construct $\varphi_i$ with $i\geq 2$ from $\varphi_{i-1}$ and $x_i$ with an arbitrary color $\alpha_i\in\phibar_{i-1}(x_i)$,  and have the following claim similarly as above with $T^i$ being a closure of $T^{i-1}(x_i)$ under $\varphi_i$. Note that $\varphi_1$ also satisfies the claim below with $\alpha_1=\alpha$, $x_1=x$, $T^0=T_{j}$, and $\varphi_0=\varphi$.

\begin{itemize}
	
	\item [$(a_i)$] $\varphi_i$ is $(T^{i-1}(x_i)-x_i, D_{j-1},\varphi_{i-1})$-stable, and if $h\geq 1$, then all the $(\alpha_h,\delta_h)$-ears of $T_h$ under $\varphi_{i-1}$ are the same as under $\varphi_i$.
	
	\item [$(b_i)$] $T^i$ is an elementary ETT with the same $(j-1)$-rungs under $\varphi_i$ as $T_j$ under $\varphi$.
	
	\item [$(c_i)$] $\varphi_i(P_1\cup P_2)\subseteq\phibar_i(T^i(x_i))\cup\{\alpha_i\}$.

	\item [$(d_i)$] $\phibar_i(v)\cap(\phibar_i(T^i(x_{i-1}))\cup\{\alpha_i\})\neq\emptyset$ for each vertex $v\in V$.
	
	\item [$(e_i)$] All the $\alpha_i$ colored edges of $P_1\cup P_2$ under $\varphi_i$ belong to $P_z(\pi,\alpha_i,\varphi_i)$ after the vertex $x_i$ along the direction from $z$.
	
	\item [$(f_i)$] If  $(V(P_1)\cup V(P_2))- V(T^i)\neq\emptyset$, then $\alpha_i\notin\phibar_i(T^i)$ and $T^i$ has a $(\pi,\alpha_i)$-exit vertex $x_{i+1}$ in $P_z(\pi,\alpha_i,\varphi_i)$ with $x_i\prec x_{i+1}$ along $T^i$, and the corresponding $(\pi,\alpha_i)$-exit path does not share vertex with $T^{i-1}$.
\end{itemize}

Here is the detailed construction. Suppose $\varphi_{i-1}$ ($i\geq 2$) is constructed with the above claims $(a_{i-1})-(f_{i-1})$.  We first let $\varphi'=\varphi_{i-1}/(G-T^{i-1},\alpha_i,\pi)$, and then $\varphi_i'=\varphi'/P_{x_i}(\alpha_i,\alpha_{i-1},\varphi')$. Finally,   we construct $\varphi_i$ from $\varphi_i'$ by switching all the $\alpha_i$ colored edges of $P_1\cup P_2$ not in $P_z(\pi,\alpha_i,\varphi_i')$ to $\pi$ by Kempe changes. Let $T^i$ be a closure of $T^{i-1}(x_i)$ under $\varphi_i$. 

We will then prove the above claim inductively. By $(f_{k})$ for all $1\leq k\leq i-1$ and (1), we see that $x\prec x_2\prec ...\prec x_i$ along $T^{i-1}$, and therefore $x_i\notin T_h\subseteq T_{j-1}$ by (1) and $(b_{i-1})$. Following (1), $(a_{i-1})$, $(b_{i-1})$, $(c_{i-1})$, and $(d_{i-1})$, we have 

(4) $\alpha_i\neq \alpha_{i-1}$, $\pi,\alpha_{i}$, $\gamma_h$ (when $h\geq 1$), and $\alpha_{i-1}$ are all different, and $\pi\notin  S_h$.

Clearly $\varphi'$ is $(T^{i-1},D_{j-1},\varphi_{i-1})$-stable as $T^{i-1}$ is closed for both $\alpha_i$ and $\pi$ under $\varphi_{i-1}$ and $\varphi'=\varphi_{i-1}/(G-T^{i-1},\alpha_i,\pi)$. By $(f_{i-1})$, we see that $P_{x_i}(\alpha_i,\alpha_{i-1},\varphi')\cap T^{i-1}=\{x_i\}$. So $\varphi_i'$ is  $(T^{i-1}(x_i)-x_i,D_{j-1},\varphi_{i-1})$-stable. As $T^{i-1}$ is closed for both $\alpha_i$ and $\pi$ under $\varphi_{i-1}$, all the $(\pi,\alpha_i)$-chains intersecting $T^{i-1}$ under $\varphi_i'$ are contained in $G[V(T^{i-1})]$ except for $P_z(\pi,\alpha_i,\varphi_i')$ which contains $x_i$ who is the only $(\pi,\alpha_i)$-exit of $T^{i-1}$ under $\varphi_i'$, and all vertices in $P_z(\pi,\alpha_i,\varphi_i')$ from $z$ to $x_i$ are contained in $T^{i-1}$.  Moreover, since $\alpha_i\neq\alpha_{i-1}$ by (4) and $x_i\prec x_{i+1}$ by $(f_{i-1})$, and $T^i$ is elementary by $(b_{i-1})$, we have $\alpha_i\notin (\phibar_{i-1}(T^{i-1}(x_i))\cup\{\alpha_{i-1}\}$. So $\alpha_i\notin\varphi_{i-1}(P_1\cup P_2)$ by $(c_{i-1})$. Thus $\pi$ is not used on edges of $P_1\cup P_2-G[V(T^{i-1})]$ and $\alpha_i$ is not used on edges of $(P_1\cup P_2)\cap G[V(T^{i-1})]$ under $\varphi'$ and $\varphi_i'$. Therefore, since all vertices in $P_z(\pi,\alpha_i,\varphi_i')$ from $z$ to $x_i$ are contained in $T^{i-1}$, none of the $\alpha_i$ colored edges of $P_1\cup P_2$ under $\varphi_i'$ is in the subpath of $P_z(\pi,\alpha_i,\varphi_i')$ from $z$ to $x_i$. Consequently, we have $(e_i)$ as we switch all the $\alpha_i$ colored edges of $P_1\cup P_2$ not in $P_z(\pi,\alpha_i,\varphi_i')$ to $\pi$ by Kempe changes, and none of these $(\pi,\alpha_i)$-chains  intersect $T^{i-1}$ under $\varphi_i'$. Furthermore,

(5) $\varphi_i$ is also $(T^{i-1}(x_i)-x_i,D_{j-1},\varphi_{i-1})$-stable.

To prove $(a_i)$, by (5), we may assume $h\geq 1$. Since $\alpha_i\neq\gamma_h$, $\alpha_{i-1}\neq\gamma_h$ and $\pi\notin S_h$ following (4), by $(a_{i-1})$, (5) and the construction of $\varphi_i$, we have $(a_i)$ if $\alpha_i\neq\delta_h$ and $\alpha_{i-1}\neq\delta_h$. If $\alpha_i=\delta_h$, then all the $(\gamma_h,\delta_h)$-ears of $T_h$ under $\varphi_{i-1}$ are contained in $G[V(T^{i-1})]$ by TAA, so still we have $(a_i)$ from $(a_{i-1})$ and (5) as $P_{x_i}(\alpha_i,\alpha_{i-1},\varphi')\cap T^{i-1}=\{x_i\}$ and  none of the $(\pi,\alpha_i)$-chains we switched by Kempe changes from $\varphi_i'$ to $\varphi_i$ intersects $T^{i-1}$ (see the paragraph above (5)). So we may assume $\alpha_{i-1}=\delta_h$. In this case, we have $\alpha_{i}\neq\delta_h$ and all the $(\gamma_h,\delta_h)$-ears of $T_h$ under $\varphi_{i-1}$ are the same as under $\varphi'$. Moreover, all the $(\gamma_h,\delta_h)$-ears of $T_h$ are contained in $G[V(T^{i-2})]$ under $\varphi_{i-2}$ as $\alpha_{i-1}=\delta_h\in\phibar_{i-2}(T^{i-2})$, and they stay the same as under $\varphi_{i-1}$ by $(a_{i-1})$. Since  $P_{x_i}(\alpha_i,\alpha_{i-1},\varphi')$ is the same path as the $(\pi,\alpha_{i-1})$-exit path of $T^{i-1}$ under $\varphi_{i-1}$, it doesn't share vertex with $T^{i-2}$ by $(f_{i-1})$. So all the $(\gamma_h,\delta_h)$-ears of $T_h$ under $\varphi_{i-1}$ stay the same as under $\varphi_i'$. Finally, since  $\alpha_{i}\neq\delta_h$ ($\alpha_{i-1}=\delta_h$) and $\pi\notin S_h$, we see that all the $(\gamma_h,\delta_h)$-ears of $T_h$ under $\varphi_{i}$ are the same as under $\varphi_{i}'$. Thus we have $(a_i)$ by $(a_{i-1})$ and (5).

By (A2)  and $(b_{i-1})$, we have

(6) if $h\geq 1$, then all the $\delta_h=\delta_n$ colored defective edges of $T_h$ under $\varphi_{i-1}$ are contained in some $(\gamma_h,\delta_h)$-ear of $T_h$ except for the $(\gamma_h,\delta_h)$-exit edge. 

By $(a_i)$ and (6), we see that $T_h$ has the same $(\gamma_h,\delta_h)$-exit vertex under $\varphi_i$ as under $\varphi_{i-1}$ if $h\geq 1$.  By $(f_{k})$ for all $1\leq k\leq i-1$, we have $x_i\notin T_{j-1}$.  Since $\varphi_i$ is $(T^{i-1}(x_i)-x_i, D_{j-1},\varphi_{i-1})$-stable and $x_i\notin T_{j-1}$, we see that $\varphi_i$ is also $(T_{j-1}, D_{j-1},\varphi_{i-1})$-stable ($T^{i-1}$ has the same rung under $\varphi_{i-1}$ as $T_j$ under $\varphi$ by $(b_{i-1})$), and thus is a $T_{j-1}+f_{j-1}$ mod $\varphi_{i-1}$ coloring by (A2.5). Therefore, $T^{i-1}(x_i)$ is an ETT with the same $(j-1)$-rungs under $\varphi_i$ as $T^{i-1}$ under $\varphi_{i-1}$ following Corollary~\ref{vstable}. Since $T^i$ is a closure of $T^{i-1}(x_i)$ under $\varphi_i$ and $j-1\leq n$, we have $(b_i)$ by (A1) on $T^i$ and $(b_{i-1})$.

The rest parts of the claim above are essentially the same as in the proof of the earlier claim for $\varphi_1$ with changed index, here we omit the details.

Now with the above claim, we see that we can always recursively construct $\varphi_{i}$ if $(V(P_1)\cup V(P_2))- V(T^i)\neq\emptyset$  with a new vertex $x_{i+1}$ with $x_1\prec x_2\prec... \prec x_{i+1}$ along $T^i$ by $(f_{k})$ for all $1\leq k\leq i$ . But since we have a finite graph, at a certain point we have to stop at some $\varphi_i$ with $(V(P_1)\cup V(P_2))\subseteq T^i$. We denote the corresponding coloring $\varphi_i$ by $\mu$ and $T^i$ by $T$, respectively. Then (I) and (II) are satisfied by $\mu$ and $T$ following $(b_i)$, and $\mu$ is $(T_{j}(x)-x,D_{j},\varphi)$-stable by $(a_k)$ for all $1\leq k\leq i$. If  $P_1$ and $P_2$ have different ends in $V$, then both ends of $P_1$ and $P_2$ in $V$ are contained in $T^i$, and therefore $T^i$ is not elementary by $(d_i)$ and the fact that $V\cap V(T_j(x))=\emptyset$. Thus we have (III). Note that this is a contradiction to $(b_i)$, and this contradiction is the key to the proof of Lemma~\ref{A3}. 

\end{proof}
\begin{LEM}\label{A3.2}
	Let $n$ be a nonnegative integer. Suppose (A1), (A2), (A2.5), (A3) hold for  all  ETTs with at most $n$ rungs, and (A3.5) holds for all  ETTs with at most $(n-1)$-rungs. Let $T_{n+1}$ be a closed ETT with ladder $T_0\subset \dots \subset T_n \subset T_{n+1}$.  Assume $T_{n+1}$ is $\RE$ finished with initial index $h$, $\delta$ is a defective color of $T_{n+1}$ under $\varphi$, and $\alpha\in\phibar(x)$ where $x$ is the $(\alpha,\delta)$-exit vertex of $T_{n+1}$ with exit path $P_{ex}$ and $x\notin V(T_h)$. Suppose there's an $(\alpha,\delta)$-ear $D$ of $T_{n+1}$ with one root  $y$ such that $y\prec x$ along $T_{n+1}$. If $\tau\in\phibar(T_{n+1}(x))$ with $\tau\notin S_h$ and $\eta$ is an arbitrary color where $\eta\notin\phibar(T_{n+1}(x))$ and colors $\alpha,\eta,\delta$ and $\tau$ are all different, then there does not exist a path $P$ disjoint from $T_{n+1}$ with two ends $u$, $v$ and a vertex $w\in  V(P)$ with $w\notin\{u,v\}$  satisfying all of the following conditions:
	\begin{itemize}
		\item [(i)] $P$ shares edges with both $P_{ex}$ and $D$, and $P$ is vertex disjoint from $T_{n+1}$.
		\item [(ii)] $\varphi(P)\subseteq\{\tau,\eta,\delta\}$
		\item [(iii)] $\delta\in\phibar(w)$, $\phibar(u)\cap\{\eta,\tau,\delta\}\neq\emptyset$, and $\phibar(v)\cap\{\eta,\tau,\delta\}\neq\emptyset$.
		\item [(iv)] All the $\eta$ colored edges of $P$ belong to an $(\eta,\tau)$-path $P_1=P_r(\eta,\tau,\varphi)$ containing the vertex $w\in V(P)$ with one end $r\in T_{n+1}(x)$ and $\tau\in\phibar(r)$, and all of these $\eta$ colored edges of $P$ are in the subpath $P_1[w,r']$ of $P_1$, where $r'$ is the other end of $P_1$ and $P_1[w,r']$ does not intersect $T_{n+1}$.
	\end{itemize}
\end{LEM}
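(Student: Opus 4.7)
The plan is to assume such a path $P$ exists and derive a contradiction via Lemma~\ref{A3.1}, extending the strategy of Lemma~\ref{A3.0} to handle the additional color $\eta$.

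First, I construct two edge-disjoint paths from $T_{n+1}(x)$ to the outside. Since $\varphi(P)\subseteq\{\tau,\eta,\delta\}$ and $\alpha\notin\{\tau,\eta,\delta\}$, any edge shared by $P$ with $P_{ex}$ or $D$ (both subpaths of $(\alpha,\delta)$-chains) must be $\delta$-colored. By (A2) applied to $T_{n+1}$ there is at most one $(\alpha,\delta)$-path intersecting $T_{n+1}$, so the $(\alpha,\delta)$-chains containing $P_{ex}$ and $D$ are distinct and hence vertex-disjoint. Mimicking the construction in Lemma~\ref{A3.0}, pick vertices $u^*\in V(P)\cap V(P_{ex})$ and $v^*\in V(P)\cap V(D)$ so that
$$Q_1:=P_{ex}(x,u^*)\cup P(u^*,u)\quad\text{and}\quad Q_2:=D(y,v^*)\cup P(v^*,v)$$
are edge-disjoint paths from $T_{n+1}(x)$ (beginning at $x$ and $y$ respectively) to the set $V:=\{u,v\}$, both lying outside $T_{n+1}$ by condition (i).

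The obstruction to directly invoking Lemma~\ref{A3.1} is that $\eta$-colored edges may occur on $Q_1,Q_2$ while $\eta\notin\phibar(T_{n+1}(x))$. To resolve this, perform the Kempe change $\mu:=\varphi/P_1$ along the full $(\eta,\tau)$-path $P_1=P_r(\eta,\tau,\varphi)$. By hypothesis (iv) every $\eta$-edge of $P$ lies in $P_1(w,r')$, which is disjoint from $T_{n+1}$; so under $\mu$ all these $\eta$-edges become $\tau$-colored, while $P_{ex}$ and $D$ are untouched (their colors are $\{\alpha,\delta\}$). The only missing-color change inside $T_{n+1}$ is at $r\in T_{n+1}(x)$, where $\tau$ leaves and $\eta$ enters. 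Using $\eta\notin\phibar(T_{n+1}(x))$ under $\varphi$ together with the elementariness of $T_{n+1}$ from (A1), the set $T_{n+1}(x)$ remains elementary under $\mu$; using $\tau\notin S_h$ and the distinctness of $\alpha,\eta,\delta,\tau$, one verifies via (A2) and (A2.5) that $T_{n+1}$ persists as a closed, $RE$-finished ETT under $\mu$ with the same exit path $P_{ex}$, the same ear $D$, and the same defective color $\delta$, and that $T_j(x)$ (with $j$ the smallest index such that $x\in T_j$) is an ETT with the same $(j-1)$-rungs under $\mu$ as under $\varphi$.

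I then apply Lemma~\ref{A3.1} under $\mu$ to $Q_1,Q_2$. Choose $\pi\in\mubar(T_{n+1}(x))\setminus(\{\alpha\}\cup S_h\cup(\mu(Q_1\cup Q_2)\setminus\{\delta\}))$; such a choice exists because $|\phibar(T_1)|\geq 5$ by Lemma~\ref{size} and both ends of $e$ miss at least two colors, which leaves enough room to avoid the listed finite set of at most $O(1)$ colors. Conditions (i)--(ii) of Lemma~\ref{A3.1} are then immediate: all edges of $Q_1,Q_2$ outside $T_{n+1}$ carry colors in $\{\alpha,\delta,\tau,\eta\}\subseteq\mubar(T_{n+1}(x))\cup\{\delta\}$ under $\mu$, and the endpoints $u,v$ miss some color in $\{\eta,\tau,\delta\}$ by (iii) (possibly shifted by the Kempe change, but still intersecting $\mubar(T_{n+1}(x))\cup\{\delta\}$). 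The lemma yields a closure $T$ of $T_j(x)$ (an ETT of $j-1\leq n$ rungs) with $V(Q_1)\cup V(Q_2)\subseteq V(T)$. Since $u\neq v$ are distinct ends in $V$, conclusion (III) forces $T$ to be non-elementary, contradicting (A1) applied to $T$; this contradiction completes the proof.

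The main obstacle is the middle paragraph: showing carefully that the $(\eta,\tau)$-Kempe change along $P_1$ preserves the entire ladder structure of $T_{n+1}$ and the elementariness of $T_{n+1}(x)$. The delicate point is that the interior portion $P_1(r,w)$ is the only part of $P_1$ that could enter $T_{n+1}$, and one must argue that its traversal does not disturb the connecting edges, connecting colors, or $(\gamma_i,\delta_i)$-ears of any earlier Tashkinov-series step --- this is precisely where the hypotheses $\tau\notin S_h$ and the restrictive structural placement of $P_1(w,r')$ from condition (iv) are both essential.
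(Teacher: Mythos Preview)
Your proposal has a genuine gap, located exactly where you flag the ``delicate point.'' After the Kempe change $\mu=\varphi/P_1$ you correctly observe that at $r$ the color $\tau$ leaves and $\eta$ enters $\mubar(r)$. But by (A1) the set $V(T_{n+1})$ is elementary under $\varphi$, so $r$ is the \emph{unique} vertex of $T_{n+1}$ missing $\tau$; hence $\tau\notin\mubar(T_{n+1}(x))$. Your later assertion that $\{\alpha,\delta,\tau,\eta\}\subseteq\mubar(T_{n+1}(x))\cup\{\delta\}$ therefore fails for $\tau$. And $\tau$ does occur on $Q_1,Q_2$ under $\mu$: the $\eta$-edges of $P$ (all on $P_1$ by (iv)) become $\tau$-edges, and any $\tau$-edges of $P$ not on $P_1$ stay $\tau$-colored. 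So hypothesis (i) of Lemma~\ref{A3.1} is not satisfied. There is also a structural problem: $r$ may lie in $T_n$ (even in $T_1$), and $P_1(r,w)$ may recolor edges incident to $T_n$, so $\mu$ is in general not $(T_n,D_n,\varphi)$-stable. Your invocations of (A2) and (A2.5) both presuppose stability hypotheses that $\mu$ does not meet, so you cannot conclude that $T_{n+1}$, or even $T_n$, remains an ETT under $\mu$.

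The paper avoids both issues by never performing a Kempe change that alters a missing color inside the current ETT. Instead it first flips $P_{ex}$ and then runs two nested iterations: a first family $(\varphi_i,T^i)$ based at successive exit vertices $x_i$ using the color pair $(\alpha_i,\pi)$, and once that stalls against the $\eta$-structure, a second family $(\mu_l,T^l_\mu)$ based at vertices $y_l$ using the pair $(\beta_l,\tau)$. At every step the swap is performed only outside the current closed tree, so $(T^{i-1}(x_i)-x_i,D_{j-1},\cdot)$-stability and the $T_{j-1}+f_{j-1}$ mod property are maintained via (A2.5) and Corollary~\ref{vstable}. The second iteration is precisely what absorbs the residual $\eta$-colored edges controlled by condition (iv) before a final invocation of Lemma~\ref{A3.1}; a single Kempe change along $P_1$ cannot substitute for this two-phase process.
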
	
\begin{proof}
Let everything be as described as in Lemma~\ref{A3.2}, and assume otherwise there is a path $P$ disjoint from $T_n$ with two ends $u$, $w$ and a vertex $v\in  V(P)$ with $v\notin\{u,w\}$  satisfying conditions (i)-(iv). By the assumption in the statement of the current lemma, we have    

(1) $x\notin V(T_h)$.
 

By (A1) and (A2) on $T_h$ under $\varphi$, we have

(2) if $h\geq 1$, under $\varphi$ all the $\delta_h=\delta_n$ colored defective edges of $T_h$ are contained in some $(\gamma_h,\delta_h)$-ear of $T_h$ except for the $(\gamma_h,\delta_h)$-exit edge. 

Following (i) in the structure of $P$ (see the last paragraph above Lemma~\ref{A3.1}), we see that 

(3) There are two edge disjoint paths $P_2$ and $P_3$ in $P\cup P_{ex}\cup D$ from $\{x,y\}$ to $u$ and $v$, respectively. 

Since $y\prec x$, we see that the uncolored edge $e\in T_{n+1}(x)$. So $|\phibar(T_{n+1}(x))|\geq 4$. Because  $|\phibar(T_1)|\geq 5$ by Lemma~\ref{size}, follwoing (1) and the assumption of the current lemma,  we have

(4) There exits a color $\pi\in\phibar(z)$ with $z\in V(T_{n+1}(x))$ such that $\pi,\alpha,\eta,\delta$,$\gamma_h$ (when $h\geq 1$) and $\tau$ are all different, and $\pi,\eta\notin S_h$.

This is because we can pick $\pi$ differently from $\alpha$ and $\tau$ as $|\phibar(T_{n+1}(x))|\geq 4$, and $\pi\notin\{\tau,\delta\}$ as $\delta,\eta\notin\phibar(T_{n+1})$ ($\delta$ is a defective color of $T_{n+1}$ and $T_{n+1}$ is closed).

Similarly as in the proof of Lemma~\ref{A3.1}, we will construct a series of colorings $\varphi_i$. We first let $\varphi_1'=\varphi/P_x(\alpha,\delta,\varphi)=\varphi/P_{ex}$, and then construct $\varphi_1$ from $\varphi_1'$ by switching all the $\alpha$ colored edges of $P\cup P_{ex}\cup D$ not in $P_z(\pi,\alpha,\varphi_1')$ to $\pi$ by Kempe changes. Let $j$ be the smallest index such that $x\in T_j$. So $j-1\geq h$ by (1). Let $T^1$ be a closure of $T_j(x)$ under $\varphi_1$. We claim that $\varphi_1$ satisfies the following:

\begin{itemize}
	
	\item [$(a_1)$] $\varphi_1$ is $(T_j(x)-x, D_{j-1},\varphi)$-stable, and if $h\geq 1$, then all the $(\gamma_h,\delta_h)$-ears of $T_h$ under $\varphi$ are the same as under $\varphi_1$.

\item [$(b_1)$] $T^1$ is an elementary ETT with the same $j-1$-rungs under $\varphi_1$ as $T_j$ under $\varphi$.

\item [$(c_1)$] $\varphi_1(P \cup P_{ex}\cup D)\subseteq\phibar_1(T^1(x))\cup\{\alpha,\eta\}$, and the set of $\eta$ colored edge and the set of $\tau$ colored edges are the same under $\varphi_1$ as under $\varphi$.

\item [$(d_1)$] $\phibar_1(w)\cap(\phibar_1(T^1(x))\cup\{\alpha\})\neq\emptyset$, $\phibar_1(u)\cap(\phibar_1(T^1(x))\cup\{\eta,\alpha\})\neq\emptyset$, and $\phibar_1(v)\cap(\phibar_1(T^1(x))\cup\{\eta,\alpha\})\neq\emptyset$.

\item [$(e_1)$] All the $\alpha$ colored edges of $P\cup P_{ex}\cup D$ under $\varphi_1$ belong to $P_z(\pi,\alpha,\varphi_1)$ after the vertex $x$ along the direction from $z$.

\item [$(f_1)$] If $\eta\notin\phibar_1(T^1)$ and $P_1$ has an $(\eta,\tau)$-exit vertex of $T^1$ in $P_1[r,w]-\{w\}$, then $\alpha\notin\phibar_1(T^1)$ and $T^1$ has a $(\pi,\alpha)$-exit vertex $x_2$ in $P_z(\pi,\alpha,\varphi_1)$ with $x\prec x_2$ along $T^1$, and the corresponding $(\pi,\alpha)$-exit path does not share vertex with $T_{n+1}$.
\end{itemize}

Clearly $\varphi_1'=\varphi/P_x(\alpha,\delta,\varphi)$ is $(T_j(x)-x, D_{j-1},\varphi)$-stable, because $P_x(\alpha,\delta,\varphi)\cap T_{n+1}=\{x\}$. Moreover, since $T_{n+1}$ is $\RE$ finished, all the $(\gamma_h,\delta_h)$-ears of $T_h$ under $\varphi$ are contained in $G[V(T_{n+1})]$ following Algorithm 3.1. So all the $(\gamma_h,\delta_h)$-ears of $T_h$ under $\varphi$ are colored the same under $\varphi_1'$ if $h\geq 1$. As $T_{n+1}$ is closed for both $\alpha$ and $\pi$ under $\varphi$, all the $(\pi,\alpha)$-chains intersect $T_{n+1}$ under $\varphi_1'$ are contained in $G[V(T_{n+1})]$ except for $P_z(\pi,\alpha,\varphi_1')$ which contains $x$ who is the only $(\pi,\alpha)$-exit of $T_{n+1}$ under $\varphi_1'$, and all vertices in $P_z(\pi,\alpha,\varphi_1')$ from $z$ to $x$ are contained in $T_{n+1}$.  So none of the $(\pi,\alpha)$-chains we switched by Kempe changes from $\varphi_1'$ to $\varphi_1$ intersect $T_{n+1}$ and none of the $\alpha$ colored edges of $P\cup P_{ex}\cup D$ under $\varphi_1'$ is in the subpath of $P_z(\pi,\alpha,\varphi_1')$ from $z$ to $x$. Thus we have $(a_1)$. Moreover, since $\pi\notin\varphi(P\cup P_{ex} \cup D)$ by the assumptions in the statement of the current Lemma, we have $(e_1)$.

By $(a_1)$ and (2), we see that $T_h$ has the same $(\gamma_h,\delta_h)$-exit vertex under $\varphi_1$ as under $\varphi$ if $h\geq 1$.  Since $\varphi_1$ is $(T_j(x)-x, D_{j-1},\varphi)$-stable and $x\notin T_{j-1}$, we see that $\varphi_1$ is also $(T_{j-1}, D_{j-1},\varphi)$-stable, and thus is $T_{j-1}+f_{j-1}$ mod $\varphi$ following (A2.5), and therefore $T_j(x)$ is an ETT with $j-1$-rungs under $\varphi_1$ following Corollary~\ref{vstable}. Since $T^1$ is a closure of $T_j(x)$ under $\varphi_1$ and $j-1\leq n$, we have $(b_1)$ by (A1) on $T^1$.

$(c_1)$ and $(d_1)$ follow from the construction of $\varphi_1$, (4), and assumptions in the current lemma.

Note that if $\eta\notin\phibar_1(T^1)$, then because $T^1$ is elementary by $(b_1)$,  we see that $P_1$ must have an $(\eta,\tau)$-exit vertex of $T^1$ following $(c_1)$. To prove $(f_1)$, we may assume that $\eta\notin\phibar_1(T^1)$ and $P_1$ has an $(\eta,\tau)$-exit vertex of $T^1$ in $P_1[r,w]-\{w\}$. 
Since $\varphi(P_{ex}\cup D)\subseteq\{\alpha,\delta\}$, by $(iv)$ of $P$, (4), and $(c_1)$, all the $\eta$ colored edges of $P_2$ and $P_3$ (see (3)) are in $P_1$ after $w$ along the direction from $r$. Because the $(\eta,\tau)$-exit vertex of $T^1$ is in $P_1[r,w]-\{w\}$, we see that

(5) None of the $\eta$ colored edges of $P_2$ and $P_3$ are in $E(G[V(T^1)])\cup\partial(T_1)$.

Following $(c_1)$, we have $\varphi_1(P_2\cup P_3) \subseteq \phibar_1(T^1)\cup\{\alpha,\eta\}$. 
  We first assume otherwise that  $\alpha\in\phibar_1(T^1)$. Then by (5) and TAA, we see that no edges of $P_2$ and $P_3$ are colored by $\eta$ and $u,v\in V(T^1)$. Since $T^1$ is elementary and $T^1(x)=T_j(x)\subseteq T_{n+1}$ by $(b_1)$, we have $\eta\in(\phibar_1(u)\cup\phibar_1(v))\subseteq \phibar_1(T^1)$ by $(d_1)$ and the fact that $u,v\notin V(T_{n+1})$, a contradiction to the assumption that $\eta\notin\phibar_1(T^1)$. So  $\alpha\notin\phibar_1(T^1)$, and therefore $T^1$ has a  $(\alpha,\pi)$-exit vertex in $P_z(\pi,\alpha,\varphi_1)$, as $T^1$ is elementary by $(b_1)$. Moreover,  none of $u$ and $v$ is in $T^1$, as otherwise we similarly have $\alpha\in\phibar_1(T^1)$ by $(c_1)$ and the assumption that $\eta\notin\phibar_1(T^1)$. So following (5) and TAA, the two edge disjoint paths $P_2$ and $P_3$ contain two $\alpha$ colored boundary edges $f$ and $g$ of $T^1$. Since all the $\alpha$ colored edges of $P_2$ and $P_3$ belong to $P_z(\pi,\alpha,\varphi_1)$ after the vertex $x$ along the direction from $z$ by $(e_1)$, one of $f$ and $g$ is incident to a vertex in $T^1$ other than $x$ and it is after $x$ along $P_z(\pi,\alpha,\varphi_1)$ along the direction from $z$. Since $T^1$ has an $(\alpha,\pi)$-exit vertex in $P_z(\pi,\alpha,\varphi_1)$, we see that $T^1$ has a $(\pi,\alpha)$-exit vertex $x_2$ in $P_z(\pi,\alpha,\varphi_1)$ where $x_2$ is after $x$ along $P_z(\pi,\alpha,\varphi_1)$ from the direction starting at $z$. Because $P_z(\pi,\alpha,\varphi_1)$ and $T_{n+1}$ only share vertices on the subpath of $P_z(\pi,\alpha,\varphi_1)$ from $z$ to $x$ (which is $P_z(\pi,\alpha,\varphi)$) by the construction of $\varphi_1$ ($T_{n+1}$ is closed for $\pi$ and it only has one boundary edge colored by $\alpha$ which is incident to $x$ under $\varphi_1$), we see that $x_2\notin T_{n+1}$ and the corresponding $(\pi,\alpha)$-exit path of $T^1$ under $\varphi_1$ does not share vertex with $T_{n+1}$. So $x\prec x_2$ along $T^1$ and $(f_1)$ is established. 
  
  Now we have as claimed.  Suppose $\varphi_{i-1}$ ($i\geq 2$) is constructed with claims $(a_{i-1})$ to $(f_{i-1})$ bellow (note that $\varphi_1$ also satisfies the claims below with $\alpha_1=\alpha$, $x_1=x$, $T^0=T_{j}$, and $\varphi_0=\varphi$). If $\eta\in\phibar_{i-1}(T^{i-1})$ or $P_1$ does not contain an $(\eta,\tau)$-exit vertex of $T^{i-1}$ in $P_1[r,w]-\{w\}$, we stop and let $\mu=\varphi_{i-1}$. Otherwise, we recursively construct $\varphi_i$ from $\varphi_{i-1}$ and $x_i$ with an arbitrary color $\alpha_i\in\phibar_{i-1}(x_i)$,  and have the following claims. Here is the detailed construction: We first let $\varphi'=\varphi_{i-1}/(G-T^{i-1},\alpha_i,\pi)$, and then let $\varphi_i'=\varphi'/P_{x_i}(\alpha_i,\alpha_{i-1},\varphi')$. Finally,   we construct $\varphi_i$ from $\varphi_i'$ by switching all the $\alpha_i$ colored edges of $P\cup P_{ex}\cup D$ not in $P_z(\pi,\alpha_i,\varphi_i')$ to $\pi$ by Kempe changes. Let $T^i$ be a closure of $T^{i-1}(x_i)$ under $\varphi_i$.

\begin{itemize}
	
	\item [$(a_i)$] $\varphi_i$ is $(T^{i-1}(x_i)-x_i, D_{j-1},\varphi_{i-1})$-stable, and if $h\geq 1$, then all the $(\alpha_h,\delta_h)$-ears of $T_h$ under $\varphi_{i-1}$ are the same as under $\varphi_i$.
	
	\item [$(b_i)$] $T^i$ is an elementary ETT with the same $j-1$-rungs under $\varphi_i$ as $T_j$ under $\varphi$.
	
	\item [$(c_i)$] $\varphi_i(P \cup P_{ex}\cup D)\subseteq\phibar_i(T^i(x_{i}))\cup\{\alpha_i,\eta\}$, and the set of $\eta$ colored edge and the set of $\tau$ colored edges are the same under $\varphi_i$ as under $\varphi$.

	\item [$(d_i)$] $\phibar_i(w)\cap(\phibar_i(T^i(x_{i}))\cup\{\alpha_i\})\neq\emptyset$, $\phibar_i(u)\cap(\phibar_i(T^i(x_i))\cup\{\eta,\alpha_i\})\neq\emptyset$, and $\phibar_i(v)\cap(\phibar_i(T^i(x_i))\cup\{\eta,\alpha_i\})\neq\emptyset$.
	
	\item [$(e_i)$] All the $\alpha_i$ colored edges of $P\cup P_{ex}\cup D$ under $\varphi_i$ belong to $P_z(\pi,\alpha_i,\varphi_i)$ after the vertex $x_i$ along the direction from $z$.
	
	\item [$(f_i)$] If $\eta\notin\phibar_i(T^i)$ and $P_1$ has an $(\eta,\tau)$-exit vertex of $T^i$ in $P_1[r,w]-\{w\}$, then $\alpha_i\notin\phibar_i(T^i)$ and $T^i$ has a $(\pi,\alpha_i)$-exit vertex $x_{i+1}$ in $P_z(\pi,\alpha_i,\varphi_i)$ with $x_i\prec x_{i+1}$ along $T^i$, and the corresponding $(\pi,\alpha_i)$-exit path does not share vertex with $T^{i-1}$.
\end{itemize}
 

Now we prove the above claims inductively. By $(f_{k})$ for all $1\leq k\leq i-1$ and (1), we see that $x\prec x_1\prec...\prec x_i$ along $T^{i-1}$, and therefore $x_i\notin T_h$ by (1) and $(b_{i-1})$. Following (4), $(a_{i-1})$, $(b_{i-1})$, $(c_{i-1})$, and the assumption of $\eta\notin\phibar_{i-1}(T^{i-1})$ ($i\geq 2$) and the $(\eta,\tau)$-exit vertex of $T^{i-1}$ being in $P_1[r,w]-\{w\}$, we have 

(6) $\alpha_i\notin\{\eta,\alpha_{i-1}\}$, and thus $\pi,\alpha_{i},\eta,\alpha_{i-1}$, $\gamma_h$ (when $h\geq 1$),  and $\tau$ are all different, and $\pi,\tau\notin  S_h$.

Clearly $\varphi'$ is $(T^{i-1},D_{j-1},\varphi_{i-1})$-stable as $T^{i-1}$ is closed for both $\alpha_i$ and $\pi$ under $\varphi_{i-1}$ and $\varphi'=\varphi_{i-1}(G-T^{i-1},\alpha_i,\pi)$. By $(f_{i-1})$, we see that $P_{x_i}(\alpha_i,\alpha_{i-1},\varphi')\cap T^{i-1}=\{x_i\}$. So $\varphi_i'$ is  $(T^{i-1}(x_i)-x_i,D_{j-1},\varphi_{i-1})$-stable. As $T^{i-1}$ is closed for both $\alpha_i$ and $\pi$ under $\varphi_{i-1}$, all the $(\pi,\alpha_i)$-chains intersect $T^{i-1}$ under $\varphi_i'$ are contained in $G[V(T^{i-1})]$ except for $P_z(\pi,\alpha_i,\varphi_i')$ which contains $x_i$ who is the only $(\pi,\alpha_i)$-exit of $T^{i-1}$ under $\varphi_i'$, and all vertices in $P_z(\pi,\alpha_i,\varphi_i')$ from $z$ to $x_i$ are contained in $T^{i-1}$. Moreover, since $\alpha_i\notin\{\eta,\alpha_{i-1}\}$ by (6), $x_{i}\prec x_{i+1}$ by $(f_{i-1})$, and $T^i$ is elementary by $(b_{i-1})$, we have $\alpha_i\notin (\phibar_{i-1}(T^{i-1}(x_i))\cup\{\alpha_{i-1}\}$. So $\alpha_i\notin\varphi_{i-1}(P\cup P_{ex}\cup D)$ by $(c_{i-1})$.  Thus $\pi$ is not used on edges of $P\cup P_{ex}\cup D-G[V(T^{i-1})]$ and $\alpha_i$ is not used on edges of $(P\cup P_{ex}\cup D)\cap G[V(T^{i-1})]$ under $\varphi'$ and $\varphi_i'$.  Therefore, since all vertices in $P_z(\pi,\alpha_i,\varphi_i')$ from $z$ to $x_i$ are contained in $T^{i-1}$, none of the $\alpha_i$ colored edges of $P\cup P_{ex}\cup D$ under $\varphi_i'$ is in the subpath of $P_z(\pi,\alpha_i,\varphi_i')$ from $z$ to $x_i$. Consequently, we have $(e_i)$ as we switch all the $\alpha_i$ colored edges of $P\cup P_{ex}\cup D$ not in $P_z(\pi,\alpha_i,\varphi_i')$ to $\pi$ by Kempe changes, and none of these $(\pi,\alpha_i)$-chains  intersects $T^{i-1}$ under $\varphi_i'$. Furthermore,  

(7) $\varphi_i$ is also $(T^{i-1}(x_i)-x_i,D_{j-1},\varphi_{i-1})$-stable.

To prove $(a_i)$, by (7), we may assume $h\geq 1$. Since $\alpha_i\neq\gamma_h$, $\alpha_{i-1}\neq\gamma_h$ and $\pi\notin S_h$ (see (6)), by $(a_{i-1})$, (7) and the construction of $\varphi_i$, we have $(a_i)$ if $\alpha_i\neq\delta_h$ and $\alpha_{i-1}\neq\delta_h$. If $\alpha_i=\delta_h$, then all the $(\gamma_h,\delta_h)$-ears of $T_h$ under $\varphi_{i-1}$ are contained in $G[V(T^{i-1})]$ by TAA, so still we have $(a_i)$ from $(a_{i-1})$ and (7) as $P_{x_i}(\alpha_i,\alpha_{i-1},\varphi')\cap T^{i-1}=\{x_i\}$ and none of the $(\pi,\alpha_i)$-chains we switched by Kempe changes from $\varphi_i'$ to $\varphi_i$ intersects $T^{i-1}$ (see the paragraph above (7)). So we assume $\alpha_{i-1}=\delta_h$. In this case, we have $\alpha_{i}\neq\delta_h$ and all the $(\gamma_h,\delta_h)$-ears of $T_h$ under $\varphi_{i-1}$ are the same as under $\varphi'$. Moreover, all the $(\gamma_h,\delta_h)$-ears of $T_h$ are contained in $G[V(T^{i-2})]$ under $\varphi_{i-2}$ as $\alpha_{i-1}=\delta_h\in\phibar_{i-2}(T^{i-2})$, and they are the same as under $\varphi_{i-1}$ by $(a_{i-1})$. Since  $P_{x_i}(\alpha_i,\alpha_{i-1},\varphi')$ is the same path as the $(\pi,\alpha_{i-1})$-exit path of $T^{i-1}$ under $\varphi_{i-1}$, it does not share vertex with $T^{i-2}$ by $(f_{i-1})$. So all the $(\gamma_h,\delta_h)$-ears of $T_h$ under $\varphi_{i-1}$ are the same as under $\varphi_i'$. Finally, since  $\alpha_{i}\neq\delta_h$ ($\alpha_{i-1}=\delta_h$) and $\pi\notin S_h$, we see that all the $(\gamma_h,\delta_h)$-ears of $T_h$ under $\varphi_{i}$ are the same as under $\varphi_{i}'$. Thus we have $(a_i)$ by $(a_{i-1})$ and (7).

By (A2) and $(b_{i-1})$ under $\varphi_{i-1}$ , we have

(8) if $h\geq 1$, then all the $\delta_h=\delta_n$ colored defective edges of $T_h$ under $\varphi_{i-1}$ are contained in some $(\gamma_h,\delta_h)$-ear of $T_h$ except for the $(\gamma_h,\delta_h)$-exit edge.

By $(a_i)$ and (8), we see that $T_h$ has the same $(\gamma_h,\delta_h)$-exit vertex under $\varphi_i$ as under $\varphi_{i-1}$ if $h\geq 1$.  Note that $T^{i-1}$ has the same rung under $\varphi_{i-1}$ as $T_j$ under $\varphi$ following $(b_{i-1})$. So by $(f_{k})$ for all $1\leq k\leq i-1$, we have $x_i\notin T_{j-1}$.  Since $\varphi_i$ is $(T^{i-1}(x_i)-x_i, D_{j-1},\varphi_{i-1})$-stable and $x_i\notin T_{j-1}$, we see that $\varphi_i$ is also $(T_{j-1}, D_{j-1},\varphi_{i-1})$-stable, and thus is a $T_{j-1}+f_{j-1}$ mod $\varphi_{i-1}$ coloring by (A2.5). Therefore, following Corollary~\ref{vstable}, $T^{i-1}(x_i)$ is an ETT with the same $(j-1)$-rungs under $\varphi_i$ as $T^{i-1}$ under $\varphi_{i-1}$. Since $T^i$ is a closure of $T^{i-1}(x_i)$ under $\varphi_i$ and $j-1\leq n$, we have $(b_i)$ by (A1) on $T^i$ and $(b_{i-1})$.

The rest part of the claims above are essentially the same as in the proof of the earlier claims of $i=1$ with changed index, here we omit the details.

Now with the above claims, we see that we can always recursively construct $\varphi_{i}$ with a new vertex $x_{i+1}$ if  $\eta\notin\phibar_{i-1}(T^{i-1})$ ($i\geq 2$) and the $(\eta,\tau)$-exit vertex of $T^{i-1}$ is in $P_1[r,w]-\{w\}$. But since we have a finite graph, at a certain point we have to stop at some $\varphi_i$ when $\eta\in\phibar_{i}(T^{i})$ or the $(\eta,\tau)$-exit vertex of $T^{i}$ is in $P_1[w,r']$; we still denote the corresponding coloring $\varphi_i$ by $\mu$. So $\mu$ satisfies all conditions $(a_i)$ to $(f_i)$.

Note that by $(a_i)$, the fact that $x\prec x_i$ (see the paragraph above (6)) and the assumption $(iv)$ of $P$, we have

(9) $\tau\in\mubar(r)$, and therefore $\tau\in\mubar(T^{i})$. Moreover, $\mu(P_1)\subseteq\{\tau,\eta\}$.
  
We first consider the case that $\eta\in\mubar(T^{i})$.  Then in view of (9), we see that $w\in V(T^{i})$ by TAA along $P_1$. So $\alpha_i\in\mubar(w)$ following $(d_i)$, because $T^i$ is elementary by $(b_i)$ and $w\notin V(T_j(x))=V(T^{i}(x))$. By $(c_i)$, both $u$ and $v$ are in $T_{n+1}$ by TAA. However, following $(d_i)$ and the fact $u,v\notin V(T_j(x))=V(T^{i}(x))$, we reach a contradiction to the elementariness of $T_{n+1}$. So we may assume that

(10) $\eta\notin\mubar(T^{i})$, and $P_1$ has an $(\eta,\tau)$-exit vertex $y_1$ of $T^{i}$ in $P_1[w,r']$. 

Since we continued to construct $\varphi_i=\mu$ from $\varphi_{i-1}$, we must have $y_1\notin T^i(x_i)$. Thus 

(11) $x_i\prec y_1$ along $T^i$.

We will then proceed in a similar fashion as earlier to recursively construct a series of coloring $\mu_l$ and corresponding claims with $l\geq 1$ using vertex $y_i$. Here is the detailed construction:

Let $\mu_0=\mu$, $y_0=x_i$, $\beta_0=\eta$ and $T^0_{\mu}=T^i$. Suppose we have already constructed $\mu_{l-1}$ and $T^{l-1}_{\mu}$, and suppose they satisfy the claims bellow (with index $l-1$) if $l\geq 2$. If $\alpha_i\in\mubar_{l-1}(T^{l-1}_{\mu}(y_l))$, we stop. Otherwise, we pick an arbitrary color $\beta_l$ in $\mubar_{l-1}(y_l)$ and let $\mu'=\mu_{l-1}/(G-T^{l-1}_{\mu},\beta_l,\tau)$. Then let $\mu_l'=\mu'/P_{y_l}(\beta_l,\beta_{l-1},\mu')$. Finally,  we construct $\mu_l$ from $\mu_l'$ by switching all the $\beta_l$ colored edges of $P\cup P_{ex}\cup D\cup P_1$ not in $P_r(\tau,\beta_l,\mu_l')$ to $\tau$ by Kempe changes. Let $T^l_{\mu}$ be a closure of $T^{l-1}_{\mu}(y_l)$ under $\mu_i$.

\begin{itemize}
	
	\item [$(A_l)$] $\mu_l$ is $(T^{l-1}_{\mu}(y_l)-y_l, D_{j-1},\mu_{l-1})$-stable, and if $h\geq 1$, then all the $(\alpha_h,\delta_h)$-ears of $T_h$ under $\mu_{l-1}$ are the same as under $\mu_l$.
	
	\item [$(B_l)$] $T^l_{\mu}$ is an elementary ETT with the same $j-1$-rungs under $\mu_l$ as $T^i$ under $\mu$.
	
	\item [$(C_l)$] $\mu_l(P \cup P_{ex}\cup D)\subseteq\mubar_l(T^l_{\mu}(y_{l}))\cup\{\alpha_i,\beta_{l}\}$, $\mu_l(P_1)\subseteq\mubar_l(T^l_{\mu}(y_{l}))\cup\{\beta_{l}\}$, and the set of $\alpha_i$ colored edge and the set of $\pi$ colored edges are the same under $\mu_l$ as under $\mu$.

	\item [$(D_l)$] $\mubar_l(w)\cap(\mubar_l(T^l_{\mu}(y_{l}))\cup\{\alpha_i\})\neq\emptyset$, $\mubar_l(u)\cap(\mubar_l(T^l_{\mu}(x_l))\cup\{\beta_l,\alpha_i\})\neq\emptyset$, and $\mubar_l(v)\cap(\mubar_l(T^l_{\mu}(x_l))\cup\{\beta_l,\alpha_i\})\neq\emptyset$.
	
	\item [$(E_l )$] All the $\beta_l$ colored edges of $P\cup P_{ex}\cup D\cup P_1$ under $\mu_l$ belong to $P_r(\tau,\beta_l,\mu_l)$ after the vertex $y_l$ along the direction from $r$.
	
	\item [$(F_l)$] $\beta_l\notin\mubar_l(T^l_{\mu})$ and $T^l_{\mu}$ has a $(\tau,\beta_l)$-exit vertex $y_{l+1}$ in $P_r(\tau,\beta_l,\mu_l)$ with $y_l\prec y_{l+1}$ along $T^l_{\mu}$, and the corresponding $(\tau,\beta_l)$-exit path does not share vertex with $T^{l-1}_{\mu}$.
\end{itemize}

We then prove the above claims inductively in a similar fashion as earlier. Here are the details. Recall that $x\prec x_i$ (see the paragraph above (6)).  By $(F_{k})$ for all $1\leq k\leq l-1$, (11), $(b_{i})$, $(B_{l-1})$ and (1), we see that 

(12) $x_i=y_0\prec y_1\prec...\prec y_l$ along $T^{i-1}_{\mu}$ and $y_l\notin T_h$. 

Since we did not stop at $\mu_{l-1}$, by the construction algorithm of $\mu_l$, we have $\alpha_i\notin\mubar_{l-1}(x_l)$ and thus $\alpha_i\neq\beta_l$. Moreover, $\beta_{l-1}\neq\alpha_i$ by (6) (when $l=1$) and  the construction algorithm of $\mu_{l-1}$ (when $l\geq 2$).
So following (6) on $\varphi_i=\mu_0$, (10), the elementariness of $T^{l-1}_{\mu}$ by $(B_{l-1})$ (when $l\geq 2$) and $(b_i)$ (when $l=1$), $(A_{k})$ for all $1\leq k\leq l-1$ (when $l\geq 2$), $(C_{l-1})$, $(F_{l-1})$ (when $l\geq 2$) and $(c_i)$ (when $l=1$), we have 

(13) $\beta_l, \beta_{l-1} \pi,\alpha_{i},\eta, \gamma_h$ (when $h\geq 1$)  and $\tau$ are all different, and $\pi,\tau\notin  S_h$.

Note that $T^{l-1}$ is an ETT with the same $(j-1)$-rungs under $\mu_{l-1}$ as $T_j$ under $\varphi$ by $(B_{l-1})$ (when $l\geq 2$) and $(b_{i})$ (when $l=1$). Clearly $\mu'$ is $(T^{l-1}_{\mu},D_{j-1},\mu_{l-1})$-stable as $T^{l-1}$ is closed for both $\beta_i$ and $\tau$ under $\mu_{l-1}$ and $\mu'=\mu_{l-1}/(G-T^{l-1}_{\mu},\beta_l,\tau)$. By $(F_{l-1})$ (when $l\geq 2$) and (10) (when $l=1$), we see that $P_{y_l}(\beta_l,\beta_{l-1},\mu')\cap T^{l-1}_{\mu}=\{y_l\}$. So $\mu_l'$ is $(T^{l-1}_{\mu}(y_l)-y_l,D_{j-1},\mu_{l-1})$-stable. As $T^{l-1}_{\mu}$ is closed for both $\beta_l$ and $\tau$ under $\mu_{l-1}$, all the $(\tau,\beta_l)$-chains intersect $T^{l-1}_{\mu}$ under $\mu_l'$ are contained in $G[V(T^{l-1}_{\mu})]$ except for $P_r(\tau,\beta_l,\mu_l')$ which contains $y_l$ who is the only $(\tau,\beta_l)$-exit of $T^{l-1}_{\mu}$ under $\mu_l'$, and all vertices in $P_r(\tau,\beta_l,\mu_l')$ from $r$ to $y_l$ are contained in $T^{l-1}_{\mu}$. Moreover, since $\beta_l\notin\{\tau,\beta_{l-1}\}$ by (13), $y_{l-1}\prec y_{l}$ by (12), and $T^l_{\mu}$ is elementary by $(B_{l-1})$ (when $l\geq 2$) and $(b_i)$ (when $l=1$), we have $\beta_l\notin (\mubar_{l-1}(T^{l-1}_{\mu}(y_l))\cup\{\beta_{l-1}\}$. So $\beta_l\notin\mu_{l-1}(P\cup P_{ex}\cup D \cup P_1)$ by $(C_{l-1})$ (when $l\geq 2$) and $(c_i)$ (when $l=1$).  Thus $\tau$ is not used on edges of $P\cup P_{ex}\cup D\cup P_1-G[V(T^{l-1}_{\mu})]$ and $\beta_l$ is not used on edges of $(P\cup P_{ex}\cup D\cup P_1)\cap G[V(T^{l-1}_{\mu})]$ under $\mu'$ and $\mu_l'$. Therefore, since all vertices in $P_r(\tau,\beta_l,\mu_l')$ from $r$ to $y_l$ are contained in $T^{l-1}_{\mu}$, none of the $\beta_l$ colored edges of $P\cup P_{ex}\cup D\cup P_1$ under $\mu_l'$ is in the subpath of $P_r(\tau,\beta_l,\mu_l')$ from $r$ to $y_l$. Consequently, we have $(E_l)$ as we switch all the $\beta_l$ colored edges of $P\cup P_{ex}\cup D\cup P_1$ not in $P_r(\pi,\beta_l,\mu_l')$ to $\tau$ by Kempe changes, and none of these $(\tau,\beta_l)$-chains  intersect $T^{l-1}_{\mu}$ under $\mu_l'$. Furthermore,  

(14) $\mu_l$ is also $(T^{l-1}_{\mu}(y_l)-y_l,D_{j-1},\mu_{l-1})$-stable.

To prove $(A_l)$, by (14), we may assume $h\geq 1$. Since $\beta_l\neq\gamma_h$, $\beta_{l-1}\neq\gamma_h$ and $\tau\notin S_h$ (see (13)), by $(A_{l-1})$ (when $l\geq 2$), $(a_i)$ (when $l=1$), (14) and the construction of $\mu_l$, we have $(A_l)$ if $\beta_l\neq\delta_h$ and $\beta_{l-1}\neq\delta_h$. If $\beta_l=\delta_h$, then all the $(\gamma_h,\delta_h)$-ears of $T_h$ under $\mu_{l-1}$ are contained in $G[V(T^{l-1}_{\mu})]$ by TAA, so still we have $(A_l)$ from $(A_{l-1})$ (when $l\geq 2$), $(a_i)$ (when $l=1$) and (14) as $P_{y_l}(\beta_l,\beta_{l-1},\mu')\cap T^{l-1}_{\mu}=\{y_l\}$ and none of the $(\tau,\beta_l)$-chains we switched by Kempe changes from $\mu_l'$ to $\mu_l$ intersects $T^{l-1}_{\mu}$ (see the paragraph above (14)). So we assume $\beta_{l-1}=\delta_h$. In this case, we have $\beta_{l}\neq\delta_h$ and all the $(\gamma_h,\delta_h)$-ears of $T_h$ under $\mu_{l-1}$ are the same as under $\mu'$. Let's first consider the case that $l\geq 2$. Then all the $(\gamma_h,\delta_h)$-ears of $T_h$ are contained in $G[V(T^{l-2}_{\mu})]$ under $\mu_{l-2}$ as $\beta_{l-1}=\delta_h\in\mubar_{l-2}(T^{l-2}_{\mu})$, and they are the same as under $\mu_{l-1}$ by $(A_{l-1})$. Since  $P_{y_l}(\beta_l,\beta_{l-1},\mu')$ is the same path as the $(\tau,\beta_{l-1})$-exit path of $T^{l-1}_{\mu}$ under $\mu_{l-1}$, it doesn't share vertex with $T^{l-2}_{\mu}$ by $(F_{l-1})$. So all the $(\gamma_h,\delta_h)$-ears of $T_h$ under $\mu_{l-1}$ are the same as under $\mu_l'$. Finally, since  $\beta_{l}\neq\delta_h$ ($\beta_{l-1}=\delta_h$) and $\tau\notin S_h$ (see (13)), we see that all the $(\gamma_h,\delta_h)$-ears of $T_h$ under $\mu_{l}$ are the same as under $\mu_{l}'$. Thus we have $(A_l)$ by $(A_{l-1})$ and (14). 

So we may assume that $l=1$. So $\beta_{0}=\eta=\delta_h$. In this case, $P_{y_1}(\beta_1,\delta_h,\mu')$ is the same path as the $(\tau,\delta_h)$-exit path of $T_{i}$, which is the subpath $P_1[y_1,r']$ of $P_1[w,r']$. Since  $T_{n+1}$ is $\RE$ finished under $\varphi$, all the $(\gamma_h,\delta_h)$-ears of $T_h$ are contained in $G[V(T_{n+1})]$. Since $P_1[w,r']$ does not intersect $T_{n+1}$ by assumption (iv) of $P$, $P_1[y_1,r']$ does not intersect any  $(\gamma_h,\delta_h)$-ear of $T_h$ under $\varphi$. Because  all the $(\gamma_h,\delta_h)$-ears of $T_h$ under $\varphi$ are the same as under $\varphi_i$ by $(a_k)$ for all $1\leq k\leq i$, we see that $P_1[y_1,r']$ does not intersect any  $(\gamma_h,\delta_h)$-ear of $T_h$ under $\varphi_i=\mu_0$. Thus all the $(\gamma_h,\delta_h)$-ears of $T_h$ under $\mu_{0}$ are the same as under $\mu_1'$ by the construction of $\mu_1'$. Since  $\beta_{1}\neq\delta_h$ ($\beta_{0}=\delta_h$) and $\tau\notin S_h$ (see (13)), we see that all the $(\gamma_h,\delta_h)$-ears of $T_h$ under $\mu_{1}$ are the same as under $\mu_{1}'$. Thus we have $(A_1)$ by (14).

By (A2), $(B_{l-1})$ (when $l\geq 2$), and $(b_{i})$ (when $l=1$) under $\mu_{l-1}$, we have

(15) if $h\geq 1$, then all the $\delta_h$ colored defective edges of $T_h$ under $\mu_{l-1}$ are contained in some $(\gamma_h,\delta_h)$-ear of $T_h$ except for the $(\gamma_h,\delta_h)$-exit edge.

By $(A_l)$ and (15), we see that $T_h$ has the same $(\gamma_h,\delta_h)$-exit vertex under $\mu_l$ as under $\mu_{l-1}$ if $h\geq 1$.  Recall that $y_l\notin T_{j-1}$ by (12).  Since $\mu_l$ is $(T^{l-1}_{\mu}(y_l)-y_l, D_{j-1},\mu_{l-1})$-stable, $y_l\notin T_{j-1}$, and $T^{l-1}_{\mu}$ has the same rung under $\mu_{l-1}$ as $T_j$ under $\varphi$ by $(B_{j-1})$ (when $j\geq 2$) and $(b_{i})$ (when $j=1$),  we see that $\mu_l$ is also $(T_{j-1}, D_{j-1},\mu_{l-1})$-stable, and thus is a $T_{j-1}+f_{j-1}$ mod $\mu_{l-1}$ coloring by (A2.5). Therefore, following Corollary~\ref{vstable}, $T^{l-1}_{\mu}(y_l)$ is an ETT with the same $(j-1)$-rungs under $\mu_l$ as $T^{l-1}_{\mu}$ under $\mu_{l-1}$. Since $T^i$ is a closure of $T^{l-1}_{\mu}(y_l)$ under $\mu_i$ and $j-1\leq n$, we have $(B_l)$ by (A1) on $T^l_{\mu}$, $(B_{l-1})$ (when $l\geq 2$) and $(b_i)$ (when $l=1$).

Following assumption (iv) of $P$, the construction of $\mu_i$ and (13), we have $(C_l)$ and $(D_{l})$ by $(C_{l-1})$ and $(D_{l-1})$ when $l\geq 2$, and by $(c_i)$ and $(d_i)$ when $l=1$.

Note that $T^l_{\mu}(x)=T_j(x)$ following $(A_l)$ and $(B_l)$. We claim that $\beta_l\notin\mubar_{l}(T^l_{\mu})$. Otherwise, by $(C_l)$, we have that $V(P_1)\subseteq V(T^{l}_{\mu})$ by TAA. So $w\in V(T^{l}_{\mu})$. Because $T^{l}_{\mu}$ is elementary by $(B_l)$, we have $\alpha_i\in\mubar_l(w)$ by $(D_l)$ and the fact that $w\notin V(T_j(x))$. So $u$ and $v$, the two ends of $P$ are also in $T^{l}_{\mu}$ by $(C_l)$ and TAA. However, this is a contradiction to $(D_l)$ and the elementariness of $T^{l}_{\mu}$ and the fact that $u,v\notin V(T_j(x))$. So $\beta_l\notin\mubar_{l}(T^l_{\mu})$, and therefore $T^l_{\mu}$ must have a $(\tau,\beta_l)$-exit vertex $y_{l+1}$ in $P_r(\tau,\beta_l,\mu_l)$ by the elementariness of $T^{l}_{\mu}$. This proves the first half of $(F_l)$.

We then claim that  $P_r(\tau,\beta_l,\mu_l)$ contains a vertex in $T^{l}_{\mu}$ that is after $y_l$ along the direction from $r$.
Indeed, if $w\notin T^l_{\mu}$, then the two edge disjoint paths $P_1[r,w]$ and $P_1[w,y_1]$ provide us two $\beta_l$ colored boundary edges of $T^l_{\mu}$ which are also contained in $P_r(\tau,\beta_l,\mu_l)$ after the vertex $y_l$ along the direction from $r$ by $(E_l)$ and $(C_l)$, and one of them provide a vertex in $T^{l}_{\mu}$ that is after $y_l$ along the direction from $r$. So we assume that $w\in T^l_{\mu}$. Then because $w\notin V(T_j(x))= V(T^{l}_{\mu})(x)$, we have $\alpha_i\in\mubar_l(w)$ by $(D_l)$ and the elementariness of $T^{l}_{\mu}$ followed from $(B_l)$. If one of $u$ and $v$, say $v$, is contained in $T^{l}_{\mu}$, then $\beta_i\in\mubar_l(v)$ by the elementariness of $T^{l}_{\mu}$, $(D_l)$ and the fact that $u,v\notin V(T_j(x))$. So both $u$ and $v$ are contained in $T^{l}_{\mu}$ by TAA and $(C_l)$, a contradiction to $(D_l)$ and the elementariness of $T^{l}_{\mu}$. Thus none of $u$ and $v$ is contained in $T^{l}_{\mu}$, and in this case $P_2$ and $P_3$ (see (3)) provide us two $\beta_l$ colored boundary edges of $T^l_{\mu}$ which are also contained in $P_r(\tau,\beta_l,\mu_l)$ after the vertex $y_l$ along the direction from $r$ by $(E_l)$ and $(C_l)$. Similarly as earlier, we have as claimed.

Now  by the claim above, $P_r(\tau,\beta_l,\mu_l)$ contains a $(\tau,\beta_l)$-exit vertex $y_{l+1}$ of $T^{l}_{\mu}$ such that $y_{l+1}$ is after $y_l$ along the direction starting from $r$. Because $P_r(\tau,\beta_l,\mu_l)$ and $T^{l-1}_{\mu}$ only share vertices on the subpath of $P_r(\tau,\beta_l,\mu_l)$ from $r$ to $y_l$ by the construction of $\mu_l$ ($T^{l-1}_{\mu}$ is closed for $\tau$ and it only has one boundary edge colored by $\beta_l$ which is the edge incident to $y_l$ under $\mu_l$), we see that $y_{l+1}\notin V(T^{l-1}_{\mu})$ and the corresponding $(\tau,\beta_l)$-exit path of $T^l_{\mu}$ under $\mu_l$ does not share vertex with $T^{l-1}_{\mu}$. So $y_l\prec y_{l+1}$ along $T^l_{\mu}$ and $(F_l)$ is established. This finishes all six claims $(A_l)$ to $(F_l)$ above.

With the six claims above and the construction algorithm of $\mu_l$ earlier, we see that we can always recursively construct $\mu_{l}$ with a new vertex $y_{l+1}$ with $x_i=y_0\prec y_1\prec...\prec y_{l+1}$ along $T^l_{\mu}$ by (12), unless $\alpha_i\in\mubar_{l}(T^l_{\mu}(y_{l+1}))$. But since we have a finite graph, at a certain point we have to stop at some $\mu_l$ with $\alpha_i\in\mubar_{l}(T^l_{\mu}(y_{l+1}))$. For convenience, we still denote the corresponding coloring by $\mu_l$ and use index $l$ for everything we established earlier. Note that $l=0$ may happen.

So we have $\alpha_i\in\mubar_{l}(T^l_{\mu})$. Therefore, by TAA, $(C_l)$, and $(e_i)$,

(16) all the  $\alpha_i$ and $\pi$ colored edges of $P\cup P_{ex}\cup D$ are contained in $G[V(T^l_{\mu})]$. 

By $(F_l)$ (when $l\geq 1$) and (10) (when $l=0$), we see that $\beta_l\notin\mubar_{l}(T^{l}_{\mu})$. Since $u,v\notin V(T_j(x))$, following the elementariness of $T^l_{\mu}$, $(D_l)$ (when $l\geq 1$) and $(d_i)$ (when $l=0$),
none of $u$ and $v$ is contained in $T^{l}_{\mu}$. Thus following TAA, (16), $(C_l)$ (when $l\geq 1$)  and $(c_i)$ (when $l=0$), the paths $P_2$ and $P_3$ (see (3)) contain two $\beta_l$ colored boundary edges $f$ and $g$ of $T^{l}_{\mu}$, where $f\in E(P_2)$ is incident with $u'\in V(T^{l}_{\mu})$ and $g\in E(P_3)$ is incident with $v'\in V(T^{l}_{\mu})$, such that $P_2[u',u]$ and  $P_3[v',v]$ do not share edges with $G[V(T^{l}_{\mu})]$.  Note that all the $\beta_l$ colored edges of $P_2\cup P_3$ are contained in  $P_r(\tau,\beta_l,\mu_l)$ by $(E_l)$ (when $l\geq 1$), $(c_i)$ and assumption (iv) of $P$ (when $l=0$). Therefore, since $y_{l+1}$ is the $(\tau,\beta_l)$-exit vertex of $T^{l}_{\mu}$ in $P_r(\tau,\beta_l,\mu_l)$, we see that both $u'$ and $v'$ are in the subpath of $P_r(\tau,\beta_l,\mu_l)$ from $r$ to $y_{l+1}$. We then let $u^*$ (and $v^*$) be the nearest vertex in $P_2$ (and $P_3$) to $u$ (and $v$) shared by the subpath of $P_r(\tau,\beta_l,\mu_l)$ from $r$ to $y_{l+1}$ and $P_2[u',u]$ (and $P_3[v',v]$), respectively. Without loss of the generality, we may assume $u^*$ is before $v^*$ along $P_r(\tau,\beta_l,\mu_l)=P_4$ from $r$. Now $P_5=P_4[r,u^*]\cup P_2[u^*,u]$ and $P_6=P_4[y_{l+1},v^*]\cup P_3[v^*,v]$ are two edge disjoint paths from $r$ to $u$ and from $y_{l+1}$ to $v$, respectively. Moreover, by (13), (16), $(C_l)$ (when $l\geq 1$)  and $(c_i)$ (when $l=0$), we have 

(17) $\mu_{l}(P_5\cup P_6)\subseteq (\mubar_l(T^l_{\mu}(y_{l}))\cup \{\beta_l\}-\{\alpha_i\})$. 

Finally, we consider the coloring $\phi=\mu_l/(G-T^{l}_{\mu},\beta_{l+1},\tau)$, where $\beta_{l+1}$ is an arbitrary color in $\mubar_{l}(y_{l+1})$. Note that $\beta_{l+1}=\alpha_i$ may happen. It is now routine to check  that $T^l_{\mu}$ remains an ETT with the same $(j-1)$-rungs under $\phi$ as $T_j$ under $\varphi$. Moreover, $P_{y_{l+1}}(\beta_l,\beta_{l+1},\phi)$ is the $(\beta_l,\beta_{l+1})$-exit path of $T^{l}_{\mu}$. By (17), no matter whether $\beta_{l+1}=\alpha_i$ or not, by the elementariness of $T^{l}_{\mu}$ under $\mu_l$ and $\phi$, we see that $\beta_{l+1}\notin \mu_{l}(P_5\cup P_6)$. Thus $\phi(P_5\cup P_6)\subseteq (\overline{\phi}(T^l_{\mu}(y_{l+1}))\cup \{\beta_l\})$, $\beta_{l+1}\notin\phi(P_5\cup P_6)\cap G[V(T^l_{\mu})])$, and $\tau\notin\phi(P_5\cup P_6)- G[V(T^l_{\mu})])$.  In the case that $\beta_{l+1}=\delta_h$, we see that all the $(\gamma_h,\delta_h)$-ears of $T_h$ are contained in $G[V(T^l_{\mu})]$ under $\phi$, so $P_{y_{l+1}}(\beta_l,\beta_{l+1},\phi)$ does not intersect any $(\gamma_h,\delta_h)$-ear of $T_h$ under $\phi$. In addition, it is also routine to check that $\tau\notin(\{\beta_{l+1}\}\cup S_h)$ and $(ii)$ in Lemma~\ref{A3.1} holds for $P_5$ and $P_6$. So by applying  Lemma~\ref{A3.1} with $P_{y_{l+1}}(\beta_l,\beta_{l+1},\phi)$ in place of $P_{ex}$, $r$ in place of $z$, $\beta_l$ in place of $\delta$, $\beta_{l+1}$ in place of $\alpha$, and $\tau$ in place of $\pi$, we have a contradiction between $(II)$ and $(III)$, because $P_5$ and $P_6$ have different ends $u$ and $v$.

\end{proof}	
\begin{LEM}(A3.5)\label{A3.5}
	Let $n$ be a nonnegative integer. Suppose (A1), (A2), (A2.5), and (A3) hold for  all  ETTs with at most $n$ rungs. Let $T_{n+1}$ be a closed ETT with ladder $T_0\subset \dots \subset T_n \subset T_{n+1}$. Assume $T_{n+1}$ could be extended further with $F_{n+1}=\{f_{n+1}\}$ and $S_{n+1}=\{\gamma_{n+1},\delta_{n+1}\}$.  Let $\alpha,\beta$ be two colors such that $\alpha\in\phibar(T_{n+1})\setminus\{\gamma_{n+1}\}$ and $\beta$ is arbitrary. For any $(\alpha,\beta)$-path $P$, if $P$ does not intersect $T_{n+1}$, then the coloring $\varphi'=\varphi/P$ is a $T_{n+1}+f_{n+1}$ mod $\varphi$ coloring.
	
\end{LEM}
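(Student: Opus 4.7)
The plan is a two-step reduction: first check stability of $\varphi':=\varphi/P$, then invoke (A2.5) and (A3) from the inductive hypothesis according to the extension type.

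Stability is immediate. Since the $(\alpha,\beta)$-path $P$ is vertex-disjoint from $T_{n+1}$, the Kempe change touches no edge incident to $T_{n+1}$ and preserves every $\phibar(v)$ for $v\in V(T_{n+1})$, so $\varphi'$ is $(T_{n+1},D_{n+1},\varphi)$-stable. By (A2.5) applied to $T_{n+1}$ (valid since $n(T_{n+1})=n$ is in the inductive range), it then suffices to show that $T_h$ has the same $(\gamma_h,\delta_h)$-exit vertex under $\varphi'$ as under $\varphi$, where $h$ is the initial index of $(F_{n+1},S_{n+1},\Theta_{n+1})$.

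I would then split on the extension type. If $\Theta_{n+1}=IE$ then $h=n+1$, $T_{n+1}$ is $RE$ finished, $\delta_{n+1}$ is defective for $T_{n+1}$, and $\gamma_{n+1}\in\phibar(T_{n+1})$; (A3) applied to $T_{n+1}$ with $\gamma=\gamma_{n+1}$ and $\delta=\delta_{n+1}$ yields the preserved exit vertex directly, since $\alpha\in\phibar(T_{n+1})\setminus\{\gamma_{n+1}\}$ is exactly (A3)'s hypothesis. If $\Theta_{n+1}=RE$ then $h\le n$, $T_h$ is $RE$ finished by the structure of Algorithm 3.1, and $T_h$ has $h-1\le n-1$ rungs, so (A3) is available for $T_h$. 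Here $\gamma_h=\gamma_{n+1}$ and $\delta_h=\delta_{n+1}$, and all hypotheses of (A3) on $T_h$ hold provided $\alpha\in\phibar(T_h)\setminus\{\gamma_h\}$; the condition $\alpha\neq\gamma_h$ follows from $\alpha\neq\gamma_{n+1}$, and $P$ is disjoint from $T_h\subseteq T_{n+1}$.

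The main obstacle is the $RE$ case when $\alpha\in\phibar(T_{n+1})\setminus\phibar(T_h)$. To bridge the gap, I would pass to an auxiliary coloring $\psi$ obtained from $\varphi$ by a Kempe swap $\alpha\leftrightarrow\alpha'$ for some $\alpha'\in\phibar(T_h)\setminus\{\gamma_h\}$ distinct from $\alpha$; such an $\alpha'$ exists because $|\phibar(T_1)|\ge 5$ by Lemma~\ref{size} and $\phibar(T_1)\subseteq\phibar(T_h)$. The swap must be performed only on $(\alpha,\alpha')$-chains not meeting $T_h$, which is feasible because $T_h$ is closed for $\alpha'$ while the $\alpha$-edges incident to $V(T_h)$ lie inside $G[V(T_{n+1})]$ (as $T_{n+1}$ is closed for $\alpha$). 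The swap then leaves $\phibar(v)$ for $v\in V(T_h)$ and the $(\gamma_h,\delta_h)$-chain structure reaching $T_h$ intact, so the $(\gamma_h,\delta_h)$-exit vertex of $T_h$ under $\psi$ agrees with that under $\varphi$; under $\psi$ the path $P$ becomes an $(\alpha',\beta)$-path still disjoint from $T_{n+1}$, and (A3) on $T_h$ yields the preserved exit vertex under $\psi/P$. Undoing the swap transfers this preservation back to $\varphi'=\varphi/P$, after which (A2.5) finishes the argument.
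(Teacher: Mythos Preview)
Your proposal is correct and follows essentially the same route as the paper: both verify $(T_{n+1},D_{n+1},\varphi)$-stability, reduce to checking the $(\gamma_h,\delta_h)$-exit of $T_h$ via (A2.5), handle the $IE$ case by applying (A3) directly to $T_{n+1}$, and in the $RE$ case pass through an auxiliary color $\eta\in\phibar(T_h)\setminus\{\gamma_h,\alpha\}$ so that (A3) applies to $T_h$. The only difference is that the paper performs the auxiliary swap as $\pi=\varphi/(G-T_{n+1},\alpha,\eta)$, i.e.\ only outside $T_{n+1}$, which is cleaner than your ``all $(\alpha,\alpha')$-chains not meeting $T_h$'': since $T_{n+1}$ is closed for both colors this is a single predefined operation that is automatically $(T_{n+1},D_{n+1},\varphi)$-stable, and comparing $\varphi/P$ with $\pi/P$ then reduces to that same swap outside $T_{n+1}$, which leaves all $\gamma_h$- and $\delta_h$-edges untouched because $\{\alpha,\eta\}\cap\{\gamma_h,\delta_h\}=\emptyset$.
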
	

\begin{proof}
	Let $T_{n+1}$, $\alpha$, $\beta$, and $P$ be as described above. So $\varphi'=\varphi/P$ is $(T_{n+1},D_{n+1},\varphi)$-stable.  So Lemma~\ref{A3.5} follows immediately from (A3) and (A2.5) if $T_{n+1}$ is $\RE$ finished. Thus, we assume that $\Theta_{n+1}=\RE$ with initial index $h$ such that $1\leq h\leq n$. In this case, following (A2.5), we only need to confirm that $T_h$ has the same $(\gamma_h,\delta_h)$-exit vertex under $\varphi'$ as under $\varphi$, where $\gamma_h=\gamma_{n+1}$ and $\delta_h=\delta_{n+1}$.
	
	Note that $\delta_h$ is a defective color of $T_{n+1}$ under $\varphi$, because $\Theta_{n+1}=\RE$ and $S_{n+1}=\{\gamma_{n+1},\delta_{n+1}\}$. So $\alpha\neq\delta_h$ and $\alpha\neq\gamma_h$.  Since $T_{h}$ is elementary by (A1) and $\phibar(T_1)\geq 5$ by Lemma~\ref{size}, there exists a color $\eta\in\phibar(T_{h})\setminus\{\gamma_h,\alpha\}$. Moreover, $\eta\neq\delta_h$ as  $\delta_h$ is a defective color of $T_{n+1}$ under $\varphi$. Thus $\{\alpha,\eta\}\cap\{\gamma_h,\delta_h\}=\emptyset$. Let $\pi=\varphi/(G-T_{n+1},\alpha,\eta)$. Clearly, $\pi$ is $(T_{n+1},D_{n+1},\varphi)$-stable and $T_h$ has the same $(\gamma_h,\delta_h)$-exit vertex under $\pi$ as under $\varphi$. So $\pi$ is a $T_{n+1}+f_{n+1}$ mod $\varphi$ coloring. Now $P$ is an $(\eta,\beta)$-path not intersecting $T_{h}$. So by (A3) on $T_h$ and $P$,  $T_h$ has the same $(\gamma_h,\delta_h)$-exit vertex under $\pi/P$ as under $\pi$ and $\varphi$. However, since $\{\alpha,\eta\}\cap\{\gamma_h,\delta_h\}=\emptyset$ and $P$ does not intersect $T_{n+1}$, the $\gamma_h$ colored edges and $\delta_h$ colored edges are the same under $\varphi'=\varphi/P$ as under $\pi/P$. So $T_h$ has the same $(\gamma_h,\delta_h)$-exit vertex under $\varphi'$ as under $\pi/P$ and $\varphi$, as desired.
	
\end{proof}

\begin{section}{Elementary}

In this section, we will prove the elementariness of $T$ if $T$ is an ETT with ladder $T_0\subset  T_1 \subset 	\dots \subset T_n \subset T$ under a $k$-triple $(G,e,\varphi)$ (A1). Let $T-T_{n}$ be the subsequence of $T$ after removing $T_n$. In the proof we will divide $T-T_n$ into a number of subsequences. We call the nested sequence 

{\bf (5.1)} $T_n\subset T_{n,1} \subset T_{n,2}\subset 	\dots \subset T_{n,q}\subset T:=T_{n,q+1}$ 

a {\bf hierarchy} with {\bf $q$ levels} of $T$ or simply a hierarchy of $T$. Let $T$ be a tree-sequence of $G$ and let $C$ be a subset of $[k]$. We say that $T$ is 
$C^-$-{\bf closed}
under $\varphi$ if it is closed for all colors in $(\phibar(T)-C)$ under $\varphi$.  For a color $\alpha$, denote by $v(\alpha, T)$ the first vertex of $T$ that misses the color $\alpha$ along $\prec$ if $\alpha\in\phibar(T)$ and the last vertex of $T$ if $\alpha\notin\phibar(T)$. If $T$ is clear, we may simply denote $v(\alpha,T)$ by $v(\alpha)$ or $v_{\alpha}$.
  Recall $D_n=\cup_{m\leq n}S_m-\phibar(T_n)=\{\delta_1,...,\delta_{n}\}-\phibar(T_n)$, where $S_m=\{\gamma_m,\delta_m\}$.  Thus $|D_n|\leq n$. Define $D_{n,j}=D_{n}-\phibar(T_{n,j})$ for each $\leq j\leq q+1$, so $|D_{n,j}|\leq n$ as well. Reserve $x_e$ for the first vertex in $T$, so $x_e$ is an end of the uncolored edge $e$ and $|\phibar(x_e)|\geq 2$. 

\begin{DEF} \label{R2}
{\rm A hierarchy (5.1) of $T$ is called {\bf good}\label{goodhierarchy} under $\varphi$ if for all $j$ with $0 \le j \le q$ and 
	all $\delta_m\in D_{n,j}$, there exists a $2$-color subset $\Gamma^{j}_m=\{\gamma^{j}_{m_1},\gamma^{j}_{m_2}\}\subseteq\phibar(T_{n,j})\setminus\{\gamma_n,\gamma,\tau\}$ with $\{\gamma,\tau\}\subseteq\phibar(x_e)$ and $\gamma\neq\tau$, such that
	\begin{itemize}
		\vspace{-2mm}
		\item[(i)] $\Gamma^j_m \cap\varphi( T_{n,j+1}(v_{\delta_m})-T_{n,j})=\emptyset$ for $0 \le j \le q$.
		(so colors in $\Gamma^{j}_m$ can only be used on edges in $T_{n,j+1}-T_{n,j}$ after a vertex missing the color $\delta_m$); 
		\vspace{-2mm}
		\item[(ii)] $\Gamma^{j}_{g} \cap \Gamma^{j}_{m}=\emptyset$ whenever $\delta_{g}$ and $\delta_{m}$ are two distinct colors in $D_{n,j}$;
		\vspace{-2mm}
		\item[(iii)] $\Gamma^{j}-\Gamma^{j-1}\subseteq\phibar(T_{n,j}-T_{n,j-1})$ for each $1\leq j\leq q$, where $\Gamma^{j}=\cup_{\delta_m\in D_{n,j}}\Gamma^{j}_m$  and $\Gamma^{j-1}=\cup_{\delta_m\in D_{n,j-1}}\Gamma^{j-1}_m$.
		\vspace{-2mm}
		\item[(iv)] $T_{n,j}$ is $(\cup_{\delta_m\in D_{n,j}}\Gamma^{j-1}_m)^-$-closed under $\varphi$ 
		for all $j$ with $1\le j \le q$.
		\vspace{-2mm}
	\end{itemize}
	\noindent The case $\gamma_n\in\{\gamma,\tau\}$ could happen. The sets $\Gamma^{j}_m$ are referred to as $\Gamma$-{\bf sets}\label{gammasets} for the hierarchy of $T$ under $\varphi$, and let $\Gamma^{j}=\cup_{\delta_m\in D_{n,j}}\Gamma^{j}_m$. We say a hierarchy of $T=T_{n,q+1}$ is {\bf good up to itself} under $\varphi$ if $T_{n,q+1}$ is  $(\cup_{\delta_m\in D_{n,q+1}}\Gamma^{q}_m)^-$-closed. For convenience, we say $T_n$ has a good hierarchy of $(-1)$ levels up to itself under $\varphi$.}
\end{DEF}


\begin{REM}\label{r21}
	Since switching colors $\delta_i$ with another color on a color alternating chain usually creates a non-stable coloring, we may use colors in $\Gamma^{j}_m$ as stepping stones to swap colors while keeping the coloring stable in later proofs. Thus, we may consider the set $\Gamma^{j}_m$ as a color set reserved for $\delta_m$  and (i) as a condition to ensure that such color changes will keep the TAA property for ETTs.  We also notice that (1) actually involves $T_{n,q+1}$ for $j=q$ while (ii), (iii) and (iv) only involve $T_{n,q}$.  	
		
\end{REM}

We divide the proof of (A1) in two statements.

\begin{LEM}(A1)\label{tec}
	Let $n$ be a positive integer. Suppose (A1), (A2), (A2.5),(A3), and (A3.5) hold for ETTs with at most $n-1$ rungs.
If $T$ is an ETT with ladder $T_1 \subset T_2 \subset \dots \subset T_n \subset T\subseteq T_{n+1}$ under $\varphi$, then the following statements A and B hold, which imply $T$ is elementary.
\begin{enumerate}
	
	\item[A.]  Every ETT with $n$-rungs that has a good hierarchy is elementary.
	\item[B.] If A holds, then under $\varphi$, there exists a closed ETT $T'$ having a good hierarchy with $V(T_{n+1})=V(T')$ and ladder $T_1 \subset T_2 \subset \dots \subset T_n \subset T'$.
	
\end{enumerate}	

\end{LEM}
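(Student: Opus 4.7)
The plan is to prove statements A and B together by induction on $n$, using (A2), (A2.5), (A3), and (A3.5) for ETTs with up to $n$ rungs as available hypotheses, and invoking (A1) only for ETTs with at most $(n-1)$ rungs. Once A and B are established, they combine to yield elementariness of an arbitrary ETT $T$ with ladder $T_1 \subset \cdots \subset T_n \subset T$: B produces a closed ETT $T'$ with $V(T') = V(T_{n+1})$ carrying a good hierarchy, A gives that $V(T')$ is elementary, and since $V(T) \subseteq V(T_{n+1}) = V(T')$ via a mod-coloring argument supplied by (A2.5), elementariness descends to $T$.

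For statement A, I would argue by contradiction on the smallest level $j$ at which the good hierarchy $T_n \subset T_{n,1} \subset \cdots \subset T_{n,q+1} = T$ first admits a non-elementary pair. By the induction hypothesis, $T_n$ is elementary, so $j \ge 0$ is well-defined. Pick $u, v \in V(T_{n,j+1})$ with a common missing color $\alpha \in \phibar(u) \cap \phibar(v)$, choosing $v$ minimal along $\prec$. Using the reserved $\Gamma$-sets $\Gamma^{j}_{m}$ for defective colors $\delta_m \in D_{n,j}$ as stepping stones, perform a controlled sequence of Kempe changes on $(\Gamma^{j}_{m}, \cdot)$-chains lying outside $T_{n,j}$ to produce a $(T_{n,j}, D_{n,j}, \varphi)$-stable coloring $\varphi'$ in which the pair $(u, v)$ falls into the scope of (A3) or (A3.5). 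Condition (iv) of Definition~\ref{R2} guarantees $T_{n,j}$ remains closed throughout; conditions (i)--(iii) ensure the swaps do not intrude into lower hierarchy segments or mix up the reserved palettes. Applying (A2) interchangeability together with (A3)/(A3.5) to the resulting configuration either forces non-elementariness of $T_{n,j}$ (contradicting the choice of $j$) or produces a shorter counterexample violating minimality.

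For statement B, construct the good hierarchy level by level. Set $T_{n,0} = T_n$ with empty $\Gamma$-data. At stage $j \ge 0$, given $T_{n,j}$ with $\Gamma$-sets $\{\Gamma^{j-1}_m : \delta_m \in D_{n,j}\}$, let $T_{n,j+1}$ be a $(\cup_{\delta_m \in D_{n,j}}\Gamma^{j-1}_m)^-$-closure of $T_{n,j}$ under $\varphi$, i.e., grow $T_{n,j}$ by TAA using only colors not in the reserved union until no further such growth is possible. Then allocate new $\Gamma^j_m \subseteq \phibar(T_{n,j+1})$ for each current defective color $\delta_m \in D_{n,j+1}$, picking two colors distinct from $\gamma_n$, from the fixed pair $\{\gamma, \tau\} \subseteq \phibar(x_e)$, from each other, and from all previously used $\Gamma$-sets, while enforcing (i) by requiring these colors to be absent on $\varphi(T_{n,j+1}(v_{\delta_m}) - T_{n,j})$; the supply of such colors is ensured by Lemma~\ref{size} together with the fresh missing colors introduced by each ladder extension into $T - T_n$, as guaranteed by (A1) applied inductively at rungs $\le n-1$. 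Terminate at the first level $q$ for which $T_{n,q+1}$ coincides with a closure of $T_n + f_n$ reaching $V(T_{n+1})$, and take $T' = T_{n,q+1}$.

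The main obstacle is in A: coordinating the Kempe changes so that $(T_{n,j}, D_{n,j}, \varphi)$-stability and $T_n+f_n$-mod-equivalence are preserved simultaneously while the non-elementary pair migrates to a position where (A3) or (A3.5) applies. Each swap consumes a $\Gamma^j_m$ color whose role is precisely that it appears only after $v_{\delta_m}$ in $T_{n,j+1} - T_{n,j}$ (by (i)) and is closed in $T_{n,j}$ (by (iv)), so the swap is a pure exchange outside the closed region; iterating such swaps without losing control of defective colors is the combinatorial heart of the argument and relies heavily on (A3.5) for rungs $\le n$. A secondary but real difficulty in B is verifying condition (iv) at each new stage: after allocating the new $\Gamma^j_m$ and extending further, one must check $T_{n,j+1}$ is indeed $(\cup \Gamma^{j-1}_m)^-$-closed with respect to the updated data, which requires care to avoid circular dependencies when additional defective colors emerge during the extension.
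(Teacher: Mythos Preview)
Your outline for Statement~B is close in spirit to the paper's construction, but one detail is off: at each new level the paper updates only \emph{one} $\Gamma$-set, namely $\Gamma^{j-1}_k$ for the single $\delta_k$ whose reserved color actually appears on the boundary edge $g$ used to start level $j$; all other $\Gamma^{j-1}_m$ are kept equal to $\Gamma^{j-2}_m$. Allocating fresh $\Gamma$-sets for every $\delta_m$ at every level, as you describe, would quickly exhaust the color supply and break condition~(iii) of Definition~\ref{R2}. Also, the termination criterion is simply that $T_{n,j}$ becomes closed (not that it ``reaches $V(T_{n+1})$''); closedness automatically forces $V(T_{n,j})=V(T_{n+1})$.

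Your plan for Statement~A has a genuine gap. The paper's argument is not a migration of the non-elementary pair toward a position where (A3) or (A3.5) fires directly. Instead it runs a second induction, this time on the number of hierarchy levels $q$, and within that selects a counterexample minimal first with respect to the \emph{path number} $p(T)$ (the least $i$ such that $(y_i,e_{i+1},\dots,y_p)$ is a genuine path), and second with respect to $|T-T_{n,q}|$. This double minimality drives a three-way case split on whether $p(T)=1$, $p(T)=p$, or $2\le p(T)\le p-1$, and in each case a chain of Kempe changes (justified by auxiliary Claims establishing interchangeability for $T_{n,j}$, preservation of mod-$\varphi$ structure, and good-hierarchy stability) produces a strictly smaller counterexample. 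The role of (A3) and (A3.5) is indirect: they feed into these auxiliary claims (in particular the interchangeability result for $T_{n,j}$, which mirrors the proof of (A2)), not into a final contradiction step. Your sketch omits the path-number device entirely, and without it there is no mechanism to control the shape of the tail of $T$ or to set up the swap that shrinks the counterexample; the phrase ``perform a controlled sequence of Kempe changes \dots\ to produce a coloring in which the pair falls into the scope of (A3) or (A3.5)'' does not correspond to any step that would actually close the argument.
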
	



We prove Statement B first as it is much shorter than the proof of Statement A. 
\subsection{Proof of Statement B in Lemma~\ref{tec}}


We assume Statement A holds.
Let $T$ be an ETT with ladder $T_1 \subset T_2 \subset \dots \subset T_n \subset T\subset T_{n+1}$ under $\varphi$.
We will construct an ETT $T'$ with ladder $T_1 \subset T_2 \subset \dots \subset T_n \subset T_n\subset T_n\subset T'$ under $\varphi$ such that $V(T)\subseteq V(T')$ with a good hierarchy $T_n=:T_{n,0} \subset T_{n,1}\subset \dots \subset T_{n,q} \subset T':=T_{n,q+1}$. 

Since $T_{n}$ is elementary under $\varphi$ by (A1) and $T_i$ is closed for each $1\leq i\leq n$, each $|T_i|$ has odd number of vertices, and therefore $|T_{i}-T_{i-1}|\geq 2$. So by Lemma~\ref{size}, we have $|\phibar(T_{n})|\geq 3+2n$. Since $|D_{n}|=|\{\delta_1,....,\delta_n\}|\leq n$, $|D_{n,0}|=|D_{n}-\phibar(T_n)|\leq n$. This allows us to construct $T_{n,1}$ using the following algorithm:

Pick each $\Gamma^{0}_{m}=\{\gamma^{0}_{m_1},\gamma^{0}_{m_2}\}$ distinctly for distinct $\delta_m\in D_{n,0}$ from $\phibar(T_n)\setminus\{\gamma_n\,\gamma,\tau\}$ with $\{\gamma,\tau\}\subseteq\phibar(x_e)$. Let $T_{n,1}=T_{n,0}+f_n$, where $f_n$ is the connecting edge of $T_n$.  While there exists an edge $f\in\partial(T_{n,1})$ such that $\varphi(f)\in\phibar(T_{n,1})-\cup_{\delta_m\in D_{n,1}}\Gamma^{0}_m$ where $D_{n,1}=D_n-\phibar(T_{n,1})$, we augment $T_{n,1}$ by updating $T_{n,1}:=T_{n,1}+f$. 

Then the resulting $T_{n,1}$ obtained from this algorithm is $(\cup_{\delta_m\in D_{n,1}}\Gamma^{0}_m)^-$-closed. Moreover, since $\varphi(f_n)=\delta_n\notin\phibar(T_{n})$, the choice of $f$ in the above algorithm guarantees $\Gamma^0_m \notin\varphi( T_{n,1}(v_{\delta_m})-T_{n,1})$. It follows that $T_{n,1}$ satisfies all requirements in Definition~\ref{R2}.

Suppose $T_{n,j-1}$ is defined for some $j\geq 2$. If $T_{n,j-1}$ is closed, then $V(T_{n+1})=V(T_{n,j-1})$ and we let $T_{n,j-1}=T'$. So we assume $T_{n,j-1}$ is not closed.  By (A1) and Statement A, both $T_{n,j-2}$ and $T_{n,j-1}$ are elementary under $\varphi$. Following Definition~\ref{R2} (iv) and the choice of $\Gamma$ sets, $\gamma_n$ is closed for both $T_{n,j-2}$ and $T_{n,j-1}$ under $\varphi$. So $|T_{n,j-2}|$ and $|T_{n,j-1}|$ are both odd, and therefore $|\phibar(T_{n,j-1})-\phibar(T_{n,j-2})|=|\phibar(T_{n,j-1}-T_{n,j-2})|\geq 2$. Now since $T_{n, j-1}$ is $(\cup_{\delta_m\in D_{n,j-1}}\Gamma_m^{j-2})^-$-closed, there exists 
an edge $g\in\partial(T_{n,j-1})$ such that $\varphi(g)\in\Gamma^{j-2}_{k}$ for some $\delta_k\in D_{n,j-1}$.  Now we are ready to construct $T_{n,j}$ using the following algorithm:

 Let $\Gamma^{j-1}_{k}$ be a set of two colors from $\phibar(T_{n,j-1}-T_{n,j-2})$,  and keep $\Gamma^{j-1}_{m}=\Gamma^{j-2}_{m}$ for all other $\delta_{m}\in D_{n,j-1}$ with $\delta_{m}\neq\delta_k$. Let $T_{n,j}=T_{n,j-1}+g$,  where $g$ is defined above.   While there exists $f\in\partial(T_{n,j})$ such that $\varphi(f)\in\phibar(T_{n,j})-\cup_{\delta_m\in D_{n,j}}\Gamma^{j-1}_m$ where $D_{n,j}=D_n-\phibar(T_{n,j})$, we augment $T_{n,j}$ by updating $T_{n,j}:=T_{n,j}+f$. 
 
 Then the resulting  $T_{n,j}$ is obtained from this algorithm is $(\cup_{\delta_m\in D_{n,j}}\Gamma^{j-1}_m)^-$-closed. In addition,  the choice of $g$,  $\Gamma$ sets, and $f$ give (i), (ii), and (iii) in Definition~\ref{R2}. So $T_{n,1}$ satisfies all requirements in Definition~\ref{R2}. If $T_{n,j}$ is still not closed, we will continue in this fashion to construct $T_{n,j+1}$ and eventually obtain a closed ETT $T'$ as desired. \qed

\subsection{Proof of Statement A in Lemma~\ref{tec}}

\begin{proof}
We prove statement A by induction on $q$ which is the number of levels. Denote $T$ by $T_1\subset  T_2 \subset 	\dots \subset T_n:=T_{n,0} \subset T_{n,1}\subset 	\dots \subset T_{n,q}\subset T:=T_{n,q+1}$. When $q=0$, we have $T_{n,q}=T_{n,0}=T_n$. Since $T_n$ is an ETT under $\varphi_{n-1}$ with $n(T_n)=n-1$, it is elementary by (A1) with $(n-1)$-rungs.  Now we assume $T_{n,q}$ is elementary and show $T_{n,q+1}=T$ is. Denote $T$ by $\{T_{n,q},e_1,y_1,e_{2},...,e_p,y_p\}$ following the order $\prec$. We define the {\bf path number $p(T)$} of $T$ as the smallest index $i\in{1,...,p}$ such that the sequence $y_iT:=(y_i,e_{i+1},...,e_p,y_p)$ corresponds to path in $G$.
Suppose on the contrary that  $T$ is a counterexample to the theorem, i.e., $T$ has a good hierarchy with $q$ levels under $\varphi$, but $V(T)$ is not elementary.  Furthermore, we assume that among all counterexamples, the following two conditions hold:
\begin{itemize}
	\item[$(i)$]   $p(T)$ is minimum,
	\item[$(ii)$]   $T-T_{n,q}$ is minimum subject to $(i)$, i.e. $p$ is minimum subject to $(i)$.
\end{itemize}

By our choice of counterexamples, 

{\bf (5.0)} $V(T(y_{p-1}))$ is elementary, where $y_0$ is defined as the last vertex of $T_{n,q}$ and $T(y_{0})=T_{n,q}$.

{\bf For simplification of notations, we let $\Gamma^{q}_m=\{\gamma_{m1},\gamma_{m2}\}$ for $\delta_m\in D_{n,q}$, and reserve $\gamma,\tau$ for the two colors in $\phibar(x_e)$ in Definition~\ref{R2}. 
 We may often mention that an ETT has a good hierarchy under another coloring in the proof, and unless specified otherwise, we always mean that the hierarchy is good for the same levels and $\Gamma$-sets in $T$.}

\subsubsection{A few basic properties}
\begin{CLA}\label{close}
	Let $\alpha\in \phibar(T_{n,q})$. Then $\alpha$ is closed for $T_{n,j}$, where $j$ is the smallest index with $\alpha\in\phibar(T_{n,j})$. If $r$ is the largest index with $r\leq q$ such that $\alpha$ is closed for $T_{n,r}$, then $\alpha\notin\varphi(T_{n,q}-T_{n,r})$ and $\alpha\in \cup_{\delta_m\in D_{n,k}}\Gamma^{k-1}_m$ for each $k$ with $r< k\leq q$.
\end{CLA}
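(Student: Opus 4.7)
The plan is to deduce all three assertions of Claim~\ref{close} directly from the four conditions of Definition~\ref{R2}, without invoking any Kempe-change or other structural ETT argument. I expect the proof to be short.

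For the ``closed at $T_{n,j}$'' part, I would split on $j$. If $j=0$, the conclusion is immediate because $T_{n,0}=T_n$ is a closed ETT. If $j\ge 1$, the minimality of $j$ says $\alpha\notin\phibar(T_{n,j-1})$. Since every reserved set in the good hierarchy satisfies $\Gamma^{j-1}_m\subseteq\phibar(T_{n,j-1})$ by definition, this implies $\alpha\notin\bigcup_{\delta_m\in D_{n,j}}\Gamma^{j-1}_m$, and Definition~\ref{R2}(iv) then forces $T_{n,j}$ to be closed for $\alpha$.

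For the ``$\Gamma$-set membership'' conclusion, fix $k$ with $r<k\le q$. Since missing sets only grow with the level, $\alpha\in\phibar(T_{n,k})$; by the maximality of $r$, $T_{n,k}$ is not closed for $\alpha$. Applying Definition~\ref{R2}(iv) once more then lands $\alpha$ inside $\bigcup_{\delta_m\in D_{n,k}}\Gamma^{k-1}_m$.

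For the assertion $\alpha\notin\varphi(T_{n,q}-T_{n,r})$, the plan is to chain the previous conclusion with Definition~\ref{R2}(i), one level at a time. Fix $k$ with $r<k\le q$ and pick $\delta_{\ell_k}\in D_{n,k}$ with $\alpha\in\Gamma^{k-1}_{\ell_k}$. Because $\delta_{\ell_k}\in D_{n,k}$ means $\delta_{\ell_k}\notin\phibar(T_{n,k})$, no vertex of $T_{n,k}$ misses $\delta_{\ell_k}$, so under the convention of Definition~\ref{R2}(i) the segment $T_{n,k}(v_{\delta_{\ell_k}})$ collapses to all of $T_{n,k}$. Condition (i) with $j=k-1$, $m=\ell_k$ then yields $\Gamma^{k-1}_{\ell_k}\cap\varphi(T_{n,k}-T_{n,k-1})=\emptyset$, in particular $\alpha\notin\varphi(T_{n,k}-T_{n,k-1})$. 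Taking the union over $r<k\le q$ delivers $\alpha\notin\varphi(T_{n,q}-T_{n,r})$.

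The only delicate step I anticipate is the correct reading of $T_{n,k}(v_{\delta_{\ell_k}})$ when $v_{\delta_{\ell_k}}$ lies beyond $T_{n,k}$; the parenthetical remark in Definition~\ref{R2}(i) (``colors in $\Gamma^j_m$ can only be used on edges in $T_{n,j+1}-T_{n,j}$ after a vertex missing the color $\delta_m$'') confirms that in the absence of such a vertex inside $T_{n,k}$ the exclusion covers \emph{every} edge of $T_{n,k}-T_{n,k-1}$, which is exactly what the third assertion requires.
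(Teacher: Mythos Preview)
Your proposal is correct and follows essentially the same route as the paper's proof: both split the first assertion on $j=0$ versus $j\ge 1$ and invoke Definition~\ref{R2}(iv), both obtain the $\Gamma$-set membership from the maximality of $r$ together with (iv), and both derive $\alpha\notin\varphi(T_{n,k}-T_{n,k-1})$ from Definition~\ref{R2}(i). The only difference is that you spell out the reading of $T_{n,k}(v_{\delta_{\ell_k}})$ when $\delta_{\ell_k}\notin\phibar(T_{n,k})$, whereas the paper leaves that step implicit.
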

\begin{proof}
	Let $j$ be the smallest index such that $\alpha\in \phibar(T_{n,j})$. If $j=0$, then $\alpha$ is closed for $T_{n,0}=T_{n}$. In the case that $j\geq 1$,  we have $\alpha\notin\phibar(T_{n,j-1})$. By Definition~\ref{R2}, $\alpha\notin\Gamma^{j-1}$, and therefore $\alpha$ is closed for $T_{n,j}$ by Definition~\ref{R2} (iv). So there exists $r$ as the largest index with $r\leq q$ such that $\alpha$ is closed for $T_{n,r}$. The rest of the current claim holds trivially if $r=q$, so we may assume $r<q$. Since $r$ is the largest index with $r\leq q$ such that $\alpha$ is closed for $T_{n,r}$, for each $k$ with $r< k\leq q$, we have $\alpha\in \cup_{\delta_m\in D_{n,k}}\Gamma^{k-1}_m$ by Definition~\ref{R2} (iv). So following Definition~\ref{R2} (i), we see that $\alpha\notin \varphi(T_{n,k}-T_{n,k-1})$, and therefore $\alpha\notin\varphi(T_{n,q}-T_{n,r})$.
	
\end{proof}	
\begin{CLA}\label{r2stable} Let $\pi$ be $(T_{n,j},D_n,\varphi)$-stable with $0\leq j\leq q$ and $T_n+f_n$ mod $\varphi$. Then $T_{n,j}$ is an ETT that has a good hierarchy up to itself under $\pi$.
\end{CLA}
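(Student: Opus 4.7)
The plan is twofold: first to show $T_{n,j}$ remains an ETT under $\pi$ with the same ladder $T_1 \subset \dots \subset T_n \subset T_{n,j}$, and then to verify that the same $\Gamma$-sets $\{\Gamma^i_m\}_{0 \le i \le j-1}$ inherited from the good hierarchy of $T$ under $\phiv$ still witness a good hierarchy up to itself for $T_{n,j}$ under $\pi$. The philosophy is that $(T_{n,j}, D_n, \phiv)$-stability together with being $T_n+f_n$ mod $\phiv$ preserves every coloring invariant referenced by the good hierarchy.

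For the first step, because $\pi$ is $T_n+f_n$ mod $\phiv$ the prefix $T_1 \subset \dots \subset T_n$ is a Tashkinov series under $\pi$ and $T_n+f_n$ is a valid one-step extension. I would then argue by induction along the TAA sequence that built $T_{n,j}$ from $T_n+f_n$ under $\phiv$: at each intermediate tree-sequence $T' \subseteq T_{n,j}$, stability condition $(i)$ gives $\pibar(v) = \phibar(v)$ for every $v \in V(T') \subseteq V(T_{n,j})$, hence $\pibar(T') = \phibar(T')$; and for each added edge $f$, its color $\phiv(f)$ lies in $\phibar(T') \subseteq \phibar(T_{n,j})$, so stability condition $(ii)$ applied with this color forces $\pi(f) = \phiv(f)$ since $f$ is incident to $T_{n,j}$. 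Thus the identical TAA sequence reproduces $T_{n,j}$ under $\pi$.

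For the second step, I would verify Definition \ref{R2}(i)--(iv) and the ``up to itself'' closure under $\pi$ using the $\Gamma$-sets inherited from $\phiv$. The central observation is that every color we need to control lies in $\phibar(T_{n,j}) \cup D_n$: each $\Gamma^i_m \subseteq \phibar(T_{n,i}) \subseteq \phibar(T_{n,j})$, each edge-color of $T_{n,j}$ used by TAA has the same inclusion by the argument above, and each $\alpha$ appearing in a closure check is in $\phibar(T_{n,i}) \subseteq \phibar(T_{n,j})$ for some $i \le j$. Then $(i)$ holds because each edge of $T_{n,i+1} - T_{n,i}$ keeps its color under $\pi$; $(ii)$ depends only on the $\Gamma$-sets; $(iii)$ depends only on $\pibar = \phibar$ at vertices of $T_{n,j}$; and $(iv)$ for each $1 \le i \le j-1$ follows because every edge of $\partial(T_{n,i})$ is incident to $T_{n,j}$, so the color class of any $\alpha \in \phibar(T_{n,i}) - \Gamma^{i-1}$ is preserved by stability, meaning no $\alpha$-colored edge appears in $\partial(T_{n,i})$ under $\pi$.

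Finally, the ``up to itself'' condition reduces to closure of $T_{n,j}$ for all colors in $\phibar(T_{n,j}) - \cup_{\delta_m \in D_{n,j}} \Gamma^{j-1}_m$ (when $j \ge 1$; when $j = 0$ it is just the full closure of $T_n$). Under $\phiv$ this holds by condition $(iv)$ at level $j$ in the original good hierarchy of $T$ (for $j \ge 1$) or because $T_n$ is a closed Tashkinov tree (for $j = 0$); the same stability argument as in $(iv)$ transfers the property to $\pi$. The only ``obstacle'' is purely bookkeeping: at each invocation of stability one must check the relevant color lies in $\phibar(T_{n,j}) \cup D_n$, which is automatic because every color in play (TAA tree-edge colors, $\Gamma$-colors, and closure-test colors) lies in $\phibar(T_{n,i}) \subseteq \phibar(T_{n,j})$ for some $i \le j$.
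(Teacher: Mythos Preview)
Your proposal is correct and follows essentially the same approach as the paper's proof: use the $T_n+f_n$ mod $\phiv$ condition to recover the ladder, then use $(T_{n,j},D_n,\phiv)$-stability to transfer the TAA construction and each clause of Definition~\ref{R2} (plus the ``up to itself'' closure) from $\phiv$ to $\pi$. The paper's proof is terse (and even slightly misphrased, calling $T_{n,j}$ a ``closure'' of $T_n+f_n$ when it is really just obtained by TAA), whereas you spell out explicitly why each color invoked lies in $\phibar(T_{n,j})\cup D_n$ so that stability condition (ii) applies; this is exactly the bookkeeping the paper sweeps into the phrase ``follow from Definition~\ref{R2}.''
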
	
\begin{proof} Since $\pi$ is $T_n+f_n$ mod $\varphi$, any closure of $T_n+f_n$ is an ETT with $n$-rungs under $\pi$. Because  $\pi$ is also $(T_{n,j}, D_{n},\varphi)$-stable, $T_{n,j}$ remains a closure of $T_n+f_n$ under $\pi$, and therefore is an ETT with $n$-rungs. The hierarchy property and the fact that $T_{n,j}$ is $(\cup_{\delta_m\in D_{n,j}}\Gamma^{j-1}_m)^-$-closed when $j\geq 1$ follow from Definition~\ref{R2} of $T_{n,q}$.
	\end{proof}

\begin{REM}\label{R2check}
	Let $T$ be an ETT under $\varphi$ with $n$-rungs and a good hierarchy of $j$ levels with $j\leq q$, and let $\pi$ be $T_{n}+f_n$ mod $\varphi$. It is worth pointing out that under $\pi$, if $T_{n,j}$ has a good hierarchy up to itself of $(j-1)$ levels and $T$ is still an ETT, then to check whether $T$ or any tree-sequence obtained from $T$ by TAA has a good hierarchy of $j$ levels under $\pi$,  we only need to check Definition~\ref{R2} (i) for $T_{n,j+1}$ which is $T$ or a tree-sequence obtained from $T_{n,j}$ by TAA, because  Definition~\ref{R2} (ii), (iii) and (iv) do not involve $T_{n,j+1}$. So if $\pi$ is also $(T,D_n,\varphi)$-weakly stable, then following Lemma~\ref{wstable}, $T$ also has a good hierarchy, as it satisfies Definition~\ref{R2} (i) under $\pi$ the same way as under $\varphi$. 
\end{REM}	
\begin{CLA}\label{bstep1}
	Let $T_{n,j}$ be an ETT under $\varphi$ with $n$ rungs and a good hierarchy $T_n:=T_{n,0} \subset 	\dots\subset T_{n,j-1} \subset T_{n,j}$ with $0\leq j\leq q$. If $T_{n,j}$ has a good hierarchy up to itself when $j\geq 1$, then the following hold:
	\begin{itemize}
		\item[(a)] $\gamma$, $\tau$ and $\gamma_n$ are closed for $T_{n,i}$ under $\varphi$ with $0\leq i\leq j$ (the case $\gamma_n\in\{\gamma,\tau\}$ may happen).
		\item[(b)]  For two colors $\alpha$ and $\beta$, if $\alpha\in\phibar(T_{n,j})\setminus\{\gamma_n\}$ and is closed in $T_{n,j}$, then $\alpha$ and $\beta$ are interchangeable for $T_{n,j}$ under $\varphi$.
	\end{itemize}	
\end{CLA}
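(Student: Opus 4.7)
For part (a), the three colors $\gamma$, $\tau$, $\gamma_n$ all lie in $\phibar(T_n)$: $\gamma,\tau\in\phibar(x_e)$ by construction and $\gamma_n\in\phibar(T_n)$ by Algorithm~3.1. Because $\phibar(v)=[k]-\varphi(\partial(v))$ is a local property of the coloring at $v$ and is independent of the tree, $\phibar(T_n)\subseteq\phibar(T_{n,i})$ for every $0\le i\le j$. Definition~\ref{R2} explicitly forbids any $\Gamma$-set from containing $\gamma,\tau,\gamma_n$, so $\{\gamma,\tau,\gamma_n\}\subseteq\phibar(T_{n,i})\setminus\bigcup_{\delta_m\in D_{n,i}}\Gamma^{i-1}_m$. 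Definition~\ref{R2}(iv) then yields closure of $T_{n,i}$ for each of these three colors when $i\ge 1$; the case $i=0$ is immediate since $T_n$ itself is closed.

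For part (b), I plan to argue by contradiction. Suppose that two distinct $(\alpha,\beta)$-paths $P_1,P_2$ both intersect $T_{n,j}$. Applying Statement~A inductively to the hierarchy of $j\le q$ levels, $T_{n,j}$ is elementary, so there is a unique $v\in V(T_{n,j})$ with $\alpha\in\phibar(v)$; since at most one of $P_1,P_2$ contains $v$, I assume $v\notin V(P_2)$. Using Claim~\ref{close} (with a $\Gamma$-preserving Kempe exchange outside $T_{n,r}$ if $\alpha$ only becomes missing at a later hierarchy level $r$), I reduce to the case $\alpha\in\phibar(T_n)$; then (A2) applied to the closed $RE$-finished ETT $T_n$ gives that at most one of $P_1,P_2$ meets $T_n$, so I may further arrange $P_2\cap T_n=\emptyset$.

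Setting $\pi:=\varphi/P_2$, the plan is to invoke (A3.5) on $T_n$, whose hypotheses hold because $P_2\cap T_n=\emptyset$ and $\alpha\ne\gamma_n$, to conclude that $\pi$ is a $T_n+f_n$ mod $\varphi$ coloring. Combining Lemma~\ref{wstable}, Corollary~\ref{vstable}, Claim~\ref{r2stable}, and Remark~\ref{R2check}, $T_{n,j}$ remains an ETT under $\pi$ with its ladder preserved, and the inductive hypothesis of Statement~A then yields elementariness of $T_{n,j}$ under $\pi$. If $P_2$ has an endpoint $u\in V(T_{n,j})\setminus\{v\}$, then elementariness under $\varphi$ forces $\beta\in\phibar(u)$, and after the Kempe swap $\alpha\in\pibar(u)$ while $\alpha\in\pibar(v)$ is unchanged (since $v\notin V(P_2)$), giving $\alpha\in\pibar(u)\cap\pibar(v)$ and contradicting the elementariness of $T_{n,j}$ under $\pi$.

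The main obstacle, and where I expect the bulk of the technical work, is the subcase in which both endpoints of $P_2$ lie outside $T_{n,j}$. Here the Kempe swap preserves every $\phibar$-value inside $T_{n,j}$, so the direct elementariness contradiction above fails; moreover $\pi$ is only weakly stable, not fully stable, with respect to $T_{n,j}$ because $P_2$ may share edges with $G[V(T_{n,j})]$ and swap their colors. The plan for this subcase is to exploit the $\beta$-colored boundary edges of $T_{n,j}$ lying on $P_2$: after the swap they become $\alpha$-colored boundary edges, so one may continue TAA from $T_{n,j}$ under $\pi$ using such an $\alpha$-edge to construct a strict closure $T'\supsetneq T_{n,j}$ under $\pi$ that still inherits a good hierarchy by part~(a) together with a careful update of the $\Gamma$-sets, and then appeal to (A1) inductively on $T'$ (with strictly smaller path number $p(T')$) to derive a contradiction with the assumed existence of two intersecting paths—paralleling the composite Kempe-move and minimum-counterexample analysis of Lemma~\ref{A2}.
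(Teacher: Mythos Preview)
Your proof of part (a) is correct and essentially identical to the paper's.

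For part (b), however, your approach has genuine gaps. First, in your ``easy'' subcase where $P_2$ has an endpoint $u\in V(T_{n,j})\setminus\{v\}$: after setting $\pi=\varphi/P_2$ you have $\pibar(u)\neq\phibar(u)$, so $\pi$ is \emph{not} $(T_{n,j},D_n,\varphi)$-stable, and none of Lemma~\ref{wstable}, Corollary~\ref{vstable}, or Claim~\ref{r2stable} applies to $T_{n,j}$ as a whole. You therefore cannot conclude that $T_{n,j}$ remains an ETT with a good hierarchy under $\pi$, and the appeal to Statement~A for elementariness under $\pi$ is unjustified. Second, in the subcase where both endpoints of $P_2$ lie outside $T_{n,j}$, you explicitly leave the argument as a plan; but this is precisely where almost all the content lies, and your sketch (``extend by TAA using the new $\alpha$-boundary edge, update $\Gamma$-sets, invoke (A1) with smaller $p(T')$\,'') does not address the central difficulty that $\pi$ may recolor internal edges of $G[V(T_{n,j})]$ and break Definition~\ref{R2}(i) or the TAA structure of $T_{n,j}$.

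The paper's argument is organized quite differently and avoids both pitfalls. It never splits on whether an endpoint of a path lies in $T_{n,j}$; instead it immediately passes to two $(\alpha,\beta)$-\emph{exit} paths $P_1,P_2$ with exit vertices $b_1\prec b_2$, and sets up a minimal-counterexample framework over the pair $(Q'(T_{n,j}),\,|P_1|+|P_2|)$, together with minimality of the level $j$. The key move is to swap $\alpha$ with a color $\eta\in\phibar(b_2)$ outside $T_{n,j}$, then flip the exit path $P_2$; this is stable not for $T_{n,j}$ but for the \emph{segment} $T_{n,j}(b_2)$ (the disruption is confined to the last vertex $b_2$), so Corollary~\ref{vstable} and Remark~\ref{R2check} do apply. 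One then rebuilds a $(\cup\Gamma^{j-1}_m)^-$-closure $T'_{n,j}\supseteq T_{n,j}(b_2)+f_1$ and obtains a strictly smaller counterexample. Finally, the case $\beta=\delta_h$ needs separate treatment and ultimately rests on Lemmas~\ref{A3.0} and~\ref{A3.2}; your outline does not touch this case at all. The missing idea, in short, is to work with segments $T_{n,j}(b_2)$ rather than all of $T_{n,j}$, inside a minimal-counterexample descent on exit-path length.
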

\begin{proof}
	Let $T_{n,j}$ be as described above and assume $T_{n,j}$ is 

	(1) $(\cup_{\delta_m\in D_{n,j}}\Gamma^{j-1}_m)^-$-closed when $j\geq 1$.

	We first prove (a). Since $\gamma_n,\gamma,\tau\in\phibar(T_n)$ and $T_{n}$ is closed, the colors $\gamma$, $\tau$ and $\gamma_n$ are all closed for $T_{n,0}$ under $\varphi$. Moreover, $\gamma_n,\gamma,\tau\notin \Gamma_j$ by the choice of $\Gamma$ sets in Definition~\ref{R2}. Then for each $1\leq i\leq j$, since $T_{n,j}$ is 
	$(\cup_{\delta_m\in D_{n,j}}\Gamma^{j-1}_m)^-$-closed following Definition~\ref{R2} (iv) and (1), all three colors $\gamma$, $\tau$ and $\gamma_n$  are closed for $T_i$ under $\varphi$ as well.
	
	We then prove (b). The proof of (b) is actually very similar to the proof of interchangeability from Lemma~\ref{A2} (A2). Assume on the contrary: Let $j$ be the smallest index such that there exist two $(\alpha,\beta)$-paths $Q_1$ and $Q_2$ intersecting the ETT $T_{n,j}$ as described above, where $\alpha\in\phibar(T_{n,j})\setminus\{\gamma_n\}$ and is closed in $T_{n,j}$.  Since $T_{n,j}$ is elementary by induction hypothesis, paths $Q_1$ and $Q_2$ together yield at least two $(\alpha,\beta)$-exit paths, say $P_1$ with exit vertex $b_1$ and $P_2$  with exit vertex $b_2$, respectively. Moreover, if $\beta\notin\phibar(T_{n,j})$, we have an additional $(\alpha,\beta)$-exit path $P_3$ with exit vertex $b_3$. Renaming subscripts if necessary, we may assume that $b_1\prec b_2$ (and $b_2\prec b_3$ if $P_3$ exists) along $T_{n,j}$.  Note that we have $j\geq 1$, because the case that $j=0$ follows directly from (A2) on $T_n=T_{n,0}$. Let $h$ be the initial index for $\Theta_{n+1}$. So $\gamma_h=\gamma_n$.  For any color $\pi\in\phibar(T_{n,j})$  that is closed for $T_{n,j}$ with $\pi\neq\gamma_h$, let $\varphi_{\pi}=\varphi/(G-T_{n,j},\alpha,\pi)$. We claim that
	
	(2) $\varphi_{\pi}$ is $T_{n}+f_n$ mod $\varphi$, and $T_{n,j}$ is an ETT with $n$-rungs under $\varphi_{\pi}$ that has a good hierarchy up to itself.
	
Since $\alpha$ and $\pi$ are closed for $T_{n,j}$, $\varphi_{\pi}$ is $(T_{n,j},D_n,\varphi)$-stable. Note that $\pi$ and $\alpha$ are different from $\gamma_h=\gamma_n$. So if they are all different from $\delta_h$, clearly $T_{h}$ has the same $(\gamma_{h},\delta_{h})$-exit vertex under $\varphi_{\pi}$ as $\varphi$. If one of them is $\delta_h$, then since  $\alpha$, $\pi$ and $\gamma_h$ (see (a))  are closed for $T_{n,j}$, all the $(\gamma_h,\delta_h)$-chains intersecting $T_h$ are contained in $G[V(T_{n,j})]$, so $T_{h}$ still has the same $(\gamma_{h},\delta_{h})$-exit vertex under $\varphi_{\pi}$ as $\varphi$.  Thus either way, $\varphi_{\pi}$ is $T_n+f_n$ mod $\varphi$ by (A2.5) because $\varphi_{\pi}$ is also $(T_{n}, D_{n},\varphi)$-stable. We then have (1) from Claim~\ref{r2stable}.  

	Without loss of generality, we assume one of $\gamma$ and $\tau$, say $\gamma$, is different from $\gamma_h$. We claim that
	
	(3) We may assume that $b_2\notin V(T_{n,j-1})$, and $P_2$ is a subpath of a $(\gamma,\beta)$-path $P$ not intersecting $T_{n,j-1}$ under $\varphi_{\gamma}$.
	
By (a) and (2), $\varphi_{\gamma}=\varphi/(G-T_{n,j},\alpha,\gamma)$ is $T_{n}+f_n$ mod $\varphi$, and $T_{n,j}$ remains an ETT with a good hierarchy under $\varphi_{\gamma}$. Since $\gamma\neq \gamma_h$, we see that there is at most one $(\gamma,\beta)$-path intersecting $T_{n,j-1}$ by the minimality of $j$. 
If $\beta\in\phibar(T_{n,j})$, then $P_2$ is a subpath of a $(\gamma,\beta)$-path $P$ not intersecting $T_{n,j-1}$ under $\varphi_{\gamma}$, and thus $b_2\notin T_{n,j-1}$ as desired. Now if  $\beta\notin\phibar(T_{n,j})$, then at least one of $P_2$ and $P_3$ ($P_3$ exists when $\beta\notin\phibar(T_{n,j})$) is a subpath of a $(\gamma,\beta)$-path $P$ not intersecting $T_{n,j-1}$ under $\varphi_{\gamma}$. Relabel $P_3$ to $P_2$ and $b_3$ to $b_2$ if necessary, we have as claimed. Note that we will not use $b_3$ after this point, so $b_3\prec b_2$ after relabeling will not be an issue. 

	We call the tuple $(\varphi,T_{n,j}, \alpha, \beta, P_1,P_2)$ a {\em counterexample} and use ${\cal K}$ 
	to denote the set of all such counterexamples.   For $i=1,2$, let $a_i$  be the end of $P_i$ outside of $T_{n,j}$, 
	and $f_i$ be the boundary edge of $T_{n,j}$ in $P_i$. Define $Q'(T_{n,j})=0$ if $\delta_n=\delta_h\in\phibar(T_{n,j}(b_2)-b_2)$, and $Q'(T_{n,j})=1$ otherwise.
	With a slight abuse of notation, let $(\varphi, T_{n,j}, \alpha, \beta, P_1,P_2)$ be a counterexample in ${\cal K}$ such that:
	\begin{itemize}
		
		\item [(i)] $Q'(T_{n,j})$ is minimum, and
		\item [(ii)] $|P_1|+|P_2|$ is minimum subject to (i).
	\end{itemize}	

Let $\eta$ be a color in $\phibar(b_2)$. 	Depending on whether $\beta$ is $\delta_h$ or not, we consider the following two cases.

{\flushleft {\bf Case I:}}  $\beta\neq\delta_h$.

Since $T_{n+1}$ is elementary by (A1) and $\gamma_h\in\phibar(T_{h})$, by (3), we have

(4) $\eta$ is different from $\gamma_n=\gamma_h$. 

Let $\varphi_{\eta}=\varphi/(G-T_{n+1},\alpha,\eta)$. Note that $P_2$ is an $(\eta,\beta)$-path under $\varphi_{\eta}$ with $\eta\in\overline{\varphi}_{\eta}(b_2)$, and $f_i$ is colored 
by $\beta$ under $\varphi_{\eta}$ for $i=1,2$.  
Consider $\sigma=\varphi_{\eta}/P_2$. So $\beta \in \overline{\sigma}(b_2)$, and $f_1$ is still colored 
by $\beta$ under $\sigma$.  We claim that 

(5) $\sigma$ is $T_{n}+f_n$ mod $\varphi_{\eta}$, and $T_{n,j}(b_2)$ is an ETT with $(n-1)$-rungs and a good hierarchy of $(j-1)$ levels under $\sigma$.

Note that $\sigma$ is $(T_{n,j}(b_2)-b_2,D_n,\varphi_{\eta})$-stable, it is an ETT with $(n-1)$-rungs under $\sigma$ by (2), (4) and  Corollary~\ref{vstable},  once we show that $\sigma$ is $T_{n}+f_n$ mod $\varphi_{\eta}$. Indeed, because $\beta$ is not closed for $T_{n,j}$ under $\varphi$, we have $\beta\neq\gamma_h$ by (a). So by (4), both $\eta$ and $\beta$ are different from $\gamma_h$. Recall that we have $\beta\neq\delta_h$ in the current case. So if $\eta\neq\delta_h$, then $T_{h}$ has the same $(\gamma_{h},\delta_{h})$-exit vertex under $\sigma$ as $\varphi_{\eta}$, and thus $\sigma$ is $T_{n}+f_n$ mod $\varphi_{\eta}$ by (A2.5).  If $\eta=\delta_h$, then since $b_2\notin T_{n,j-1}$ by (3) and the elementariness of $T_{n,j}$ under $\varphi$, we have $\delta_h\notin\phibar(T_{n,j})$. Thus $\delta_h,\gamma_h\notin\Gamma^{j}$ by Definition~\ref{R2}. So all the $(\gamma_h,\delta_h)$-ears of $T_h$ under $\varphi_{\eta}$ are contained in $G[V(T_{n,j})]$ by Definition~\ref{R2} (iv), (2) and (4). Thus by the construction of $\sigma$, they are colored the same under $\sigma$. Since $T_h$ only has one $(\gamma_h,\delta_h)$-exit path under $\varphi_{\eta}$ by (A2),  $T_{h}$ still has the same $(\gamma_{h},\delta_{h})$-exit vertex under $\sigma$ as $\varphi_{\eta}$, and thus $\sigma$ is $T_{n}+f_n$ mod $\varphi_{\eta}$ by (A2.5). It is routine to see that $T_{n,j}(b_2)$ has a good hierarchy of $(j-1)$ levels, see Remark~\ref{R2check}.

So $T_{n,j}(b_2)$ is elementary under $\sigma$ following (5) and (5.0). Thus $\beta\notin\sigmabar(T_{n,j}(b_2)-b_2)$ by the construction of $\sigma$. Moreover, we have
 
 (6) $\beta\notin\phibar(T_{n,j}(b_2))$, and consequently $\beta\notin\Gamma^{j-1}$ by Definition~\ref{R2} and (3).
 
 
 Let $T'=T_{n+1}(b_3)+f_1$ under $\sigma$. Because $\beta\notin\Gamma^{j-1}$ by (6), we see that $T'$ satisfies Definition~\ref{R2} (i), and thus $T'$ has a good hierarchy of $(j-1)$ levels, see Remark~\ref{R2check}. Let $T'_{n,j}$ be a tree-sequence obtained from $T'$ by the following algorithm:

while there exists $f\in\partial(T'_{n,j})$ such that $\varphi(f)\in\phibar(T'_{n,j})-\cup_{\delta_m\in D_{n,j}}\Gamma^{j-1}_m$ where $D_{n,j}=D_n-\phibar(T'_{n,j})$, we augment $T'_{n,j}$ by updating $T'_{n,j}:=T'_{n,j}+f$. 

Note that this is exactly the same algorithm as in the proof of Statement A. It is routine to check that 

(7) $T'_{n,j}$ is an ETT with a good hierarchy of $(j-1)$ levels under $\sigma$ and $T'_{n,j}$ is $(\cup_{\delta_m\in D_{n,j}}\Gamma^{j-1}_m)^-$-closed.

Observe that none of $a_1$ and $a_2$ is contained in $T'_{n,j}$, for otherwise, let $a_i \in V(T'_{n,j})$ for some $i$
with $1\le i \le 2$. By (5.0) and (7), we see that $T'_{n,j}$ is elementary under $\sigma$. Since $\{\beta,\eta\}\cap \overline{\sigma}(a_i) \ne \emptyset$ and $\beta \in \overline{\sigma}(b_2)$, 
we obtain $\eta \in \overline{\sigma}(a_i)$. Since $\eta\in\phibar(b_2)$ and $b_2\notin T_{n,j-1}$ by (3), we have $\eta\notin\phibar(T_{n,j-1})$. So $\eta\notin \Gamma^{j-1}$ as $\Gamma^{j-1}\subseteq\phibar(T_{n,j-1})$ following Definition~\ref{R2}. Recall that we also have $\beta\notin\Gamma^{j-1}$ by (6). So all of $P_1,P_2$ and $a_1, a_2$ are all entirely contained in $G[V(T'_{n,j})]$, as  $T'_{n,j}$ is $(\cup_{\delta_m\in D_{n,j}}\Gamma^{j-1}_m)^-$-closed by (7) and $(\cup_{\delta_m\in D_{n,j}}\Gamma^{j-1}_m)\subset \Gamma^{j-1}$.
However, because $\{\beta,\eta\}\cap \overline{\sigma}(a_k) \ne \emptyset$  for $k=1,2$, we see that $V(T'_{n,j})$ is not 
elementary with respect to $\sigma$, a contradiction to the elementariness we obtained earlier. So each $P_i$ contains a subpath $L_i$ with $i=1,2$, which is a exit path of  $T'_{n,j}$
under $\sigma$. Since $f_1\in T'_{n,j}$, we obtain $|L_1|+|L_2|<|P_1|+|P_2|$. Moreover, we observe that $Q'(T'_{n,j})\leq Q'(T_{n,j})$, and $\beta\neq\gamma_n$ as $\gamma_n\in\phibar(T_{n,j-1})$ (see (6)).
Thus the existence of the counterexample $(\sigma, T'_{n,j}, \beta, \eta, L_1,L_2)$ with $\beta\neq\gamma_n$ violates the minimality 
assumption on $(\varphi, T_{n,j}, \alpha, \beta, P_1,P_2)$.

{\flushleft {\bf Case II:}} $\beta=\delta_h$.

In this case, we have $h\geq 1$. Similarly as the previous case, we have $\eta\neq\gamma_h$, and we consider $\varphi_{\eta}=\varphi/(G-T_{n+1},\alpha,\eta)$. Since $\eta\neq\gamma_h$, in view of (2), we have 

(8) $\varphi_{\eta}$ is $T_{n}$ mod $\varphi$, and therefore $T_{n,j}$ is an ETT with a good hierarchy of $(j-1)$ levels under $\varphi_{\eta}$.

Observe that $P_2$ is an $(\eta,\beta)$-path under $\varphi_{\eta}$ with $\eta\in\overline{\varphi}_{\eta}(b_2)$. So similarly as in the previous case, we let $\sigma=\varphi_{\eta}/P_2$. We claim that 

(9) $T_h$ has the same $(\gamma_h,\delta_h)$-exit vertex under $\sigma$ as $\varphi_{\eta}$, and therefore $\sigma$ is  $T_{n}+f_n$ mod $\varphi_{\eta}$ (by A(2.5) on $T_n$).

Note that following (9), with the same argument as showing (6), we have $\beta\notin\phibar(T_{n,j}(b_2))$ (so $Q'(T_{n,j})=1$ in this case), $\beta\notin\Gamma^{j-1}$, and $T_{n,j}(b_2)$ is an ETT with a good hierarchy of $(j-1)$ levels, and therefore similarly as earlier, none of $a_1$ and $a_2$ are in $T'_{n,j}$ also an ETT with a good hierarchy of $(j-1)$ levels ($T'_{n,j}$ is constructed the same way as in Case I). So $P_1$ and $P_2$ give us two $(\beta,\eta)$-paths intersecting $T'_{n,j}$, and they provides two $(\beta,\eta)$-exit paths $L_1$ and $L_2$ of $T'$, respectively. Moreover, $L_1$ provide exit vertex in $T'_{n,j}$ after $b_2$ along $\prec$ of $T'_{n,j}$, as $T_{n,j}(b_2)+f_1\subseteq T'_{n,j}$. Since $\delta_h=\beta\in\overline{\sigma}(b_2)$,  $(\sigma, T'_{n,j}, \beta, \eta, L_1,L_2)$ with $\beta\neq\gamma_n$ provides a counterexample with $Q'(T'_{n,j})=0$, a contradiction.

Following (a) and (3), the proof of (9) is almost identical to the proof of (7) in Lemma~\ref{A2} with $T_{n,j}$ in place of $T_{n+1}$. Here we give a summary of the actual algorithm used here. Suppose that $T_h$ has a different $(\gamma_h,\delta_h)$-exit vertex $v_1$ under $\sigma$ from the $(\gamma_h,\delta_h)$-exit vertex $v_2$ under $\varphi_{\eta}$, where $\sigma=\varphi_{\eta}/P_2$. In the case that $v_1\prec v_2$, we let $n'$ be the smallest index such that $v_2\in T_{n'}$ and $T_{n'}$ is $\RE$ finished. Then under $\varphi_{\gamma}$, we have a contradiction to Lemma~\ref{A3.0} for $P$ with $T_{n'}$ ($n'\leq n$) in place of $T_{n+1}$, $\gamma$ in place of $\alpha$, $v_2$ in place of $x$, $v_1$ in place of $y$, and $\gamma_h,\delta_h$ in place of $\gamma,\delta$. 

In the case that $v_2\prec v_1$, we let $n'$ be the smallest index such that $v_1\in T_{n'}$ and $T_{n'}$ is $\RE$ finished. Let $h'$ be the initial index of $T_{n'}$. Pick $\alpha'\in\phibar(v_1)$.  Recall $\gamma\neq\gamma_h$ (see the assumption before (3)), following (3),  we let $\mu$ be a coloring obtained from $\sigma$ by interchanging labeling for $\gamma$ and $\tau$ if $\gamma\neq\gamma_{h'}$, and  let $\mu=\sigma/(G-T_{h},\gamma,\tau)$ if $\gamma=\gamma_{h'}$. Either way, we have $\tau\neq\gamma_{h'}$, and $\sigma(P)\subseteq\{\tau,\eta,\delta_h\}$. We then let $\mu_1=\mu/(G-T_{h},\alpha',\gamma_h)$ if $\tau\neq\gamma_h$, and $\mu_1=\mu/(G-T_{h},\alpha',\gamma)$ otherwise. Let $\mu_2$ be obtained from $\mu_1$ by switching all the $\eta$ colored edges of $P$ not in $P_{x_e}(\eta,\tau,\mu_1)$ to $\tau$ by Kempe changes. Then under $\mu_2$, we reach a contradiction to Lemma~\ref{A3.2} with $T_{n'}$ ($n'\leq n$) in place of $T_{n+1}$, $\alpha'$ in place of $\alpha$, $\delta_h$ in place of $\delta$, $x_e$ in place of $r$, $v_1$ in place of $x$, $v_2$ in place of $y$, and $b_2$ in place of $w$.
\end{proof}

\begin{CLA}\label{colorchange}
	Let $\alpha\in\phibar(T_{n,j})$ for some $0\leq j\leq q$ with $\alpha\neq\gamma_n$, and $\beta$ be any color in $[k]$. Suppose $P$ is an $(\alpha,\beta)$-path not intersecting $T_{n,j}$. If $\pi=\varphi/P$, then $\pi$ is $T_{n}+f_n$ mod $\varphi$. 
\end{CLA}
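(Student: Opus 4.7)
The plan is to invoke $(A2.5)$ on $T_n$, which is available by the inductive hypothesis since $T_n$ carries $n-1$ rungs. Stability of $\pi=\varphi/P$ relative to $(T_n,D_n,\varphi)$ is immediate because $P$ is disjoint from $T_{n,j}\supseteq T_n$, so no edge incident to $V(T_n)$ is recoloured; hence $\pibar(v)=\phibar(v)$ for every $v\in V(T_n)$ and, for every $\alpha'\in\phibar(T_n)\cup D_n$, the set of $\alpha'$-coloured edges incident to $T_n$ is preserved. By $(A2.5)$ it then suffices to verify that $T_h$ has the same $(\gamma_h,\delta_h)$-exit vertex under $\pi$ as under $\varphi$, where $h$ is the initial index of $T_n$. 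The cases $h=0$ and $\{\alpha,\beta\}\cap\{\gamma_h,\delta_h\}=\emptyset$ are trivial, since in the latter situation the swap on $P$ touches no $\gamma_h$- or $\delta_h$-coloured edge and every $(\gamma_h,\delta_h)$-chain is preserved.

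For the remaining case $h\ge 1$ with $\{\alpha,\beta\}\cap\{\gamma_h,\delta_h\}\ne\emptyset$ (noting $\alpha\ne\gamma_h=\gamma_n$), I would split on whether $\alpha\in\phibar(T_n)$. If $\alpha\in\phibar(T_n)\setminus\{\gamma_n\}$, then $(A3.5)$ applied to $T_n$, with $P$ disjoint from $T_n$, directly yields that $\pi$ is $T_n+f_n$ mod $\varphi$. Otherwise $\alpha\in\phibar(T_{n,j})\setminus\phibar(T_n)$; using Claim~\ref{close} I would first replace $j$ by the smallest level at which $\alpha$ is missing, so that $\alpha$ is closed for $T_{n,j}$, and then pick an auxiliary colour $\eta\in\phibar(T_h)\setminus\{\gamma_h,\alpha,\beta\}$. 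Such $\eta$ exists because $|\phibar(T_h)|\ge|\phibar(T_1)|\ge 5$ by Lemma~\ref{size}, and $\eta\ne\delta_h$ is automatic because $\delta_h\notin\phibar(T_h)$. Form the intermediate coloring $\varphi_1=\varphi/(G-V(T_{n,j}),\alpha,\eta)$; by Claim~\ref{bstep1}(b) the colours $\alpha$ and $\eta$ are $T_{n,j}$-interchangeable, so the swap is well defined, and Claim~\ref{r2stable} together with $(A2.5)$ identifies $\varphi_1$ as a $T_n+f_n$ mod $\varphi$ coloring. Under $\varphi_1$ the path $P$ becomes an $(\eta,\beta)$-path still disjoint from $T_{n,j}$, with $\eta\in\phibar(T_n)\setminus\{\gamma_n\}$, so the previous subcase (via $(A3.5)$ for $T_n$ under $\varphi_1$) yields that $\varphi_1/P$ is $T_n+f_n$ mod $\varphi_1$, and hence mod $\varphi$ by the transitivity of mod-equivalence classes. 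Since $\pi$ and $\varphi_1/P$ differ only by the inverse Kempe swap of $\alpha$ and $\eta$ in $G-V(T_{n,j})$, a final invocation of the same stability reasoning identifies $\pi$ with $\varphi_1/P$ within this equivalence class.

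The main delicacy is the sub-case $\alpha=\delta_h$ occurring inside $\alpha\in\phibar(T_{n,j})\setminus\phibar(T_n)$: the auxiliary swap then unavoidably recolours $\delta_h$-edges outside $T_{n,j}$, and one must certify that the unique $(\gamma_h,\delta_h)$-chain intersecting $T_h$ (its uniqueness supplied by $(A2)$ on $T_h$ together with Claim~\ref{bstep1}(b) on $T_{n,j}$) is not shifted. Here I would appeal to the $RE$-finished structure encoded in Algorithm 3.1, which forces every $(\gamma_h,\delta_h)$-ear of $T_h$ to lie inside $G[V(T_n)]\subseteq G[V(T_{n,j})]$, combined with the $\Gamma$-set constraints of the good hierarchy; together these confine the relevant $(\gamma_h,\delta_h)$-chain to $G[V(T_{n,j})]$, outside the domain of the auxiliary swap, so that the exit vertex is genuinely unchanged.
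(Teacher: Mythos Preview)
Your overall strategy matches the paper's: verify $(T_n,D_n,\varphi)$-stability, then show the $(\gamma_h,\delta_h)$-exit vertex of $T_h$ is preserved so that $(A2.5)$ applies; handle the nontrivial case $\beta\in S_h$ via an auxiliary colour swap that converts the problem into one where $(A3.5)$ (applied to $T_n$) does the work, then undo the swap. Your shortcut in the sub-case $\alpha\in\overline\varphi(T_n)\setminus\{\gamma_n\}$ (invoke $(A3.5)$ directly, bypassing the auxiliary swap) is even a slight streamlining of the paper's argument.

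There is, however, a genuine gap in the other sub-case. You pick $\eta\in\overline\varphi(T_h)\setminus\{\gamma_h,\alpha,\beta\}$ and form $\varphi_1=\varphi/(G-V(T_{n,j}),\alpha,\eta)$, justifying well-definedness by the $T_{n,j}$-interchangeability of $\alpha$ and $\eta$ from Claim~\ref{bstep1}(b). But the operation $\varphi/(G-H,\alpha,\eta)$ is only defined when neither $\alpha$ nor $\eta$ appears on $\partial(H)$; interchangeability (at most one $(\alpha,\eta)$-\emph{path} meets $H$) is not the same thing. You have arranged $\alpha$ closed for $T_{n,j}$ via Claim~\ref{close}, but an arbitrary $\eta\in\overline\varphi(T_h)\subseteq\overline\varphi(T_{n,0})$ is only guaranteed closed for $T_{n,0}=T_n$, not for $T_{n,j}$ with $j\ge 1$ (and in this sub-case $j\ge1$ necessarily, since $\alpha\notin\overline\varphi(T_n)$). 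Indeed, such $\eta$ may lie in some $\Gamma^{j-1}_m$, and $T_{n,j}$ is only $(\cup_{\delta_m\in D_{n,j}}\Gamma^{j-1}_m)^-$-closed, so $\eta$-coloured boundary edges of $T_{n,j}$ can exist. In that case the global swap in $G-V(T_{n,j})$ fails to produce a proper colouring (a vertex just outside $T_{n,j}$ incident to an $\eta$-boundary edge and to an $\alpha$-edge of $G-V(T_{n,j})$ ends up with two $\eta$-edges), and even under a chain-by-chain interpretation $P$ need not become an $(\eta,\beta)$-path. The paper avoids exactly this obstacle by taking the auxiliary colour from $\{\gamma,\tau\}\setminus\{\gamma_h\}$---the two distinguished colours of $\overline\varphi(x_e)$ reserved in Definition~\ref{R2}---which are closed in every $T_{n,i}$ by Claim~\ref{bstep1}(a); with that choice your argument goes through verbatim.

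A smaller point: your justification in the $\alpha=\delta_h$ sub-case appeals to ``the $RE$-finished structure'' to force all $(\gamma_h,\delta_h)$-ears of $T_h$ inside $G[V(T_n)]$. That is not true in general ($T_n$ need not be $RE$-finished). The correct reason is simpler and already implicit in your setup: after replacing $j$, both $\alpha=\delta_h$ and $\gamma_h=\gamma_n$ are closed for $T_{n,j}$ (the latter by Claim~\ref{bstep1}(a)), so every $(\gamma_h,\delta_h)$-chain meeting $T_h$ lies inside $G[V(T_{n,j})]$ and is untouched by any swap outside $T_{n,j}$.
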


\begin{proof} Let $h$ be the initial index for $\Theta_{n}$ and $\pi=\varphi/P$. Then $\gamma_n=\gamma_h$, $\delta_n=\delta_h$, and $\pi$ is $(T_n,D_n,\varphi)$-stable. We propose to show
		
	(1)	$T_h$ has the same $(\gamma_h,\delta_h)$-exit vertex under $\pi$ as under $\varphi$.
	
	Note that by assuming (1), we immediately see that $\pi$ is $T_{n}+f_n$ mod $\varphi$ by (A2.5) for $T_{n+1}$ in place of $T_{n+2}$. 
	
	In the case that $\alpha\neq\delta_h$ and $\beta\notin S_h=\{\gamma_h,\delta_h\}$, (1) holds trivially.  If $\alpha=\delta_h$, then all the $(\gamma_h,\delta_h)$-chains intersecting $T_h$ are contained in $G[V(T_n)]$ by TAA, and (1) also holds as $\pi$ is $(T_n,D_n,\varphi)$-stable.

	Thus we may assume $\alpha\notin S_h$ and $\beta\in S_h$. In this case, we may assume one of $\gamma$ and $\tau$, say $\gamma\neq\gamma_h$. Moreover, since $\gamma\in\phibar(x_e)$, $\gamma\neq\delta_h$. Following Claim~\ref{close}, $\alpha$ is closed for some $T_{n,j'}$ with $0 \leq j'\leq j$. By slightly abuse of notation with $j$ denoting $j'$, we may assume that $\alpha$ is closed for $T_{n,j}$. By Claim~\ref{r2stable} (a), $T_{n,j}$ is also closed for $\gamma$. So $\sigma=\varphi(G-T_{n,j},\alpha,\gamma)$ is also $(T_{n,j}, D_n,\varphi)$-stable.  As $\gamma,\alpha\notin S_h$,  $T_h$ has the same $(\gamma_h,\delta_h)$-exit vertex under $\sigma$ as under $\varphi$.  So $\sigma$ is also $T_{n}+f_n$ mod $\varphi$ by (A2.5). Note that under $\sigma$, $P$ is a $(\gamma,\beta)$-path not intersecting $T_{n,j}$. So by (A3.5) for $T_n$, $\pi_1=\sigma/P$ is $T_{n}+f_n$ mod $\sigma$, and therefore is  $T_{n}+f_n$ mod $\varphi$ too. Note that $\pi=\pi_1/(G-T_{n,j},\alpha,\gamma)$, we similarly see that $\pi$ is  $T_{n}+f_n$ mod $\pi_1$ as $\gamma,\alpha\notin S_h$. So $\pi$ is  $T_{n}+f_n$ mod $\varphi$, as desired. 
	
\end{proof}

\begin{CLA}\label{b9n}
	For any $y\in V(T(y_{p-1})-T_{n,q})$, $|(\phibar(T(y))\setminus\{\gamma_n\})\setminus\varphi(T(y) - T_{n,q}))|\geq 2+2n$.
	Furthermore, for $i=1,2,3$, there exist distinct colors $\beta_i\in\phibar(T(y))\setminus\{\gamma_n\}$ with $\beta_i\notin \varphi(T(y)-T_{n,q})$ such that either $\beta_i\notin\Gamma^q\cup D_{n,q}$ or $\beta_i\in\Gamma^q_r$ for some $\delta_r\in D_{n,q}\cap\phibar(T(y))$ (here $\delta_r$ might be different for different $i$). Consequently, for any $y=y_k$ with $0\leq k\leq p-2$, there exists a color $\beta\in\phibar(T(y))$ with $\beta\notin \varphi(T(y_{k+2})-T_{n,q})$ such that either $\beta\notin\Gamma^q\cup D_{n,q}\cup\{\gamma_n\}$ or $\beta\in\Gamma^q_r$ for some $\delta_r\in D_{n,q}\cap\phibar(T(y))$.
	
\end{CLA}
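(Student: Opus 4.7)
The plan is to prove both inequalities by a careful counting of missing colors combined with the disjointness properties guaranteed by the good hierarchy. First, since $T(y)\subseteq T(y_{p-1})$ and the latter is elementary by (5.0), $V(T(y))$ is elementary, so $|\phibar(T(y))|=\sum_{v\in V(T(y))}|\phibar(v)|$. Each of the two endpoints of the uncolored edge $e$ has $|\phibar(v)|\geq 2$ (as $k\geq\Delta+1$), and every other vertex has $|\phibar(v)|\geq 1$. To bound $|V(T(y))|$, I would note that each $|V(T_i)|$ for $1\leq i\leq n$ and each $|V(T_{n,j})|$ for $0\leq j\leq q$ is odd, since any color in $\phibar(T_i)$ (resp.\ $\phibar(T_{n,j})$) that is closed in it---such as $\gamma,\tau,\gamma_n$ by Claim~\ref{bstep1}(a)---matches all but one vertex. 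Combining with Lemma~\ref{size}, one obtains $|V(T_n)|\geq 2n+1$, $|V(T_{n,q})|\geq 2n+1+2q$, and with $s:=|V(T(y))-V(T_{n,q})|$,
\[|\phibar(T(y))|\;\geq\;|V(T(y))|+2\;\geq\;2n+3+2q+s.\]
Since $T(y)-T_{n,q}$ has exactly $s$ edges, $|\varphi(T(y)-T_{n,q})|\leq s$, so discarding $\gamma_n$ and these edge colors yields $|\phibar(T(y))\setminus\{\gamma_n\}-\varphi(T(y)-T_{n,q})|\geq 2n+2+2q\geq 2n+2$, establishing the first assertion.

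For the ``furthermore'' part, let $B'=\phibar(T(y))\setminus\{\gamma_n\}$ and let $A$ denote the set of admissible $\beta$-candidates. A color $\beta\in B'$ is disallowed precisely when $\beta\in\varphi(T(y)-T_{n,q})$, or $\beta\in D_{n,q}$, or $\beta\in\Gamma^q_r$ for some $\delta_r\in D_{n,q}-\phibar(T(y))$. Setting $a:=|D_{n,q}\cap\phibar(T(y))|$ and $b:=|D_{n,q}|-a$, these three disallowed sets have sizes at most $s$, $a$ (since $D_{n,q}\cap\phibar(T_{n,q})=\emptyset$ forces $D_{n,q}\cap B'=D_{n,q}\cap\phibar(T(y))$), and $2b$, respectively. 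The crucial disjointness observation is that for $\delta_r\notin\phibar(T(y))$ one has $v_{\delta_r}\succeq y$, so $T(y)-T_{n,q}\subseteq T(v_{\delta_r})-T_{n,q}$, and Definition~\ref{R2}(i) yields $\Gamma^q_r\cap\varphi(T(y)-T_{n,q})=\emptyset$. Writing $c_1:=|D_{n,q}\cap\varphi(T(y)-T_{n,q})|$ for the overlap between the first two sets, inclusion-exclusion then gives
\[|A|\;\geq\;|B'|-s-a-2b+c_1\;=\;2n+2+2q-(a+2b)+c_1\;\geq\;2+2q+c_1,\]
using $a+2b\leq 2|D_{n,q}|\leq 2n$.

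For $q\geq 1$ this already yields $|A|\geq 4\geq 3$. For $q=0$ we have $T_{n,q}=T_n$, so the connecting edge $f_n$ of the Tashkinov series lies in $T-T_n$; since $T_n$ is closed with $\varphi(f_n)=\delta_n$, we have $\delta_n\notin\phibar(T_n)$, i.e.\ $\delta_n\in D_n=D_{n,0}$, forcing $\delta_n\in D_{n,q}\cap\varphi(T(y)-T_{n,q})$ and hence $c_1\geq 1$. In both cases $|A|\geq 3$, so three distinct admissible colors $\beta_1,\beta_2,\beta_3$ exist. Finally, for the ``Consequently'' clause with $y=y_k$ and $k\leq p-2$, the new edge colors in $\varphi(T(y_{k+2})-T_{n,q})\setminus\varphi(T(y)-T_{n,q})$ are at most $\{\varphi(e_{k+1}),\varphi(e_{k+2})\}$, so at most two of the $\beta_i$'s can be knocked out, leaving at least one admissible $\beta$.

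The main technical obstacle is the tight $q=0$ case, where the generic counting only yields $|A|\geq 2$; the proof hinges on recognizing that the connecting edge $f_n$ always contributes a $D_{n,q}$-colored edge to $T-T_{n,q}$, supplying the extra $+1$ needed. The rest is routine inclusion-exclusion using the disjointness between edge colors and the problematic $\Gamma^q_r$'s, which is handed to us by Definition~\ref{R2}(i).
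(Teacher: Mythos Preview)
Your counting for the first inequality and for the ``Furthermore'' part is correct and arguably cleaner than the paper's: you obtain the sharper estimate $|\phibar(T(y))\setminus\{\gamma_n\}-\varphi(T(y)-T_{n,q})|\geq 2n+2+2q$ via $|V(T_{n,q})|\geq 2n+1+2q$ (justified by Claim~\ref{bstep1}(a) and elementariness), and your direct inclusion--exclusion bound on $|A|$---using the key disjointness $\Gamma^q_r\cap\varphi(T(y)-T_{n,q})=\emptyset$ for $\delta_r\notin\phibar(T(y))$ from Definition~\ref{R2}(i)---replaces the paper's pigeonhole argument. Both routes reach the same conclusion; yours is more transparent.

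There is, however, a genuine gap in your treatment of the ``Consequently'' clause. That clause is stated for $0\leq k\leq p-2$, and by convention $y_0$ is the last vertex of $T_{n,q}$ with $T(y_0)=T_{n,q}$; thus $y_0\notin V(T(y_{p-1})-T_{n,q})$, and the ``Furthermore'' part, which you invoke to produce three $\beta_i$'s, simply does not apply at $k=0$. Running your own bound at $y=y_0$ gives $s=a=c_1=0$, hence $|A|\geq 2+2q$, which for $q=0$ yields only $|A|\geq 2$---insufficient to absorb the two edge colors $\varphi(e_1),\varphi(e_2)$. The paper handles $k=0$ by a separate case split: for $q\geq 1$ the extra $+2q$ (equivalently $|\phibar(T_{n,q})\setminus\{\gamma_n\}|\geq 2n+3$) already suffices, while for $q=0$ one uses precisely the observation you made elsewhere, namely $\varphi(e_1)=\delta_n\notin\phibar(T_n)$, so $\varphi(e_1)\notin A\subseteq\phibar(T_{n,q})$ and only $\varphi(e_2)$ must be excluded. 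Adding this short $k=0$ case completes your argument.
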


\begin{proof}  Since under $\varphi$, $T_n$ is elementary following (5.0) and $T_{l}$ is closed for each $l\in[n]$, we have $|T_l|$ being odd, and thus $|\phibar(T_n)\setminus\{\gamma_n\}|\geq 2n+2$ following Lemma~\ref{size}. So $|\phibar(T_{n,q})\setminus\{\gamma_n\}|\geq 2n+3$ when $q\geq 1$. Because $|V(T(y)-T_{n,q})|=|E(T(y)-T_{n,q})|$,  following (5.0), we have $|\phibar (T(y) -T_{n,q})| \ge |\varphi(T(y) - T_{n,q})|$. Note that in the case that $q=0$, the connecting edge $f_n$ is colored by $\delta_n\notin\phibar(T_{n})$, so $|\varphi(T(y) - T_{n,q})\cap\phibar(T(y))|<|\varphi(T(y) - T_{n,q})|$ unless $\delta_n\in\phibar(T(y))$. Therefore,
	$ |(\overline{\varphi}(T(y))\setminus\{\gamma_n\}) \setminus\varphi(T(y) -T_{n,q}) | \ge |\phibar(T_{n,q})\setminus\{\gamma_n\}|  \ge |\phibar(T_n)\setminus\{\gamma_n\}|\geq 2n+2$, and equality holds only if $q=0$ and $\delta_n\in\phibar(T(y))$.

	Now let us show the ``furthermore" part. Denote $\overline{\varphi}(T(y))\setminus\{\gamma_n\}$ by $C$. Now let  $|C\setminus ( \Gamma^{q} \cup D_{n,q} \cup\varphi(T(y) - T_{n,q}))|=d$. Clearly we have as desired if $d\geq 3$, so we may assume $d<3$.
	Since \[
	C\setminus \varphi(T(y)-T_{n,q})  = (C  \setminus (\Gamma^{q}\cup D_{n,q} \cup \varphi(T(y) - T_{n,q}))) \cup
	(((\Gamma^q\cup D_{n,q})\cap C) \setminus (\varphi(T(y) - T_{n,q})\cap C)),
	\]
	we have
	\[
	|((\Gamma^{q}\cup D_{n,q})\cap C) \setminus (\varphi(T(y) - T_{n,q})\cap C)| \ge |C\setminus \varphi(T(y)-T_{n,q})| - |C\setminus (\Gamma^{q} \cup D_{n,q} \cup\varphi(T(y) - T_{n,q}))| \ge 2n + 2-d,   
	\]
	and equality holds only if $q=0$ and $\delta_n\in\phibar(T(y))$. Since $|D_{n,q}|\leq n$ and $d<3$, by the Pigeonhole Principle with $D_{n,q}$ many pigeonholes, there are at least $(3-d)$ distinguished colors $\beta\in\Gamma^q_r\cap C$ such that  $\beta\notin\varphi(T(y) - T_{n,q})$ with $\delta_r\in D_{n,q}\cap\phibar(T(y))$ (different $\beta$ can have different $\delta_r$). Together with $|C\setminus (\Gamma^{q} \cup D_{n,q} \cup\varphi(T(y) - T_{n,q}))|=d$, the ``furthermore" part holds.
	
	Finally we prove the ``consequently" part of the current claim. Since there are only two edges in $T(y_{k+2})-T(y_k)$, clearly it holds by the ``furthermore" part if $k\geq 1$. So we assume $k=0$, that is, $y=y_0$ being the last vertex of $T_{n,q}$. Note that if $q\geq 1$, we have $|\phibar(T_{n,q})\setminus\{\gamma_n\}|\geq 2n+3$. Since there are only two edges in $T(y_2)-T_{n,q}$, $D_{n,q}\notin\phibar(T_{n,q})$, and $|\Gamma^q|\leq 2n$, we have a color $\beta\in\phibar(T_{n,q})$ with $\beta\notin \varphi(T(y_{2})-T_{n,q})$ such that $\beta\notin\Gamma^q\cup D_{n,q}\cup \{\gamma_n\}$, as desired. So $q=0$, and we have $|\phibar(T_{n,q})\setminus\{\gamma_n\}|\geq 2n+2$ in this case. However, in this case the first edge on $T(y_{2})-T_{n,q}$ is colored by $\delta_n\notin\phibar(T_{n})$. So again, since  there are only two edges in $T(y_2)-T_{n,q}$, $D_{n,q}\notin\phibar(T_{n,q})$, and $|\Gamma^q|\leq 2n$, we also have a desired color $\beta\in\phibar(T_{n,q})$ with $\beta\notin \varphi(T(y_{2})-T_{n,q})$ such that $\beta\notin\Gamma^q\cup D_{n,q}\cup \{\gamma_n\}$.
\end{proof}

\begin{CLA}\label{bchange}
	Let $\alpha$ and $\beta$ be colors in $\phibar(T(y_{p-1}))$  with
	$v(\alpha) \prec v(\beta)$ and $\alpha\notin\varphi(T(v(\beta)) -  T_{n,q})$ ($T_{v(\beta)}-T_{n,q}=\emptyset$ if $v(\beta)\in T_{n,q}$).
	If $\alpha\in \phibar(T_{n,q})-\{\gamma_n\}$, or $\alpha, \beta\notin D_{n,q}\cup\{\gamma_n\}$, or $\alpha,\beta$ are both closed for $T_{n,q}$, then $P_{v(\alpha)}(\alpha,\beta,\varphi)=P_{v(\beta)}(\alpha,\beta,\varphi)$. Additionally, if $\alpha\in \phibar(T_{n,q})$ and $\alpha$ is closed for $T_{n,q}$, then
	$P_{v(\alpha)}(\alpha,\beta,\varphi)$ is the only $(\alpha,\beta)$-{\bf path}  intersecting $\partial(T_{n,q})$.
\end{CLA}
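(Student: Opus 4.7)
The plan is to argue by contradiction: assume the two $(\alpha,\beta)$-chains $P_1:=P_{v(\alpha)}(\alpha,\beta,\varphi)$ and $P_2:=P_{v(\beta)}(\alpha,\beta,\varphi)$ are distinct, perform a Kempe change $\pi=\varphi/P_2$, and observe that under $\pi$ the unique $\alpha$-coloured edge at $v(\beta)$ becomes $\beta$-coloured, so $\alpha\in\pibar(v(\alpha))\cap\pibar(v(\beta))$. Provided $T(y_{p-1})$ remains an ETT with the same good hierarchy under $\pi$, the elementariness guaranteed by (5.0) yields a contradiction. Case (c) is immediate: if both $\alpha$ and $\beta$ are closed for $T_{n,q}$, Claim~\ref{bstep1}(b) gives $T_{n,q}$-interchangeability, so the unique $(\alpha,\beta)$-chain meeting $T_{n,q}$ contains both $v(\alpha),v(\beta)\in V(T_{n,q})$, forcing $P_1=P_2$. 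The ``additional'' clause follows from the same interchangeability: any $(\alpha,\beta)$-path meeting $\partial(T_{n,q})$ also meets $V(T_{n,q})$, hence coincides with $P_1$.

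For case (a), with $\alpha\in\phibar(T_{n,q})\setminus\{\gamma_n\}$, Claim~\ref{close} provides the smallest $j$ with $\alpha\in\phibar(T_{n,j})$ for which $\alpha$ is closed in $T_{n,j}$. Claim~\ref{bstep1}(b) applied to $T_{n,j}$ says $\alpha,\beta$ are $T_{n,j}$-interchangeable, so $P_1$ is the unique $(\alpha,\beta)$-chain meeting $T_{n,j}$. If $v(\beta)\in V(T_{n,j})$ we are done; otherwise $P_2$ is disjoint from $T_{n,j}$, so Claim~\ref{colorchange} gives that $\pi=\varphi/P_2$ is $T_n+f_n$ mod $\varphi$. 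The hypothesis $\alpha\notin\varphi(T(v(\beta))-T_{n,q})$ together with $P_2\cap V(T_{n,j})=\emptyset$ then lets us verify that $\pi$ agrees with $\varphi$ on $E(T(y_{p-1}))$ and on $\phibar$ at every vertex of $T(y_{p-2})$, so $\pi$ is $(T(y_{p-1}),D_n,\varphi)$-weakly stable. Lemma~\ref{wstable} combined with Remark~\ref{R2check} then says that $T(y_{p-1})$ remains an ETT with the same good hierarchy under $\pi$, hence is elementary by (5.0), contradicting $\alpha\in\pibar(v(\alpha))\cap\pibar(v(\beta))$.

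Case (b), with $\alpha,\beta\notin D_{n,q}\cup\{\gamma_n\}$, reduces to case (a) whenever $\alpha\in\phibar(T_{n,q})$; in the remaining subcase $v(\alpha)\in V(T-T_{n,q})$, the ordering $v(\alpha)\prec v(\beta)$ forces $v(\beta)\in V(T-T_{n,q})$ too, so $\alpha,\beta\notin\phibar(T_n)\cup D_n$. Consequently $P_2$ meets $T_n$ only at internal vertices; its Kempe swap leaves $\phibar$ on $V(T_n)$ unchanged, so $\pi$ is $(T_n,D_n,\varphi)$-stable. Since $\alpha,\beta\notin\{\gamma_h,\delta_h\}$, the $(\gamma_h,\delta_h)$-exit vertex of $T_h$ is preserved, and (A2.5) yields that $\pi$ is $T_n+f_n$ mod $\varphi$; the argument then concludes as in case (a). The main technical obstacle throughout is the weak-stability check on $T(y_{p-1})$: one must show that no edge of $T(y_{p-1})$ coloured $\alpha$ or $\beta$ lies on $P_2$. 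This is exactly where the hypothesis $\alpha\notin\varphi(T(v(\beta))-T_{n,q})$ is used to handle $\alpha$-coloured tree edges before $v(\beta)$, while Claim~\ref{close} and Definition~\ref{R2}(i) are invoked to constrain where $\beta$-coloured edges of $T-T_{n,q}$ may occur.
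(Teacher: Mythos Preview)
Your overall strategy---assume $P_{v(\alpha)}\neq P_{v(\beta)}$, set $\pi=\varphi/P_{v(\beta)}$, and exhibit a smaller non-elementary ETT with a good hierarchy---is exactly the paper's. Case~(c) and the ``additional'' clause are fine (modulo the remark that when $\alpha=\gamma_n$ you must apply Claim~\ref{bstep1}(b) with $\beta$ in the distinguished role, or just observe directly that both colors closed means no $(\alpha,\beta)$-chain crosses $\partial(T_{n,q})$). But there are two genuine gaps in cases~(a) and~(b).

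First, you repeatedly claim that $\pi$ is $(T(y_{p-1}),D_n,\varphi)$-weakly stable. This need not hold: once $v(\beta)\prec y_{p-1}$, tree edges $e_l$ with $v(\beta)\prec y_l\preceq y_{p-1}$ may well be $\alpha$- or $\beta$-coloured (TAA allows this as soon as $v(\alpha)$, resp.\ $v(\beta)$, has been passed), and nothing in the hypotheses keeps such edges off $P_{v(\beta)}$. The paper avoids this by working with the \emph{smaller} segment $T(v(\beta))$. On $T(v(\beta))$ weak stability is actually checkable: the hypothesis $\alpha\notin\varphi(T(v(\beta))-T_{n,q})$ kills all $\alpha$-edges, and since $v(\beta)$ is the \emph{first} vertex missing $\beta$, no edge of $T(v(\beta))-T_{n,q}$ can be $\beta$-coloured either (except possibly $f_n$ when $q=0$ and $\beta=\delta_n$, which is harmless because then $\beta\in D_n$). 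As $v(\alpha),v(\beta)\in V(T(v(\beta)))$, the non-elementariness of $T(v(\beta))$ under $\pi$ already produces the required smaller counterexample.

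Second, in case~(a) with $\alpha$ not closed in $T_{n,q}$, you take the \emph{smallest} $j$ with $\alpha$ closed in $T_{n,j}$ and then assert the good hierarchy persists. But swapping $P_{v(\beta)}$ can create $\alpha$-coloured boundary edges of $T_{n,k}$ for $j<k\le q$, which violates Definition~\ref{R2}(iv) whenever $\alpha\notin\cup_{\delta_m\in D_{n,k}}\Gamma^{k-1}_m$. The paper instead takes the \emph{largest} $r$ with $\alpha$ closed in $T_{n,r}$ and invokes the second conclusion of Claim~\ref{close}: $\alpha\in\cup_{\delta_m\in D_{n,k}}\Gamma^{k-1}_m$ for every $r<k\le q$. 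This is precisely what ensures condition~(iv) survives the Kempe change, and it is a step your outline omits.
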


\begin{proof}
	Let $v(\alpha)=u$ and $v(\beta)=w$. We consider the following few cases.
	{\flushleft \bf Case I: 	$u,w\in T_{n,q}$. }

	If $T_{n,q}$ is closed for both $\alpha,\beta$, then no boundary edges of $T_{n,q}$ are colored by $\alpha$ or $\beta$. Thus we have $P_{u}(\alpha,\beta,\varphi)=P_{w}(\alpha,\beta,\varphi)$, because $T_{n,q}$ is elementary by (5.0). Moreover, this is the only $(\alpha,\beta)$-path intersecting $T_{n,q}$, so Claim~\ref{bchange} holds in this case. 
	
	Now we suppose $T_{n,q}$ is closed for $\alpha$ or $\beta$ but not both. We first consider the case $\alpha,\beta\neq \gamma_n$. By Claim~\ref{bstep1} (could be applied on $\beta$ in place of $\alpha$ when $\beta$ is closed for $T_{n,q}$), there is at most one $(\alpha,\beta)$-path intersecting $T_{n,q}$. Thus $P_{u}(\alpha,\beta,\varphi)= P_{w}(\alpha,\beta,\varphi)$ is the unique $(\alpha,\beta)$-path intersecting $\partial(T_{n,q})$, as desired. We then assume $\beta=\gamma_n$. As $\gamma_n$ is closed for $T_{n,q}$ by Claim~\ref{bstep1}, $\alpha$ is not closed for $T_{n,q}$ by assumption. So we only need to show $P_{u}(\alpha,\beta,\varphi)= P_{w}(\alpha,\beta,\varphi)$. By Claim~\ref{close}, $\alpha$ is closed for $T_{n,j}$, where $j$ is the smallest index with $\alpha\in\phibar(T_{n,j})$. Thus both $\alpha$ and $\beta=\gamma_n$ are closed for $T_{n,j}$ by Claim~\ref{bstep1}, and therefore $P_{u}(\alpha,\beta,\varphi)=P_{w}(\alpha,\beta,\varphi)$ following (5.0).

	 We now assume neither $\alpha$ nor $\beta$ is $T_{n,q}$-closed. As $\gamma_n$ is closed for $T_{n,q}$ by Claim~\ref{bstep1}, we have both $\alpha,\beta\neq \gamma_n$.  In this case, we only need to show that $P_{u}(\alpha,\beta,\varphi)=P_{w}(\alpha,\beta,\varphi)$. By Claim~\ref{close}, $\beta$ is closed for some $T_{n,j}$ with $\beta\in\phibar(T_{n,j})$ and $0\leq j\leq q$. Because $u\prec w$, we have $u,w\in T_{n,j}$. Again by Claim~\ref{bstep1}, we can see that $P_{u}(\alpha,\beta,\varphi)= P_{w}(\alpha,\beta,\varphi)$ which is the unique $(\alpha,\beta)$-path intersecting $\partial(T_{n,j})$.

	{\flushleft {\bf Case II: $w\notin T_{n,q}$} and $u\in T_{n,q}$. }
	
	In this case $\alpha\notin\varphi(T(w) - T_{n,q})$, $\beta\neq\gamma_n$, and $\beta$ is not closed for $T_{n,q}$. So we have $\alpha\neq\gamma_n$. We first assume that  $\alpha$ is closed in $T_{n,q}$. By Claim~\ref{bstep1}, there is exactly one $(\alpha,\beta)$-path intersecting $T_{n,q}$, which is $P_{u}(\alpha,\beta,\varphi)$. Assume otherwise that $P_{u}(\alpha,\beta,\varphi)\neq P_{w}(\alpha,\beta,\varphi)$. Then $P_{w}(\alpha,\beta,\varphi)$ does not intersect $T_{n,q}$. Thus $\pi=\varphi/P_{w}(\alpha,\beta,\varphi)$ is $(T_{n,q}, D_n,\varphi)$-stable. Moreover, it is $T_n+f_n$ mod $\varphi$ by Claim~\ref{colorchange}, since $\alpha\neq\gamma_n$. Therefore, by Claim~\ref{r2stable}, $T_{n,q}$ is an ETT with a good hierarchy up to itself under $\pi$. Note that following TAA, the only case that $\beta\in\varphi(T(w) - T_{n,q})$ is  $\beta=\delta_n$ and $q=0$. Thus because $\pi$ is $(T_{n,q}, D_n,\varphi)$-stable and $\alpha\notin\varphi(T(w) - T_{n,q})$, $\pi$ is also $(T(w),D_n,\varphi)$-weakly stable. So $T(w)$ remains an ETT under $\pi$ by Lemma~\ref{wstable}.
		Following Remark~\ref{R2check}, $T(w)$  also has a good hierarchy under $\pi$. However, this is a contradiction to (5.0) as $\alpha\in\pibar(w)\cap \pibar(T_{u})$.

	Thus $\alpha$ is not closed in $T_{n,q}$. In this case we only need to prove $P_{u}(\alpha,\beta,\varphi)=P_{w}(\alpha,\beta,\varphi)$. Assume otherwise. Following Claim~\ref{close}, we have $\alpha\notin\varphi(T_{n,q}-T_{n,r})$, where $r$ is the largest index with $r\leq q$ such that $\alpha$ is closed for $T_{n,r}$, and 
	
	(1) $\alpha\in \cup_{\delta_m\in D_{n,k}}\Gamma^{k-1}_m$ for each $k$ with $r< k\leq q$.
	
	 By the same argument as above for $T_{n,r}$ using Claim~\ref{bstep1}, we see that  $P_{u}(\alpha,\beta,\varphi)$ is the only $(\alpha,\beta)$-path intersecting $T_{n,r}$, and $\pi=\varphi/P_{w}(\alpha,\beta,\varphi)$ is $(T_{n,r}, D_n,\varphi)$-stable. Similarly following  Claim~\ref{colorchange} and Claim~\ref{r2stable}, $T_{n,r}$ is an ETT with a good hierarchy up to itself under $\pi$. Since $\alpha\notin\varphi(T_{n,q}-T_{n,r})$ and $\beta\notin\phibar(T(w)-w)$, we again see that $\pi$ is $(T(w),D_n,\varphi)$-weakly stable. So the hierarchy of $T(w)$ satisfies Definition~\ref{R2} (i), (ii) and (iii) under $\pi$ the same way as it is under $\varphi$. Following (1), we see that $T(w)$ also satisfies Definition~\ref{R2} (iv), so $T(w)$ has a good hierarchy of $q$ levels.  Again, this is a contradiction to (5.0) as $\alpha\in\pibar(w)\cap \pibar(u)$.

	{\flushleft \bf Case III: 	$u,w\notin T_{n,q}$.}
	
	In this case,  by (5.0), we have  $\alpha\notin\varphi(T(w) -  T_{n,q})$ and $\alpha,\beta \notin D_{n,q}\cup\phibar(T_{n,q})$ . So $\alpha,\beta\notin\varphi(T(w))$.    Suppose on the contrary that $P_{u}(\alpha,\beta,\varphi)\neq P_{w}(\alpha,\beta,\varphi)$. Then it is easy to see that $\pi=\varphi/P_{w}(\alpha,\beta,\varphi)$ is $(T_{n,q},D_n,\varphi)$-stable and $(T(w)-w, D_n,\varphi)$-weakly stable.  Moreover, $T_h$ has the same $(\gamma_h,\delta_h)$-exit vertex under $\pi$ as under $\varphi$, where $h$ is the initial index of $\Theta_n$. So $\pi$ is $T_n+f_n$ mod $\varphi$. Thus following Lemma~\ref{wstable}, Claim~\ref{r2stable} and Remark~\ref{R2check}, we see that $T(w)$ has a good hierarchy with $q$ levels, reaching a contradiction to (5.0) as $\alpha\in\pibar(w)\cap \pibar(T_{u})$.
\end{proof}

\begin{CLA}\label{bstablechange}
	For any two colors $\alpha,\beta\in\phibar(T(y_{p-1}))$, the following two statements hold.
	\begin{itemize}
		\item [(i)]	Let $P$ is an $(\alpha,\beta)$-path other than $P_{v(\alpha)}(\alpha,\beta,\varphi)$. If $\alpha\in\phibar(T_{n,q})-\{\gamma_n\}$ and $\beta\neq\gamma_n$, or both $\alpha$ and $\beta$ are closed for $T_{n,q}$, then  $\pi=\varphi/P$ is $T_n+f_n$ mod $\varphi$, $(T_{n,q},D_n,\varphi)$-weakly stable, and $T_{n,q}$ has a good hierarchy of $(q-1)$ levels up to itself under $\pi$.
		\item [(ii)] If $T_{n,q}\prec v(\alpha) \prec v(\beta)$, $\alpha\notin \phiv(T(v(\beta))-T_{n,q})$ and $\alpha, \beta \notin D_{n,q}$, then $\pi=\phiv/P$ is $(T_{n,q},D_n,\varphi)$-stable and $T_n+f_n$ mod $\varphi$ for any $(\alpha, \beta)$-chain $P$. Moreover, $T$ is an ETT with a good hierarchy of $q$ levels under $\pi$. 
	\end{itemize}
\end{CLA}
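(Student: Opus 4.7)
The plan is to deduce both parts as applications of the preceding machinery (Claims~\ref{bchange}, \ref{colorchange}, \ref{r2stable}, together with Lemma~\ref{wstable}/Corollary~\ref{vstable} and Remark~\ref{R2check}).

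For Part (i), the core step is to show that $P$ is disjoint from $T_{n,q}$; once this is established, the three conclusions are immediate. Under condition (B) with both $\alpha,\beta$ closed for $T_{n,q}$, Claim~\ref{bchange} gives that $P_{v(\alpha)}(\alpha,\beta,\varphi)$ is the only $(\alpha,\beta)$-path meeting $\partial(T_{n,q})$; since no boundary edge of $T_{n,q}$ is colored $\alpha$ or $\beta$, any other $(\alpha,\beta)$-chain stays wholly outside $T_{n,q}$, so $P\neq P_{v(\alpha)}$ forces $P$ outside. Under condition (A) with $\alpha$ closed for $T_{n,q}$, Claim~\ref{bstep1}(b) gives $T_{n,q}$-interchangeability of $\alpha$ and $\beta$, yielding the same conclusion. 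When $\alpha\in\phibar(T_{n,q})-\{\gamma_n\}$ is not closed for $T_{n,q}$, apply Claim~\ref{close} to obtain the maximal $r<q$ with $\alpha$ closed for $T_{n,r}$; using Claim~\ref{b9n}, pick a color $\gamma'\in\phibar(T_{n,q})$ closed for $T_{n,q}$ and disjoint from $\{\alpha,\beta,\gamma_n\}\cup\Gamma^q\cup D_{n,q}$, then set $\sigma=\varphi/(G-T_{n,r},\alpha,\gamma')$. One checks via (A2.5) that $\sigma$ is $T_n+f_n$ mod $\varphi$, so $T_{n,q}$ is an ETT with good hierarchy up to itself under $\sigma$ by Claim~\ref{r2stable}, and moreover $\alpha$ becomes closed for $T_{n,q}$ under $\sigma$. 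Applying the closed-case argument to the corresponding chain under $\sigma$ and pulling back yields $P$ disjoint from $T_{n,q}$ under $\varphi$. Now $\pi=\varphi/P$ is $(T_{n,q},D_n,\varphi)$-stable (hence weakly stable), Claim~\ref{colorchange} (applied with the color in $\{\alpha,\beta\}$ different from $\gamma_n$) gives $\pi$ is $T_n+f_n$ mod $\varphi$, and Claim~\ref{r2stable} gives the good hierarchy up to $T_{n,q}$ itself.

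For Part (ii), the hypotheses $T_{n,q}\prec v(\alpha)\prec v(\beta)$ together with $\alpha,\beta\notin D_{n,q}$ translate into $\alpha,\beta\notin\phibar(T_{n,q})\cup D_n$. From this I first read off that $\pi=\varphi/P$ is $(T_{n,q},D_n,\varphi)$-stable: no endpoint of $P$ lies in $T_{n,q}$ (endpoints miss $\alpha$ or $\beta$), so every vertex of $T_{n,q}$ on $P$ is internal and preserves its missing-color set; and since $\{\alpha,\beta\}\cap(\phibar(T_{n,q})\cup D_n)=\emptyset$, edge color classes for colors in $\phibar(T_{n,q})\cup D_n$ incident to $T_{n,q}$ are untouched. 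Because $\gamma_n\in\phibar(T_{n,q})$ and $\delta_n\in\phibar(T_{n,q})\cup D_{n,q}$, we also have $\{\alpha,\beta\}\cap S_h=\emptyset$; hence every $(\gamma_h,\delta_h)$-chain is unaffected by the Kempe swap on $P$, $T_h$ keeps the same $(\gamma_h,\delta_h)$-exit vertex, and (A2.5) delivers $\pi$ is $T_n+f_n$ mod $\varphi$.

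For the final assertion that $T$ remains an ETT with a good hierarchy of $q$-levels under $\pi$, I use the hypothesis $\alpha\notin\varphi(T(v(\beta))-T_{n,q})$: this forces every edge of $T$ lying between $T_{n,q}$ and $v(\beta)$ to keep its color under $\pi$, so combined with $(T_{n,q},D_n,\varphi)$-stability the coloring $\pi$ is $(T-v,D_n,\varphi)$-weakly stable where $v$ is the last vertex of $T$. Corollary~\ref{vstable} (via Lemma~\ref{wstable}) then gives that $T$ is an ETT under $\pi$ with the same ladder, connecting edges, connecting/companion colors, and extension types. Finally, Remark~\ref{R2check} reduces checking the good hierarchy to Definition~\ref{R2}(i) at level $q$; since $\Gamma^q\subseteq\phibar(T_{n,q})$ and $\{\alpha,\beta\}\cap\phibar(T_{n,q})=\emptyset$, no $\Gamma^q_m$-color is moved by the Kempe swap, so the $\Gamma$-avoidance on edges of $T_{n,q+1}-T_{n,q}$ before $v_{\delta_m}$ is inherited verbatim from $\varphi$.

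The main obstacle is the non-closed case in Part (i): one must produce an intermediate coloring $\sigma$ under which $\alpha$ becomes closed for $T_{n,q}$ while simultaneously preserving both the $T_n+f_n$-mod property and the good hierarchy. This demands a careful color selection via Claim~\ref{b9n} avoiding the forbidden set $\Gamma^q\cup D_{n,q}\cup\{\alpha,\beta,\gamma_n\}$, and a verification that the path $P$ under $\varphi$ corresponds cleanly to a path disjoint from $T_{n,q}$ under $\sigma$ — the bookkeeping of the boundary $\alpha$-edges of $T_{n,q}$ (which all live at vertices of $T_{n,q}-T_{n,r}$ by Claim~\ref{close}) is the delicate combinatorial ingredient here.
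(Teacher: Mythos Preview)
Your approach to Part~(i) has a genuine gap in the non-closed case. You declare that ``the core step is to show that $P$ is disjoint from $T_{n,q}$,'' and then conclude that $\pi$ is fully $(T_{n,q},D_n,\varphi)$-stable. But this disjointness is in general false: when $\alpha$ is not closed for $T_{n,q}$ there are $\alpha$-colored boundary edges of $T_{n,q}$ at vertices of $T_{n,q}\setminus T_{n,r}$, and $P$ may well pass through such a vertex (so condition~(ii) of stability fails for the color $\alpha\in\phibar(T_{n,q})$). Your detour via $\sigma=\varphi/(G-T_{n,r},\alpha,\gamma')$ does not rescue the argument: under $\sigma$ the path $P$ (which lies outside $T_{n,r}$) becomes a $(\gamma',\beta)$-chain rather than an $(\alpha,\beta)$-chain, so applying the closed-$\alpha$ case under $\sigma$ says nothing about $P$, and there is no coherent ``pull-back.'' The paper's route is different and does not aim for full stability: it takes $r$ to be the largest index with $\alpha$ or $\beta$ closed in $T_{n,r}$, shows (via Claim~\ref{bchange} at level $r$) that $P$ avoids $T_{n,r}$, and then uses Claim~\ref{close} to obtain $\alpha,\beta\notin\varphi(T_{n,q}-T_{n,r})$ (with the harmless $f_n$ exception), which is exactly what is needed for \emph{weak} stability of $\pi$ with respect to $T_{n,q}$ and for the hierarchy check.

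In Part~(ii) your argument is in the right spirit but the final step is not justified: you assert that $\pi$ is $(T-v,D_n,\varphi)$-weakly stable, yet edges of $T$ lying beyond $v(\beta)$ may be colored $\alpha$ or $\beta$ and can lie on $P$, so their colors may change and Lemma~\ref{wstable} does not apply directly. The paper handles this by splitting into $P\neq P_{v(\alpha)}$ (where weak stability of $T$ is argued) versus $P=P_{v(\alpha)}$ (where one shows $T$ is still obtainable by TAA because any recolored later edge still carries a color in $\{\alpha,\beta\}\subseteq\pibar(T(v(\beta)))$, and Definition~\ref{R2}(i) survives since $\alpha,\beta\notin\Gamma^q$).
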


\begin{proof}
	We first prove (i).
	If one of $\alpha$ and $\beta$ is closed in $T_{n,q}$, then $P$ does not intersect $T_{n,q}$ by Claim~\ref{bchange}. So $\pi=\varphi/P$ is $(T_{n,q},D_n,\varphi)$-stable. Moreover, it is $T_n$ mod $\varphi$ by Claim~\ref{colorchange}. So $T_{n,q}$ has a good hierarchy up to itself by Claim~\ref{r2stable}. 
	
We then assume neither $\alpha$ nor $\beta$ is closed in $T_{n,q}$. Because $\alpha\in\phibar(T_{n,j})$, by Claim~\ref{close}, there exist the largest index $r$ such that $\alpha$ or $\beta$ is closed in $T_{n,r}$.  Thus by Claim~\ref{bchange} (could be applied on $\beta$ in place of $\alpha$ when $\beta$ is closed for $T_{n,r}$), $P$ does not intersect $T_{n,r}$, and $\pi=\varphi/P$ is $(T_{n,r},D_n,\varphi)$-stable. Therefore, it is $T_n$ mod $\varphi$ by Claim~\ref{colorchange}, and $T_{n,r}$ has a good hierarchy up to itself by Claim~\ref{r2stable}. We claim that  
	
	(1) $\alpha,\beta\notin\varphi(T_{n,q}-T_{n,r})$, unless $r=0$, $\beta=\delta_n$ with $\beta\notin\phibar(T_{n,q})$, and $f_n$ is the only edge colored by $\beta$ in $E(T_{n,q}-T_{n,r})$ in this case.
	
	Indeed, if $\beta\in\phibar(T_{n,q})$, then we have (1) immediately by Claim~\ref{close} by our choice of $r$. So $\beta\notin\phibar(T_{n,q})$. So following Algorithm 3.1 and TAA, the only way that $\beta\in\varphi(T_{n,q}-T_{n,r})$ happens is $r=0$ and $\beta=\delta_n$ with $\beta\notin\phibar(T_{n,q})$, and in this case $f_n$ is the only edge colored by $\beta$ in $E(T_{n,q}-T_{n,r})$. Note that we also have $\alpha \notin\varphi(T_{n,q}-T_{n,r})$ by Claim~\ref{close} with our choice of $r$. So (1) is established.
	
	Since $\pi$ is $(T_{n,r},D_n,\varphi)$-stable, we see that $\pi$ is $(T_{n,q},D_n,\varphi)$-weakly stable by (1). Because neither $\alpha$ nor $\beta$ is closed for $T_{n,k}$ for each $r<k\leq q$ by our choice of $r$, it is then routine to check that $T_{n,q}$ indeed has a good hierarchy of $(q-1)$ levels up to itself under $\pi$.
	
	Now we prove (ii). Let $u=v(\alpha)$ and $w=v(\beta)$. So  $\alpha\notin\varphi(T(w) -  T_{n,q})$ and $\alpha,\beta \notin D_{n,q}\cup\phibar(T_{n,q})$. Thus $\alpha,\beta\notin\varphi(T(w))$ and $\alpha,\beta\notin \Gamma^{q}$.  By Claim~\ref{bstablechange}, $P_w(\alpha,\beta,\varphi)=P_u(\alpha,\beta,\varphi)$. For any $(\alpha,\beta)$-chain $P$, it is easy to see that  $\pi=\varphi/P$ is $(T_{n,q},D_n,\varphi)$-stable and $(T(u)-u, D_n,\varphi)$-weakly stable. Moreover, $T_h$ has the same $(\gamma_h,\delta_h)$-exit vertex under $\pi$ as under $\varphi$, where $h$ is the initial index of $\Theta_n$. So $\pi$ is $T_n+f_n$ mod $\varphi$. Thus $T_{n,q}$ has a good hierarchy up to itself under $\pi$ by Claim~\ref{r2stable}. Now if $P\neq P_u(\alpha,\beta,\varphi)$, then $\pi$ is $(T,D_n,\varphi)$-weakly stable, and therefore $T$ has a good hierarchy with $q$ levels following Remark~\ref{R2check}. If $P= P_u(\alpha,\beta,\varphi)$, then since  $\alpha,\beta\notin\varphi(T(w))$, $T$ can still be obtained from $T_{n,q}$ by TAA under $\pi$. Thus $T$ is an ETT under $\pi$. Because $\alpha,\beta\notin \Gamma^{q}$, even though colors of some edges in $T-T(w)$ may be switched between $\alpha$ and $\beta$, we still see that $T=T_{n,q+1}$ satisfies Definition~\ref{R2} (i). So following Remark~\ref{R2check}, $T$ remains an ETT with a good hierarchy of $q$ levels under $\pi$, as well.  
 \end{proof}

It is worth pointing out that, in later proof after applying Claim~\ref{bstablechange},  to check if an ETT has a good hierarchy we only need to check Definition~\ref{R2} (i) following Remark~\ref{R2check}. If it is straightforward to check Definition~\ref{R2} (i), we may simply say it is routine to see that an ETT has a good hierarchy without mentioning Remark~\ref{R2check} and the details.  
\subsubsection{Case Verification}

Since $V(T)$ is not elementary, there exist a color $\alpha\in\overline{\varphi}(y_p)\cap\overline{\varphi}(v)$ for some $v\in V(T(y_{p-1}))$. Note that $\gamma_n\notin \Gamma^q\cup D_{n,q}$; we use this fact many times in our proof without mentioning it for a clean presentation. {\bf Recall that for simplification of notations, we let $\Gamma^{q}_m=\{\gamma_{m1},\gamma_{m2}\}$ for $\delta_m\in D_{n,q}$, and reserve $\gamma,\tau$ for the two colors in $\phibar(x_e)$ in Definition~\ref{R2}. We assume without loss of generality, one of $\gamma$ and $\tau$ in $\phibar(x_e)$, say $\gamma$, is different from $\gamma_n$.} 

\begin{CLA}\label{ngamma}
	We may assume that $\alpha\neq\gamma_n$.
\end{CLA}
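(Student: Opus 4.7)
The plan is to reduce to the case $\alpha\neq\gamma_n$ by a single Kempe change built from the color $\gamma\in\phibar(x_e)\setminus\{\gamma_n\}$ guaranteed by hypothesis. First, if it already happens that $\gamma\in\phibar(y_p)$, then $(\gamma,x_e)$ itself is a witness of non-elementarity, since $x_e\in V(T_n)\subseteq V(T(y_{p-1}))$ and $\gamma\neq\gamma_n$, and we are done. So assume $\gamma\notin\phibar(y_p)$ and consider $P:=P_{y_p}(\gamma,\gamma_n,\varphi)$. By Claim~\ref{bstep1}(a) both $\gamma$ and $\gamma_n$ are closed for $T_{n,q}$ under $\varphi$, so no edge of $\partial(T_{n,q})$ is colored by $\gamma$ or $\gamma_n$; combined with Claim~\ref{bchange} this forces every $(\gamma,\gamma_n)$-chain meeting $V(T_{n,q})$ to lie inside $G[V(T_{n,q})]$. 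Since $y_p\notin V(T_{n,q})$, the path $P$ is disjoint from $V(T_{n,q})$.

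Let $\pi:=\varphi/P$. Because $P$ avoids $V(T_{n,q})$, the coloring $\pi$ is $(T_{n,q},D_n,\varphi)$-stable, and Claim~\ref{bstablechange}(i) (applicable since $\gamma$ and $\gamma_n$ are both closed for $T_{n,q}$, and $P\neq P_{v(\gamma)}(\gamma,\gamma_n,\varphi)$) yields that $\pi$ is $T_n+f_n$ mod $\varphi$ and that $T_{n,q}$ retains a good hierarchy up to itself under $\pi$. At $y_p$ the swap turns the unique $\gamma$-edge into a $\gamma_n$-edge, so $\gamma\in\pibar(y_p)$, while $\pibar(x_e)=\phibar(x_e)\ni\gamma$ by the stability on $V(T_{n,q})$. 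Thus $(\gamma,x_e)$ will be the new non-elementarity witness, provided $T$ itself survives as an ETT with good hierarchy of $q$-levels under $\pi$.

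To verify that, I check TAA and Definition~\ref{R2}(i) for the top level. Even when $P$ happens to share some edges with $T-T_{n,q}$ and swaps their colors between $\gamma$ and $\gamma_n$, each resulting color $\pi(e_i)$ (for $e_i$ added by TAA) is still missing at some earlier vertex: if $\pi(e_i)\notin\{\gamma,\gamma_n\}$ the missing-pattern is unchanged by the swap, while if $\pi(e_i)\in\{\gamma,\gamma_n\}$ the unique vertex of $V(T_n)\subseteq V(T_{n,q})$ missing that color is preserved and precedes $y_i$. For Definition~\ref{R2}(i) at level $q$, since by construction $\gamma,\gamma_n\notin\Gamma^q$, the set of edges of $T-T_{n,q}$ colored by any $\Gamma^q_m$ is identical under $\pi$ and under $\varphi$, so (i) carries over verbatim; conditions (ii)--(iv) involve only $T_{n,j}$ with $j\le q$ and are already handled by the stability of $\pi$ on $V(T_{n,q})$. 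Hence $(T,\pi)$ is again a counterexample with the same $p(T)$ and $|T-T_{n,q}|$ but witnessed by $\alpha=\gamma\neq\gamma_n$, which justifies the WLOG reduction.

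The sole delicate point is exactly the possibility that $P$ overlaps $T-T_{n,q}$ and forces recolorings inside the tree we are trying to preserve; the argument above neutralizes this by exploiting that $V(T_n)$ is fixed pointwise by $\pi$ and that neither $\gamma$ nor $\gamma_n$ belongs to any $\Gamma$-set, making both TAA and the good hierarchy essentially indifferent to the swap outside $T_{n,q}$.
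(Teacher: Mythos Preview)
Your argument is correct and follows essentially the same route as the paper: perform the single Kempe change $\pi=\varphi/P_{y_p}(\gamma_n,\gamma,\varphi)$, invoke Claim~\ref{bstablechange}(i) via the fact that both $\gamma$ and $\gamma_n$ are closed in $T_{n,q}$, and then check that $T$ is still an ETT with a good hierarchy of $q$ levels (using $\gamma,\gamma_n\in\phibar(T_n)$ and $\gamma,\gamma_n\notin\Gamma^q$). Your write-up is more explicit than the paper's in separating the trivial case $\gamma\in\phibar(y_p)$, in arguing that $P$ avoids $V(T_{n,q})$, and in verifying TAA edge-by-edge after the swap, but the underlying idea is identical.
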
	

\begin{proof}
 Assume $\alpha=\gamma_n$. Then we just let $\pi=\varphi/P_{y_p}(\gamma_n,\gamma,\varphi)$. By Claim~\ref{bstablechange} (i), $T_{n,q}$ is an ETT with a hierarchy of $(q-1)$ levels up to itself under the $T_n+f_n$ mod $\varphi$ coloring $\pi$. Moreover, since $\gamma_n,\gamma\in\phibar(T_{n})$ and $\gamma_n,\gamma\notin \Gamma^{q}$ following Definition~\ref{R2}, it is routine to see that $T$ remains an ETT under $\pi$ with a good hierarchy of $q$ levels and  $\pi\in\phibar(y_p)\cap\phibar(T_{n})$, and therefore $T$ remains a minimum counterexample under $\pi$. So the claim is established by replacing $\varphi$ with $\pi$.
\end{proof}

\begin{CLA}\label{p>1}
	$p \geq 2$.
\end{CLA}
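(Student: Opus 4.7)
I argue by contradiction, supposing $p = 1$. Then $T = T_{n,q} \cup \{e_1, y_1\}$, and by (5.0) together with Claim~\ref{ngamma}, there exists $\alpha \in \phibar(v) \cap \phibar(y_1) \setminus \{\gamma_n\}$ with $v \in V(T_{n,q})$. Set $\beta := \varphi(e_1)$. Since $e_1$ is the unique edge of $T - T_{n,q}$ and the good hierarchy of $T$ is built level-by-level as in the proof of Statement B, $e_1$ must be the required initial edge of the level-$(q{+}1)$ growth: either $e_1 = f_n$ with $\beta = \delta_n$ when $q = 0$, or $\beta \in \Gamma^{q-1}_k$ for some $\delta_k \in D_{n,q}$ when $q \geq 1$. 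Using Claim~\ref{bstep1}(a) together with the observation that $\beta$ labels the boundary edge $e_1$ of $T_{n,q}$, one verifies $\beta \notin \{\gamma_n,\gamma,\tau\}$ (otherwise $\beta$ would be closed for $T_{n,q}$); moreover $\beta \neq \alpha$ since $y_1$ is incident to the $\beta$-edge $e_1$ but misses $\alpha$.

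The plan is to perform a Kempe change $\pi = \varphi / P_{y_1}(\alpha,\xi,\varphi)$ along a well-chosen $(\alpha,\xi)$-path (with $\xi \in \{\gamma,\tau\} \setminus \{\gamma_n\}$) disjoint from $T_{n,q}$, so that $\pi$ is $(T_{n,q},D_n,\varphi)$-stable and $T_n + f_n$ mod $\varphi$, $T$ remains an ETT with its good hierarchy of $q$ levels under $\pi$, and the witness of non-elementariness shifts from $(v,y_1,\alpha)$ to $(x_e,y_1,\xi)$. To arrange the disjointness, I first use Claim~\ref{close} to locate the largest level $r \leq q$ at which $\alpha$ is closed for $T_{n,r}$, then invoke Claim~\ref{bstep1}(b) at level $r$ for the $T_{n,r}$-interchangeability of $\alpha$ with $\xi$; the required disjointness of $P_{y_1}(\alpha,\xi,\varphi)$ from $T_{n,r}$ follows because otherwise $P_{y_1} = P_v = P_{x_e}$, and swapping the role of $\gamma$ and $\tau$ then yields a second chain forbidden by interchangeability. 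Claims~\ref{bchange},~\ref{colorchange}, and~\ref{bstablechange} supply the stability and ``mod $\varphi$'' verifications, and $e_1$'s color $\beta$ is untouched since $\beta \notin \{\alpha,\xi\}$. Once the witness sits at $(x_e, y_1, \xi)$, a second Kempe change on an $(\xi, \beta)$-chain through $y_1$ (whose existence is forced by $\xi \in \overline{\pi}(y_1)$ and $\varphi(e_1) = \beta$) recolors $e_1$; the new coloring then produces an ETT whose good-hierarchy construction cannot assign $e_1$ the required initial-edge role of the level-$(q{+}1)$ growth (either the color is no longer in $\Gamma^{q-1}_k$, or, when $q=0$, it is no longer $\delta_n$), contradicting the fact that $T$ was a counterexample with this hierarchy.

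The principal obstacle is the case $q = 0$, $\beta = \delta_n$: here $\delta_n$ is a defective color of $T_n$ with multiple boundary edges, so the Kempe changes must avoid altering the $(\gamma_h, \delta_h)$-exit vertex of $T_h$ (where $h$ is the initial index of $\Theta_n$), lest the $T_n + f_n$ mod $\varphi$ property fail. To accommodate this I take $\xi \in \phibar(x_e) \setminus S_h$; such a choice is available because Lemma~\ref{size} guarantees $|\phibar(T_1)| \geq 5$, leaving room to pick $\gamma$ and $\tau$ outside $S_h$. Lemma~\ref{A3.5} is then invoked to confirm the mod $\varphi$ property persists through the Kempe steps, completing the contradiction and establishing $p \geq 2$.
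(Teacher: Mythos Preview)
Your argument breaks at the second Kempe change. After you make $\xi\in\pibar(y_1)$, the $(\xi,\beta)$-chain through $y_1$ begins with the edge $e_1$ itself (since $\pi(e_1)=\beta$), so this chain enters $T_{n,q}$ immediately. None of the tools you cite apply to such a change: Claim~\ref{colorchange} and Claim~\ref{bstablechange} both require the path to miss $T_{n,q}$ (or at least $T_{n,r}$), and Lemma~\ref{A3.5} explicitly requires the $(\alpha,\beta)$-path not to intersect the relevant $T_{n+1}$. Consequently you cannot certify that the resulting coloring is $(T_{n,q},D_n,\varphi)$-stable or $T_n+f_n$ mod $\varphi$. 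Your stated contradiction (``$e_1$ no longer has the required initial-edge color'') then evaporates: once the ETT structure is destroyed by the Kempe change, nothing forces $T$ to remain an ETT under the new coloring, and $T$ being a counterexample under the \emph{original} $\varphi$ is unaffected.

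The paper avoids the second Kempe change entirely and instead derives the contradiction from the first one. For $q=0$ it swaps with $\gamma_n$ rather than with $\gamma$ or $\tau$: since both $\alpha$ and $\gamma_n$ are closed in $T_n$, the path $P_{y_1}(\alpha,\gamma_n,\varphi)$ is disjoint from $T_n$, so Claim~\ref{bstablechange}(i) (the ``both closed'' clause) still yields $\pi$ is $T_n+f_n$ mod $\varphi$; but now $\gamma_h=\gamma_n\in\pibar(y_1)$, so the $(\gamma_h,\delta_h)$-chain through $f_n$ terminates at $y_1\notin V(T_h)$ and $f_n$ cannot lie in a $(\gamma_h,\delta_h)$-ear, contradicting the ETT structure that the mod-$\varphi$ property guarantees. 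For $q>0$ the paper stops after the first swap: with $\gamma\in\pibar(y_1)$ one has two distinct $(\gamma,\beta)$-paths meeting $T_{n,q}$ (namely $P_{x_e}$ and $P_{y_1}$, the latter through $e_1$), contradicting Claim~\ref{bchange} under $\pi$. In both cases the contradiction comes from interchangeability/ear structure, not from a further recoloring of $e_1$.
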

\proof
Suppose on the contrary $p=1$, that is, $T=T_{n,q}\cup\{e_1,y_1\}$ and $y_1$ is the end of $e_1$ outside of $T_{n,q}$. We consider two cases.

{\flushleft {\bf Case I:}  $q=0$. In this case $T_{n,q}=T_n$ is closed and $e_1=f_n$.}		

In this case, we have $\alpha\in\phibar(T_n)$ and $\alpha$ is closed for $T_{n}$. Since $\gamma_n$ is also closed for $T_n$ under $\varphi$, $P_{y_1}(\alpha,\gamma_n,\varphi)$ is different from $P_{v(\alpha)}(\alpha,\gamma_n,\varphi)=P_{v(\beta)}(\alpha,\gamma_n,\varphi)$ by Claim~\ref{bchange}. Let $\pi=\varphi/P_{y_1}(\alpha,\gamma_n,\varphi)$. By Claim~\ref{bstablechange} (i), $\pi$ is $T_n+f_n$ mod $\varphi$, and therefore $T_{n}+f_n$ is an ETT with $n$-rungs under $\pi$ with the same extension types, connecting colors, companion colors and connecting edges as under $\varphi$. Let $h$ be the initial index for $\Theta_{n}$. Consequently, $y_1$ belongs to a $(\gamma_h,\delta_h)$-ear of $T_h$ following Algorithm 3.1 ($h=n$ when $\Theta_n=\IE$). On the other hand, since $\gamma_n=\gamma_h\in\pibar(y_1)$, $f_n$ is no longer an edge contained in a $(\gamma_h,\delta_h)$-ear of $T_{h}$, a contradiction.

{\flushleft {\bf Case II:}  $q>0$. In this case $T_{n,q}$ is not closed although it is $(\cup_{\delta_h\in D_{n,q}}\Gamma^{q-1}_h)^-$ closed.}	

In this case $e_1$ is colored by $\beta\in\Gamma^{q-1}$. So $\beta\notin\{\gamma_n,\gamma\}$ and $\beta\in\phibar(T_{n,q})$ by Definition~\ref{R2}. Note that $\alpha\neq\gamma_n$ following Claim~\ref{ngamma}.  Since $\gamma$ is closed for $T_{n,q}$ by Claim~\ref{close} and $\gamma\neq\gamma_n$,  $P_{x_e}(\gamma,\alpha,\varphi)=P_{v(\alpha)}(\gamma,\alpha,\varphi)$ is the only $(\gamma,\alpha)$-path intersecting $T_{n,q}$ by Claim~\ref{bchange}. Thus $P_{y_1}(\gamma,\alpha,\varphi)$ is different from $P_{x_e}(\gamma,\alpha,\varphi)$. Following Claim~\ref{bstablechange}, $T_{n,q}$ is an ETT with a good hierarchy up to itself of $(q-1)$ levels under $\pi=\varphi/P_{y_1}(\gamma,\alpha,\varphi)$. Moreover, because $\beta\notin\{\gamma_n,\gamma\}$, $\pi$ is $(T,D_n,\varphi)$-weakly stable, and therefore it remains a counterexample under $\pi$. However, since $y_1$, $x_e$, and $v(\beta)$ induce at least two $(\beta,\gamma)$-paths intersecting $T_{n,q}$ under $\pi$ ($v(\beta)=x_e$ may happen), we reach a contradiction to Claim~\ref{bchange} under $\pi$.
\qed

Recall that the path number $p(T)$ of $T$ is the smallest index $i\in\{1,...,p\}$ such that the sequence $y_iT:=(y_i,e_{i+1},...,e_p,y_p)$ corresponds to a path in $G$, where $p\geq 2$ by Claim~\ref{p>1}. Depending on the value of $p(T)$, we divide the remainder of the proof into three cases. In these three cases, we are going to perform a lot of Kempe changes based on Claims~\ref{bchange} and~\ref{bstablechange} similarly as above, where Claim~\ref{bchange} is used to show that a path $P$ is different from $P_{v(\alpha)}(\alpha,\beta,\varphi)$ to set up Claim~\ref{bstablechange}. We may skip mentioning some of these details when they are straightforward. By Claim~\ref{ngamma}, when applying Claims~\ref{bchange} and~\ref{bstablechange} to $\alpha$ later, we will assume $\alpha\neq\gamma_n$ by default without mentioning Claim~\ref{ngamma}, unless specified otherwise. 

\setcounter{case}{0}
\begin{case}\label{bcase1x}$p(T)=1$.
\end{case}			

\begin{CLA}\label{claimi}
	We may assume $\alpha\in\overline{\varphi}(y_i)\cap\overline{\varphi}(y_p)$ for some $1\leq i\leq p-1$.
\end{CLA}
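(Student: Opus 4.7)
Since $V(T)$ is not elementary and $V(T(y_{p-1}))$ is elementary by (5.0), there is a unique vertex $v \in V(T(y_{p-1}))$ with $\alpha \in \phibar(v)$. If $v = y_i$ for some $1 \le i \le p-1$, the claim holds; so I suppose $v \in V(T_{n,q})$. My plan is to produce a $T_n+f_n$ mod $\varphi$ coloring $\pi$ that is $(T, D_n, \varphi)$-weakly stable, under which $T$ remains an ETT with a good hierarchy of $q$-levels and some color is missing simultaneously at $y_p$ and at some $y_i$ with $1 \le i \le p-1$; replacing $\varphi$ by $\pi$ then delivers the claim.

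I would first invoke Claim~\ref{b9n} at $y_{p-2}$ (possible since $p \ge 2$ by Claim~\ref{p>1}) to extract a color $\beta \in \phibar(T(y_{p-2})) \setminus \{\gamma_n, \alpha\}$ satisfying $\beta \notin \varphi(T(y_p) - T_{n,q})$ and the $\Gamma$-set safety property needed for Claim~\ref{bstablechange}. If $\beta \in \phibar(y_p)$, then $\beta$ is missing at both $y_p$ and $v(\beta) \in V(T(y_{p-2}))$; either $v(\beta) \in \{y_1, \ldots, y_{p-2}\}$ and I finish by renaming $\alpha := \beta$, or $v(\beta) \in V(T_{n,q})$ and I reduce to the same situation with the pair $(\alpha, v)$ replaced by $(\beta, v(\beta))$. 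So I may assume $\beta \notin \phibar(y_p)$. I then examine the three $(\alpha, \beta)$-chains through $v$, $y_{p-1}$, $y_p$; in the generic configuration $P_v(\alpha, \beta, \varphi) = P_{y_{p-1}}(\alpha, \beta, \varphi) \ne P_{y_p}(\alpha, \beta, \varphi)$, the coloring $\pi = \varphi / P_v(\alpha, \beta, \varphi)$ satisfies $\alpha \in \pibar(y_p) \cap \pibar(y_{p-1})$, and Claim~\ref{bstablechange}(i), combined with the admissibility of $\beta$, confirms that $\pi$ is $T_n+f_n$ mod $\varphi$ and $(T, D_n, \varphi)$-weakly stable, giving the conclusion with $i = p-1$ via Lemma~\ref{wstable} and Remark~\ref{R2check}.

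The main obstacle I anticipate is the alternative configuration $P_v(\alpha, \beta, \varphi) = P_{y_p}(\alpha, \beta, \varphi)$, where a single Kempe swap on this chain simultaneously removes $\alpha$ from both $v$ and $y_p$ and destroys the non-elementarity witness. To resolve this I would exploit the path structure forced by $p(T) = 1$ together with the condition $\beta \notin \varphi(T(y_p) - T_{n,q})$, which forces $P_{y_p}(\alpha, \beta, \varphi)$ to exit $y_p$ along an edge outside $E(T) - E(T_{n,q})$; then the multiplicity in the ``furthermore'' clause of Claim~\ref{b9n} (at least three admissible $\beta$) combined with Claim~\ref{bchange} precludes this coincidence persisting across all admissible choices without contradicting the elementariness (5.0) of $T(y_{p-1})$. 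As an alternative route I could perform a preliminary Kempe change $\varphi / (G - T_{n,q}, \alpha, \gamma)$ with $\gamma \in \phibar(x_e) \setminus \{\gamma_n\}$ (whose stability and hierarchy-preservation follow from Claims~\ref{colorchange} and~\ref{r2stable}), which repositions $\alpha$ inside $T_{n,q}$ without disturbing $y_p$ and breaks the bad coincidence. Throughout, the routine bookkeeping is the verification of Definition~\ref{R2}(i) for the modified hierarchy and of the weak-stability hypotheses of Lemma~\ref{wstable}.
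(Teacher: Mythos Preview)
Your central step is wrong. You choose $\beta\in\phibar(T(y_{p-2}))$, so by the elementariness of $T(y_{p-1})$ the vertex $y_{p-1}$ misses neither $\alpha$ nor $\beta$; hence $y_{p-1}$ is an \emph{interior} vertex of whichever $(\alpha,\beta)$-chain contains it, and no $(\alpha,\beta)$-Kempe change can put $\alpha$ into $\pibar(y_{p-1})$. Thus the assertion ``$\pi=\varphi/P_v(\alpha,\beta,\varphi)$ satisfies $\alpha\in\pibar(y_p)\cap\pibar(y_{p-1})$'' is simply false. In addition, $P_v$ has both its ends $v,v(\beta)$ in $T_{n,q}$ (possibly) and certainly meets $T_{n,q}$, so swapping it is not covered by Claim~\ref{bstablechange} and will generally destroy $(T_{n,q},D_n,\varphi)$-stability. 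Your ``alternative configuration'' discussion and the fallback $\varphi/(G-T_{n,q},\alpha,\gamma)$ do not repair this: the latter requires $\alpha$ to be closed in $T_{n,q}$, which you have not arranged, and in any case it still leaves the common missing color inside $T_{n,q}$.

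The paper's idea is to pick $\beta$ missing at the \emph{right} $y_i$, not somewhere in $T(y_{p-2})$. When $\alpha\notin\varphi(T-T_{n,q})$ one takes $\beta\in\phibar(y_{p-1})$; Claim~\ref{bchange} gives $P_{v(\alpha)}(\alpha,\beta,\varphi)=P_{y_{p-1}}(\alpha,\beta,\varphi)$, so $P_{y_p}(\alpha,\beta,\varphi)$ is a different path, and swapping \emph{that} path (not $P_v$) via Claim~\ref{bstablechange}(i) yields $\beta\in\pibar(y_p)\cap\pibar(y_{p-1})$ with $T$ still a counterexample. When $\alpha\in\varphi(T-T_{n,q})$, one first locates the earliest edge $e_j$ with $\varphi(e_j)=\alpha$ and takes $\beta\in\phibar(y_{j-1})$ (with a separate reduction via $\gamma$ when $j=1$). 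The point you are missing is that the swap must be performed on the chain through $y_p$, and $\beta$ must be chosen so that some specific $y_i$ is an \emph{endpoint} of the surviving chain.
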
 	

\proof Suppose  $\alpha\in\phibar(y_p)\cap\phibar(T_{n,q})$.  We first consider the case  $\alpha\notin\varphi(T -  T_{n,q})$. Let $\beta\in\overline{\varphi}(y_{p-1})$. So $\beta\neq\gamma_n$ by (5.0). Note that $\beta\notin\varphi(T-T_{n,q}-f_n)$. Let $\pi=P_{y_p}(\alpha,\beta,\varphi)$. Following Claim~\ref{bchange} and Claim~\ref{bstablechange} (i), $P_{y_{p-1}}(\alpha,\beta,\varphi)=P_{v(\alpha)}(\alpha,\beta,\varphi)$, and $T_{n,q}$ is an ETT with a good hierarchy up to itself under the $T_n+f_n$ mod $\varphi$ coloring $\pi$. Moreover, since $\alpha\notin \varphi(T-T_{n,q})$, it is routine to check that $\pi$ is $(T,D_n,\varphi)$-weakly stable and $T$ remains a counterexample with a good hierarchy under $\pi$. But now Claim~\ref{claimi} holds with $\beta\in\pibar(y_p)\cap\pibar(y_{p-1})$.

We now consider the case $\alpha\in\varphi(T - T_{n,q})$.
Following $\prec$, let $e_j$ be the first edge in $T -T_{n,q}$ such that $\alpha=\varphi(e_j)$. 
We first consider the case $j\geq 2$. Then $\alpha\notin\varphi(T({y_{j-1}})-T_{n,q})$. Let $\beta \in \overline{\varphi}(y_{j-1})$. So $\beta\neq\gamma_n$ and $\beta\notin \Gamma^{q}$ in view of (5.0) and Definition~\ref{R2}. Let $\pi=\varphi/P_{y_p}(\alpha,\beta,\varphi)$. Again by Claim~\ref{bchange} and Claim~\ref{bstablechange} (i), $P_{y_{j-1}}(\alpha,\beta,\varphi)=P_{v(\alpha)}(\alpha,\beta,\varphi)$, and it is routine to check that $T(y_{j})$ is an ETT with a good hierarchy under $\pi$ which is $(T(y_{j}),D_n,\varphi)$-weakly stable and $T_n+f_n$ mod $\varphi$. Note that if $\alpha\in\Gamma^{q}_h$ for some $\delta_m\in D_{n,q}$, then $\delta_m\in\phibar(T(y_{j-1}))$ following Definition~\ref{R2} (i). Moreover, since   $\alpha\notin\varphi(T(y_{j-1})-T_{n,q})$, $\beta\notin\varphi(T_{y_j}-T_{n,q}-f_n)$ and $\beta\notin \Gamma^{q}$, $T$ is still an ETT satisfying Definition~\ref{R2} (i) as $T_{n,q+1}$. Thus it is routine to check that $T$ remains a counterexample with a good hierarchy under $\pi$. But now Claim~\ref{claimi} holds with $\beta\in\pibar(y_p)\cap\pibar(y_{j-1})$.

Now we assume that $j=1$. In this case $q>0$, since when $q=0$ we have $\alpha=\delta_n\notin\phibar(T_{n})$. Therefore, $\alpha=\varphi(e_1)$ where $\alpha\in\Gamma^{q-1}$ following Definition~\ref{R2} (iv). Note that then $\alpha\notin\Gamma^{q}$ by Definition~\ref{R2} (i). Note that we also have $\gamma\notin\Gamma^q$ by Definition~\ref{R2} (i). Let $\pi=\varphi/P_{y_p}(\alpha,\gamma,\varphi)$. Following Claim~\ref{bchange} and Claim~\ref{bstablechange} (i), since $\alpha,\gamma\notin\Gamma^q$ and $\alpha,\gamma\in\phibar(T_{n,q})$ with $\gamma$ closed for $T_{n,q}$, we have $P_{x_e}(\alpha,\gamma,\varphi)=P_{v(\alpha)}(\alpha,\gamma,\varphi)$, and it is routine to check that $T$ remains a counterexample with a good hierarchy under $\pi$ and $\pi(e_1)=\varphi(e_1)=\alpha$. But now either $\gamma\notin\pi(T-T_{n,q})$, or $\gamma\in\pi(T-T_{n,q})$ but the first edge colored by $\gamma$ is not $e_1$, we reach previous cases. 
\qed

\begin{CLA}\label{bp-1}
	We may further assume that $\alpha\in\overline{\varphi}(y_{p-1})\cap\overline{\varphi}(y_p)$.
\end{CLA}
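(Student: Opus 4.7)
The plan is to take the existing $\alpha\in\overline{\varphi}(y_i)\cap\overline{\varphi}(y_p)$ from Claim~\ref{claimi} with $1\le i\le p-1$ and, assuming $i\le p-2$, use a single Kempe change to produce a coloring $\pi$ under which a suitable color $\beta$ is missing at both $y_{p-1}$ and $y_p$ while $T$ remains a minimum counterexample (i.e.\ an ETT with a good hierarchy of $q$ levels under $\pi$ satisfying (5.0)). Then we simply relabel $\alpha:=\beta$. The candidate $\beta$ is chosen by applying Claim~\ref{b9n} to $y=y_{p-1}$, which provides three distinct colors in $\overline{\varphi}(T(y_{p-1}))\setminus\{\gamma_n\}$ that do not appear on $T(y_{p-1})-T_{n,q}$ and that either avoid $\Gamma^q\cup D_{n,q}$ or lie in some $\Gamma^q_r$ with $\delta_r\in D_{n,q}\cap\overline{\varphi}(y_{p-1})$; this supply is exactly what is needed so that Claim~\ref{bstablechange}(ii) (or (i), depending on whether $y_i\in T_{n,q}$) can be invoked after the Kempe swap.

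If some such $\beta$ already lies in $\overline{\varphi}(y_p)$, we are done immediately. Otherwise $v(\beta)=y_{p-1}$ and $v(\alpha)=y_i$ by the elementariness of $T(y_{p-1})$ in (5.0), and we split on whether $P_{y_p}(\alpha,\beta,\varphi)$ and $P_{y_{p-1}}(\alpha,\beta,\varphi)$ coincide. In the \emph{different-chains} case, $y_{p-1}\notin P_{y_p}(\alpha,\beta,\varphi)$, so letting $\pi=\varphi/P_{y_p}(\alpha,\beta,\varphi)$ swaps $\beta$ in at $y_p$ while leaving $\beta\in\overline{\pi}(y_{p-1})$ untouched; Claim~\ref{bstablechange} guarantees $\pi$ is $T_n+f_n$ mod $\varphi$ and that $T$ is still an ETT with a good $q$-level hierarchy, so the minimum counterexample persists and $\beta\in\overline{\pi}(y_{p-1})\cap\overline{\pi}(y_p)$ gives the claim. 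In the \emph{same-chain} case, the two ends of that one $(\alpha,\beta)$-chain are forced to be $y_p$ and $y_{p-1}$, so $y_i$ must lie on a different $(\alpha,\beta)$-chain $P_{y_i}(\alpha,\beta,\varphi)$; setting $\pi=\varphi/P_{y_i}(\alpha,\beta,\varphi)$ and again invoking Claim~\ref{bstablechange} preserves the counterexample and yields $\beta\in\overline{\pi}(y_i)\cap\overline{\pi}(y_{p-1})$, which directly contradicts (5.0) applied to $\pi$ since $y_i,y_{p-1}\in V(T(y_{p-1}))$.

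The main obstacle is stability bookkeeping, not the Kempe idea itself: verifying that the specific pair $(\alpha,\beta)$ actually meets the hypotheses of one of the two stability statements in Claim~\ref{bstablechange}. In particular, one must ensure $\alpha,\beta\notin D_{n,q}$, $\alpha\notin\varphi(T(y_{p-1})-T_{n,q})$, and $\beta\neq\gamma_n$; the first two are delivered by (5.0) and Claim~\ref{b9n}, while the last is built into the selection. The edge cases where $\alpha\in\Gamma^q$, or $\alpha\in D_{n,q}$, or $y_i\in T_{n,q}$ are handled either by noting that Claim~\ref{bstablechange}(i) applies (when $\alpha\in\overline{\varphi}(T_{n,q})\setminus\{\gamma_n\}$) or by an initial auxiliary Kempe change on the $(\alpha,\gamma)$-chain (as already done in the proof of Claim~\ref{claimi}) to first push $\alpha$ out of $\Gamma^q\cup D_{n,q}$ before running the main argument above.
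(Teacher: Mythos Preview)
Your proposal has two substantive gaps.

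First, Claim~\ref{b9n} does \emph{not} produce colors missing at $y_{p-1}$; it produces colors $\beta_i\in\overline{\varphi}(T(y_{p-1}))$, i.e.\ missing somewhere in the segment up to $y_{p-1}$ (and with $\delta_r\in D_{n,q}\cap\overline{\varphi}(T(y_{p-1}))$, not $\cap\,\overline{\varphi}(y_{p-1})$). Hence your assertion ``Otherwise $v(\beta)=y_{p-1}$'' is unjustified: $v(\beta)$ can be any vertex of $T(y_{p-1})$, possibly even in $T_{n,q}$. With $v(\beta)\ne y_{p-1}$ the whole ``different-chains / same-chain'' dichotomy no longer delivers a color missing at both $y_{p-1}$ and $y_p$.

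Second, even if you simply pick $\beta\in\overline{\varphi}(y_{p-1})$, the hypothesis $\alpha\notin\varphi(T(v(\beta))-T_{n,q})=\varphi(T(y_{p-1})-T_{n,q})$ of Claim~\ref{bstablechange}(ii) is in general \emph{false}. Elementariness of $T(y_{p-1})$ in (5.0) only controls which vertex misses $\alpha$, not which edges carry it: since $\alpha\in\overline{\varphi}(y_i)$, TAA permits $\alpha=\varphi(e_j)$ for any $j\ge i+2$ (here $p(T)=1$, so $e_j\in E(y_{j-1},y_j)$ and only $e_{i},e_{i+1}$ are forbidden from using $\alpha$). Thus (5.0) and Claim~\ref{b9n} do not supply $\alpha\notin\varphi(T(y_{p-1})-T_{n,q})$, and similarly nothing rules out $\alpha\in D_{n,q}$. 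So your one-shot jump from $y_i$ to $y_{p-1}$ cannot be justified by Claim~\ref{bstablechange} as written.

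The paper avoids both issues by an incremental argument: it assumes $i$ is maximal with $\alpha\in\overline{\varphi}(y_i)\cap\overline{\varphi}(y_p)$, picks $\beta\in\overline{\varphi}(y_{i+1})$, and steps only to $y_{i+1}$. Then $v(\beta)=y_{i+1}$ automatically, and $\alpha\notin\varphi(T(y_{i+1})-T_{n,q})$ holds because $y_i$ is an end of $e_{i+1}$ (path structure from $p(T)=1$) and $\alpha\notin\overline{\varphi}(T(y_{i-1}))$ by elementariness. When $\alpha$ or $\beta$ falls in $D_{n,q}$, the paper routes through the reserved $\Gamma$-color $\gamma_{m1}$ via two Kempe changes rather than one. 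The contradiction is then to the maximality of $i$, not directly to (5.0).
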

\proof  Following Claim~\ref{claimi},  we assume that $i<p-1$ but $i$ is already maximum among all counterexamples. Let $\beta\in\phibar(y_{i+1})$. So $\beta\neq\gamma_n$. We first consider the case $\alpha,\beta\notin D_{n,q}$. Then $\alpha,\beta\notin\phibar(T_{n,q})\cup D_{n,q}$. Since $y_{i-1}$ is an end of $e_i$, we have $\varphi(e_i)\neq\alpha$. So $\alpha\notin\varphi(T(y_i)-T_{n,q})$ by TAA.   Thus by Claim~\ref{bchange}, $P_{y_p}(\alpha,\beta,\varphi)$ is different from $P_{y_i}(\alpha,\beta,\varphi)=P_{y_{i+1}}(\alpha,\beta,\varphi)$. Therefore following Claim~\ref{bstablechange},   $T$ is a counterexample with a good hierarchy under $\pi=\varphi/P_{y_p}(\alpha,\beta,\varphi)$.   But $\beta\in\pibar(y_{i+1})\cap\pibar(y_p)$, a contradiction to the maximality of $i$. 

We now consider the case $\beta=\delta_m$ with $\delta_m\in D_{n,q}$. So $\gamma_{m1}\notin\varphi(T(y_{i+1}))$ following Definition~\ref{R2} (i).
By Claim~\ref{bchange} with $\gamma_{m1}$ in place of $\alpha$, $P_{v(\gamma_{m1})}(\alpha,\gamma_{m1},\varphi)=P_{y_{i}}(\alpha,\gamma_{m1},\varphi)$, and $P_{y_{p}}(\alpha,\gamma_{m1},\varphi)$ is different from path above. Let $\pi=\varphi/P_{y_{p}}(\alpha,\gamma_{m1},\varphi)$. Following Claim~\ref{bstablechange},  since $\alpha\notin\varphi(T-T_{n,q}-f_n)$, $T$ remains a counterexample with a good hierarchy under $\pi$ with $\gamma_{m1}\notin\pi(T(y_{i+1})-T_{n,q})$. Then by Claim~\ref{bchange} again, $P_{v(\gamma_{m1})}(\delta_m,\gamma_{m1},\pi)=P_{y_{i+1}}(\delta_m,\gamma_{m1},\pi)$, and $P_{y_{p}}(\delta_m,\gamma_{m1},\pi)$ is different from the path above. Let $\sigma=\pi/P_{y_{p}}(\delta_m,\gamma_{m1},\pi)$.  Similarly as earlier, by Claim~\ref{bstablechange},  it is routine to check that $T$ remains a counterexample with a good hierarchy under $\sigma$. But now $\delta_m\in\sigmabar(y_p)\cap\sigmabar(y_{i+1})$, a contradiction to the maximality of $i$.

Finally we consider the case that $\alpha=\delta_m$ with $\delta_m\in D_{n,q}$. In this case, we may assume that one of $\gamma_{m1}$ and $\gamma_{m2}$, say $\gamma_{m1}$, is not used on $e_i$ under $\varphi$. Consequently, we see $\gamma_{m1}\notin\varphi(T(y_{i+1}))$ by Definition~\ref{R2} (i). Similarly as the previous case, we let $\pi=\varphi/P_{y_p}(\gamma_{m1},\delta_m,\varphi)$ and let $\sigma=\pi/P_{y_p}(\gamma_{m1},\beta,\varphi)$. Similarly as above, it is routine to check that $T$ remains a counterexample with a good hierarchy under $\sigma$ with $\beta\in\sigmabar(y_p)\cap\sigmabar(y_{i+1})$, a contradiction to the maximality of $i$. We omit the details as it is similar to the previous case.\qed

Now we have $\alpha\in\overline{\varphi}(y_{p-1})\cap\overline{\varphi}(y_p)$.  Let $\varphi(e_{p})=\theta\in\phibar(T(y_{p-2}))$. 
 Since $\alpha\in\overline{\varphi}(y_p)\cap\overline{\varphi}(y_{p-1})$, we can recolor $e_{p}$ by $\alpha$. 
Let the resulting coloring be $\pi$. Clearly $\pi$ is $(T(y_{p-1}),D_n,\varphi)$-weakly stable and $(T_{n,q},D_n,\varphi)$-stable and $\theta\in\pibar(y_{p-1})\cap\pibar(T(y_{p-2}))$.  We claim 

(1) $\pi$ is also $T_n+f_n$ mod $\varphi$.

Note that by (1), following Claim~\ref{r2stable} and Remark~\ref{R2check}, we have a smaller counterexample $T(y_{p-1})$ with a good hierarchy of $q$ levels, contradicting our assumption on $T$.

To see (1), let $h$ be the initial index of $\Theta_n$. Since $\pi$ is $(T_n,D_n,\varphi)$-stable, following (A2.5), it is sufficient to show that $T_h$ has the same $(\gamma_h,\delta_h)$-exit vertex under $\pi$ as under $\varphi$. Clearly this holds when $\alpha,\theta\notin D_h$. Note that by (A2) on $T_h$, 

(2) Under $\varphi$, all the $\delta_h$ colored boundary edges of $T_h$ belong to some $(\gamma_h,\delta_h)$-ear of $T_h$ except for the only $(\gamma_h,\delta_h)$-exit edge.

Since $\pi$ is also $(T_h,D_h,\varphi)$-stable and $T_h$ is $\RE$ finished, again by (A2),

(3) $T_h$ is an ETT with $(h-1)$-rungs under $\pi$.

 We first consider the case $\theta\in S_h$. In this case, since we only changed the color of one edge from $\varphi$ to $\pi$, by (2), the only way to let $T_h$ having different $(\gamma_h,\delta_h)$-exit vertices is for $e_p$ to be an edge in some $(\gamma_h,\delta_h)$-ear of $T_h$. However, this creates two more $(\gamma_h,\delta_h)$-exit paths of $T_h$ under $\pi$, a contradiction to (A2) on $T_h$ under $\pi$ in view of (3).

Finally we consider the case $\alpha\in S_h$. Since $\alpha\in \phibar(y_p)\cap\phibar(y_{p-1})$,  neither $y_p$ nor $y_{p-1}$ belongs to any $(\gamma_h,\delta_h)$-ear of $T_h$ under $\varphi$. So all the $(\gamma_h,\delta_h)$-ears of $T_h$ under $\varphi$ are colored the same under $\pi$. In view of (2), we see that $T_h$ still has the same $(\gamma_h,\delta_h)$-exit vertex under $\pi$ as under $\varphi$, as desired. This completes the proof of Case~\ref{bcase1x}.

\begin{case}\label{b2}
	$p(T)=p$. 
\end{case}
In this case, $y_{p-1}$ is not incident to $e_p$. Let $\theta =\varphi(e_p)$. Recall that $p\geq 2$ by Claim~\ref{p>1}, and we defined $T(y_{0})$ to be $T_{n,q}$. Let us define the following two tree-sequences for this case.



$\bullet$ $T^-=(T_{n,q}^*, e_1, y_1, e_2, \ldots, e_{p-2}, y_{p-2}, e_p, y_p)$ and

$\bullet$ $T^* \hskip 0.6mm =(T_{n,q}^*, e_1, y_1, e_2, \ldots ,y_{p-2}, e_p, y_p, e_{p-1}, y_{p-1})$.
Note that $T^-$ is obtained from $T$ by deleting $y_{p-1}$ and $T^*$ arises from $T$ by interchanging
the order of $(e_{p-1}, y_{p-1})$ and $(e_p, y_p)$. 
\begin{subcase}\label{bcase611}
	$\alpha\in\overline{\varphi}(y_p)\cap\overline{\varphi}(y_{p-1})$ and $\alpha=\delta_m\in D_{n,q}$.
	\end{subcase}
Since $\delta_{m}\in\overline{\varphi}(y_{p})$, we have $\theta\neq\delta_{m}$. Note that $\theta\in D_{n,q}$ may occur. Moreover, by TAA, (5.0) and Definition~\ref{R2} (i), we have

(1) $\gamma_{m1},\gamma_{m2}\notin \varphi(T(y_{p-1})-T_{n,q})$ and $\delta_m\notin \varphi(T-T_{n,q}-f_n)$ ($\delta_m$ can only be used on $f_n$ in $T-T_{n,q}$, and it happens only when $q=0$ and $\delta_m=\delta_n$).

Following (1) and Lemma~\ref{bchange}, we have 

(2) $P_{v_{\gamma_{mj}}}(\delta_m, \gamma_{mj},\varphi)=P_{y_{p-1}}(\delta_m,\gamma_{mj},\varphi)$ for $j=1,2$. 
\begin{subsubcase}\label{bcase611a}
	$\theta\notin\overline{\varphi}(y_{p-1})$.
\end{subsubcase}
We first consider the case $\theta\notin\Gamma^{q}$. In this case, it is easy to check that $T^*$ is also a counterexample with a good hierarchy of $q$ levels under $\varphi$.  Following (1), we have $\gamma_{m1},\gamma_{m2},\delta_m\notin \varphi(T^*-T_{n,q}-f_n)$. Thus by Lemma~\ref{bchange} on $T^*$, we have $P_{v(\gamma_{mj})}(\delta_m, \gamma_{mj},\varphi)=P_{y_{p}}(\delta_m,\gamma_{mj},\varphi)$ for $j=1,2$, a contradiction to (2).

Now we assume $\theta\in\Gamma^{q}$. Without loss of generality, we say $\theta=\gamma_{k1}$ with  $\delta_k\in D_{n,q}$.  Note that if $\delta_k\notin\phibar(y_{p-1})$, then  $T^*$ is also a counterexample with a good hierarchy of $q$ levels under $\varphi$, and therefore we can get a contradiction to (2) again as above. So we may assume $\delta_k\in\phibar(y_{p-1})$. So following Definition~\ref{R2} (i),

(3) $\gamma_{k2}\notin\varphi(T-T_{n,q})$.

Note that $\delta_k=\delta_m$ may occur. Nonetheless, following (1), (3), Claims~\ref{bchange} and~\ref{bstablechange}, it is routine to check that $T$ is a counterexample with a good hierarchy under the $(T,D_n,\varphi)$-weakly stable coloring $\pi=\varphi/P_{y_p}(\delta_m,\gamma_{k2},\varphi)$ with $\gamma_{k_2}\in\pibar(y_p)\cap\pibar(T_{n,q})$ and $\gamma_{k2}\notin\pi(T-T_{n,q})$.  By Claim~\ref{b9n}, there exists a color $\beta\in\pibar(T(y_{p-2}))\setminus\{\gamma_n\}$ with $\beta\notin\pi(T-T_{n,q})$ such that either $\beta\notin \Gamma^q$ or $\beta\in\Gamma^q_r$ for some $\delta_r\in D_{n,q}\cap \phibar(T(y_{p-2}))$.  We then let $\pi_1=\pi/P_{y_p}(\gamma_{k2},\beta,\pi)$ and $\pi_2=\pi_1/P_{y_p}(\beta,\gamma_{k1},\pi_1)$. Since $\gamma_{k2},\beta\notin\pi(T-T_{n,q})$, $\beta\neq\gamma_n$ and the assumption on $\beta$, by repeatedly applying Claims~\ref{bchange} and~\ref{bstablechange} on $\pi$ and $\pi_1$, it is routine to check that $T$ remains a counterexample with a good hierarchy under $\pi_2$ with $\pi_2(e_p)=\beta\in\phibar(T(y_{p-2}))$ and $\pibar_2(y_p)\cap \pibar_2(T_{n,q})$. However, since $\delta_r\in\phibar(T(y_{p-2}))$ when $\beta\in\Gamma^q_r$, we see that $T^-$ is also a counterexample under $\pi_2$, which is a contradiction to the minimum assumption on $T$.

\begin{subsubcase}\label{bcase611b}
	$\theta \in\overline{\varphi}(y_{p-1})$. 
\end{subsubcase}
In this case $\theta\neq\delta_m$ and $\theta\neq\gamma_n$ (by (5.0)). Let $\pi=\varphi/P_{y_{p}}(\delta_m,\gamma_{m1},\varphi)$. Following (1), (2) and Claim~\ref{bstablechange}, it is routine to check that $T$ is a counterexample with a good hierarchy under the $(T,D_n,\varphi)$-weakly stable coloring $\pi$ and $\gamma_{m1}\notin\pi(T-T_{n,q})$. So again by Claims~\ref{bchange} and~\ref{bstablechange} with $\theta\neq\gamma_n$,  it is routine to check that $T$ remains a counterexample with a good hierarchy under the $(T(y_{p-1}),D_n,\pi)$-weakly stable coloring $\pi_1=\pi/P_{y_{p}}(\gamma_{m1},\theta, \varphi)$. Note that under $\pi_1$, 	$\theta\in\pibar_1(y_p)\cap\pibar_1(y_{p-1})$ and $\pi_1(e_p)=\gamma_{m1}$. Now if $\theta\in D_{n,q}$, then under $\pi_1$ we have Case~\ref{bcase611a}. So we may assume $\theta\notin D_{n,q}$, which will be handled in Case \ref{bcase612a} below with $\gamma_{m1}$ in place of $\theta$.

\begin{subcase}\label{bcase612}
	$\alpha\in \phibar(y_p)\cap \phibar(y_{p-1})$ and $\alpha\notin D_{n,q}$.
\end{subcase}
 In this case,  because $\alpha\notin D_{n,q}$, following TAA we have 
 
 (4) $\alpha\notin\varphi(T - T_{n,q})$.

\begin{subsubcase}\label{bcase612a}
	$\theta  \notin\overline{\varphi}(y_{p-1})$.
\end{subsubcase}
Note that in this case, $T^*$ is also a counterexample with a good hierarchy, unless $\theta\in\Gamma^{q}_m$ for some $\delta_m\in D_{n,q}$ with $\delta_m\in\phibar(y_{p-1})$.
We first assume that there does not exist such $m$ that $\theta\in\Gamma^{q}_m$ with $\delta_m\in\ D_{n,q}$ and $\delta_m\in\phibar(y_{p-1})$. 
By Claim~\ref{b9n},  we have $|(\overline{\varphi}(T(y_{p-2}))\setminus\{\gamma_n\})\setminus \varphi(T(y_{p-2})-T_{n,q})|\geq 2n+2$. Recall that $|D_{n,q}|\leq n$ and $n\geq 1$. Since there are only two edges in $T-T(y_{p-2})$, there exists  a color $\beta\in\phibar (T(y_{p-2})) \setminus ( D_{n,q}\cup\{\gamma_n\})$ such that $\beta\notin\varphi(T - T_{n,q})$. Note that $\beta\notin\overline{\varphi}(y_{p})$. Otherwise, $T^-$ is a smaller counterexample, giving a contradiction to the minimum assumption of $T$.   Now since $\alpha,\beta\notin D_{n,q}$, $\alpha,\beta\neq\gamma_n$ and  $\beta\notin\varphi(T - T_{n,q})$, we have $P_{v(\beta)}(\alpha,\beta,\varphi)=P_{y_{p-1}}(\alpha,\beta,\varphi)$ by applying Claim~\ref{bchange} on $T$.  On the other hand, by applying Claim~\ref{bchange} on $T^*$,  we see that $P_{v(\beta)}(\alpha,\beta,\varphi)=P_{y_{p}}(\alpha,\beta,\varphi)$, a contradiction.  

We then assume  $\theta=\gamma_{m1}$ with $\delta_m\in D_{n,q}$ and $\delta_{m}\in\overline{\varphi}(y_{p-1})$. In this case we may also assume $\delta_m\notin\phibar(y_p)$, for otherwise we have Case~\ref{bcase611a}. Let $\pi=\varphi/P_{y_{p}}(\alpha,\gamma_{m2},\varphi)$ and $\pi_1=\pi/P_{y_{p}}(\gamma_{m2},\delta_m, \varphi)$. Note that $\gamma_{m2}\notin\varphi(T - T_{n,q})$ and $\delta_m\notin\varphi(T-T_{n,q}-f_n)$ by Definition~\ref{R2} (i) and TAA. 
So with (4) and $\alpha\neq\gamma_n$, again by repeatedly applying Claims~\ref{bchange} and~\ref{bstablechange} on $\varphi$ and $\pi$, it is routine to check that $T$ remains a counterexample with a good hierarchy under the $(T, D_n,\varphi)$-weakly stable coloring $\pi_1$. Note that $\delta_m\in\pibar_1(y_p)\cap\pibar_1(y_{p-1})$ and $\gamma_{m1}=\pi_1(e_p)\notin\pibar_1(y_{p-1})$, so we are back to Case \ref{bcase611a}.

\begin{subsubcase}\label{bcase612b}
	$\theta\in\overline{\varphi}(y_{p-1})$.
\end{subsubcase}
We first assume  $\theta=\delta_{m}$ for some $\delta_m\in D_{n,q}$. Let $\pi=\varphi/P_{y_{p}}(\alpha,\gamma_{m1},\varphi)$ and $\pi_1=\pi/P_{y_{p}}(\delta_{m1},\gamma_{m1},\pi)$. Since $\gamma_{m1}\notin\varphi(T -  T_{n,q})$ by Definition~\ref{R2} (i) and $\alpha\neq\gamma_n$, following (4) by repeatedly applying Claims~\ref{bchange} and~\ref{bstablechange} on $\varphi$ and $\pi$, it is again routine to check that $T$ remains a counterexample with a good hierarchy under the $(T(y_{p-1}), D_n,\varphi)$-weakly stable coloring $\pi_1$ with $\gamma_{m2},\alpha\notin\pi(T - T_{n,q})$. Note that $\delta_m\in\pibar_1(y_p)\cap\pibar_1(y_{p-1})$ and $\gamma_{m1}=\pi_1(e_p)\notin\pibar_1(y_{p-1})$, so we are back to Case \ref{bcase611a}.

We then assume $\theta\notin D_{n,q}$. Note that $\theta\neq\gamma_n$ by (5.0) as $\theta\in\phibar(y_{p-1})$.
By Claim~\ref{b9n}, there exists a color  $\beta\in\overline{\varphi}(T({y_{p-2}}))$ with $\beta\notin\varphi(T - T_{n,q})$ such that either $\beta\notin\Gamma^{q}\cup  D_{n,q}\cup\{\gamma_n\}$ or $\beta\in\Gamma^{q}_r$ with $\delta_{r}\in\phibar(T_{y_{p-2}})$.   Let $\pi=\varphi/P_{y_{p}}(\alpha,\beta,\varphi)$ and $\pi_1=\varphi/P_{y_{p-1}}(\theta,\beta,\pi)$. Since $\alpha,\beta\notin\varphi(T - T_{n,q})$ (see (4)), $\alpha,\beta,\theta\neq\gamma_n$, and if $\beta\in\Gamma^{q}_r$ then $\delta_{r}\in\phibar(T_{y_{p-2}})$, by repeatedly applying Claims~\ref{bchange} and~\ref{bstablechange} on $\varphi$ and $\pi$, it is again routine to check that $T$ remains a counterexample with a good hierarchy under the $(T(y_{p-1}), D_n,\varphi)$-weakly stable coloring $\pi_1$. Note that we have $\theta\in\pibar_1(y_p)\cap\pibar_1(y_{p-1})$, $\pi_1(e_p)=\beta\notin\pibar_1(y_{p-1})$, so we are back to Case \ref{bcase612a}.

\begin{subcase}
	$\alpha\in\overline{\varphi}(y_p)\cap\overline{\varphi}(v)$ for a vertex  $v\prec y_{p-1}$.
\end{subcase}

In this case, we may assume that $\alpha\notin D_{n,q}$. Otherwise, say $\alpha=\delta_m$ for some $\delta_m\in D_{n,q}$. Then by Definition~\ref{R2} (i), $\gamma_{m1}\notin\varphi(T(v(\delta))-T_{n,q})$. So following Claims~\ref{bchange} and~\ref{bstablechange}, it is routine to check that $T$ remains a counterexample with a good hierarchy under $\pi=\varphi/P_{y_p}(\delta_m,\gamma_m,\varphi)$. Since $\gamma_{m1}\in\pibar(y_p)\cap \pibar(T_{n,q})$, we have as assumed by considering $T$ under $\pi$ with $\gamma_{m1}$ for $\alpha$.

\begin{CLA}\label{balpha} We may further assume
	$\alpha\notin\varphi(T - T_{n,q})$ such that either $\alpha\notin D_{n,q}\cup\Gamma^{q}\cup\{\gamma_n\}$, or 
	$\alpha\in \Gamma^q_m$  with $\delta_m\in D_{n,q}$ and $\delta_m\in\phibar(T({y_{p-2}}))$.
\end{CLA}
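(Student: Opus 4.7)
The plan is to reduce the choice of the non-elementarity witness $\alpha$ to one with the claimed structural properties via a single Kempe change in the style used throughout Section~5. The key input is the ``consequently'' portion of Claim~\ref{b9n}, which, applied to $y=y_{p-2}$, supplies a color $\beta\in\phibar(T(y_{p-2}))$ with $\beta\notin\varphi(T-T_{n,q})$ such that either $\beta\notin\Gamma^q\cup D_{n,q}\cup\{\gamma_n\}$ or $\beta\in\Gamma^q_r$ for some $\delta_r\in D_{n,q}\cap\phibar(T(y_{p-2}))$. These two alternatives are precisely the two branches of the conclusion we want.

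First I would argue $\beta\notin\phibar(y_p)$: otherwise $\beta\in\phibar(T(y_{p-2}))\cap\phibar(y_p)$, and then the tree-sequence $T^-=(T_{n,q},e_1,y_1,\dots,e_{p-2},y_{p-2},e_p,y_p)$ (well-defined since $p(T)=p$) inherits a good hierarchy of $q$ levels from $T$ and fails elementariness, contradicting the minimality of $T$. So $\beta\notin\phibar(y_p)$ and $v(\beta)\preceq y_{p-2}\prec y_{p-1}$.

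Next I would set $\pi=\varphi/P_{y_p}(\alpha,\beta,\varphi)$. Because we have already reduced to $\alpha\notin D_{n,q}\cup\{\gamma_n\}$ earlier in Case~3, and because $\beta$ satisfies the structural constraints above, Claim~\ref{bchange} (in its ``$\alpha,\beta\notin D_{n,q}\cup\{\gamma_n\}$'' branch, or the corresponding branch when $\alpha\in\Gamma^q_r$) gives $P_{v(\alpha)}(\alpha,\beta,\varphi)=P_{v(\beta)}(\alpha,\beta,\varphi)\ne P_{y_p}(\alpha,\beta,\varphi)$; then Claim~\ref{bstablechange}(i) yields that $\pi$ is $T_n+f_n$ mod $\varphi$ and $(T,D_n,\varphi)$-weakly stable. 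A routine check via Remark~\ref{R2check} then shows that $T$ remains an ETT with a good hierarchy of $q$ levels under $\pi$. Since the swap alters $\phibar$ only at the endpoints of the swapped path, $\beta\in\pibar(v(\beta))\cap\pibar(y_p)$ with $v(\beta)\prec y_{p-1}$, so $T$ remains a minimum counterexample, now witnessed by $\beta$. Relabeling $\beta$ as $\alpha$, the new $\alpha$ satisfies $\alpha\notin\varphi(T-T_{n,q})$ together with one of the two branches of the claim, as required.

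The main obstacle I anticipate is the verification of Definition~\ref{R2}(i) under $\pi$ in the sub-case $\beta\in\Gamma^q_r$: one must ensure that no edge of $T_{n,q+1}(v_{\delta_r})-T_{n,q}$ newly carries a color in $\Gamma^q_r$. This is precisely why Claim~\ref{b9n} is stated with the guarantee $\delta_r\in\phibar(T(y_{p-2}))$: it forces $v_{\delta_r}\preceq y_{p-2}\prec y_{p-1}$, so the only edges of $T-T_{n,q}$ lying after $v_{\delta_r}$ that could be recolored by the swap are on $P_{y_p}(\alpha,\beta,\varphi)\cap(T-T_{n,q})$; the conditions $\alpha,\beta\notin\varphi(T-T_{n,q})$ (the condition on $\alpha$ being arranged, if needed, by an analogous preliminary swap of the kind carried out in Subcases~\ref{bcase611a} and~\ref{bcase612a}) ensure that no such edges exist, so the hierarchy is preserved.
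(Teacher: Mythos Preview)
Your outline has a genuine gap. You assert that Claim~\ref{bstablechange}(i) yields that $\pi=\varphi/P_{y_p}(\alpha,\beta,\varphi)$ is $(T,D_n,\varphi)$-weakly stable, but that claim only delivers $(T_{n,q},D_n,\varphi)$-weak stability. To upgrade to all of $T$ you would need $\alpha,\beta\notin\varphi(T-T_{n,q})$, and while $\beta$ has this property by construction, nothing arranged so far forces it for $\alpha$. At this point in Subcase~2.3 the only reductions in hand are $\alpha\neq\gamma_n$ and $\alpha\notin D_{n,q}$; the color $\alpha$ may well sit on an edge $e_i$ of $T-T_{n,q}$, and then the Kempe change can recolor that edge and destroy the TAA structure of $T$ (you would need $v(\beta)\preceq y_{i-1}$ to keep the augmentation valid, which is not guaranteed). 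Your parenthetical ``preliminary swap of the kind carried out in Subcases~\ref{bcase611a} and~\ref{bcase612a}'' does not supply a mechanism to strip $\alpha$ from $\varphi(T-T_{n,q})$; those subcases are solving a different problem.

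The paper's proof addresses exactly this obstruction by abandoning the attempt to keep $T$ intact when the path $P=P_{y_p}(\alpha,\beta,\varphi)$ meets $V(T(y_{p-1}))$. Instead, letting $u$ be the first vertex of $P$ (from $y_p$) in $T(y_{p-1})$ and $P'$ the corresponding subpath, one forms the new tree-sequence $T'=T(y_{p-2})\cup P'$ (or $T(y_{p-1})\cup P'$ if $u=y_{p-1}$). This $T'$ is still non-elementary and, after a short case split on whether $\alpha\notin\Gamma^q$, $\alpha\in\Gamma^q\cap\varphi(T-T_{n,q})$, or $\alpha\in\Gamma^q\setminus\varphi(T-T_{n,q})$, one checks $T'$ carries a good hierarchy of $q$ levels; but $p(T')<p(T)$, contradicting the minimality of $p(T)$. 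Only in the last case (where $\alpha\notin\varphi(T-T_{n,q})$ is automatic) does the direct Kempe change you propose go through. The dichotomy on whether $P$ hits $T(y_{p-1})$ and the construction of $T'$ are the missing ingredients in your argument.
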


\proof  By Claim \ref{b9n}, there exists a color  $\beta\in\overline{\varphi}(T({y_{p-2}}))$ with $\beta\notin\varphi(T - T_{n,q})$ such that either $\beta\notin\Gamma^{q}\cup  D_{n,q}\cup\{\gamma_n\}$ or $\beta\in\Gamma^{q}_m$ with $\delta_{m}\in\phibar(T_{y_{p-2}})$.  If $\beta\in\phibar(y_p)$, we are done. 
Hence we assume $\beta\notin\phibar(y_p)$. Let $P:=P_{y_p}(\alpha, \beta, \varphi)$.  We will show one of the following two statement holds.
\begin{itemize}
	\item[(a)] \label{bStatA} $\pi = \varphi/P$ is $(T,D_n,\varphi)$-weakly stable and $T$ is a counterexample with a good hierarchy.
	\item[(b)] \label{bStatB} Under $\varphi$, there exists a non-elementary ETT $T'$ with a good hierarchy of $q$ levels but  $p(T') < p(T)$.
\end{itemize}

Note that statement (a) gives Claim~\ref{balpha} while statement (b) gives a contradiction to the minimum assumption on $T$. Now if $V(P)\cap V(T_{y_{p-1}}) = \emptyset$, since $\alpha,\beta\notin D_{n,q}$, $\alpha,\beta\neq\gamma_n$, and if $\beta\in \Gamma^q_m$ then $\delta_m\in\phibar(T(y_{p-2}))$, following Claim~\ref{bstablechange} (both cases), it is routine to check we have statement (a).

Hence we assume $V(P)\cap V(T_{y_{p-1}})\neq \emptyset$. Along the order of $P$ from $y_p$, let $u$ be the first vertex in $V(T_{y_{p-1}})$ and $P'$ be the subpath joining $u$ and $y_p$. Define
\begin{eqnarray*}
	T' & = & T_{y_{p-2}}\cup P' \quad \mbox{ if $u\ne y_{p-1}$, and }\\
	T' & = & T_{y_{p-1}} \cup P' \quad \mbox{ if $u=y_{p-1}$. }
\end{eqnarray*}
 Clearly $T'$ is an ETT under $\varphi$ as it can be obtained by TAA from $T_{n,q}$, and $T_{n,q}$ has a good hierarchy of $(q-1)$ levels up to itself. Note that $p(T')<p(T)$. We proceed with the proof by considering three cases:  $\alpha \notin \Gamma^{q}$, $\alpha \in \Gamma^{q} - \varphi(T-T_{n,q})$ and $\alpha \in \Gamma^{q}\cap \varphi(T-T_{n,q})$.
{\flushleft \bf Case I: $\alpha\notin \Gamma^{q}$.}	
Since $\alpha, \beta \in \phibar(T_{y_{p-2}})$ and if $\beta\in \Gamma^q_m$ then $\delta_m\in\phibar(T(y_{p-2}))$, we see that $T'=T_{n,q+1}$ satisfies Definition~\ref{R2} (i). Hence it is routine to see that statement (b) holds, a contradiction.


{\flushleft \bf Case II: $\alpha\in \Gamma^{q}\cap \varphi(T-T_{n,q})$.}  Assume $\alpha = \gamma_{k1}\in\Gamma^q$ with $\delta_k\in D_{n,q}$. Since $\varphi(e_p)\neq\alpha$, $\alpha\in\varphi(T_{y_{p-1}}-T_{n,q})$. Therefore we must have  $\delta_{m}\in\phibar(T(y_{p-2}))$ following Definition~\ref{R2} (i). Since $\delta_m\in\phibar(T(y_{p-2}))$ when $\beta\in \Gamma^q_m$, we again see that $T'=T_{n,q+1}$ satisfies Definition~\ref{R2} (i). Hence it is routine to see that statement (b) holds, a contradiction.

{\flushleft \bf Case III: $\alpha \in \Gamma^{q} - \varphi(T-T_{n,q})$.} Then $\alpha,\beta\notin\varphi(T-T_{n,q})$ and $\alpha,\beta\neq\gamma_n$. Let $\pi=\varphi/P$. Again following Claims~\ref{bchange} and~\ref{bstablechange}, it is routine to check that $T$ remains a counterexample with a good hierarchy, and statement (a) holds. \qed

Now let $\varphi$ and $\alpha$ be  as the claim above. We then consider two cases.
\begin{subsubcase}
	$\theta=\varphi(e_p)\notin\phibar(y_{p-1})$.
\end{subsubcase}
Note that unless $\theta\in\Gamma^q_k$  and $\delta_{k}\in\overline{\varphi}(y_{p-1})$ for some $\delta_k\in D_{n,q}$, we see $T^-$ is a counterexample with a good hierarchy under $\varphi$, giving a contradiction to the minimum assumption on $T$. Hence we may assume $\theta=\gamma_{k1}$  and $\delta_{k}\in\overline{\varphi}(y_{p-1})$ with $\delta_k\in D_{n,q}$. So $\delta_k\notin\varphi(T-T_{n,q}-f_n)$. Following Definition~\ref{R2} (i),  we have  $\gamma_{k1}\notin\varphi(T_{y_{p-1}} - T_{n,q})$. Let $\pi=\varphi/P_{y_p}(\alpha,\gamma_{k1},\varphi)$ and $\pi_1=\pi/P_{y_p}(\gamma_{k1},\delta_{k},\pi)$.
Following our assumption on $\alpha$ in Claim~\ref{balpha}, by repeatedly applying Claims~\ref{bchange} and~\ref{bstablechange} on $\varphi$ and $\pi$, it is again routine to check that $T$ is a counterexample with a good hierarchy under the $(T(y_{p-1}),D_n,\varphi)$-weakly stable coloring $\pi_1$. Since $\delta_k\in\pibar_1(y_p)\cap\pibar_1(y_{p-1})$ and $\pi_1(e_p)=\alpha$, we are back to Case \ref{bcase611a}.

\begin{subsubcase}
	$\theta=\varphi(e_p)\in\overline{\varphi}(y_{p-1})$.
\end{subsubcase}
In this case, we have $\theta\neq\gamma_n$ by (5.0). We first consider the case $\theta=\varphi(e_p)=\delta_{k}$ with $\delta_k\in D_{n,q}$. Following Definition~\ref{R2} (i), we see $\gamma_{k1}\notin\varphi(T -  T_{n,q})$. Let $\pi=\varphi/P_{y_p}(\alpha,\gamma_{k1},\varphi)$ and $\pi_1=\pi/P_{y_p}(\gamma_{k1},\delta_{k},\pi)$. Again following our assumption on $\alpha$ in Claim~\ref{balpha}, by repeatedly applying Claims~\ref{bchange} and~\ref{bstablechange} on $\varphi$ and $\pi$, it is routine to check that $T$ is a counterexample with a good hierarchy under the $(T(y_{p-1}),D_n,\varphi)$-weakly stable coloring $\pi_1$. Since $\delta_k\in\pibar_1(y_p)\cap\pibar_1(y_{p-1})$ and $\pi_1(e_p)=\gamma_{k1}$, we are back to Case \ref{bcase611a}.

We then consider the case $\theta=\varphi(e_p)\notin D_{n,q}$. Since $\theta\neq\gamma_n$, following our assumption on $\alpha$ in Claim~\ref{balpha}, by Claims~\ref{bchange} and~\ref{bstablechange} (both cases), it is routine to check that $T$ is a counterexample with a good hierarchy under the $(T(y_{p-1}),D_n,\varphi)$-weakly stable coloring $\pi=\varphi/P_{y_{p}}(\alpha,\theta,\varphi)$. Now $\theta\in \pibar(y_p)\cap \pibar(y_{p-1})$ and $\pi(e_p)=\alpha\notin\pibar(y_{p-1})$, which is dealt in Case \ref{bcase612a}. This completes the proof of Case~\ref{b2}.




\begin{case}\label{bcase4}
	$2\leq p(T)\leq p-1$.
\end{case}

Define $I_{\varphi}=\{i:\overline{\varphi}(y_p)\cap\overline{\varphi}(y_{i})\ne \emptyset \}$. So $I_{\varphi} = \emptyset$ when $\{ v:\phibar(y_p)\cap \phibar(v) \ne \emptyset\} \subseteq V(T_{n,q})$.  For convention,  we denote $\max{I_{\varphi}} = 0$ when $I_{\varphi} = \emptyset$. Let $j= p(T)$. By the assumption of the current case,   we have  $j\geq 2$ and $e_j\notin E(y_{j-1},y_j)$. We first consider the case $max(I_{\varphi})\geq j$. This case is similar to Case~\ref{bcase1x} and can be handled in the same fashion:  We first show that $\max(I_{\varphi}) = p-1$ the same way as in Claim~\ref{bp-1},  and replace color $\varphi(e_p)$ by $\alpha$ to get a smaller counterexample with a new color $\pi$. The proof of $\pi$ being $T_n+f_n$ is also the same as the end in Case~\ref{bcase1x}. We omit the details.

Now we assume that $max(I_{\varphi})< j$. Recall that we define $T(y_0)=T_{n,q}$.

\begin{CLA}\label{bj-1}
		We may assume there exists $\alpha\in\overline{\varphi}(y_p)\cap\overline{\varphi}(T(y_{j-2}))$ such that either $\alpha\notin \Gamma^q\cup D_{n,q}\cup\{\gamma_n\}$ or $\alpha\in\Gamma^q_m$ for some $\delta_m\in D_{n,q}$ with $\delta_m\in\phibar(T(y_{j-2}))$.
	
\end{CLA}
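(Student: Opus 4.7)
The plan is to follow the template established for Claim~\ref{balpha} (with $y_{j-2}$ playing the role of $y_{p-2}$), adapting it to the current assumption $\max(I_\varphi)<j=p(T)\le p-1$. Since $\max(I_\varphi)<j$, any witness $\alpha\in\phibar(y_p)\cap\phibar(v)$ satisfies $v\in V(T(y_{j-1}))$, so the task is to push the witness back one further level into $T(y_{j-2})$ and, simultaneously, arrange the required behavior with respect to $\Gamma^q\cup D_{n,q}\cup\{\gamma_n\}$. Apply the ``consequently'' part of Claim~\ref{b9n} with $k=j-2$ (valid since $j-2\le p-2$) to produce a color $\beta\in\phibar(T(y_{j-2}))$ with $\beta\notin\varphi(T(y_j)-T_{n,q})$ such that either $\beta\notin\Gamma^q\cup D_{n,q}\cup\{\gamma_n\}$, or $\beta\in\Gamma^q_r$ for some $\delta_r\in D_{n,q}\cap\phibar(T(y_{j-2}))$. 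If $\beta\in\phibar(y_p)$, take $\alpha:=\beta$ and we are done. Otherwise, let $P=P_{y_p}(\alpha,\beta,\varphi)$.

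In the ``good'' case $V(P)\cap V(T(y_{j-1}))=\emptyset$, the plan is to show that $\pi=\varphi/P$ satisfies the hypotheses of Claim~\ref{bstablechange}: indeed $\alpha,\beta\neq\gamma_n$, and the restrictions on $\beta$ relative to $\Gamma^q$ and $D_{n,q}$ mirror exactly those used in Claim~\ref{balpha}. Consequently $\pi$ is $(T,D_n,\varphi)$-weakly stable, $T_n+f_n$ mod $\varphi$, and $T$ remains a counterexample with a good hierarchy of $q$ levels under $\pi$. Under $\pi$, now $\beta\in\phibar_\pi(y_p)\cap\phibar_\pi(T(y_{j-2}))$ and satisfies the required disjunction, so $\alpha:=\beta$ works.

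In the ``bad'' case $V(P)\cap V(T(y_{j-1}))\ne\emptyset$, let $u$ be the first vertex along $P$ from $y_p$ lying in $V(T(y_{j-1}))$ and let $P'$ be the subpath of $P$ joining $u$ to $y_p$. Define $T'=T(y_{j-2})\cup P'$ when $u\ne y_{j-1}$, and $T'=T(y_{j-1})\cup P'$ when $u=y_{j-1}$. By construction $T'$ can be obtained from $T_{n,q}$ by TAA under $\varphi$, so it is an ETT, and plainly $p(T')<p(T)$ because $P'$ is a path attached directly to $T(y_{j-1})$. To conclude that $T'$ is a counterexample contradicting the choice of $T$ one must verify Definition~\ref{R2}(i) for $T'$ viewed as $T_{n,q+1}$; this is the step requiring case analysis identical in spirit to the three cases of Claim~\ref{balpha}, namely $\alpha\notin\Gamma^q$, $\alpha\in\Gamma^q-\varphi(T-T_{n,q})$, and $\alpha\in\Gamma^q\cap\varphi(T-T_{n,q})$, together with the analogous split on $\beta$. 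In each subcase the hypothesis produced by Claim~\ref{b9n} on $\beta$, together with the fact that $\alpha\in\phibar(T(y_{j-1}))$, forces the relevant $\Gamma$-set indices $\delta_m,\delta_r$ to lie in $\phibar(T(y_{j-2}))\subseteq\phibar(T')$, so condition (i) of Definition~\ref{R2} remains satisfied on the newly attached segment $P'$.

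The main obstacle will be the third subcase $\alpha\in\Gamma^q\cap\varphi(T-T_{n,q})$, where the color $\alpha$ is already consumed on some edge of $T(y_{j-1})-T_{n,q}$; here one needs to argue, using the existing hierarchy of $T$, that the associated $\delta_k$ with $\alpha=\gamma_{k\ell}$ necessarily misses a vertex in $T(y_{j-2})$, so that $T'$ inherits (i) for this $\Gamma$-set. The other tricky point is handling the case $\alpha=\gamma_n$, which we avoid via Claim~\ref{ngamma}, and the possibility $u=y_{j-1}$, where the alternative definition of $T'$ is used precisely to keep $p(T')<p(T)$ rather than merely $p(T')\le p(T)$. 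With these verifications, either the Kempe change yields the desired $\alpha$ in the good case, or a smaller counterexample $T'$ is obtained in the bad case, contradicting the minimality of $T$.
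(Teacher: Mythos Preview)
Your template from Claim~\ref{balpha} presupposes $\alpha\in\phibar(T(y_{j-2}))$, but at the outset you only know $\alpha\in\phibar(T(y_{j-1}))$; the case $v(\alpha)=y_{j-1}$ breaks both branches of your dichotomy. In the bad case: by Claim~\ref{bchange} one has $P_{v(\alpha)}=P_{v(\beta)}$, and since $y_p$ and $v(\alpha)=y_{j-1}$ both miss $\alpha$ they cannot lie on the same $(\alpha,\beta)$-path together with $v(\beta)$, so $P=P_{y_p}$ is vertex-disjoint from $P_{y_{j-1}}$ and necessarily $u\ne y_{j-1}$. You are therefore forced into $T'=T(y_{j-2})\cup P'$, but now $\alpha\notin\phibar(T(y_{j-2}))$ and the $\alpha$-coloured edges of $P'$ cannot be added by TAA; $T'$ is not a legitimate ETT and no contradiction to the minimality of $p(T)$ results. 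In the good case with $\alpha=\delta_k\in D_{n,q}$: Claim~\ref{bstablechange}(ii) requires $\alpha,\beta\notin D_{n,q}$, and (i) needs one of them in $\phibar(T_{n,q})$ or both closed there, none of which is available. Finally, your third-subcase assertion that $\alpha=\gamma_{k1}\in\varphi(T-T_{n,q})$ forces $\delta_k\in\phibar(T(y_{j-2}))$ is unjustified: the $\alpha$-coloured edge may lie in $T-T(y_{j-1})$, which only yields $v(\delta_k)\preceq y_{p-1}$.

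The paper avoids all of this by \emph{not} using the $T'$-construction for the present claim. It separates Case~I ($\alpha\in\phibar(y_{j-1})$) from Case~II ($\alpha\in\phibar(T(y_{j-2}))$) and handles each by direct Kempe changes: routing through $\gamma_{k1}$ when $\alpha\in D_{n,q}$, and when $\alpha\in\Gamma^q_m$ with $\delta_m\notin\phibar(T(y_{j-2}))$ reducing either to $\max(I_\varphi)\ge j$ or back to Case~I. The $T'$-trick you borrowed is exactly what the paper deploys for the \emph{next} claim (Claim~\ref{beta}), once Claim~\ref{bj-1} has already placed $\alpha$ in $\phibar(T(y_{j-2}))$ with the required disjunction.
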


\proof Following Claim~\ref{ngamma}, let $\alpha\in\overline{\varphi}(y_p)\cap\overline{\varphi}(T(y_{j-1}))$ with $\alpha\neq\gamma_n$. We consider the following two cases.
{\flushleft \bf Case I} $\alpha\in\phibar(y_{j-1})$. 
 
 By Claim~\ref{b9n}, there exists a color $\beta\in\overline{\varphi}(T(y_{j-2}))$ with $\beta\notin\varphi(T(y_{j-1})- T_{n,q})$ and either $\beta\notin\Gamma^q\cup D_{n,q}\cup\{\gamma_n\}$ or $\beta\in \Gamma^q_m$  for some $\delta_m\in D_{n,q}\cap\phibar(T(y_{j-2}))$.

We first assume $\alpha=\delta_k \in D_{n,q}$. In this case we let $\pi=\varphi/P_{y_p}(\delta_k, \gamma_{k1},\varphi)$ and $\pi_1=\pi/P_{y_p}(\gamma_{k1},\beta,\pi)$. Following Definition~\ref{R2} (i), we have  $\gamma_{k1}\notin\varphi(T(y_{j-1}) - T_{n,q}))$. With the assumption on $\beta$, by repeatedly applying Claims~\ref{bchange} and~\ref{bstablechange} on $\varphi$ and $\pi$, it is routine to check that $T$ remains a counterexample with a good hierarchy under the $(T(y_{j-1}),D_n,\varphi)$-weakly stable $\pi_1$.  Now $\beta\in\pibar_1(y_p)\cap \pibar_1(v(\beta))$,  and we have as claimed by considering $\pi_1$ with $\beta$.

We then assume $\alpha \notin D_{n,q}$. Note that $\alpha\notin\Gamma^q$ following Definition~\ref{R2} as $y_{j-1}\notin T_{n,q}$. Moreover, in this case, since $\alpha\neq\gamma_n$, by the assumption on $\beta$ and Claims~\ref{bchange} and~\ref{bstablechange}, it is routine to check that $T$ remains a counterexample with a good hierarchy under the $(T(y_{j-1}),D_n,\varphi)$-weakly stable coloring $\pi=\varphi/ P_{y_p}(\alpha,\beta,\varphi)$. Again we have as claimed by considering $\pi$ with $\beta$.

{\flushleft \bf Case II} $\alpha\in\phibar(T(y_{j-2}))$. 

In this case, we may further assume $\alpha\notin D_{n,q}$. Otherwise, say $\alpha=\delta_m\in D_{n,q}$. Then $v(\delta_m)\in T(y_{j-1})-T_{n,q}$. Following Definition~\ref{R2} (i), we have $\gamma_{m1}\notin\varphi(T(v(\delta_m))-T_{n,q})$. Consequently, by Claims~\ref{bchange} and~\ref{bstablechange}, it is routine to check that $T$ remains a counterexample with a good hierarchy under the $(T(v(\delta_m)),D_n,\varphi)$-weakly stable coloring $\pi=\varphi/ P_{y_p}(\delta_m,\gamma_{m1},\varphi)$, and we have as claimed by considering $\pi$ with $\gamma_{m1}\in\pibar(y_p)\cap\pibar(T(y_{j-2}))$ and $\delta_m\in\pibar(T(y_{j-2}))$.

Now if $\alpha\notin\Gamma^{q}$, we are done. Hence we may assume $\alpha=\gamma_{m1}\in\Gamma^{q}_m$ with $\delta_m\in D_{n,q}$. Let us first consider the case that $\delta_m\notin\phibar(T(y_{p-1}))$. Then $\gamma_{m1}\notin\varphi(T-T_{n,q})$ by Definition~\ref{R2} (i). Let $\beta\in\phibar(y_{p-1})$. Since $p-1\geq j$, $\beta\notin\overline{\varphi}(T-T_{n,q}-f_n)$ by TAA and the fact that $e_p\in E(y_{p-1},y_p)$. Moreover, $\beta\neq\gamma_n$ by (5.0). So following Claims~\ref{bchange} and~\ref{bstablechange}, it is routine to check that 
$T$ remains a counterexample with a good hierarchy under the $(T,D_n,\varphi)$-weakly stable coloring $\pi/P_{y_p}(\gamma_{m1},\beta,\varphi)$. With $\beta\in\pibar(y_p)\cap\pibar(y_{p-1})$, we reach the case that $max(I_{\varphi})\geq j$ and then proceed as in Case~\ref{bcase1x}.

Now we assume that $\delta_m\in\phibar(T(y_{p-1}))$. Note that $\delta_m\notin\phibar(T_{n,q})$ since $\delta_m\in D_{n,q}$. So $\delta_{m}\in\phibar(y_k)$ for some $1 \leq k\leq p-1$. If $k< j-1$, then we have Claim~\ref{bj-1} already, so we assume $k\geq j-1$. Following Definition~\ref{R2} (i), we have $\gamma_{m1}\notin\varphi(T(y_{k})-T_{n,q})$. So following Claims~\ref{bchange} and~\ref{bstablechange}, it is routine to check that $T$ is a counterexample with a good hierarchy under $\pi=\varphi/P_{y_p}(\delta_m, \gamma_{m1},\varphi)$ with $\delta_m\in\pibar(y_p)\cap\pibar(y_k)$. Now if $k=j-1$, we return to Case I. Otherwise, $k\geq j$ and we reach the case that $max(I_{\varphi})\geq j$, and then proceed as in Case~\ref{bcase1x}. 
\qed

\begin{CLA}\label{beta}
	We may assume there exists $\alpha\in\overline{\varphi}(y_p)\cap\overline{\varphi}(T(y_{j-2}))$ with $\alpha\notin\varphi(T(y_{j})-T_{n,q})$ such that either $\alpha\notin \Gamma^q\cup D_{n,q}\cup\{\gamma_n\}$ or $\alpha\in\Gamma^q_m$ for some $\delta_m\in D_{n,q}$ with $\delta_m\in\phibar(T(y_{j-2}))$.
	
\end{CLA}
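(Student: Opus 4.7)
The plan is to sharpen Claim~\ref{bj-1} by further guaranteeing $\alpha\notin\varphi(T(y_j)-T_{n,q})$. Beginning with the color $\alpha$ supplied by Claim~\ref{bj-1}, if it already satisfies the new condition there is nothing to do; otherwise, the strategy, modeled on the proof of Claim~\ref{balpha}, is to locate a replacement color $\beta$ that is missing inside $T(y_{j-2})$, avoids $\varphi(T(y_j)-T_{n,q})$, and carries the same color-class dichotomy as required of $\alpha$, and then switch to it via a Kempe change along the $(\alpha,\beta)$-chain ending at $y_p$.

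The color $\beta$ will come from the ``consequently'' clause of Claim~\ref{b9n} applied with $k=j-2$, which is valid even when $y_{j-2}\in T_{n,q}$ (the case $j=2$): it produces $\beta\in\phibar(T(y_{j-2}))\setminus\{\gamma_n\}$ with $\beta\notin\varphi(T(y_j)-T_{n,q})$ satisfying either $\beta\notin\Gamma^q\cup D_{n,q}\cup\{\gamma_n\}$, or $\beta\in\Gamma^q_r$ for some $\delta_r\in D_{n,q}\cap\phibar(T(y_{j-2}))$. If $\beta\in\phibar(y_p)$ I rename $\alpha:=\beta$ and finish. Otherwise set $P:=P_{y_p}(\alpha,\beta,\varphi)$ and $\pi:=\varphi/P$, and split on whether $V(P)$ meets $V(T(y_{j-1}))$. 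In the clean case $V(P)\cap V(T(y_{j-1}))=\emptyset$, Claim~\ref{bchange} forces $P\neq P_{v(\alpha)}(\alpha,\beta,\varphi)$ (using $\beta\notin\varphi(T(y_j)-T_{n,q})$ as the key non-appearance hypothesis), and Claim~\ref{bstablechange} then certifies that $\pi$ is $(T,D_n,\varphi)$-weakly stable and $T_n+f_n$ mod $\varphi$, so $T$ remains a minimum counterexample with the same good $q$-level hierarchy under $\pi$; the three requirements on the new color $\beta$ are then verified directly, using weak stability on $T(y_{j-2})$. The subcase $\beta\in D_{n,q}$ is handled with a preliminary Kempe change through a color in $\Gamma^q_r$, exactly as in the $\beta=\delta_m\in D_{n,q}$ branch of the proof of Claim~\ref{bp-1}.

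The hard case will be $V(P)\cap V(T(y_{j-1}))\neq\emptyset$, and I plan to derive a contradiction there by producing a smaller counterexample, following the pattern of Case~III in the proof of Claim~\ref{balpha}. Let $u$ be the first intersection of $P$ with $T(y_{j-1})$ when traversing $P$ from $y_p$, and set $T':=T(u)\cup P(u,y_p)$. The main obstacle is to verify that $T'$ is still an ETT with a good hierarchy of $q$ levels, while $p(T')<p(T)$: the new edges added to $T(u)$ by $P(u,y_p)$ carry the colors $\alpha$ or $\beta$, each missing somewhere in $T(y_{j-2})\subseteq T(u)$, so the TAA property is preserved; and Definition~\ref{R2}\,(i) on $T'$ follows because in the $\Gamma^q_r$ branch the hosting $\delta_r$ lies in $\phibar(T(y_{j-2}))\subseteq\phibar(T(u))$. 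Since $T'$ re-routes through $P(u,y_p)$ and omits the later segment of $T$, its path-number is strictly smaller than $p(T)$, contradicting the minimality in the original choice of counterexample and forcing us into the clean case, thereby completing Claim~\ref{beta}.
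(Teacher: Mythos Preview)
Your overall strategy matches the paper's: obtain $\beta$ from the ``consequently'' clause of Claim~\ref{b9n}, set $P = P_{y_p}(\alpha,\beta,\varphi)$, and split on whether $P$ meets $T(y_{j-1})$. There is, however, a concrete gap in the intersecting case.

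You set $T' := T(u) \cup P(u,y_p)$ and assert $T(y_{j-2}) \subseteq T(u)$. That inclusion need not hold: $u$ is merely some vertex of $V(T(y_{j-1}))$, and nothing prevents $u \prec y_{j-2}$. When that happens two things break. First, $\alpha$ and $\beta$ are only known to lie in $\phibar(T(y_{j-2}))$, so if $v(\alpha)$ or $v(\beta)$ comes after $u$ in $\prec$ then the edges of $P(u,y_p)$ carry colors not yet missing in $T(u)$, and $T'$ fails to arise by TAA---hence is not an ETT at all. Second, even if TAA happened to go through, you need $v(\alpha)\in V(T')$ to conclude $T'$ is non-elementary, and $v(\alpha)$ may lie strictly between $u$ and $y_{j-2}$. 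The paper sidesteps both issues by anchoring at $T(y_{j-2})$ (taking $T(y_{j-1})$ only in the boundary case $u=y_{j-1}$) rather than at $T(u)$: with $T' = T(y_{j-2})\cup P'$ one has $v(\alpha),v(\beta)\in T(y_{j-2})\subseteq T'$, so TAA and non-elementariness are automatic, while the path number still drops to at most $j-1$. With this one-line correction your argument goes through.

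Two small side notes. In the disjoint case, $\pi$ is $(T(y_{j-1}),D_n,\varphi)$-stable but not $(T,D_n,\varphi)$-weakly stable in general, since edges $e_k$ with $k>j$ can lie on $P$ and change color; the conclusion that $T$ retains a good $q$-level hierarchy under $\pi$ is still correct, but it comes from re-checking TAA and Definition~\ref{R2}(i) directly via the dichotomy on $\alpha,\beta$, not from weak stability. And the ``subcase $\beta\in D_{n,q}$'' you mention is vacuous: the output of Claim~\ref{b9n} already excludes $\beta\in D_{n,q}$.
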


\proof Let us consider a coloring $\varphi$ satisfies Claim~\ref{bj-1}.
 By Claim~\ref{b9n}, there exists a color $\beta\in\overline{\varphi}(T(y_{j-2}))$ with $\beta\notin\varphi(T(y_{j})- T_{n,q})$ and either $\beta\notin\Gamma^q\cup D_{n,q}\cup\{\gamma_n\}$ or $\beta\in \Gamma^q_m$  for some $\delta_m\in D_{n,q}\cap\phibar(T(y_{j-2}))$.  Now we consider the path $P:=P_{y_{p}}(\alpha,\beta,\varphi)$. First we consider the case $V(P)\cap V(T_{y_{j-1}})\neq\emptyset$. Along the order of $P$ from $y_p$, let $u$ be the first vertex in $V(T_{y_{j-1}})$ and $P'$ be the subpath joining $u$ and $y_p$. Define
\begin{eqnarray*}
	T' & = & T_{y_{j-2}}\cup P' \quad \mbox{ if $u\ne y_{j-1}$, and }\\
	T' & = & T_{y_{j-1}} \cup P' \quad \mbox{ if $u=y_{j-1}$. }
\end{eqnarray*}
Clearly $T'$ is an ETT under $\varphi$ as it can by obtained by TAA from $T_{n,q}$, and $T_{n,q}$ has a good hierarchy of $(q-1)$ levels up to itself. By the assumptions on $\alpha$ and $\beta$, $T'$ also satisfies Definition~\ref{R2} (i). So following Remark~\ref{R2check}, $T'$ is a counterexample under $\varphi$. Note that we have $p(T')<p(T)$, giving a  contradiction to the minimum assumption on $T$.

Therefore we have $V(P)\cap V(T_{y_{j-1}})=\emptyset$. Let $\pi=\varphi/P$.  By the assumptions on $\alpha$ and $\beta$, with Claim~\ref{bstablechange}, we see that $T$ remains a counterexample with a good hierarchy under the $(T(y_{j-1}),D_n,\varphi)$-stable coloring $\pi$. With $\beta\in\pibar(y_p)\cap\pibar(T(y_{j-2}))$, we have as claimed by considering $\beta$ and $\pi$.
\qed

Finally we consider a coloring $\varphi$ satisfies Claim~\ref{beta}, and let $\beta\in\phibar(y_j)$. Note that $\beta\neq\gamma_n$ by (5.0). We consider the following two cases based on $\beta$.
\begin{subcase}
	$\beta\notin D_{n,q}$.
\end{subcase}
Let $\pi=\varphi/P_{y_p}(\alpha,\beta,\varphi)$. Following the assumption on $\alpha$ with $\beta\neq\gamma_n$, by Claims~\ref{bchange} and~\ref{bstablechange}, it is routine to check that $T$ is a counterexample with a good hierarchy under $\pi$ with $\beta\in\pibar(y_p)\cap\pibar(y_j)$. So we reach the case that $max(I_{\varphi})\geq j$, and then proceed as in Case~\ref{bcase1x}. 
\begin{subcase}
	$\beta=\delta_k\in D_{n,q}$.
\end{subcase}
In this case, by Definition~\ref{R2} (i), we have $\gamma_{k1}\notin\varphi(T(y_j)-T_{n,q})$. Let $\pi=\varphi/P_{y_p}(\alpha,\gamma_{k1},\varphi)$ and $\pi_1=\pi/P_{y_p}(\gamma_{k1},\delta_k,\pi)$. Again following the assumption on $\alpha$, by repeatedly applying Claims~\ref{bchange} and~\ref{bstablechange} on $\varphi$ and $\pi$, we see that $T$ is a counterexample with a good hierarchy under $\pi$ with $\delta_k\in\pibar(y_p)\cap\pibar(y_j)$. So again we reach the case that $max(I_{\varphi})\geq j$, and then proceed as in Case~\ref{bcase1x}. 
This completes the proof of Case \ref{bcase4} and statement A.
\end{proof}

\end{section}

\begin{section}{Conclusion}

Despite that the proof of Conjecture~\ref{con:GAS} is presented in view of critical edges, it indeed provides a polynomial-time algorithm to achieve the bound in Conjecture~\ref{con:GAS}. In fact, for a given graph $G$ with $\Delta(G)\geq 3$, we initiate by setting $k=\Delta(G)+1$, and color the edges of the graph greedily; that is, color the edges incident to non-colored edges randomly, and color the edges adjacent to colored edges with colors not appearing on adjacent colored edges, if possible. Then if there's still an uncolored edge $e$ and $G'$ is the subgraph induced by colored edges, following Algorithm 3.1 and the algorithm in proof of Statement B of Lemma~\ref{tec}, we construct an ETT with a good hierarchy from $e$ until it is non-elementary, or it is elementary and strongly closed. If it is not elementary, we use Theorem~\ref{tech} to find a coloring of $G'+e$. If it is elementary and strongly closed, we update $k=k+1$ and repeat the process.

Following~\cite{HochNS80} by Hochbaum et al. and~\cite{StiebSTF-Book} by Stiebitz et al., we may always assume that $k=O(\Delta)$, where $\Delta=\Delta(G)$ is the maximum degree of $G$. Then, each set $\phibar(x)$ can be found in $O(\Delta)$, and therefore, one can decided in time $O(\Delta)$ whether two vertices have a common missing color. More importantly, the coloring $\varphi/P$ or $\varphi/(G-T,\alpha,\beta)$ can be constructed in time $O(|V(G)|+\Delta)$. In the proof of (A1), we used two nested ``minimal" assumptions (first $p(T)$, the path number and then $|T-T_{n,q}|$). In the general proof, we also used two parameters $n,q$, where $n$ is the rung number and $q$ is the level of the hierarchy. Note that the size of an elementary set can not exceed $\Delta$ and $|V(G)|$. Thus, the four parameters $p(T)$, $|T-T_{n,q}|$, $n$ and $q$ are all bounded by $\min\{\Delta, |V(G)|\}$. Our proof is actually an algorithm to find a smaller non-elementary ETT by reducing the lexicographic order of $n\times q\times p(T)\times |T-T_{n,q}|$ through Kempe changes of types $\varphi/P$ and $\varphi/(G-T,\alpha,\beta)$. So it is not difficult to see that the complexity of our coloring algorithm is $O(|E|(|V|+\Delta)\Delta^6)$, where $E=E(G)$ and $V=V(G)$. Moreover, if $k$ is preconfigured by calculating $\max\{ \Gamma(G), \Delta+1\}$, the coloring process can be done in  $O(|E|(|V|+\Delta)\Delta^5)$. 

For instant future works, with this new proof of the Goldberg-Seymour conjecture, we now can start to find algorithms for all previous results based on the conjecture itself or the proof of the conjecture. For example, Cao, Chen, Ding, Zang, and the author~\cite{gupta} obtained some partial results towards Gupta's co-density conjecture, which is commonly known as a dual version of the Goldberg-Seymour conjecture on edge cover packing. This new approach could be used to develop a polynomial-time algorithm for finding the corresponding edge covers. 
 In the Ph.D Dissertation of Hao~\cite{fchromatic}, the authors improved the proof of the Goldberg-Seymour conjecture to confirm a generalized version of the Goldberg-Seymour conjecture on the $f$-chromatic index, see Chapter $8$ of book~\cite{StiebSTF-Book} for more details on the $f$-chromatic index conjecture. It might also be possible that this new approach could be revised for a polynomial-time algorithm for $f$-chromatic index. 

\end{section}

\begin{section}{Acknowledgment}
The author wishes to thank his wife, Wenyi, for her constant support. This work wouldn't have been possible without her. The author also thanks Jonathan Farley for careful reading and many helpful comments and corrections on earlier versions of the manuscript.
	\end{section}
\bibliography{gsbib}

\begin{thebibliography}{10}

\bibitem{Andersen}
L.~D. Andersen.
\newblock On edge-colourings of graphs.
\newblock {\em Math. Scand.}, 40(2):161--175, 1977.

\bibitem{AsplundMcD16}
J.~Asplund and J.~McDonald.
\newblock On a limit of the method of {T}ashkinov trees for edge-colouring.
\newblock {\em Discrete Math.}, 339(9):2231--2238, 2016.

\bibitem{gupta}
Y.~Cao, G.~Chen, G.~Ding, G.~Jing, and W.~Zang.
\newblock On gupta’s codensity conjecture.
\newblock {\em SIAM Journal on Discrete Mathematics}, 37(3):1666--1673, 2023.

\bibitem{short}
G.~Chen, Y.~Hao, X.~Yu, and W.~Zang.
\newblock A short proof of the goldberg-seymour conjecture, 2024.
\newblock arXiv:2407.09403 [math.CO].

\bibitem{2017paper}
G.~Chen and G.~Jing.
\newblock Structural properties of edge-chromatic critical multigraphs.
\newblock {\em Journal of Combinatorial Theory, Series B}, 139:128--162, 2019.

\bibitem{gs}
G.~Chen, G.~Jing, and W.~Zang.
\newblock Proof of the goldberg–seymour conjecture on edge–colorings of
  multigraphs.
\newblock {\em J. Comb. Optim.}, 50(3), September 2025.

\bibitem{Goldberg}
M.~K. Goldberg.
\newblock On multigraphs of almost maximal chromatic class\,(russian).
\newblock {\em Discret. Analiz.}, 23:3--7, 1973.

\bibitem{Gupta67}
R.~P. Gupta.
\newblock {\em Studies in the Theory of Graphs}.
\newblock PhD thesis, Tata Institute of Fundamental Research, Bombay, 1967.

\bibitem{fchromatic}
Y.~Hao.
\newblock Researches on the coloring of multigraphs, 2023.

\bibitem{HochNS80}
D.~S. Hochbaum, T.~Nishizeki, and D.~B. Shmoys.
\newblock A better than ``best possible'' algorithm to edge color multigraphs.
\newblock {\em J. Algorithms}, 7(1):79--104, 1986.

\bibitem{Holyer81}
I.~Holyer.
\newblock The {NP}-completeness of edge-coloring.
\newblock {\em SIAM J. Comput.}, 10(4):718--720, 1981.

\bibitem{Seymour}
P.~D. Seymour.
\newblock On multicolourings of cubic graphs, and conjectures of {F}ulkerson
  and {T}utte.
\newblock {\em Proc. London Math. Soc. (3)}, 38(3):423--460, 1979.

\bibitem{StiebSTF-Book}
M.~Stiebitz, D.~Scheide, B.~Toft, and L.~M. Favrholdt.
\newblock {\em Graph edge coloring}.
\newblock Wiley Series in Discrete Mathematics and Optimization. John Wiley \&
  Sons, Inc., Hoboken, NJ, 2012.
\newblock Vizing's theorem and Goldberg's conjecture, With a preface by
  Stiebitz and Toft.

\bibitem{Tashkinov-2000}
V.~A. Tashkinov.
\newblock On an algorithm for the edge coloring of multigraphs.
\newblock {\em Diskretn. Anal. Issled. Oper. Ser. 1}, 7(3):72--85, 100, 2000.

\bibitem{Vizing64}
V.~G. Vizing.
\newblock On an estimate of the chromatic class of a {$p$}-graph.
\newblock {\em Diskret. Analiz No.}, 3:25--30, 1964.

\end{thebibliography}
\bibliographystyle{plain}


\end{document}